\documentclass[11pt]{article}
\usepackage{amsmath,amssymb,fullpage}

\usepackage[T1]{fontenc}
\usepackage{amsmath}
\usepackage[all,dvips]{xy}

\usepackage{tikz}
\usepackage{tikz,fullpage}
\usetikzlibrary{matrix}

\usepackage{enumerate}
\usepackage{mathtools}

\usepackage{graphicx}
\usepackage{color}
\definecolor{vert}{rgb}{0.02,0.4,0.10}
\usepackage{hyperref}
\hypersetup{
    colorlinks=true,                
    breaklinks=true,                
    linkcolor= vert,                
    citecolor= blue                
    }

\usepackage{graphicx}
\graphicspath{ {images/} }
\usepackage{pgf,tikz,pgfplots}
\pgfplotsset{compat=1.14}
\usepackage{mathrsfs}
\usetikzlibrary{arrows}

\usepackage{makeidx}
\usepackage{pstricks}




\providecommand{\otherindexspace}[1]{}

\usepackage{amsthm}

\newtheorem{theorem}{Theorem}[section]

\newtheorem{lemma}[theorem]{Lemma}
\newtheorem{proposition}[theorem]{Proposition}
\newtheorem{remark}[theorem]{Remark}

\newtheorem{corollary}[theorem]{Corollary}

\numberwithin{equation}{section}

\def\D{\Delta}

\def\vp{\varepsilon}


\def \R{\mathbb {R}}
\def \N{\mathbb{N}}
\def \Z{\mathbb{Z}}

\def \D{\mathbb{D}}

\def\E{\mathbb{E}}
\def\P{\mathbb{P}}
\def\lb{[\![}
\def\rb{]\!]}

\usepackage{fancyhdr}



\pagestyle{fancy}

\pagestyle{plain}
\usepackage{graphics}
\usepackage{graphicx}

\DeclareGraphicsExtensions{.eps,.bmp,.jpg,.pdf,.mps,.png,.gif}

\makeatletter
\def\titre{\@title}
\makeatother

\title{On large $3/2$-stable maps}

\author{Emmanuel Kammerer}
\date{\today}

\begin{document}

\maketitle

\begin{abstract}
We discuss asymptotics of large Boltzmann random planar maps such that every vertex of degree $k$ has weight of order $k^{-2}$. Infinite maps of that kind were studied by Budd, Curien and Marzouk. These maps correspond to the dual of the discrete $\alpha$-stable maps studied by Le Gall and Miermont for $\alpha=3/2$ and to the gaskets of critical $O(2)$-decorated random planar maps. We compute the asymptotics of the graph and first passage percolation distances between two uniform vertices, which are respectively equivalent in probability to $(\log \ell)^2/\pi^2$ and $2(\log \ell)/(\pi^2 p_{\bf q})$ when the perimeter of the map $\ell$ goes to $\infty$, where $p_{\bf q}$ is a constant depending on the model. We also show that the diameter is of the same order as those distances for both metrics and establish the absence of scaling limits in the sense of Gromov-Prokhorov or Gromov-Hausdorff for lack of tightness. To study the peeling exploration of these maps, we prove local limit and scaling limit theorems for a class of random walks with heavy tails conditioned to remain positive until they die at $-\ell$ towards processes that we call stable Lévy processes conditioned to stay positive until they jump and die at $-1$.
\end{abstract}

\begin{figure}[!ht]
\centering
\includegraphics[trim=2.1cm 2cm 2.1cm 2.3cm,width=0.75\linewidth]{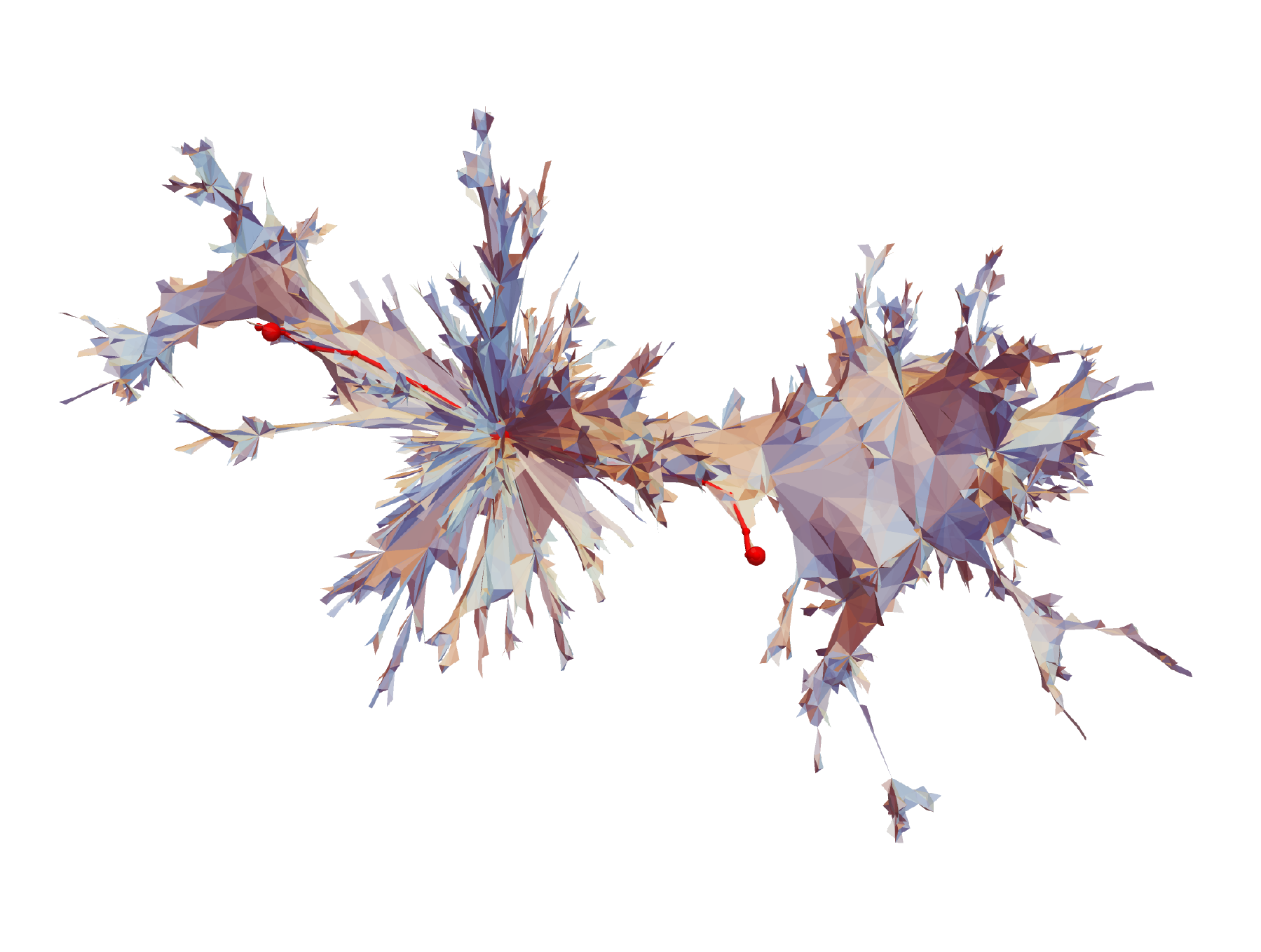}%
\caption{Simulation of a $3/2$-stable map $\mathfrak{M}^{(500,\dagger)}$ of perimeter $1000$. The two large red spheres are two uniform random vertices of $\mathfrak{M}^{(500,\dagger)}$ and the red path is the shortest path between them.}
\label{fig:ssimus_intro}
\end{figure}

\tableofcontents

\section{Introduction}

\subsection{Boltzmann $3/2$-stable planar maps}

Random planar maps have been studied very actively, motivated partly by their connections with two dimensional Liouville quantum gravity. In this work, we study the geometry of large random $3/2$-stable Boltzmann planar maps. A planar map $\mathfrak{m}$ is a connected planar graph embedded in the sphere seen up to orientation-preserving homeomorphism and equipped with a distinguished oriented edge $\vec{e}_r$ called the root edge. The face $f_r$ adjacent to the right of the root edge is called the root face. The degree of $f_r$ is also called the perimeter of $\mathfrak{m}$. We also require the map $\mathfrak{m}$ to be bipartite, i.e. that all its faces have even degree, inasmuch as the enumeration of bipartite maps is particularly simple. We write $\mathcal{M}$ for the set of finite bipartite rooted planar maps.

Random planar maps can be picked uniformly at random among maps of a given size. A more general way to pick a random map $\mathfrak{m} \in \mathcal{M}$ is to assign Boltzmann weights to them, following \cite{MM07}. Let ${\bf q} = (q_k)_{k\ge 1}$ be a non-zero sequence of non-negative numbers called the sequence of weights. The weight of a map $\mathfrak{m} \in \mathcal{M}$ is defined as
$$
w_{\bf q} (\mathfrak{m}) = \prod_{f \in \mathrm{Faces}(\mathfrak{m})\setminus \left\{ f_r\right\}} q_{\mathrm{deg}(f)/2}.
$$
For all $\ell \ge 1$, let $\mathcal{M}^{(\ell)}\subset \mathcal{M}$ be the set of all (finite) maps with perimeter $2\ell$. We introduce the partition function
$$
W^{(\ell)} = \sum_{\mathfrak{m} \in\mathcal{M}^{(\ell)}} w_{\bf q}(\mathfrak{m}).
$$
When $W^{(1)}< \infty$, we say that $\bf q$ is admissible. 
In what follows, we also require the weight sequence to be critical of type $a\in (3/2,5/2]$, which is equivalent by Proposition 5.10 of \cite{StFlour} to having
\begin{equation}\label{criticité de type 2}
	W^{(\ell)} \mathop{\sim}\limits_{\ell \to \infty} \frac{p_{\bf q}}{2} c_{\bf q}^{\ell+1} \ell^{-a},
\end{equation}
for some constant $p_{\bf q}>0$. The cases $a\in (3/2,5/2)$ are called non-generic. Such sequences $\bf q$ do exist, see Lemma 6.1 of \cite{BC} for explicit examples.
If $\bf q$ is admissible, we write $\P^{(\ell)}$ the probability measure on $\mathcal{M}^{(\ell)}$ associated with the weight sequence ${\bf q}$, called the Boltzmann measure, defined by
$$
\P^{(\ell)}(\mathfrak{m}) = \frac{w_{\bf q}(\mathfrak{m})}{W^{(\ell)}}.
$$
We also write $\E^{(\ell)}$ for the associated mathematical expectation. A random map $\mathfrak{M}^{(\ell)}$ following the law $\P^{(\ell)}$ is called a $\bf q$-Boltzmann random map of perimeter $2\ell$. The $\bf q$-Boltzmann random maps for $\bf q$ of type $a$ are also called $(a-1/2)$-stable maps. The object of study in this work is not the map $\mathfrak{M}^{(\ell)}$ in itself but rather its dual map $\mathfrak{M}^{(\ell,\dagger)}$ obtained by exchanging the roles of the faces and of the vertices. 

For $a=2$, the local limit, the infinite $3/2$-stable map, was studied in \cite{BCM} by Budd, Curien and Marzouk. In particular, they established the scaling limit of the perimeter associated with the exploration of the infinite map, and studied its basic metric properties and their results will be especially helpful in Section \ref{section distance racine face}.

The peeling exploration of the map makes appear a random walk $S$ on $\Z$ with step distribution $\nu$ which will play a crucial part in this work. It is related to $\bf q$ by
\begin{equation}\label{def marche}
	\forall k \ge 0, \enskip \nu(k)= q_{k+1} c_{\bf q}^k \qquad \text{and} \qquad \nu(-k-1)= 2W^{(k)} c_{\bf q}^{-k-1}.
\end{equation}
When $\bf q$ is critical of type $a=2$, we know in particular by Proposition 5.10 of \cite{StFlour} that 
\begin{equation}\label{comportement asymptotique nu}
	\nu(-k)\sim p_{\bf q} k^{-2} \qquad \text{and}  \qquad \nu([k,\infty)) \sim p_{\bf q}  k^{-1} \enskip \text{ as } k \longrightarrow \infty,
\end{equation}
so that $\nu$ is in the domain of attraction of the symmetric Cauchy process (with no drift).

\subsection{Main results}

Throughout this work, unless explicitly stated otherwise, $\bf q$ will be a critical non-generic weight sequence of type $a=2$ and, for all $\ell \ge 1$, $\mathfrak{M}^{(\ell)}$ will be a $\bf q$-Boltzmann random map of perimeter $2\ell$.

Before stating the main results, let us introduce the distance functions on a map $\mathfrak{m}$. For every faces $f_1$, $f_2$ of $\mathfrak{m}$, we denote by $d_{\mathrm{gr}}^\dagger (f_1,f_2)$ the graph distance between $f_1$ and $f_2$ in the dual map $\mathfrak{m}^\dagger$. Another distance, easier to study in our context, is the first-passage percolation distance $d_\mathrm{fpp}^\dagger$ defined as follows: to each edge of $\mathfrak{m}^\dagger$ we assign an independent random variable following the exponential law of parameter $1$ (that we will write $\mathcal{E}(1)$), interpreted as its random length. For every faces $f_1$, $f_2$ of $\mathfrak{m}$, we then denote by $d_{\mathrm{fpp}}^\dagger (f_1,f_2)$ the distance between $f_1$ and $f_2$ in the dual map $\mathfrak{m}^\dagger$ whose edges are equipped with those random lengths.

The first theorem describes the distance from the root face to a uniform random face (we also prove that the same results hold for a uniform random edge or a uniform random vertex).

\begin{theorem}\label{distance de la racine à une face uniforme}
	For all $\ell \ge 1$, let $F^{(\ell)}$ be a random uniform face in $\mathfrak{M}^{(\ell)}$ (conditionally on $\mathfrak{M}^{(\ell)}$) and recall that $f_r$ is the root face. Then
	$$\frac{d_\mathrm{fpp}^\dagger (f_r, F^{(\ell)})}{\log \ell } \mathop{\longrightarrow}\limits_{\ell \to \infty}^{(\P)} \frac{1 }{\pi^2 p_{\bf q}}
	\qquad \text{and} \qquad
	\frac{d_\mathrm{gr}^\dagger (f_r, F^{(\ell)})  }{(\log \ell)^2 } \mathop{\longrightarrow}\limits_{\ell \to \infty}^{(\P)} \frac{1 }{2\pi^2 }.
	$$
\end{theorem}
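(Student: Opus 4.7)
The plan is to run a peeling exploration of $\mathfrak{M}^{(\ell)}$ and read off both distances from the dynamics of the perimeter process $(P^{(\ell)}_t)_{t \ge 0}$. Two variants of peeling appear naturally: a first-passage peeling for $d_{\mathrm{fpp}}^\dagger$, and a layered (ball) peeling for $d_{\mathrm{gr}}^\dagger$. In both cases the main input is a scaling limit for $(P^{(\ell)}_t)$ of the form $P^{(\ell)}_{\lfloor \ell t \rfloor}/\ell \Rightarrow X_t$, where $X$ is a symmetric $1$-stable (Cauchy) process conditioned to stay positive until it jumps and dies at $-1$. The abstract signals that such a scaling limit is one of the central technical contributions of this paper, and I would invoke it here.

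For the FPP distance I use a first-passage peeling: put i.i.d.\ $\mathcal{E}(1)$ lengths on the primal edges and always peel the boundary edge with minimal residual length. Standard properties of the exponential law give that the elapsed FPP time after $t$ peeling steps is $\sum_{s<t} \mathcal{E}_s/P^{(\ell)}_s$ with $\mathcal{E}_s$ i.i.d.\ $\mathcal{E}(1)$, which by concentration is well-approximated by $\sum_{s<t} (P^{(\ell)}_s)^{-1}$. Taking $t$ equal to the total peeling length, I decompose the sum dyadically in the perimeter: by Cauchy self-similarity, each dyadic scale $[2^k, 2^{k+1}]$ of $P^{(\ell)}$ is visited for $O(2^k)$ peeling steps and so contributes $O(1)$ to the sum, and summing over the $\log_2 \ell$ scales between $1$ and $\ell$ yields order $\log \ell$. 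The explicit constant $1/(\pi^2 p_{\mathbf{q}})$ should come out of a renewal/Cauchy-resolvent computation using the tail $\nu(-k) \sim p_{\mathbf{q}}/k^2$ from \eqref{comportement asymptotique nu}.

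For the graph distance I use a layered (ball) peeling that reveals at step $r$ precisely the faces at dual graph distance $r$ from $f_r$. Letting $\mathcal{P}_r$ be the perimeter of the complement of the ball of radius $r$, a one-step analysis of this layered peeling in the $3/2$-stable regime should show that $\log \mathcal{P}_{r+1} - \log \mathcal{P}_r$ has a limiting distribution with mean $0$ and explicit variance, so that $(\log \mathcal{P}_r)$ is asymptotically a centered random walk with variance $\pi^2$ per step. The first passage of this walk from $\log \ell$ to $0$ then takes $(\log \ell)^2/(2\pi^2)$ steps, yielding $d_{\mathrm{gr}}^\dagger \sim (\log \ell)^2/(2\pi^2)$. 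To pass from the \emph{total} distances accumulated during the full peeling to the distances to the uniform face $F^{(\ell)}$, I argue using the scaling limit that $F^{(\ell)}$ is discovered at a peeling step $t$ with $P^{(\ell)}_t/\ell \to 0$ with probability going to $1$, so that the distances from $f_r$ to $F^{(\ell)}$ inherit the same leading-order asymptotics as the total distances.

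I expect the main obstacle to be the scaling limit for $(P^{(\ell)}_t)$ of the \emph{finite} map, whose law differs from the infinite map of \cite{BCM} by a conditioning that forces the walk to make its terminal negative jump of magnitude~$\ell$. This is the ``stable Lévy process conditioned to stay positive until it jumps and dies at $-1$'' advertised in the abstract. A secondary difficulty is the one-step analysis for the layered peeling: identifying the right variance and showing that $\log \mathcal{P}_r$ really does behave like a random walk, uniformly in $\ell$, requires delicate control of how a full peeling layer is distributed given the current perimeter in the non-generic stable regime.
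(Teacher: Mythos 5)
Your proposal has the right overall architecture (peeling, reading the fpp distance off $\sum_k \mathcal{E}_k/(2P(k))$, a layered peeling for the graph distance, and the conditioned-walk scaling limit as a technical ingredient), but it has a structural gap at the very first step: one cannot simply ``peel towards a uniform face''. The filled-in peeling is Markovian only when targeted at a distinguished face or vertex whose marking is built into the Boltzmann measure; a uniform face of $\mathfrak{M}^{(\ell)}$ is not such a target, and in a filled-in exploration it is typically swallowed inside a filled hole, so your closing remark that ``$F^{(\ell)}$ is discovered at a step $t$ with $P^{(\ell)}_t/\ell\to 0$'' has no meaning for the exploration you describe. The paper resolves this by unzipping a uniform edge into a $2$-face and \emph{exchanging the roles of root and target}, so that the exploration starts from the uniform edge with perimeter $1$ and targets the boundary face of perimeter $2\ell$; the perimeter process is then the walk conditioned to stay positive until it dies at $-\ell$ under $\P^{(1)}_\ell$, and the mismatch between the size-biased law $\P^{(1)}_\ell$ and the uniform-edge law is controlled by Lemma \ref{lemme transfert biais}. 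The further passage from a uniform edge to a uniform face or vertex requires separate degree estimates (the exponential tail for $\deg v_r$ and for the degree of a uniform vertex). None of this appears in your proposal.

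The quantitative mechanisms also do not close. For the fpp distance, the scaling limit $P(\lfloor \ell t\rfloor)/\ell\Rightarrow X$ cannot produce the $\log\ell$: the sum $\sum_k 1/(2P(k))$ is dominated by the times $k=o(\ell)$ at which the rescaled process sits at $0$, i.e.\ precisely the window the scaling limit does not resolve. The paper's actual input is the coupling of Lemma \ref{couplage marche}, which identifies $P_\ell$ with the infinite-map walk $P_\infty$ up to time $\varepsilon\ell$, together with Equation (15) of \cite{BCM} giving $\sum_{k\le \varepsilon\ell}1/(2P_\infty(k))\sim (\log\ell)/(\pi^2 p_{\bf q})$; your dyadic decomposition is a heuristic for that cited result, not a derivation, and no renewal computation producing the constant is actually performed. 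For the graph distance your mechanism is wrong as stated: if $\log\mathcal{P}_r$ were a centered random walk, its first-passage time from $\log\ell$ to $0$, rescaled by $(\log\ell)^2$, would converge to a nondegenerate heavy-tailed random variable (the hitting time of $0$ by a Brownian motion), so convergence in probability to the constant $1/(2\pi^2)$ would fail. The correct mechanism, used in the paper via Proposition 4 of \cite{BCM}, is a law of large numbers for the additive functional $H(n)\approx \tfrac{p_{\bf q}}{2}\sum_{k<n}(\log P(k))/P(k)$, which concentrates because each logarithmic scale of the perimeter contributes a concentrated amount; the paper then transfers this from $\P^{(1)}_\infty$ to $\P^{(1)}_\ell$ by an absolute-continuity argument resting on the bivariate local limit theorem for $(\tau_{-\ell},P_\ell(\tau_{-\ell}-1))$, not by any layer-by-layer analysis of the finite map.
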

The scales for the distances in $\log \ell$ and $(\log \ell )^2$ are specific to the $3/2$-stable case on account of its criticality and of the singular behavior of Cauchy processes. Deeply related behaviors have already been observed for the volume growth of balls in the infinite map in Theorems A and B of \cite{BCM}.

We then prove that the geodesics from the root to two uniform random faces ``split very quickly'', so that we get the asymptotic of the distance between two uniform random faces, see Corollary \ref{distance entre deux faces uniformes}. 
This corollary implies that if $(F_i^{(\ell)})_{i\ge 1}$ are i.i.d. uniform random faces of $\mathfrak{M}^{(\ell)}$ for all $\ell \ge 1$, then, for the product topology,
\begin{equation}\label{eq étoile}\left(\frac{d_\mathrm{fpp}^\dagger(F_i,F_j)}{\log \ell} \right)_{i,j\ge 1}
	\mathop{\longrightarrow}\limits_{\ell \to \infty}^{(\P)}
	\left(\frac{2}{ \pi^2 p_{\bf q}}\right)_{i,j\ge 1}
	\qquad
	\text{and}
	\qquad
	\left(\frac{d_\mathrm{gr}^\dagger(F_i,F_j)}{(\log \ell)^2} \right)_{i,j\ge 1}
	\mathop{\longrightarrow}\limits_{\ell \to \infty}^{(\P)}
	\left(\frac{1}{ \pi^2}\right)_{i,j\ge 1}.
\end{equation}

So as to study the diameter in Section \ref{section diamètre}, we control the distances uniformly on all the faces. Aside from the first and second moment methods, a convenient tool is a martingale associated with the perimeter process, hence enabling us to show that the diameter is of the same order as the distance between two faces picked uniformly at random.

\begin{theorem}\label{majoration du diamètre graphe et fpp}
	Let $\mathrm{Diam}_\mathrm{gr}^\dagger(\mathfrak{M}^{(\ell)})$ (resp. $\mathrm{Diam}_\mathrm{fpp}^\dagger(\mathfrak{M}^{(\ell)})$) be the diameter of $\mathfrak{M}^{(\ell,\dagger)}$ for the graph distance (resp. fpp distance). Let $\delta>0$. There exists a constant $K({\bf q})$ depending on $\bf q$ such that, with probability $1-o(1)$ when $\ell \to \infty$,
	$$
	\left(\frac{3}{4(1-q_1^2)}\vee \frac{2}{\pi^2 p_{\bf q}} \right) -\delta \le
	\frac{\mathrm{Diam}_\mathrm{fpp}^\dagger(\mathfrak{M}^{(\ell)})}{\log \ell}
	\le K({\bf q}) \qquad
	\text{and} \qquad
	\frac{1 }{\pi^2 }-\delta \le
	\frac{\mathrm{Diam}_\mathrm{gr}^\dagger(\mathfrak{M}^{(\ell)})}{(\log \ell)^2}
	\le 18+\delta.
	$$
\end{theorem}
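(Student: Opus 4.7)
The argument splits into lower and upper bounds, the latter being substantially harder. The lower bounds $1/\pi^2-\delta$ (graph) and $2/(\pi^2 p_{\bf q})-\delta$ (fpp) follow immediately from \eqref{eq étoile} applied to two i.i.d.\ uniform random faces $F_1^{(\ell)},F_2^{(\ell)}$, using the obvious inequality $\mathrm{Diam}^\dagger\ge d^\dagger(F_1^{(\ell)},F_2^{(\ell)})$. The additional fpp lower bound $3/(4(1-q_1^2))-\delta$ would come from a dedicated construction of long chains of degree-$2$ primal faces, each of which contributes an independent $\mathcal{E}(1)$ length in the dual fpp metric; a first/second moment computation on the number of such chains --- whose per-step occurrence probability in the peeling is governed by $\nu(0)=q_1$ --- should show that the longest chain has length at least $(3/(4(1-q_1^2))-\delta')\log\ell$ with high probability, producing an fpp distance of that order along it.

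For the upper bounds, the triangle inequality gives $\mathrm{Diam}^\dagger\le 2\max_f d^\dagger(f_r,f)$, so by a union/first-moment bound,
\begin{equation*}
\P^{(\ell)}\!\Big(\max_f d^\dagger(f_r,f)>k\Big)\;\le\;\E^{(\ell)}[\#\mathrm{Faces}]\cdot\P^{(\ell)}\!\big(d^\dagger(f_r,F^{(\ell)})>k\big).
\end{equation*}
Since $\E^{(\ell)}[\#\mathrm{Faces}]$ grows only polynomially in $\ell$, it suffices to show that $\P^{(\ell)}(d^\dagger(f_r,F^{(\ell)})>c\log\ell)$ (resp.\ $>c(\log\ell)^2$) decays faster than any polynomial in $\ell$, for appropriate $c$ (namely $c=9$ in the graph case and $c=K({\bf q})/2$ in the fpp case). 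To obtain such tail bounds, I would strengthen the peeling analysis used in the proof of Theorem \ref{distance de la racine à une face uniforme}: the exploration targeting a uniform face produces a Cauchy-type perimeter process $(P_n)_{n\ge 0}$ killed when it enters the non-positives, and the distance of interest is a functional of the number of peeling steps (with an additional independent sum of $\mathcal{E}(1)$ edge lengths in the fpp case). Using the martingale associated with $P$ (announced in the statement) together with Doob's inequality and, for the fpp case, a Chernoff bound on the exponential sum, one derives exponential-type concentration of the peeling time around its typical value. The explicit constant $18$ then reflects the deterministic one-per-step bound on the graph distance, while $K({\bf q})$ also incorporates the moment generating function of $\mathcal{E}(1)$ and the constants appearing in \eqref{comportement asymptotique nu}.

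The principal obstacle is upgrading the convergence in probability of Theorem \ref{distance de la racine à une face uniforme} to quantitative tail bounds that are strong enough to survive a union bound over $\Theta(\ell)$ faces. The Cauchy (infinite-variance) domain of attraction of $\nu$ rules out classical exponential concentration and forces a fine martingale analysis tailored to this heavy-tailed regime; matching the numerical constants $18$ and $K({\bf q})$ precisely requires careful optimization of the parameters entering Doob's and Chernoff's inequalities along the peeling exploration, and is where I expect the bulk of the technical work to lie.
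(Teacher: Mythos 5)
Your overall skeleton matches the paper's: the lower bounds $1/\pi^2-\delta$ and $2/(\pi^2 p_{\bf q})-\delta$ do come from Corollary \ref{distance entre deux faces uniformes}, the extra fpp lower bound $3/(4(1-q_1^2))-\delta$ is indeed obtained from chains of degree-$2$ faces (the paper's ``watermelons'', whose multiplicities are geometric of parameter $q_1$ conditionally on the collapsed map $\widetilde{\mathfrak{M}}^{(\ell)}$) via a second moment method over $\ell^{3/2}$ uniform edges, and both upper bounds follow the first-moment/union-bound scheme you describe, reduced via the rerooting bias of Lemma \ref{lemme transfert biais} to polynomial tail bounds under $\P^{(\ell)}_1$ that are then proved with exponential supermartingales built on the perimeter process. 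For the fpp upper bound your plan is essentially the paper's (supermartingale for $\sum_k 1/P_\ell(k)$ plus Bernstein/Chernoff for the exponential weights conditionally on $P_\ell$), modulo the fact that the relevant quantity to concentrate is the additive functional $\sum_{k<\tau_{-\ell}}1/P_\ell(k)$, not ``the peeling time'' $\tau_{-\ell}$ itself, which is of order $\ell$.

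The genuine gap is in the graph-distance upper bound, where your explanation of the constant $18$ is wrong and the key ingredient is missing. A ``deterministic one-per-step bound on the graph distance'' only gives $\mathrm{Diam}_{\mathrm{gr}}^\dagger = O(\tau_{-\ell}) = O(\ell)$, which is useless at the $(\log\ell)^2$ scale. What is actually needed is a quantitative per-step estimate on the \emph{expected} growth of the height: the paper introduces the interpolated height process $H^f_\ell$ and proves (Lemma \ref{analogue6}, the analogue of Lemma 6 of \cite{BCM}) that $\E[\Delta H^f_\ell(n)\mid\mathcal{F}_n]\le \frac{(1+O(\vp))\,p_{\bf q}}{2}\,\frac{\log P_\ell(n)}{P_\ell(n)}$, which requires a careful three-sum analysis of the peeling-by-layers transitions under the $h^\downarrow_\ell$-transform. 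Combining the resulting exponential supermartingale (Corollary \ref{sur-martingale3}) with the bound $\sum_k \log P_\ell(k)/P_\ell(k)\lesssim (\log\ell)\sum_k 1/P_\ell(k)$ and the sharp estimate $\sum_k 1/P_\ell(k)\,{\bf 1}_{P_\ell(k)\ge k_0}\le \frac{c+3/2}{p_{\bf q}-\delta}\log\ell$ with probability $1-O(\ell^{-c})$ (Lemma \ref{martingale tronquée}) is what makes the factors of $p_{\bf q}$ cancel and yields the universal exponent $\frac{1}{2}(c+3)(c+\frac52)=9$ at $c=3/2$, hence $18$ after doubling by the triangle inequality. Note also that you cannot ask for decay ``faster than any polynomial'': the exponent $K$ grows with $c$, and the constant $18$ is obtained precisely by taking the smallest admissible polynomial rate $c=3/2$ dictated by $\E\,\#\mathrm{Edges}(\mathfrak{M}^{(\ell)})\asymp\ell^{3/2}$. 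Without the interpolated-height estimate, your argument does not produce the $(\log\ell)^2$ scale, let alone the stated constant.
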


These bounds on the diameter indicate in particular the scale at which one should look for studying Gromov-Hausdorff convergence. Actually, (\ref{eq étoile}) shows that at the macroscopic scale, $\mathfrak{M}^{(\ell),\dagger}$ looks like a star with many branches when $\ell \to \infty$, see the simulation in Figure \ref{fig:ssimus_intro}. As a consequence, the sequence $\left(\mathfrak{M}^{(\ell),\dagger}\right)_{\ell \ge 1}$ equipped with the rescaled distances is not tight, neither for the Gromov-Hausdorff topology, nor for the Gromov-Prokhorov topology. Indeed, on the one hand, the limit of any subsequence can not be compact due to (\ref{eq étoile}), while on the other hand any limiting random separable metric measured space $(X,d,\mu)$ of some subsequence for Gromov-Prokhorov would have an empty support since two uniform points of law $\mu$ would be at constant distance by (\ref{eq étoile}), which contradicts the separability (see e.g. Remark 4.5 of \cite{J20} for details on this reasoning).

More precisely, for all $\ell \ge 1$, let $\mu_\ell$ be the uniform measure on the faces of $\mathfrak{M}^{(\ell)}$. We have thus shown the following corollary.
\begin{corollary}\label{noGPnoGH}
	The sequences $( \mathfrak{M}^{(\ell,\dagger)}, \frac{1}{(\log \ell)^2}d_\mathrm{gr}^\dagger, \mu_\ell )_{\ell\ge 1}$ and $( \mathfrak{M}^{(\ell,\dagger)}, \frac{1}{ \log \ell}d_\mathrm{fpp}^\dagger, \mu_\ell )_{\ell\ge 1}$ are not tight for the Gromov-Prokhorov topology. Besides, the sequences 
	$
	(\mathfrak{M}^{(\ell,\dagger)}, \frac{1}{(\log\ell)^2} d_\mathrm{gr}^\dagger)_{\ell \ge 1}
	$ and 
	$
	(\mathfrak{M}^{(\ell,\dagger)}, \frac{1}{ \log\ell} d_\mathrm{fpp}^\dagger)_{\ell \ge 1}
	$
	are not tight for the Gromov-Hausdorff topology. In particular, there is no Gromov-Hausdorff scaling limit of any subsequence of $(\mathfrak{M}^{(\ell,\dagger)}, d_\mathrm{gr}^\dagger )_{\ell \ge 1}$ or $(\mathfrak{M}^{(\ell,\dagger)}, d_\mathrm{fpp}^\dagger )_{\ell \ge 1}$
\end{corollary}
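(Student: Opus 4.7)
The plan is to prove both non-tightness statements by contradiction, using only (\ref{eq étoile}), which gives that pairwise rescaled distances between i.i.d.\ uniform faces converge in probability to the positive constants $c_\mathrm{fpp}:=2/(\pi^2 p_{\bf q})$ and $c_\mathrm{gr}:=1/\pi^2$. I will treat the two metrics simultaneously, writing $c>0$ for either of these constants.

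For the Gromov--Prokhorov statement, I would assume that along some subsequence $(\mathfrak{M}^{(\ell,\dagger)}, \frac{1}{\rho(\ell)}d^\dagger, \mu_\ell)$ converges in GP to a random separable metric measured space $(X,d,\mu)$ (where $\rho(\ell)=\log\ell$ or $(\log\ell)^2$ depending on the metric). By the standard characterisation of GP convergence via the ``sampling'' or ``distance matrix distribution'' functional (Greven--Pfaffelhuber--Winter), the distribution of the rescaled pairwise distance between two independent $\mu_\ell$-uniform faces converges to the distribution of the pairwise distance between two i.i.d.\ $\mu$-samples $X_1,X_2$. Combined with (\ref{eq étoile}) for $n=2$, this forces $d(X_1,X_2)=c$ almost surely. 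But if $x_0\in\supp(\mu)$ then $\mu(B(x_0,c/3))>0$, so with positive probability $X_1,X_2\in B(x_0,c/3)$, which gives $d(X_1,X_2)<2c/3<c$, a contradiction. Hence $\supp(\mu)=\emptyset$, contradicting the fact that a probability measure on a separable metric space has nonempty support. The sequence is therefore not GP-tight.

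For the Gromov--Hausdorff statement I would again argue by contradiction: if some subsequence converged in GH to a compact metric space $(X,d)$, then fixing $n\ge1$ and applying (\ref{eq étoile}) one obtains, with probability $1-o(1)$, $n$ uniform faces of $\mathfrak{M}^{(\ell,\dagger)}$ whose rescaled pairwise distances are all at least $c/2$. Via a Skorokhod coupling and the correspondence formulation of GH convergence (each face has a counterpart in $X$ at vanishing distortion), one transfers these $n$ points into $X$ and obtains a $c/2$-separated subset of $X$ of size $n$. Since $n$ is arbitrary, $X$ admits arbitrarily large $c/2$-separated sets, contradicting its compactness. For the last assertion of the corollary, GH non-tightness of the full sequence implies, a fortiori, that no subsequence of $(\mathfrak{M}^{(\ell,\dagger)}, d_\mathrm{gr}^\dagger)$ or $(\mathfrak{M}^{(\ell,\dagger)}, d_\mathrm{fpp}^\dagger)$ can have a GH scaling limit under any rescaling, by the same star-shape argument at the diverging scale.

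The core content of the corollary is essentially already contained in (\ref{eq étoile}); there is no real analytic obstacle. The only point demanding some care is the translation between the convergence of finitely many sampled faces and the structure of the hypothetical limit: for GP this is a textbook application of the sampling functional, while for GH the absence of a canonical measure on the limit requires one to argue directly via correspondences, which is why I prefer to derive the $n$-point separated set directly from (\ref{eq étoile}) rather than trying to deduce GH non-tightness from GP non-tightness.
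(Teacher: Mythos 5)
Your proposal is correct and follows essentially the same route as the paper: the paper's proof is the paragraph preceding the corollary, which derives GP non-tightness from the fact that two $\mu$-samples of any subsequential limit would be at constant distance (forcing empty support, contradicting separability, with details deferred to Remark 4.5 of \cite{J20}), and GH non-tightness from the non-compactness forced by (\ref{eq étoile}). Your write-up merely makes explicit the sampling-functional and correspondence arguments that the paper leaves implicit.
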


Still, following \cite{ET22} and \cite{Ja21}, the convergence $(\ref{eq étoile})$ can also be interpreted  as the convergence in probability of $( \mathfrak{M}^{(\ell,\dagger)}, \frac{1}{ (\log \ell)^2}d_\mathrm{gr}^\dagger, \mu_\ell )_{\ell\ge 1}$ and $( \mathfrak{M}^{(\ell,\dagger)}, \frac{1}{ \log \ell}d_\mathrm{fpp}^\dagger, \mu_\ell )_{\ell\ge 1}$ towards some long dendrons defined in Example 7.2 of \cite{Ja21}.

\subsection{Conditioned random walks}

The main technique to study the metric properties of $\mathfrak{M}^{(\ell,\dagger)}$ is the filled-in peeling exploration. It was first introduced in \cite{Wa95} and \cite{An03} for triangulations. A general version is defined in \cite{B16} and is presented in Section \ref{section exploration}. See \cite{BuddPeeling} and \cite{StFlour} for a more detailed introduction. The peeling process consists in a step-by-step Markovian exploration of the map starting from the root face according some peeling algorithm. At each step, the peeling process reveals the face behind the peeled edge chosen by the peeling algorithm. Various choices of peeling algorithms enable to understand different properties of the map. Our use of the peeling algorithm is similar to that of \cite{BC} for the study of infinite $(a-1/2)$-stable Boltzmann maps for $\bf q$ critical non-generic of type $a \in (3/2,5/2)\setminus\left\{2\right\}$ and \cite{BCM} for the study of infinite $3/2$-stable Boltzmann maps. A difference is that most of the time we will work with the peeling algorithm on the map which has been rerooted (see Subsection \ref{échange racine cible}).

In our case, the evolution of the perimeter of the explored part has the same law as the $\nu$-random walk conditioned to stay positive until it jumps and dies at $-\ell$. This is, to the best of our knowledge, a new type of conditioning which we study in details in Section \ref{section périmètre}. 
In that section we introduce a key coupling (Lemma \ref{couplage marche}) between this conditioned random walk and the $\nu$-random walk conditioned to stay positive forever
, using that such conditionings can be performed via Doob $h$-transforms. This coupling enables us in particular to use some results of \cite{BCM}. 
This lemma states that for all $n\ge 1$, there exists a coupling between the two conditioned random walks such that if the second one has not been killed yet at time $n$, then the two conditioned random walks coincide.

The rest of Section \ref{section périmètre} can be seen as an asymptotic study of this conditioned $\nu$-random walk. 
We first prove a local limit theorem for the lifetime and the last positive value of the walk. Relying on this local limit, we next establish in Theorem \ref{cvpérimètre} the scaling limit of that conditioned random walk dying at $-\ell$ towards a process that we call an $(a-1)$-stable Lévy process conditioned to die at $-1$. We hope that those results could be fruitful in other contexts.

\subsection{Discussion}

The gasket of a critical $O(n)$ loop-decorated planar map, which is obtained after deleting the edges and vertices in the outermost loops, is a particular example of an $(a-1/2)$-stable map, where the case $n=2$ corresponds to $a=2$. This relation is depicted for instance in \cite{LGM11,BBG12}.
The $O(n)$ loop-decorated maps undergo a phase transition described in \cite{BBG12,B18}. The critical case $n=2$ is often excluded in the literature. 
For the infinite Boltzmann $(a-1/2)$-stable maps, in \cite{BC} a phase transition at $a=2$ is equally exhibited. Let us also mention the conjecture that for $n=2$ the gasket should converge in some sense to the $\mathrm{CLE}_4$ ensemble (introduced in \cite{S09,SW12}).

We are confident that some of our results such as Theorem \ref{distance de la racine à une face uniforme} and Corollary \ref{distance entre deux faces uniformes} can be extended to the case $a \in (3/2,2)$ with a $\log \ell$ scaling for the dual graph distance. The martingale (\ref{martingale}) seems also adapted in this setting, so that Theorem \ref{majoration du diamètre graphe et fpp} should have an analog in this case.

\section{The random walk conditioned to die at $-\ell$}\label{section périmètre}

This section is devoted to the study the $\nu$-random walk $S$ conditioned to stay positive until it jumps and dies at $-\ell$, which will be denoted by $(P_\ell(n))_{n\ge 0}$ under $\P_\ell^{(1)}$. 
We establish in Lemma \ref{couplage marche} a coupling between this conditioned random walk and the random walk conditioned to stay always positive. 
We then prove a local limit result for the hitting time $\tau_{-\ell}$ of $-\ell$ (Proposition \ref{limite locale bivariée}) which we use to establish a scaling limit (Theorem \ref{cvpérimètre}).

\subsection{The $\nu$ random walk, harmonic functions and conditionings}
The criticality condition (\ref{criticité de type 2}) imposes strong properties on the step distribution $\nu$ which are better stated in terms of harmonic functions. 
For all $\ell\ge 0$ and $p\ge 1$, let
\begin{equation}\label{h flèche}
	h^\downarrow_p(\ell)= h^\downarrow (\ell) h^\downarrow(p)\frac{\ell}{\ell+p} \qquad \text{and} \qquad h_0^\downarrow (\ell)=h^\downarrow(\ell) = 2^{-2\ell} \binom{2\ell}{\ell},
\end{equation}
with by convention $h^\downarrow(\ell)=h_p^\downarrow(\ell)=0$ for $\ell <0$ except for $\ell=-p$ where we set $h_p(-p)=1$. We also set $h^\uparrow(\ell)=2\ell h^\downarrow(\ell)$ for all $\ell \in \mathbb{Z}$.
Let $a \in (3/2,5/2]$. In the rest of this section we only assume that $\nu$ is a probability measure on $\mathbb{Z}$ such that
\begin{enumerate}[(I)]
	\item \label{condition harmonicité}The functions $h^\downarrow$ and $h^\uparrow$ are harmonic for $\nu$.
	\item \label{condition queue}We have $\nu(-k)\sim p_{\bf q}/k^a$ as $k \to \infty$.
\end{enumerate}

These two conditions are equivalent to the criticality of type $a$ of the weight sequence $\bf q$ associated with $\nu$ by (\ref{def marche}) (see the paragraph 3.2 in \cite{B16}, Theorem 1 and Proposition 5 of \cite{BuddPeeling} or Theorem 5.4 and its proof in \cite{StFlour} for (\ref{condition harmonicité}), and Propositions 5.9 and 5.10 of \cite{StFlour} for (\ref{condition queue})). The admissibility of $\bf q$ then entails the harmonicity of $h^\downarrow_p$ for $S$ for all $p\ge 1$ (see the proof of Lemma 5.2 in \cite{StFlour}).  

It is also known (see e.g. Proposition 10.1 in \cite{StFlour}) that $(S_{\lfloor nt \rfloor}/n^{1/(a-1)})_{t\ge 0}$ converges in distribution towards $p_{\bf q}^{1/(a-1)} \Upsilon_a$ for the Skorokhod topology, where $\Upsilon_a$ is the $(a-1)$-stable Lévy process starting at zero with positivity parameter $\rho = \P(S_t\ge 0)$ satisfying $(a-1)(1-\rho)= 1/2$, normalized so that its Lévy measure is $\cos(a \pi) (dx/x^a){\bf 1}_{x>0} + (dx/|x|^a){\bf 1}_{x<0}$.

Finally, from the harmonic functions, one can define the corresponding Doob $h$-transforms of the $\nu$ random walk $S$. For all $\ell\ge 1$, we denote by $(P_\infty(n))_{n\ge 0}$ under $\P^{(\ell)}_\infty$ the Doob $h^\uparrow$-transform of $S$ starting at $\ell$ and for all $\ell \ge 1, p\ge 0$, we write $(P_p(n))_{n\ge 0}$ under $\P^{(\ell)}_p$ the Doob $h^\downarrow_p$-transform of $S$ killed when it reaches $-p$. The notation is chosen to be consistent with the rest of the paper since these conditioned random walks will be the perimeter processes in Section \ref{section exploration}. The Doob $h^\uparrow$-transform of $S$ can be seen as the random walk $S$ conditioned to stay always positive while the Doob $h^\downarrow_p$-transform of $S$ killed at $-p$ can be understood analogously as the random walk conditioned to stay positive until it is killed at $-p$ (see \cite{StFlour} Proposition 5.3).

\subsection{Preliminaries on the random walk conditioned to stay positive} 

Before delving into the study of the walk $P_\ell$ under $\P^{(1)}_\ell$, we record for future use some known results on the walks $P_\infty$, $P_p$ under $\P^{(1)}_\infty$, $\P^{(\ell)}_p$ for fixed $p$ as $\ell \to + \infty$
. 
One of the most useful results will be the scaling limit of the random walk conditioned to stay always positive $P_\infty$ under $\P^{(1)}_\infty$. This scaling limit was introduced in \cite{BC} and \cite{BCM} using Theorem 1.1 from \cite{CC}. The limit is described informally as the Lévy process $\Upsilon_a$, introduced in the above subsection, conditioned to stay positive and starting from zero. To do this, one can first define the process $(\mathcal{S}^\uparrow_x(t))_{t\ge 0}$ starting at some $x>0$ as the Doob $h$-transform of $(x+\Upsilon_a(t))_{t\ge 0}$ starting at $x$, using the harmonic function $h:y\mapsto {\bf 1}_{y\ge 0}\sqrt{y}$. The process $(\mathcal{S}^\uparrow_x(t))_{t\ge 0}$ then converges in distribution when $x\to 0$ towards a process written $\Upsilon_a^\uparrow$ which starts at zero. Recall the constant $p_{\bf q}$ which first appears in (\ref{criticité de type 2}).

\begin{theorem}\label{upsilon flèche}(Proposition 10.3 of \cite{StFlour})
	The following convergence holds for the $J_1$-Skorokhod topology: under $\P^{(1)}_\infty$,
	$$
	\left(\frac{P_\infty(\lfloor n t\rfloor)}{n^{1/(a-1)}}\right)_{t\ge 0} 
	\enskip
	\mathop{\longrightarrow}\limits_{n \to \infty}^{(\mathrm{d})} 
	\enskip
	\left(\Upsilon_a^\uparrow(p_{\bf q} t)\right)_{t\ge 0}\mathop{=}\limits^{(\mathrm{d})} \left(p_{\bf q}^{1/(a-1)}\Upsilon_a^\uparrow( t)\right)_{t\ge 0}.
	$$
\end{theorem}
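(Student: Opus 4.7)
The plan is to derive the scaling limit by combining the classical invariance principle for the unconditioned walk $S$ with the general invariance principle of Caravenna--Chaumont (Theorem 1.1 of \cite{CC}) for random walks conditioned to stay positive. Since, by hypotheses \eqref{condition harmonicité}--\eqref{condition queue}, $\nu$ lies in the domain of attraction of the $(a-1)$-stable law with positivity parameter $\rho$ satisfying $(a-1)(1-\rho)=1/2$, one already has the unconditioned convergence $\bigl(S_{\lfloor nt\rfloor}/n^{1/(a-1)}\bigr)_{t\ge 0}\Rightarrow\bigl(p_{\bf q}^{1/(a-1)}\Upsilon_a(t)\bigr)_{t\ge 0}$ recalled after condition \eqref{condition queue}, so only the conditioning has to be propagated to the limit.

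The next step is to check that the discrete harmonic function $h^\uparrow$ is, after rescaling, the ``same'' as the continuum harmonic function $y\mapsto\sqrt{y}$ used in the definition of $\mathcal S^\uparrow_x$. Stirling's formula yields $h^\downarrow(\ell)=2^{-2\ell}\binom{2\ell}{\ell}\sim (\pi\ell)^{-1/2}$, hence $h^\uparrow(\ell)=2\ell h^\downarrow(\ell)\sim 2\sqrt{\ell/\pi}$. Therefore
$$
\frac{h^\uparrow\!\bigl(\lfloor n^{1/(a-1)}y\rfloor\bigr)}{n^{1/(2(a-1))}}\;\mathop{\longrightarrow}_{n\to\infty}\;\frac{2}{\sqrt{\pi}}\sqrt{y}
$$
uniformly on compact subsets of $(0,\infty)$, which is precisely the regularity assumption needed to identify the Doob $h^\uparrow$-transform of $S$ with the scaled Doob $h$-transform of $\Upsilon_a$.

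With these two ingredients in hand, I would apply the Caravenna--Chaumont theorem. It asserts that, as soon as the unrescaled walk converges to a Lévy process and the discrete harmonic function matches the continuous one in the above sense, the random walk conditioned to stay positive and started at any sequence $x_n$ with $x_n/n^{1/(a-1)}\to 0$ (here $x_n=1$) converges, under the same rescaling, to the Lévy process conditioned to stay positive and started at $0$, constructed as the weak limit $\lim_{x\downarrow 0}\mathcal S^\uparrow_x$ recalled in the subsection. This gives the convergence of $\bigl(P_\infty(\lfloor nt\rfloor)/n^{1/(a-1)}\bigr)_{t\ge 0}$ to $\Upsilon_a^\uparrow$ time-changed by $p_{\bf q}$, since the factor $p_{\bf q}^{1/(a-1)}$ from the unconditioned invariance principle gets absorbed into a deterministic time-change via the $(a-1)$-self-similarity of $\Upsilon_a^\uparrow$, which also yields the final distributional identity $\Upsilon_a^\uparrow(p_{\bf q}t)\stackrel{(\mathrm d)}{=}p_{\bf q}^{1/(a-1)}\Upsilon_a^\uparrow(t)$.

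The main obstacle is the entrance law issue at the boundary. The process $\Upsilon_a^\uparrow$ is defined only indirectly as the $x\downarrow 0$ limit of the $\mathcal S^\uparrow_x$, and one must verify that the discretely conditioned walk, started at the single site $1$ (which after rescaling lies at height $n^{-1/(a-1)}\to 0$), really selects this entrance law rather than, say, some mixture or some $\mathcal S^\uparrow_x$ with $x>0$. This is exactly what the Caravenna--Chaumont argument provides: it combines the local central limit theorem for the Lévy measure (ensured by the tail condition \eqref{condition queue}) with Chaumont's construction of the conditioned entrance law, and the asymptotic of $h^\uparrow$ above is precisely the compatibility hypothesis that their theorem requires. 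Once the hypotheses have been checked, the rest of the proof is essentially a translation of \cite{CC} into the normalization conventions used here.
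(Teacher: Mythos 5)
Your argument follows the same route as the paper's source for this statement: the paper does not prove the theorem but simply cites Proposition 10.3 of \cite{StFlour}, itself obtained (as the surrounding text notes) from Theorem 1.1 of \cite{CC} exactly in the way you describe. The only point worth making explicit is that invoking \cite{CC} requires identifying the Doob $h^\uparrow$-transform with the conditioning used there, i.e. that $h^\uparrow$ is (up to a multiplicative constant) the renewal function of the strict descending ladder height process of $S$ — your asymptotic $h^\uparrow(\ell)\sim 2\sqrt{\ell/\pi}$ is a consequence of, not a substitute for, that identification, which is standard for these peeling walks.
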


The proof of Theorem 1.1 in \cite{CC} relies mostly on the scaling limit of the $\nu$-random walk $S$ conditioned to stay positive until some time towards a stable meander: let $\widetilde{\Upsilon}_a$ be the meander of length $1$ associated with $\Upsilon_a$, which is informally the process $\Upsilon_a$ conditioned to remain positive up to time $1$. Then the following theorem is a consequence of the main theorem of \cite{D85}.

\begin{theorem}\label{cvméandre}(\cite{D85})
	For $S_0=1$, conditionally on $S_1, \ldots,S_n>0$, the process $(S(\lfloor nt \rfloor)/n^{1/(a-1)})_{t\ge 0}$ converges in distribution for the $J_1$-Skorokhod topology towards  $(p_{\bf q}^{1/(a-1)} \widetilde{\Upsilon}_a(t))_{t\ge 0}$.
\end{theorem}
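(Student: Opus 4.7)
The plan is to deduce this statement as a direct application of the main scaling-limit theorem of Doney \cite{D85}. That theorem asserts that for any random walk $(S_n)_{n\ge 0}$ whose step distribution lies in the domain of attraction of a stable law of index $\alpha\in(0,2)$, the walk conditioned to remain positive on $\{1,\ldots,n\}$ converges, once rescaled by the corresponding stable norming sequence, to the associated stable meander in the $J_1$-Skorokhod topology. The hypotheses (\ref{condition harmonicité}) and (\ref{condition queue}) of Subsection 2.1, together with the Karamata-type tail estimate that they imply on $\nu$, place $\nu$ in the domain of attraction of $\Upsilon_a$; the unconditional scaling limit has in fact already been recorded just before Theorem \ref{upsilon flèche} as $(S_{\lfloor nt\rfloor}/n^{1/(a-1)})_{t\ge 0}\to (p_{\bf q}^{1/(a-1)}\Upsilon_a(t))_{t\ge 0}$, which identifies the stable index, the positivity parameter $\rho$ and the precise norming constants that must appear in the conditional limit.

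Two small adjustments remain. First, Doney's statement is usually written for $S_0=0$, whereas here $S_0=1$; but the corresponding shift in the rescaled process is $n^{-1/(a-1)}\to 0$, so the initial condition does not affect the limit. Second, the explicit multiplicative factor $p_{\bf q}^{1/(a-1)}$ in front of $\Upsilon_a$ must be transmitted to the meander limit: since conditioning a Lévy process to stay positive on $[0,1]$ commutes with multiplication by a positive constant, the meander associated with $p_{\bf q}^{1/(a-1)}\Upsilon_a$ is exactly $p_{\bf q}^{1/(a-1)}\widetilde{\Upsilon}_a$.

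The essentially only nontrivial step is the passage from the unconditional to the conditional scaling limit, which is precisely what Doney's theorem provides; no separate probabilistic input is required. If I wanted a self-contained proof I would reproduce Doney's argument, which proceeds by combining the classical invariance principle for $S$, uniform bounds on the probability $\P(S_1>0,\ldots,S_n>0)$ coming from fluctuation theory for walks in the domain of attraction of a stable law (Rogozin-type estimates that give $\P(S_1>0,\ldots,S_n>0)\sim c\, n^{\rho-1}$), and a time-reversal identity to control the endpoint; the main technical obstacle in such an approach is the uniform integrability ensuring that conditioning preserves the scaling limit. Here this work is already done in \cite{D85}, so the only role of the present statement is to fix, in our notation, the precise normalization $p_{\bf q}^{1/(a-1)}$ that will be invoked throughout Section \ref{section périmètre}.
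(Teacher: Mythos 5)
Your proposal is correct and matches the paper exactly: the paper gives no proof of this statement, simply recording it as a consequence of the main theorem of \cite{D85}, which is precisely the deduction you carry out (domain-of-attraction hypothesis from (\ref{condition queue}), identification of the norming constant $p_{\bf q}^{1/(a-1)}$ from the unconditional scaling limit, and the harmless shift from $S_0=0$ to $S_0=1$). Nothing further is needed.
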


The density of $\Upsilon_a^\uparrow(1)$ (resp. $\widetilde{\Upsilon}_a(1)$)  will be written $f^\uparrow_a$ (resp. $\tilde{f}_a$). We know from \cite{DS} that  $\tilde{f}_a$ is continuous and bounded (and also bounded away from zero on any compact of $(0,\infty)$) and moreover that there exists $C_1>0$ such that $f_a^\uparrow (x) = C_1 x^{1/2} \tilde{f}_a(x)$.

Besides, so as to establish our local limit results, we state the following lemma which is an extension and a reformulation of Lemma 3 (i) of \cite{BCM}. 
Its proof is exactly the same as in \cite{BCM} and builds upon \cite{VW}.

\begin{lemma}\label{lemme3BCM}(Lemma 3 (i) from \cite{BCM})
	Uniformly on $x\ge 1$ as $n\to \infty$, 
	$$
	\P_1\left(S_1, \ldots, S_n \ge 1 \text{ and } S_n=x\right) 
	\enskip
	=
	\enskip
	\frac{\sqrt{\pi}}{2(np_{\bf q})^{1/(a-1)}\sqrt{x}} f_a^\uparrow\left(\frac{x}{(p_{\bf q}n)^{1/(a-1)}}\right)
	+o(n^{-3/(2(a-1))}).
	$$
\end{lemma}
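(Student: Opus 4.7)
My plan is to follow the argument of Lemma 3(i) in \cite{BCM}, itself a specialization of the local limit theorem of \cite{VW} for random walks conditioned to stay positive. The starting point is the $h^\uparrow$-transform identity
\begin{equation*}
\P^{(1)}_\infty\bigl(P_\infty(n) = x\bigr) = \frac{h^\uparrow(x)}{h^\uparrow(1)}\,\P_1\bigl(S_1,\ldots,S_n \ge 1,\ S_n = x\bigr),
\end{equation*}
which, combined with the computation $h^\uparrow(1) = 2\cdot 2^{-2}\binom{2}{1} = 1$ and the Stirling estimate $h^\uparrow(x) = (2/\sqrt{\pi})\sqrt{x}\,(1+O(1/x))$ as $x\to\infty$, reduces the claim to a local limit theorem for $P_\infty$ of the form
\begin{equation*}
\P^{(1)}_\infty\bigl(P_\infty(n) = x\bigr) = \frac{1}{(p_{\bf q}n)^{1/(a-1)}}\, f_a^\uparrow\!\left(\frac{x}{(p_{\bf q}n)^{1/(a-1)}}\right) + o\bigl(n^{-1/(a-1)}\bigr),
\end{equation*}
uniformly on $x \ge 1$ (with the uniformity tightening appropriately for small $x$ so as to yield the claimed $o(n^{-3/(2(a-1))})$ after dividing by $h^\uparrow(x)$). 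Note that for $x$ at its typical order $n^{1/(a-1)}$ the extra factor $1/\sqrt{x} \asymp n^{-1/(2(a-1))}$ converts $n^{-1/(a-1)}$ precisely into $n^{-3/(2(a-1))}$.

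The main step is thus to establish this local limit theorem for $P_\infty$. Since $\nu$ has the polynomial tail $\nu(-k) \sim p_{\bf q}k^{-a}$ from \eqref{comportement asymptotique nu} and is in the domain of attraction of the $(a-1)$-stable process $\Upsilon_a$ with normalization $(p_{\bf q}n)^{1/(a-1)}$, the proof of \cite{VW} transfers essentially verbatim. The natural route is to split the trajectory at a middle time $m\asymp n$ via the Markov property, use the meander scaling limit from Theorem \ref{cvméandre} for the left half, and apply the classical Gnedenko-Stone local limit theorem for the unconditioned stable walk on the right half; the $h^\uparrow$-weighting then reconstructs the density of $\Upsilon^\uparrow_a(1)$ at the terminal point through the identity $f_a^\uparrow(y) = C_1\sqrt{y}\,\tilde{f}_a(y)$ recalled just above the statement.

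The main obstacle is obtaining the error uniformly in $x$. The bulk regime $x\asymp n^{1/(a-1)}$ is handled directly by Gnedenko-Stone, but two boundary regimes require separate treatment: for small $x$, the vanishing factor $f_a^\uparrow(y) \sim C_1\sqrt{y}\,\tilde{f}_a(0)$ near zero must be tracked against the boundedness of $\tilde{f}_a$ (from \cite{DS}) via a quantitative near-boundary version of Theorem \ref{cvméandre}, which also accounts for the balancing shrinking of $h^\uparrow(x)$; for $x$ much larger than $n^{1/(a-1)}$, a one-big-jump decomposition based on the explicit tail $\nu([k,\infty)) \sim p_{\bf q}/k$ from \eqref{comportement asymptotique nu} localizes the event on the presence of a single large negative step and matches its contribution with the power-law tail of $f_a^\uparrow$, with the precise constant $p_{\bf q}$ delivering the correct prefactor. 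Stitching the three regimes together yields the uniform estimate and, through the $h$-transform reduction, proves the lemma.
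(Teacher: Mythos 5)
Your proposal is correct and follows exactly the route the paper takes: the paper gives no proof of this lemma, stating only that it is a reformulation of Lemma 3(i) of \cite{BCM} whose proof "is exactly the same as in \cite{BCM} and builds upon \cite{VW}", and your reduction via the $h^\uparrow$-transform (with $h^\uparrow(1)=1$, $h^\uparrow(x)\sim (2/\sqrt{\pi})\sqrt{x}$) to a Vatutin--Wachtel-type local limit theorem for the conditioned walk, together with the identity $f_a^\uparrow(y)=C_1\sqrt{y}\,\tilde f_a(y)$, is precisely that argument. Your attention to the uniformity in the small-$x$ and large-$x$ regimes correctly identifies where the $o(n^{-3/(2(a-1))})$ error must come from.
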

Finally, the next proposition, which describes the scaling limit of $P_p$ under $\P^{(\ell)}_p$ for fixed $p$, will be of some use in Section \ref{section distance entre sommets uniformes}. The scaling limit is described by the Lévy process $\Upsilon_a^\downarrow$ conditioned to stay positive until it dies continuously at zero. The process $\Upsilon_a^\downarrow$ can be defined as a Doob transform of $\Upsilon_a+1$ using the harmonic function $x \mapsto (1/\sqrt{x}){\bf 1}_{x>0}$.
This proposition slightly extends a particular case of Proposition 6.6 from \cite{BBCK}. Not only does its proof rely on the convergence in distribution of $(S_{\lfloor nt \rfloor}/n^{1/(a-1)})_{t\ge 0}$ towards $p_{\bf q}^{1/(a-1)} \Upsilon_a$ but it also relies on Theorem 1.3 of \cite{CC} in the case $p=0$. In that case $P_0$ is a Doob $h^\downarrow$ transform of the $\nu$-random walk, i.e. a random walk conditioned to die at zero. The proof of Theorem 1.3 in \cite{CC} can be rewritten by replacing $h^\downarrow$ (which is denoted by $W$ in \cite{CC}) by $h^\downarrow_p$, i.e. by replacing the walk conditioned to die at zero by a walk conditioned to stay positive until it dies at $-p$. 
\begin{proposition}\label{périmètre des cartes à grand bord}(Extension of Proposition 6.6 from \cite{BBCK})
	For all $p\ge 0$, the following convergence holds for the $J_1$-Skorokhod topology: under $\P^{(\ell)}_p$,
	$$
	\left(\frac{P_p(\lfloor\ell^{a-1} t\rfloor)}{\ell}\right)_{t\ge 0} 
	\enskip
	\mathop{\longrightarrow}\limits_{\ell \to \infty}^{(\mathrm{d})}
	\enskip
	\left(\Upsilon_a^\downarrow(p_{\bf q} t)\right)_{t\ge 0}.
	$$
	Moreover, jointly with the above convergence, we have the convergence of the lifetimes:
	$$
	\frac{\tau_{-p}}{\ell^{a-1} }
	\enskip
	\mathop{\longrightarrow}\limits_{\ell \to \infty}^{(\mathrm{d})}
	\enskip
	\frac{\zeta }{p_{\bf q}},
	$$
	where $\tau_{-p}=\inf\left\{ n\ge 0; \ P_p(n)=-p\right\}$ and $\zeta= \inf \left\{t\ge 0; \  \Upsilon_a^\downarrow(t)=0 \right\}$.
\end{proposition}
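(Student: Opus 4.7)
The plan is to follow the proof of Theorem 1.3 of \cite{CC}, adapting it to the walk $P_p$ which is conditioned to stay positive until it dies at $-p$ rather than at $0$. As observed just before the statement, the case $p=0$ is exactly Proposition 6.6 of \cite{BBCK}, so the work consists in showing that replacing the killing at $0$ by killing at $-p$ for a fixed $p\ge 1$ leaves the scaling limit unchanged.

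First, I would write the explicit law of a trajectory $(P_p(n))_{0\le n\le k}$ under $\P^{(\ell)}_p$ via its Doob description: for any path $(s_0=\ell, s_1,\ldots, s_k)$ with $s_1,\ldots,s_{k-1}\ge 1$ and either $s_k\ge 1$ or $s_k=-p$, the probability equals the probability that $S$ starting from $\ell$ follows this path while remaining positive at intermediate times, multiplied by $h^\downarrow_p(s_k)/h^\downarrow_p(\ell)$. The essential input is that $h^\downarrow_p$ behaves asymptotically like $h^\downarrow$ on macroscopic scales: from $h^\downarrow_p(\ell) = h^\downarrow(\ell) h^\downarrow(p) \ell/(\ell+p)$ and the Stirling asymptotic $h^\downarrow(\ell)\sim 1/\sqrt{\pi\ell}$, one obtains
\begin{equation*}
\frac{h^\downarrow_p(\ell y)}{h^\downarrow_p(\ell)} \mathop{\longrightarrow}\limits_{\ell\to\infty} \frac{1}{\sqrt{y}} \qquad \text{uniformly on compact subsets of }(0,\infty),
\end{equation*}
which is exactly the limit obtained in the $p=0$ case. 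Hence the likelihood factor appearing in the Doob transform has the same macroscopic limit, recovering the harmonic function $y\mapsto 1/\sqrt{y}$ used to build $\Upsilon_a^\downarrow$ from $\Upsilon_a$.

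Second, I would rerun the argument of \cite{CC} verbatim with $h^\downarrow$ replaced by $h^\downarrow_p$: the invariance principle $(S_{\lfloor nt\rfloor}/n^{1/(a-1)})\to p_{\bf q}^{1/(a-1)}\Upsilon_a$, combined with the meander convergence recalled in Theorem \ref{cvméandre}, gives the finite-dimensional convergence of $(P_p(\lfloor \ell^{a-1} t\rfloor)/\ell)_t$ towards the corresponding Doob transform of $\Upsilon_a$, i.e. $(\Upsilon_a^\downarrow(p_{\bf q} t))_t$. Tightness in the $J_1$-Skorokhod topology follows as in \cite{CC}: on the event $\{P_p(\cdot)\ge \varepsilon\ell\}$ the likelihood factor $h^\downarrow_p(\cdot)/h^\downarrow_p(\ell)$ is bounded, so the conditioned trajectory is dominated by the (tight) unconditioned walk up to a uniformly bounded Radon--Nikodym density; the segment of the trajectory close to the killing time is handled by time reversal. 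Joint convergence of the lifetime $\tau_{-p}/\ell^{a-1} \to \zeta/p_{\bf q}$ then drops out of the Skorokhod convergence, since $\Upsilon_a^\downarrow$ hits $0$ continuously at $\zeta$ and $\zeta$ is almost surely a continuity point.

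The main obstacle is the behavior of the walk near its killing step: $P_p$ dies by making a macroscopic negative jump from some positive level to $-p$, whose overshoot is slightly different from the $p=0$ case since the target is $-p$ instead of $0$. One must verify, using the heavy-tail assumption (\ref{condition queue}) and the local estimate of Lemma \ref{lemme3BCM}, that just before the final jump the walk is at a level $o(\ell)$ with high probability, so that the shift by $p$ is macroscopically invisible and the killing transition contributes no atom to the limit. This is the only step where the adaptation genuinely differs from \cite{CC} and \cite{BBCK}, and this is where the assumption that $p$ is fixed (and not growing with $\ell$) is used.
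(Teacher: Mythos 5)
Your proposal follows exactly the route the paper itself indicates: the paper's justification is precisely that the proof of Theorem 1.3 of \cite{CC} (the case $p=0$, i.e. Proposition 6.6 of \cite{BBCK}) goes through verbatim with $h^\downarrow$ replaced by $h^\downarrow_p$, and your key observation that $h^\downarrow_p(\ell y)/h^\downarrow_p(\ell)\to 1/\sqrt{y}$ uniformly on compacts of $(0,\infty)$ (since $h^\downarrow_p(\ell)=h^\downarrow(\ell)h^\downarrow(p)\ell/(\ell+p)$ and $h^\downarrow(\ell)\sim 1/\sqrt{\pi\ell}$) is exactly the point that makes this substitution harmless. The additional care you take with the terminal jump to $-p$ is consistent with the paper's (much terser) argument and with the alternative reference it gives to Theorems 2 and 3(i) of \cite{BK}.
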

One may alternatively prove the first convergence of the above proposition by applying Theorem 2 of \cite{BK} while the second assertion can be seen as a consequence of Theorem 3 (i) of \cite{BK}.

\subsection{Coupling random walks conditioned to stay positive with random walks conditioned to die at $-\ell$}

Let us denote by $\tau_{-\ell}$ the first hitting time of $-\ell$ by  $(P_\ell(k))_{k\ge 0}$ the random walk conditioned to remain positive until it dies at $-\ell$ under $\P^{(1)}_\ell$. Let us first state a coupling lemma which will be useful in this section and in the next ones. In particular, it will enable us to take advantage of the results from \cite{BCM} in Section \ref{section distance racine face}. 
\begin{lemma}\label{couplage marche}
	There exists a coupling of the random walks $P_\ell$ under $\P^{(1)}_\ell$ and $P_\infty$ under $\P^{(1)}_\infty$ up to time $n$ written $\P^{(1),n}_{\ell,\infty}$ such that:
	under $\P^{(1),n}_{\ell,\infty}$, for all $s_1, \ldots, s_n \in \N^*$, conditionally on $(P_\infty(k))_{1\le k \le n} = (s_k)_{1\le k \le n}$, the equality $(P_\ell(k))_{1\le k \le n} = (s_k)_{1\le k \le n}$ holds with probability $\frac{1+\ell}{ s_n+\ell}$ and $\tau_{-\ell} \le n$ with probability $1-\frac{1+\ell}{ s_n+\ell}$ (see Figure \ref{image couplage}).
\end{lemma}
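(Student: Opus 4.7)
The strategy is to exploit the fact that both conditioned walks are Doob $h$-transforms of the same underlying $\nu$-random walk $S$: by the earlier discussion, $P_\infty$ under $\P^{(1)}_\infty$ is the $h^\uparrow$-transform of $S$, while $P_\ell$ under $\P^{(1)}_\ell$ is the $h^\downarrow_\ell$-transform of $S$ killed upon hitting $-\ell$. The whole lemma comes down to computing the Radon--Nikodym derivative of the law of $(P_\ell(k))_{1\le k\le n}$ with respect to the law of $(P_\infty(k))_{1\le k\le n}$ on the event that the trajectory stays positive up to time $n$.

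First I would write out the one-step transitions for $x,y\ge 1$,
$$
p_\infty(x,y) = \frac{h^\uparrow(y)}{h^\uparrow(x)}\nu(y-x),\qquad p_\ell(x,y) = \frac{h^\downarrow_\ell(y)}{h^\downarrow_\ell(x)}\nu(y-x),
$$
and plug in the explicit formulas $h^\uparrow(x) = 2x\,h^\downarrow(x)$ and $h^\downarrow_\ell(x) = h^\downarrow(x)h^\downarrow(\ell)\frac{x}{x+\ell}$. The factors involving $h^\downarrow$, $h^\downarrow(\ell)$, and the factors $2x, 2y$ cancel pairwise, and a short simplification leaves
$$
\frac{p_\ell(x,y)}{p_\infty(x,y)} = \frac{x+\ell}{y+\ell}.
$$

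Second, I would multiply this pointwise ratio along an arbitrary trajectory $s_0 = 1, s_1,\dots, s_n$ with all $s_k\ge 1$. The product telescopes:
$$
\prod_{k=1}^{n}\frac{s_{k-1}+\ell}{s_k+\ell} \;=\; \frac{1+\ell}{s_n+\ell},
$$
which gives the key likelihood-ratio identity
$$
\P^{(1)}_\ell\bigl(P_\ell(k) = s_k,\ 1\le k\le n\bigr) \;=\; \frac{1+\ell}{s_n+\ell}\,\P^{(1)}_\infty\bigl(P_\infty(k) = s_k,\ 1\le k\le n\bigr).
$$

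Finally, I would construct the coupling as follows: sample $(P_\infty(k))_{1\le k\le n}$ under $\P^{(1)}_\infty$, and conditionally on this trajectory toss an independent Bernoulli of parameter $\frac{1+\ell}{P_\infty(n)+\ell}$. On a success, set $P_\ell(k) = P_\infty(k)$ for all $1\le k\le n$; on a failure, draw $(P_\ell(k))_{1\le k\le n}$ from the conditional law $\P^{(1)}_\ell(\,\cdot\mid \tau_{-\ell}\le n)$, which is well defined because summing the identity above over admissible positive trajectories shows that the complementary mass is exactly $\P^{(1)}_\ell(\tau_{-\ell}\le n)$. The same identity then yields that the $P_\ell$-marginal of the coupling coincides with $\P^{(1)}_\ell$, while the $P_\infty$-marginal is $\P^{(1)}_\infty$ by construction. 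The only real obstacle is the algebraic verification that the Doob transform ratio collapses to $(x+\ell)/(y+\ell)$; once that is in hand, telescoping and the coupling construction are routine.
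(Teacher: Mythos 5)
Your proposal is correct and follows essentially the same route as the paper: both rest on the identity $h^\downarrow_\ell(x)=h^\downarrow(\ell)h^\uparrow(x)/(2(x+\ell))$, which yields the likelihood-ratio $\P^{(1)}_\ell((P_\ell(k))_{k\le n}=(s_k)_{k\le n})=\frac{1+\ell}{s_n+\ell}\,\P^{(1)}_\infty((P_\infty(k))_{k\le n}=(s_k)_{k\le n})$ (you obtain it by telescoping one-step ratios, the paper directly from the trajectory-level Doob formula). Your explicit Bernoulli construction of the coupling, with the failure branch drawn from $\P^{(1)}_\ell(\cdot\mid\tau_{-\ell}\le n)$, correctly fills in the step the paper compresses into ``one can thus define such a coupling.''
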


\begin{figure}[h]
	\centering
	\includegraphics[scale=0.75]{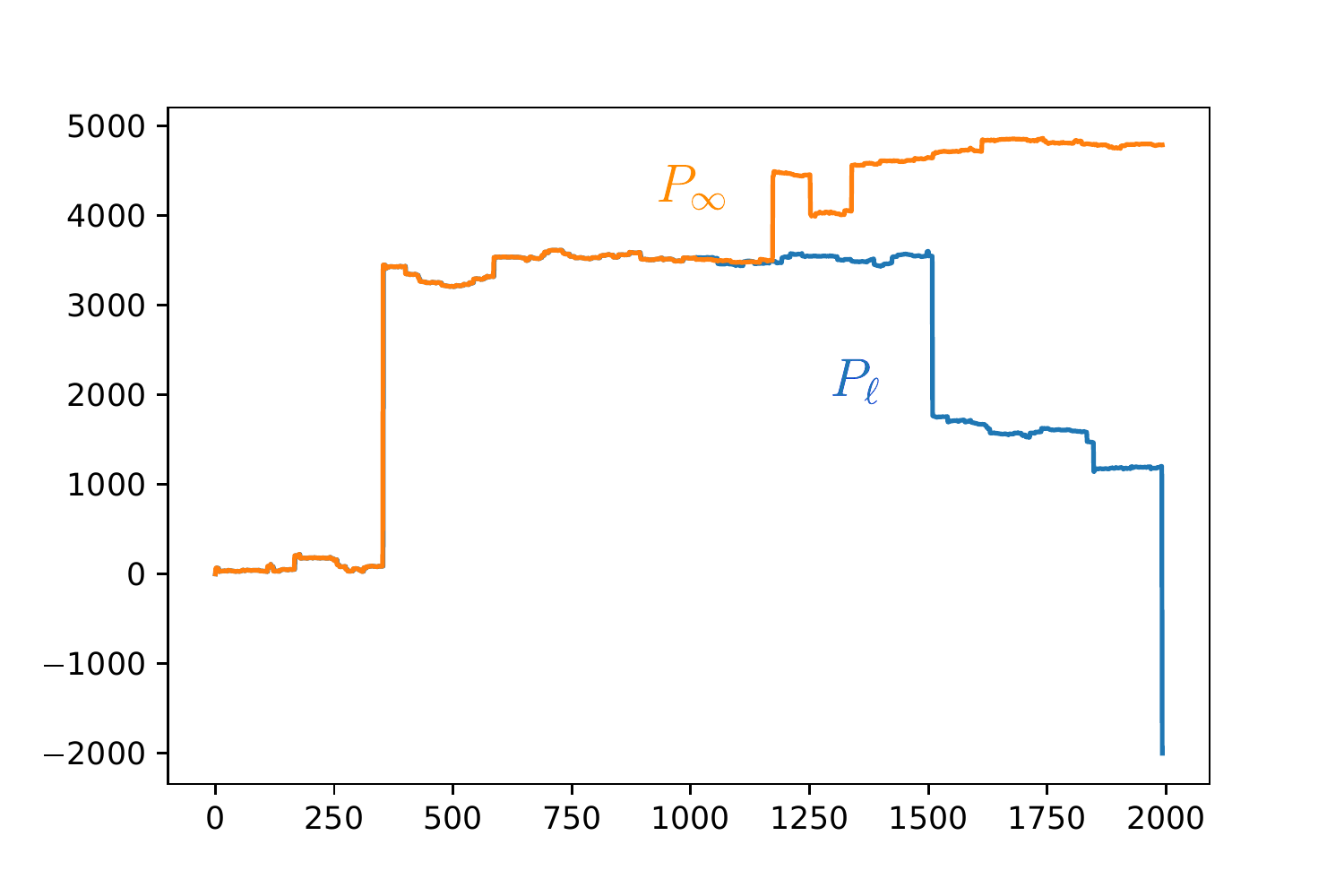}
	\caption{Simulation of the conditioned random walks $P_\infty$ and $P_{\ell}$ under the coupling $\P^{(1),n}_{\ell,\infty}$ in the case $\tau_{-\ell}>n$ for $n=1000$ and $\ell=2000$ (after time $n$, we let both processes evolve independently).}
	\label{image couplage}
\end{figure}

\begin{proof}
	The key ingredient of the proof is the relation $h^\downarrow_\ell(k)=h^\downarrow(\ell)h^\uparrow(k)/(2(\ell+k))$ for $k,\ell \ge 1$ coming from the exact expression of the harmonic functions. From the definition of $P_\ell$ under $\P^{(1)}_\ell$ as a $h^\downarrow_\ell$-Doob transform of a $\nu$-random walk $S$, we note that if $s_1,\ldots, s_n \in \N^*$, then
	\begin{align*}
		\P^{(1)}_\ell ((P_\ell(k))_{1\le k \le n} = (s_k)_{1\le k \le n}) &=  \frac{h^\downarrow_\ell (s_n) }{ h^\downarrow_\ell (1)} \P_1( (S_k)_{1\le k \le n} =(s_k)_{1\le k \le n} ) \\
		&= \frac{h^\uparrow (s_n) }{ h^\uparrow (1)} \frac{1+\ell }{ s_n+\ell} \P_1( (S_k)_{1\le k \le n} =(s_k)_{1\le k \le n} ) \\
		&= \frac{1+\ell }{ s_n+\ell} \P^{(1)}_\infty ((P_\infty(k))_{1\le k \le n} = (s_k)_{1\le k \le n}) \\
		&\le \P^{(1)}_\infty ((P_\infty(k))_{1\le k \le n} = (s_k)_{1\le k \le n}).
	\end{align*}
	One can thus define such a coupling.
\end{proof}

One can see from the proof that the above lemma only relies on condition (\ref{condition harmonicité}). A simple consequence of this coupling is a limit theorem for $\tau_{-\ell}/\ell^{(a-1)}$ when $\bf q$ has type $a\in (3/2,5/2]$. Recall that $\Upsilon_a^\uparrow$ is the stable Lévy process $\Upsilon_a$ starting at zero conditioned to stay positive.

\begin{proposition}\label{limite du temps d'arret}
	We have the convergence in law ${\tau_{-\ell}}/{\ell^{a-1}} \mathop{\longrightarrow}\limits_{\ell \to \infty}^\text{d} \tilde{\tau}/p_{\bf q}$
	where $\tilde{\tau}$ is a random variable such that for all $u\ge 0$,
	$
	\P(\tilde{\tau}\ge u)  = \E( {1/ (1+  \Upsilon_a^\uparrow(u))}).
	$
\end{proposition}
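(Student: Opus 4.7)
The natural plan is to exploit the coupling of Lemma \ref{couplage marche} to translate the tail of $\tau_{-\ell}$ into a functional of the unconditioned positive walk $P_\infty$, and then use the scaling limit given in Theorem \ref{upsilon flèche}.

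More precisely, I would fix $u>0$, set $n = n(\ell) := \lfloor u \ell^{a-1} \rfloor$, and apply Lemma \ref{couplage marche} at this time $n$. Integrating the conditional probability given in the lemma over the law of $(P_\infty(k))_{1\le k\le n}$ yields the exact identity
\begin{equation*}
\P^{(1)}_\ell\bigl(\tau_{-\ell} > n\bigr)
= \E^{(1)}_\infty\!\left[\frac{1+\ell}{P_\infty(n)+\ell}\right].
\end{equation*}
This is the crucial reformulation: the lifetime of the walk conditioned to die at $-\ell$ is expressed through a bounded functional of the walk conditioned to stay positive forever.

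Now Theorem \ref{upsilon flèche}, applied with the time scaling $n = \lfloor u \ell^{a-1}\rfloor$ so that $n^{1/(a-1)} \sim u^{1/(a-1)}\ell$, provides the one-time convergence $P_\infty(n)/\ell \xrightarrow{(d)} \Upsilon_a^\uparrow(p_{\bf q} u)$ as $\ell\to\infty$ (extracted from the Skorokhod convergence; the marginal at a fixed time has a density, so $u$ is a continuity point). Since $x\mapsto 1/(1+x)$ is bounded and continuous on $[0,\infty)$, the continuous mapping theorem combined with bounded convergence gives
\begin{equation*}
\E^{(1)}_\infty\!\left[\frac{1+\ell}{P_\infty(n)+\ell}\right]
= \E^{(1)}_\infty\!\left[\frac{1+1/\ell}{P_\infty(n)/\ell+1}\right]
\xrightarrow[\ell\to\infty]{}
\E\!\left[\frac{1}{1+\Upsilon_a^\uparrow(p_{\bf q} u)}\right].
\end{equation*}
Combining with the identity of the previous paragraph,
\begin{equation*}
\P\!\left(\frac{\tau_{-\ell}}{\ell^{a-1}} > u\right)
\xrightarrow[\ell\to\infty]{}
\E\!\left[\frac{1}{1+\Upsilon_a^\uparrow(p_{\bf q} u)}\right]
= \P\!\left(\frac{\tilde{\tau}}{p_{\bf q}} > u\right),
\end{equation*}
where the last equality uses the definition $\P(\tilde{\tau}\ge v) = \E[1/(1+\Upsilon_a^\uparrow(v))]$ with $v=p_{\bf q} u$.

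To conclude convergence in distribution rather than just pointwise convergence of tails, I would observe that the limiting function $u\mapsto \E[1/(1+\Upsilon_a^\uparrow(p_{\bf q} u))]$ is continuous (since $\Upsilon_a^\uparrow$ has càdlàg paths with no fixed discontinuity and $1/(1+\cdot)$ is bounded, another bounded-convergence argument). The pointwise limit of the tails at every continuity point yields convergence in distribution. I do not foresee a serious technical obstacle: the only mildly delicate point is justifying the one-time marginal convergence from the Skorokhod statement of Theorem \ref{upsilon flèche}, but this is immediate because $\Upsilon_a^\uparrow(t)$ is almost surely continuous at every fixed $t>0$ (so $t$ is a.s. a continuity point of the limiting path).
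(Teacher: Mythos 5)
Your proposal is correct and follows exactly the paper's argument: the coupling of Lemma \ref{couplage marche} yields the identity $\P^{(1)}_\ell(\tau_{-\ell}>u\ell^{a-1})=\E^{(1)}_\infty[(1+\ell)/(P_\infty(\lfloor u\ell^{a-1}\rfloor)+\ell)]$, and Theorem \ref{upsilon flèche} plus bounded convergence gives the limit. The extra details you supply (one-time marginal convergence from the Skorokhod statement, continuity of the limiting tail) are exactly the routine justifications the paper leaves implicit.
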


\begin{proof}
	By Lemma \ref{couplage marche} and Theorem \ref{upsilon flèche}, for every $u \ge 0$,
	\begin{align*}
		\P^{(1)}_\ell(\tau_{-\ell} > u\ell^{a-1} )  
		&=\E^{(1)}_\infty \frac{1 + \ell }{ P_\infty({\lfloor u \ell^{a-1} \rfloor}) +\ell} \mathop{\longrightarrow}\limits_{\ell \to \infty}
		\E \frac{1 }{ 1+ \Upsilon_a^\uparrow(p_{\bf q}u)},
	\end{align*}
	and the claim follows.
\end{proof}

\subsection{Local limit results for the random walk conditioned to die at $-\ell$}

The result of Proposition \ref{limite du temps d'arret} can be refined to obtain a joint local limit theorem for the lifetime $\tau_{-\ell}$ together with the last positive value of the conditioned random walk $P_\ell(\tau_{-\ell}-1)$:
\begin{proposition}\label{limite locale bivariée} For all $\vp >0$ the convergence below holds uniformly in $u,s\in [\vp,\vp^{-1}]$:
	$$ \ell^a\P^{(1)}_\ell (\tau_{-\ell}  = \lfloor \ell^{a-1} s \rfloor \text{ and } P_\ell(\tau_{-\ell}-1)  = \lfloor \ell u  \rfloor) \mathop{\longrightarrow}\limits_{\ell \to \infty} \frac{\pi p_{\bf q}^{1-1/(a-1)}}{ s^{1/(a-1)}}  \frac{1}{\sqrt{u} (1+u)^a} f_a^\uparrow\left(\frac{u}{ (p_{\bf q}s)^{1/(a-1)}}\right) ,$$
	and the right-hand side is a probability density.
\end{proposition}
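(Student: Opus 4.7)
The plan is to reduce the bivariate local limit to a one-shot computation via the Doob $h^\downarrow_\ell$-transform structure of $P_\ell$, combined with the local limit for the walk conditioned to stay positive (Lemma \ref{lemme3BCM}). The starting point is the identity
\begin{equation*}
\P^{(1)}_\ell(\tau_{-\ell} = n,\, P_\ell(n-1) = k) = \frac{\nu(-\ell-k)}{h^\downarrow_\ell(1)}\,\P_1\bigl(S_1, \ldots, S_{n-1} \ge 1,\ S_{n-1} = k\bigr),
\end{equation*}
obtained by unrolling the definition of the $h^\downarrow_\ell$-transform: for $k \ge 1$, the killing step $k \to -\ell$ has weight $\nu(-\ell-k)\cdot h^\downarrow_\ell(-\ell)/h^\downarrow_\ell(k) = \nu(-\ell-k)/h^\downarrow_\ell(k)$ since $h^\downarrow_\ell(-\ell)=1$, while the factors $h^\downarrow_\ell(s_j)/h^\downarrow_\ell(s_{j-1})$ for $1 \le j \le n-1$ telescope to $h^\downarrow_\ell(k)/h^\downarrow_\ell(1)$; summing over intermediate positions collapses the joint law into the above product.

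With $n = \lfloor \ell^{a-1} s \rfloor$ and $k = \lfloor \ell u \rfloor$, I would then plug in three asymptotic ingredients. First, Stirling gives $h^\downarrow(\ell) \sim (\pi \ell)^{-1/2}$, so
\begin{equation*}
h^\downarrow_\ell(1) = h^\downarrow(1)\,h^\downarrow(\ell)\,\frac{1}{1+\ell} \sim \frac{1}{2\sqrt{\pi}\,\ell^{3/2}}.
\end{equation*}
Second, condition (II) gives $\nu(-\ell-k) \sim p_{\bf q}\,\ell^{-a}(1+u)^{-a}$, uniformly in $u \in [\varepsilon,\varepsilon^{-1}]$. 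Third, Lemma \ref{lemme3BCM} provides
\begin{equation*}
\P_1(S_1, \ldots, S_{n-1} \ge 1,\ S_{n-1} = k) \sim \frac{\sqrt{\pi}}{2(np_{\bf q})^{1/(a-1)}\sqrt{k}}\, f_a^\uparrow\!\left(\frac{k}{(p_{\bf q} n)^{1/(a-1)}}\right),
\end{equation*}
uniformly in $k \ge 1$. Substituting and simplifying, using $\ell^{3/2}/\sqrt{k} = \ell/\sqrt{u}$, $(np_{\bf q})^{1/(a-1)} \sim \ell\, p_{\bf q}^{1/(a-1)} s^{1/(a-1)}$, and $(\ell+k)^a \sim \ell^a(1+u)^a$, the product collapses to $\ell^{-a}$ times the announced right-hand side, with uniformity on $[\varepsilon,\varepsilon^{-1}]^2$ inherited from the three inputs and the continuity of $f_a^\uparrow$ on $(0,\infty)$.

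To conclude that the right-hand side is a probability density on $(0,\infty)^2$, I would combine the pointwise local limit with a tightness argument. The equality $\sum_{n,k} \P^{(1)}_\ell(\tau_{-\ell}=n,\, P_\ell(n-1)=k) = 1$ together with the above uniform convergence on compacts yields, via Fatou, that the candidate density integrates to at most $1$; tightness of $(\tau_{-\ell}/\ell^{a-1}, P_\ell(\tau_{-\ell}-1)/\ell)$ then upgrades this to equality. The main obstacle I anticipate lies here: the $s$-marginal is already tight by Proposition \ref{limite du temps d'arret}, whose limit $\tilde\tau/p_{\bf q}$ puts no mass at $\{0,\infty\}$, while the $u$-marginal requires an ad hoc estimate. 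For the large-$u$ tail one can use the coupling of Lemma \ref{couplage marche} together with Theorem \ref{upsilon flèche} to compare $P_\ell(\tau_{-\ell}-1)$ with $P_\infty$ at a comparable time; for the small-$u$ tail, a direct bound using the identity above and the boundedness of $f_a^\uparrow$ near the origin controls the probability of terminating with a very small last positive value, thereby identifying the limit as a bona fide probability density.
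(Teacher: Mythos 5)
Your treatment of the local limit itself is correct and is essentially the paper's argument: the same telescoping identity $\P^{(1)}_\ell(\tau_{-\ell}=n,\,P_\ell(n-1)=k)=\nu(-\ell-k)\,\P_1(S_1,\ldots,S_{n-1}\ge 1,\ S_{n-1}=k)/h^\downarrow_\ell(1)$ (the paper writes $h^\downarrow_\ell(1)=\tfrac{1}{2}h^\downarrow(\ell)/(\ell+1)$), followed by Lemma \ref{lemme3BCM}, condition (\ref{condition queue}) and Stirling; your bookkeeping of the powers of $\ell$ checks out. Where you genuinely diverge is the verification that the limit is a probability density. The paper computes the double integral explicitly: a change of variables reduces it to $\int_0^\infty \pi(a-1)u^{a-5/2}(1+u)^{-a}du\cdot\E\,\Upsilon_a^\uparrow(1)^{-(a-1)}$, the first factor is a Beta integral, and the second is evaluated in closed form via the Lamperti representation of $\Upsilon_a^\uparrow$ and results of \cite{BY02} and \cite{BBCK}; the two factors are exactly reciprocal. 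You instead propose Fatou plus tightness of $(\tau_{-\ell}/\ell^{a-1},P_\ell(\tau_{-\ell}-1)/\ell)$ in $(0,\infty)^2$. This is a legitimate alternative that avoids the special-function computation entirely, and your identification of where the work lies (the two tails of the $u$-marginal) is accurate; but note that these tail bounds are only sketched, and the large-$u$ one is more naturally obtained from the one-step killing probability $\nu(-\ell-k)/h^\downarrow_\ell(k)$ than from the coupling of Lemma \ref{couplage marche}, which controls $P_\ell$ at deterministic times rather than at the random time $\tau_{-\ell}-1$. The paper's explicit route also has the side benefit of producing the constant $2\Gamma(a)/(\sqrt{\pi}\,\Gamma(a-3/2))$, which is reused immediately afterwards in the marginal local estimate for $P_\ell(\tau_{-\ell}-1)$; your softer argument would not deliver it.
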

\begin{proof}
	Let $\vp >0$. Let us write $n=\lfloor \ell^{a-1} s \rfloor$ and $x=\lfloor \ell u  \rfloor$. Then,
	$$
	\frac{1}{ 2} \frac{h^\downarrow (\ell) }{ \ell +1}\P^{(1)}_\ell (\tau_{-\ell}  = \lfloor \ell^{a-1} s \rfloor \text{ and } P_\ell(\tau_{-\ell}-1)  = \lfloor \ell u  \rfloor)=\P_1(S_1, \ldots, S_{n-1} \ge 1 \text{ and } S_{n-1} = x) \nu(-\ell-x).
	$$
	But by Lemma \ref{lemme3BCM} and since $\nu(-k)\sim p_{\bf q} k^{-a}$ as $k \to \infty$ by (\ref{condition queue}), uniformly in $u,s \in[\vp,\vp^{-1}]$,
	\begin{align*}
		\P_1(S_1, \ldots, S_{n-1} \ge 1 \text{ and } S_{n-1} = x) \nu(-\ell-x) 
		&\mathop{\sim}\limits_{\ell\to \infty} \frac{\sqrt{\pi} p_{\bf q}^{1-1/(a-1)}}{2 s^{1/(a-1)} \ell^{a+3/2}}  f_a^{\uparrow} \left(\frac{ u}{ (p_{\bf q} s)^{1/(a-1)} }\right) \frac{1}{ \sqrt{u} (1+u)^a} .
	\end{align*}
	This proves the desired convergence using that $h^\downarrow(\ell)\sim 1/\sqrt{\pi \ell}$ as $\ell \to \infty$. Let us check that the right term indeed corresponds to a probability density, in other words that
	$$
	\int_0^{\infty} \frac{\pi p_{\bf q}^{1-1/(a-1)}}{s^{1/(a-1)} } \int_0^{\infty} f_a^{\uparrow} \left(\frac{ u}{ (p_{\bf q} s)^{1/(a-1)} }\right) \frac{1}{ \sqrt{u} (1+u)^a} du ds = 1
	$$
	By the change of variable $w = \frac{u}{ (p_{\bf q} s)^{1/(a-1)} }$ at fixed $u$,
	\begin{align*}
		\int_0^{\infty} \frac{\pi p_{\bf q}^{1-1/(a-1)}}{s^{1/(a-1)} } \int_0^{\infty} f_a^{\uparrow} \left(\frac{ u}{ (p_{\bf q} s)^{1/(a-1)} }\right) \frac{1}{\sqrt{u} (1+u)^a} du ds
		&=
		\int_0^\infty\int_0^\infty \frac{\pi u^{a-2} (a-1)}{\sqrt{u}(1+u)^a} w^{1-a} f_a^\uparrow(v) dwdu \\
		&=\int_0^\infty \frac{\pi u^{a-2} (a-1)}{\sqrt{u}(1+u)^a} du \E \frac{1}{\Upsilon^\uparrow_a(1)^{a-1}}.
	\end{align*}
	
	Yet, by the change of variable $t=1/(1+u)$, by using the link between the Beta function and the Gamma function and using that $\Gamma(3/2)= \sqrt{\pi}/2$, one can see that 
	$$
	\int_0^\infty \frac{\pi u^{a-2} (a-1)}{\sqrt{u}(1+u)^a} du = \frac{(a-1)\pi^{3/2} \Gamma(a-3/2)}{2\Gamma(a)}.
	$$
	Finally, by exactly the same reasoning as in the end of Remark 3 in \cite{BCM}, one can compute 
	$$
	\E \frac{1}{\Upsilon^\uparrow_a(1)^{a-1}} = \frac{2\Gamma(a)}{(a-1)\pi^{3/2} \Gamma(a-3/2)},
	$$
	which is precisely the inverse of the above integral. Indeed, since $\Upsilon_a^\uparrow$ is a positive self-similar Markov process, let us denote by $\xi^\uparrow$ the Lévy process arising in the Lamperti representation of $\Upsilon^\uparrow_a$. Then by \cite{BY02}, we know that $\E 1/\Upsilon^\uparrow_a(1)^{a-1} = 1/((a-1)\E \xi^\uparrow_1)$. But the Laplace exponent of $(\Gamma(a)/\pi)\xi^\uparrow$ is given by $\kappa_{a-1}(\omega_+ +q) $ where $\omega_+=a+1/2$ and $\kappa_{a-1}$ is defined by Equation (19) of \cite{BBCK}. One could have made the computations using the characteristic exponent of $\xi^\uparrow$ identified in Proposition 2 of \cite{KPR10}.
\end{proof}

From Proposition \ref{limite locale bivariée} one can get the following local estimates for $\tau_{-\ell}$ and $P_{\ell}(\tau_{-\ell}-1)$. For $\tau_{-\ell}$ it suffices to sum on the values of $P_\ell(\tau_{-\ell}-1)$ in an interval $[\ell\delta,\ell/\delta]$ for an arbitrary small $\delta>0$. Thus we get that for all $s>0$, when $\ell \to \infty$,
$$
\P^{(1)}_\ell (\tau_{-\ell}  = \lfloor \ell^{a-1} s\rfloor) \ge (1+o(1))\frac{\pi p_{\bf q}^{1-1/(a-1)}}{ \ell^{a-1} s^{1/(a-1)}} \int_0^{\infty} \frac{1}{ \sqrt{u} (1+u)^a} f_a^\uparrow\left({u\over (p_{\bf q}s)^{1/(a-1)}}\right)  du.
$$
In the same vein, one can check the local estimate for $P_\ell(\tau_{-\ell}-1)$: for all $u>0$, when $\ell \to \infty$,
$$\P^{(1)}_\ell (P_\ell(\tau_{-\ell}-1)  = \lfloor \ell u  \rfloor) \ge  \frac{2\Gamma(a)}{ \sqrt{\pi}\Gamma(a-3/2) \ell }  \frac{u^{a-2}}{ \sqrt{u} (1+u)^a}   (1+o(1)).$$

\subsection{Scaling limit of the random walk conditioned to die at $-\ell$}

The convergence in distribution of $(\tau_{-\ell}/\ell^{a-1}, P_\ell(\tau_{-\ell}-1)/\ell)$ implied by Proposition \ref{limite locale bivariée} leads us to the following scaling limit: the rescaled random walk conditioned to die at $-\ell$, which is written $({P_\ell (\ell^{a-1} t ) / \ell})_{t\ge 0}$ under $\P^{(1)}_\ell$ converges in distribution towards $(\Upsilon^\downarrow_{-1,a}(p_{\bf q} t))_{t\ge 0}$ where $\Upsilon^\downarrow_{-1,a}$ is the $(a-1)$-stable Lévy process conditioned to stay positive until it dies at $-1$. This process is defined as follows. Let $(\tilde{\tau},Z)$ be a random variable of law 
$$
g(s,z)ds dz \coloneqq \frac{\pi}{ s^{1/(a-1)}}  \frac{1}{ \sqrt{z} (1+z)^a} f_a^\uparrow\left(\frac{z}{ s^{1/(a-1)}}\right) ds dz.
$$
We define $\Upsilon^\downarrow_{-1,a}$ so that, conditionally on $(\tilde{\tau},Z)$, the path $(\Upsilon^\downarrow_{-1,a}(t))_{0\le t <  \tilde{\tau}}$ is a stable meander associated with $\Upsilon_a$ of length $\tilde{\tau}$ conditioned on $\Upsilon^\downarrow_{-1,a}(\tilde{\tau}-)=Z$, and for all $t\ge \tilde{\tau}$, we set $\Upsilon^\downarrow_{-1,a}(t)=-1$.

Formally, the law of the process $\Upsilon^\downarrow_{-1,a}$ can be characterized with the law of the meander of length one $\widetilde{\Upsilon}_a$. To express it, we introduce the density of $\tilde{\tau}$ that we write $f$. 
For all $s>0$, we define the process $\widetilde{\Upsilon}_a^{s}$ using $\widetilde{\Upsilon}_a$ by setting for all $t \ge 0$,
$$
\widetilde{\Upsilon}^{s}_a(t) \coloneqq 
\left\{
\begin{array}{ll}
	s^{1/(a-1)} \widetilde{\Upsilon}_a\left(t/s\right) &\text{ if } t< s, \\
	-1 & \text{ otherwise.}
\end{array}
\right.
$$
Notice that $ (s^{1/(a-1)}\widetilde{\Upsilon}_a(t/s))_{t\ge 0}$ has the same law as the meander of length $ s $ evaluated at time $t $ because $\Upsilon_a$ is $(a-1)$-stable.
We then define the law of $\Upsilon^\downarrow_{-1,a}$ by the relation
$$
\E \, F\left(\left(\Upsilon^\downarrow_{-1,a}(t)\right)_{t\ge 0}\right) 
=\int_0^{\infty} f(s) \E \left(
\frac{g(s,s^{1/(a-1)}\widetilde{\Upsilon}_a(1))}{f(s)\tilde{f}_a\left(\widetilde{\Upsilon}_a(1)\right)}
F\left(\widetilde{\Upsilon}_a^{s}\right)
\right) ds
$$
for all bounded measurable function $F: \D(\R_+,\R)\to \R$ , where $\D(\R_+,\R) $ is the space of càdlàg functions from $\R_+$ to $\R$ equipped with the $J_1$ Skorokhod topology and $\tilde{f}_a$ is the density of $\widetilde{\Upsilon}_a(1)$. 
This is indeed a characterization of the law of $\widetilde{\Upsilon}^\downarrow_{-1,a}$ as the meander of length $s>0$ has the same law as $ (s^{1/(a-1)}\widetilde{\Upsilon}_a(t/s))_{t\ge 0}$.
\begin{theorem} \label{cvpérimètre}
	With the convention that $P_\ell(k)=-\ell$ for $k\ge \tau_{-\ell}$, we have the convergence in distribution with respect to the Skorokhod topology:
	$$
	\text{Under } \P^{(1)}_\ell, \qquad
	\left(\frac{P_\ell (\ell^{a-1} t ) }{ \ell}\right)_{t \ge 0}
	\mathop{\longrightarrow}\limits_{\ell \to \infty}^{(\mathrm{d})}
	(\Upsilon^\downarrow_{-1,a}(p_{\bf q}t))_{t\ge 0}.
	$$
\end{theorem}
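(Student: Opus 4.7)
The strategy is to decompose $P_\ell$ at its killing time $\tau_{-\ell}$, which exactly mirrors the definition of the candidate limit $\Upsilon^\downarrow_{-1,a}$. By the Markov property of $P_\ell$ under $\P^{(1)}_\ell$, conditionally on $\{\tau_{-\ell}=n,\,P_\ell(n-1)=x\}$ the path $(P_\ell(k))_{0\le k\le n-1}$ has the law of the $\nu$-random walk $(S_k)$ started at $1$ conditioned on $\{S_1,\dots,S_{n-1}\ge 1,\,S_{n-1}=x\}$, while $P_\ell(k)=-\ell$ for all $k\ge n$. Since $\Upsilon^\downarrow_{-1,a}$ is likewise built from its $(\tilde\tau,Z)$-coordinates together with a meander conditioned at its endpoint, it suffices to establish (i) joint convergence of the killing-time/endpoint pair and (ii) a conditional version of Theorem \ref{cvméandre} at a prescribed terminal value.

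Step (i) is essentially a consequence of Proposition \ref{limite locale bivariée}: integrating the local limit density against bounded continuous test functions on $[\varepsilon,\varepsilon^{-1}]^2$ and controlling the boundary contributions via Proposition \ref{limite du temps d'arret} yields
$$
\left(\frac{\tau_{-\ell}}{\ell^{a-1}},\,\frac{P_\ell(\tau_{-\ell}-1)}{\ell}\right)\xrightarrow[\ell\to\infty]{(d)}\left(\frac{\tilde\tau}{p_{\bf q}},\,Z\right).
$$
For Step (ii), I would invoke Skorokhod's representation so that this pair converges almost surely to deterministic values $(s_0/p_{\bf q},z_0)$ with $s_0,z_0>0$, and then show that conditionally on $\{\tau_{-\ell}=n_\ell,\,P_\ell(n_\ell-1)=x_\ell\}$ with $n_\ell/\ell^{a-1}\to s_0/p_{\bf q}$ and $x_\ell/\ell\to z_0$, the rescaled bulk $(P_\ell(\lfloor\ell^{a-1}t\rfloor)/\ell)_{0\le t<s_0/p_{\bf q}}$ converges in $J_1$-Skorokhod to $(s_0^{1/(a-1)}\widetilde\Upsilon_a(p_{\bf q}t/s_0))_{0\le t<s_0/p_{\bf q}}$ conditioned on $s_0^{1/(a-1)}\widetilde\Upsilon_a(1^-)=z_0$. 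For any bounded continuous functional $F$ on $\D([0,s_0/p_{\bf q}-\delta],\R)$, this reduces to an identity of the form
$$
\E_1\bigl[F\bigm|S_1,\dots,S_{n_\ell-1}\ge 1,\,S_{n_\ell-1}=x_\ell\bigr]=\frac{\E_1\!\left[F\cdot p_{n_\ell-m,\,x_\ell}(S_m)\,\mathbf{1}_{S_1,\dots,S_m\ge 1}\right]}{\P_1(S_1,\dots,S_{n_\ell-1}\ge 1,\,S_{n_\ell-1}=x_\ell)},
$$
where $m=\lfloor\ell^{a-1}(s_0/p_{\bf q}-\delta)\rfloor$ and $p_{k,y}(s)=\P_s(S_k=y,\,S_1,\dots,S_k\ge 1)$. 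Theorem \ref{cvméandre} handles the macroscopic portion, while Lemma \ref{lemme3BCM} applied uniformly in a shifted starting point controls the ratio of transition probabilities over the last $\delta\ell^{a-1}$ steps; sending $\delta\to 0$ together with a tightness estimate in this short killing window extends the convergence up to the jump.

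The main obstacle is precisely Step (ii): upgrading the meander convergence of Theorem \ref{cvméandre} to a conditional scaling limit at a prescribed terminal value. The identification of the limit is transparent from the relation $f_a^\uparrow(x)=C_1\sqrt{x}\tilde f_a(x)$, which shows that $g(s_0,\cdot)/\tilde f_a(\cdot/s_0^{1/(a-1)})$ is, up to normalization, precisely the Radon--Nikodym derivative of the endpoint-conditioned meander with respect to the plain meander; justifying the path-space convergence however demands versions of Lemma \ref{lemme3BCM} uniform in a variable starting point, together with tightness of the rescaled bridge near $\tau_{-\ell}$. These are technical but standard for walks in the domain of attraction of a stable law and parallel classical endpoint-conditioned meander results. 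Once this ingredient is in hand, gluing the flat piece $P_\ell(k)=-\ell$ for $k\ge\tau_{-\ell}$ (which in the limit becomes $\Upsilon^\downarrow_{-1,a}(p_{\bf q}t)=-1$ for $t\ge\tilde\tau/p_{\bf q}$) and integrating over the law of $(\tilde\tau,Z)$ yield the claimed $J_1$-Skorokhod convergence.
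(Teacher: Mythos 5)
Your decomposition at $(\tau_{-\ell},P_\ell(\tau_{-\ell}-1))$ and your identification of the limit are exactly right, and Step (i) matches the paper. The problem is Step (ii), which you yourself flag as ``the main obstacle'' and then defer as ``technical but standard'': you need an invariance principle for the walk conditioned to stay positive \emph{and} to terminate at a prescribed value $x_\ell\sim z_0\ell$, converging to the endpoint-conditioned meander. This is not supplied by the tools you cite. Lemma \ref{lemme3BCM} is a local limit theorem for the walk started at $1$; your last-window argument requires controlling $p_{n_\ell-m,x_\ell}(S_m)$ where $S_m$ is of macroscopic order $\ell$, i.e.\ a local limit theorem for the positivity-constrained walk from a \emph{macroscopic} starting point to a \emph{macroscopic} endpoint, uniformly in both — a genuinely different (bridge-type) estimate that would have to be proved from scratch, together with the tightness near $\tau_{-\ell}$ you mention. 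As written, the hardest part of the theorem is asserted rather than established.

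The paper's proof shows how to avoid this entirely, and the mechanism is worth internalizing because you came within a hair of it. Instead of conditioning on the terminal value and passing to the limit in the conditional law, the paper \emph{sums over} the terminal value: the conditioning $\{S_{k-1}=n\}$ is absorbed into an explicit Radon--Nikodym weight
$$
\frac{g_\ell(s,S^{s,\ell}(s-1/\ell^{a-1}))}{f_\ell(s)\,\tilde f_{\ell,s}(S^{s,\ell}(s-1/\ell^{a-1}))}
=\left(\E_1\!\left[\left.\frac{\nu(-\ell-S_{k-1})}{\nu(-\ell-n)}\,\right| S_1,\dots,S_{k-1}\ge 1\right]\right)^{-1}
$$
placed inside an expectation conditioned \emph{only on positivity}. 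The only path-space convergence then needed is the plain meander limit of Theorem \ref{cvméandre}; the weight converges by the tail asymptotics of $\nu$ and is uniformly bounded, so dominated convergence (plus Scheff\'e for $f_\ell$ and a truncation in $\tau_{-\ell}/\ell^{a-1}$) finishes the argument. You observed that $g(s_0,\cdot)/\tilde f_a(\cdot/s_0^{1/(a-1)})$ is the Radon--Nikodym derivative of the endpoint-conditioned meander with respect to the plain meander, but you used this only to identify the limit; the paper uses the discrete analogue of this identity as the proof itself, which is precisely what lets it bypass the bridge invariance principle you left open.
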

The above theorem provides a first reason to see $\Upsilon^\downarrow_{-1,a}$ as the Lévy process $\Upsilon_a$ conditioned to stay positive until it jumps and dies at $-1$.
\begin{proof}
	The main ideas of the proof are the scaling limit result on random walks conditioned to stay positive up to time $n$ of \cite{D85}, which was recalled in Theorem \ref{cvméandre}, together with absolute continuity relations. Let $F : \D(\R_+,\R) \to \R $ be a bounded continuous real function. For conciseness, we set for all $\ell \ge 1$, for all $s,u\ge 0$,
	$$
	g_\ell(s,u) = \ell^a\P_\ell^{(1)} \left( \tau_{-\ell}=\lfloor \ell^{a-1} s \rfloor \text{ and } P_\ell(\lfloor \ell^{a-1} s \rfloor -1) = \lfloor \ell u \rfloor \right), \ f_\ell(s)  = \ell^{a-1} \P_\ell^{(1)} \left( \tau_{-\ell}=\lfloor \ell^{a-1} s \rfloor \right)
	$$
	and
	$$
	\tilde{f}_{\ell,s}(u) = \ell \P_1\left(\left. S_{\lfloor \ell^{a-1} s \rfloor -1} = \lfloor \ell u \rfloor \right| S_1,\ldots,S_{\lfloor \ell^{a-1} s \rfloor -1}\ge 1\right),
	$$
	where we recall that $(S_i)_{i\ge 0}$ is a $\nu$-random walk.  We also write for all $s\ge 0, \ell \ge 1, t\ge 0$,
	$$S^{s,\ell}(t) = \left\{
	\begin{array}{ll}
		{S_{\lfloor\ell^{a-1} t \rfloor}}/{\ell} & \mbox{if } t<s, \\
		-1 & \mbox{otherwise.}
	\end{array}
	\right.
	$$
	Then, by the expression of the law of $P_\ell$ under $\P^{(1)}_\ell$, we get
	\begin{align*}
		\E^{(1)}_\ell &F\left(\left(\frac{P_\ell(\ell^{a-1} t)}{\ell}\right)_{t\ge 0}\right)\\
		&=
		\sum_{k,n\ge 1} \P_\ell^{(1)} \left( \tau_{-\ell}=k, P_\ell(k-1)=n\right) \E_1 \left(\left. F\left(S^{k/\ell^{a-1}, \ell}\right) \right| S_1, \ldots, S_{k-1} \ge 1, \ S_{k-1}=n\right) \\
		&=
		\int_0^{\infty} ds \sum_{n \ge 1} \frac{1}{\ell}g_\ell(s, \frac{n}{\ell}) \E_1 \left(\left. F\left(S^{s, \ell}\right) \right| S_1, \ldots, S_{\lfloor \ell^{a-1} s \rfloor-1} \ge 1, \ S_{\lfloor \ell^{a-1} s \rfloor-1}=n\right) \\
		&=
		\int_0^{\infty}ds f_\ell(s) \E_1 \left( \left.\frac{g_\ell(s,S^{s, \ell}(s-1/\ell^{a-1}) )}{f_\ell(s)\tilde{f}_{\ell,s}(S^{s, \ell}(s-1/\ell^{a-1}))}F\left(S^{s, \ell}\right) \right| S_1, \ldots, S_{\lfloor \ell^{a-1} s \rfloor-1} \ge 1\right).
	\end{align*}
	The factor inside the expectation can be computed as follows: for all $k,n \ge 1$,
	\begin{align*}
		\frac{g_\ell\left({k}/{\ell^{a-1}}, {n}/{\ell}\right)}{f_\ell(k/\ell^{a-1})\tilde{f}_{\ell,k/\ell^{a-1}}(n/\ell)}
		&=
		\frac{\frac{2(\ell+1)}{h^\downarrow(\ell)} \P_1(S_1, \ldots, S_{k-1}\ge 1 \text{ and } S_{k-1}=n) \nu(-\ell-n)}{\P^{(1)}_\ell(\tau_{-\ell}=k)\P_1(S_{k-1}= n | S_1, \ldots, S_{k-1} \ge 1)} \\
		&=\frac{2(\ell+1)\nu(-\ell-n) \P_1(S_1,\ldots,S_{k-1}\ge1)}{h^\downarrow(\ell) \P_\ell^{(1)}(\tau_{-\ell}=k)} \\
		&=\frac{\nu(-\ell-n) \P_1(S_1,\ldots,S_{k-1}\ge1)}{\P_1(S_1,\ldots,S_{k-1}\ge 1 \text{ and } S_k= -\ell)} \\
		&= \left({\sum_{m\ge 1} \frac{\nu(-\ell-m)}{\nu(-\ell-n)} \P_1(S_{k-1}=m | S_1, \ldots , S_{k-1}\ge1)}\right)^{-1} \\
		&=\left({\E_1 \left( \left.\frac{\nu(-\ell-S_{k-1})}{\nu(-\ell-n)} \right| S_1, \ldots , S_{k-1}\ge1 \right) }\right)^{-1}.
	\end{align*}
	Using Theorem \ref{cvméandre}, is easy to see that if $s,u>0$, then
	$$
	\E_1 \left( \left.\frac{\nu(-\ell-S_{\lfloor \ell^{a-1} s \rfloor-1})}{\nu(-\ell-\lfloor \ell u \rfloor)} \right| S_1, \ldots , S_{\lfloor \ell^{a-1} s \rfloor-1}\ge1 \right) 
	\mathop{\longrightarrow}\limits_{\ell \to \infty} \int_0^{\infty} \left( \frac{1+u}{1+v} \right)^a \tilde{f}_a\left( \frac{v}{(p_{\bf q} s)^{1/(a-1)}}\right) dv ,
	$$
	where $\tilde{f}_a$ is the density of $\widetilde{\Upsilon}_a(1)$. Indeed, by (\ref{condition queue}), 
	$$
	\sup_{n,m\ge 1} \left| \frac{\nu(-\ell-m)}{\nu(-\ell-n)} - \left( \frac{\ell+n}{\ell+m} \right)^a\right| \mathop{\longrightarrow}\limits_{\ell \to \infty} 0.
	$$
	Furthermore, since there exists a constant $C_1>0$ such that for all $x\ge 0$, $f_a^\uparrow (x) = C_1 x^{1/2} \tilde{f}_a(x)$, we have
	\begin{align*}
		\int_0^{\infty} \left( \frac{1+u}{1+v} \right)^a \tilde{f}_a\left( \frac{v}{(p_{\bf q} s)^{1/(a-1)}}\right) dv  
		&=\frac{1}{C_1} ({p_{\bf q} s})^{1/(2(a-1))} (1+u)^a \int_0^{\infty} \frac{1}{\sqrt{v} (1+v)^a} f_a^\uparrow\left( \frac{v}{(p_{\bf q}s)^{1/(a-1)}}\right) dv \\
		&= \frac{({p_{\bf q} s})^{1/(2(a-1))} (1+u)^a}{C_1} \frac{(p_{\bf q}s)^{1/(a-1)}}{\pi} f(p_{\bf q} s) \\
		&= \frac{f(p_{\bf q}s)\tilde{f}_a\left(\frac{u}{(p_{\bf q} s)^{1/(a-1)}}\right)}{g(p_{\bf q}s,u)}.
	\end{align*}
	Thus, again using Theorem \ref{cvméandre}, we obtain that for all $s>0$, conditionally on $S_1,\ldots, S_{\lfloor \ell^{a-1} s \rfloor} \ge 1$, the random variable
	$
	{g_\ell(s,S^{s, \ell}(s-1/\ell^{a-1}) )}/({f_\ell(s)\tilde{f}_{\ell,s}(S^{s, \ell}(s-1/\ell^{a-1}))})
	$
	converges in law towards
	$$
	\frac{1}{\int_0^{\infty} \left( \frac{1+(p_{\bf q} s)^{1/(a-1)}\widetilde{\Upsilon}_a(1)}{1+v} \right)^a \tilde{f}_a\left( \frac{v}{(p_{\bf q} s)^{1/(a-1)}}\right) dv}= \frac{g(p_{\bf q}s,(p_{\bf q} s)^{1/(a-1)}\widetilde{\Upsilon}_a(1))}{f(p_{\bf q}s)\tilde{f}_a\left(\widetilde{\Upsilon}_a(1)\right)}.
	$$
	Besides, if $s>0$, one can upper bound uniformly in $n,\ell\ge 1$
	\begin{align*}
		\frac{g_\ell\left(s, {n}/{\ell}\right)}{f_\ell(s)\tilde{f}_{\ell,s}(n/\ell)}&=
		\frac{1}{\sum_{m\ge 1} \frac{\nu(-\ell-m)}{\nu(-\ell-n)} \P_1(S_{\lfloor \ell^{a-1} s \rfloor-1}=m | S_1, \ldots , S_{\lfloor \ell^{a-1} s \rfloor-1}\ge1)} 
		\\
		&\le \frac{C}{\P_1( \left. S_{\lfloor \ell^{a-1} s \rfloor -1} \in [0,\ell] \right| S_1,\ldots,S_{\lfloor \ell^{a-1} s \rfloor -1} \ge 1)} \\
		&\le \frac{C'}{\P( (p_{\bf q} s)^{1/(a-1)} \widetilde{\Upsilon}_a(1) \in [0,1] )}  \text{ by Theorem \ref{cvméandre}},
	\end{align*}
	where $C,C'$ are constants that only depend on $\nu$.
	Therefore, by dominated convergence and using once again Theorem \ref{cvméandre}, for all $s>0$,
	\begin{align*}
		G_\ell(s)&\coloneqq\E_1 \left( \left.\frac{g_\ell(s,S^{s, \ell}(s-1/\ell^{a-1}) )}{f_\ell(s)\tilde{f}_{\ell,s}(S^{s, \ell}(s-1/\ell^{a-1}))}F\left(S^{s, \ell}\right) \right| S_1, \ldots, S_{\lfloor \ell^{a-1}s \rfloor-1} \ge 1\right) \\
		&\mathop{\longrightarrow}\limits_{\ell \to \infty}G(s)\coloneqq\E \left(
		\frac{g(p_{\bf q}s,(p_{\bf q} s)^{1/(a-1)}\widetilde{\Upsilon}_a(1))}{f(p_{\bf q}s)\tilde{f}_a\left(\widetilde{\Upsilon}_a(1)\right)}
		F\left((\widetilde{\Upsilon}_a^{p_{\bf q}s}(p_{\bf q} t))_{t\ge 0}\right)
		\right).
	\end{align*}
	Furthermore, from Proposition \ref{limite locale bivariée}, and using Scheffé's lemma, $f_\ell$ converges in $\mathrm{L}^1(\R)$ towards $p_{\bf q}f(p_{\bf q}\cdot)$. Therefore, again by dominated convergence, we obtain that for all $A>0$, 
	\begin{align*}
		\E^{(1)}_\ell \left(F\left(\left(\frac{P_\ell(\ell^{a-1} t)}{\ell}\right)_{t\ge 0}\right){\bf 1}_{\frac{\tau_{-\ell}}{\ell^{a-1}}<A} \right)
		&=\int_0^A f_\ell(s) G_\ell(s) ds \\
		&\mathop{\longrightarrow}\limits_{\ell \to \infty} \int_0^A p_{\bf q}f(p_{\bf q}s) G(s)ds\\
		&=\E \left(F\left(\left(\Upsilon^\downarrow_{-1,a}(p_{\bf q}t)\right)_{t\ge 0}\right){\bf 1}_{\tilde{\tau}<p_{\bf q}A} \right),
	\end{align*}
	where the last equality stems from the definition of $\Upsilon^\downarrow_{-1,a}$ and from the change of variables $s'=p_{\bf q} s$. But since ${\tau_{-\ell}}/{\ell^{a-1}}$ converges in law, and since $F$ is bounded, we get that
	$$
	\sup_{\ell \ge 1}\E^{(1)}_\ell \left(F\left(\left(\frac{P_\ell(\ell^{a-1} t)}{\ell}\right)_{t\ge 0}\right){\bf 1}_{\frac{\tau_{-\ell}}{\ell^{a-1}}\ge A} \right) \mathop{\longrightarrow}\limits_{A \to \infty} 0,
	$$
	hence the desired result.
\end{proof}

The process $\Upsilon^\downarrow_{-1,a}$ can be linked to the stable Lévy process conditioned to stay positive $\Upsilon_a^\uparrow$ using Lemma \ref{couplage marche} and Theorem \ref{cvpérimètre}:

\begin{corollary}\label{couplage continu}
	Let $\Upsilon^\downarrow_{-1,a}$ as in Theorem \ref{cvpérimètre}. Let $\tilde{\tau}$ be the time at which $\Upsilon^\downarrow_{-1,a}$ dies at $-1$. For all bounded measurable function $F : \D([0,t],\R) \to \R$, 
	$$
	\E\, {\bf 1}_{\tilde{\tau}>t} F\left( \left(\Upsilon^\downarrow_{-1,a}(s)\right)_{0\le s \le t}\right)
	=
	\E\, \frac{1}{1+\Upsilon_a^\uparrow(t)} F \left(\left(\Upsilon_a^\uparrow(s)\right)_{0\le s\le t}\right).
	$$
\end{corollary}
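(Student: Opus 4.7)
The plan is to pass to the scaling limit in the discrete absolute-continuity relation established inside the proof of Lemma \ref{couplage marche}. That proof shows that for all $s_1,\ldots,s_n \in \N^*$,
$$\P^{(1)}_\ell\bigl((P_\ell(k))_{1\le k\le n} = (s_k)\bigr) = \tfrac{1+\ell}{s_n+\ell}\,\P^{(1)}_\infty\bigl((P_\infty(k))_{1\le k\le n} = (s_k)\bigr).$$
Summing this identity against any bounded measurable functional $G$ on $(\N^*)^n$, and noting that the left-hand side is automatically supported on $\{\tau_{-\ell}>n\}$, yields
\begin{equation}\label{eq:discr-AC}
\E^{(1)}_\ell\bigl[G((P_\ell(k))_{1\le k\le n})\,{\bf 1}_{\tau_{-\ell}>n}\bigr] = \E^{(1)}_\infty\left[G((P_\infty(k))_{1\le k\le n})\,\frac{1+\ell}{\ell+P_\infty(n)}\right].
\end{equation}

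Fix $t>0$ and a bounded continuous $F:\D([0,t],\R)\to\R$, and apply \eqref{eq:discr-AC} with $n_\ell = \lfloor \ell^{a-1}/p_{\bf q}\rfloor$, $n = \lfloor n_\ell t\rfloor$ and $G((s_k)) = F((s_{\lfloor n_\ell r\rfloor}/\ell)_{0\le r\le t})$. With this choice, Theorem \ref{upsilon flèche} (after using the $(a-1)$-self-similarity of $\Upsilon_a^\uparrow$ to absorb the prefactor $p_{\bf q}^{1/(a-1)}$) reads $(P_\infty(\lfloor n_\ell r\rfloor)/\ell)_{r\ge 0} \to (\Upsilon_a^\uparrow(r))_{r\ge 0}$ in distribution under $\P^{(1)}_\infty$, while Theorem \ref{cvpérimètre} reads $(P_\ell(\lfloor n_\ell r\rfloor)/\ell)_{r\ge 0} \to (\Upsilon^\downarrow_{-1,a}(r))_{r\ge 0}$ in distribution under $\P^{(1)}_\ell$, jointly with $\tau_{-\ell}/n_\ell \to \tilde\tau$ (the lifetime is a continuous functional of the Skorokhod path at typical limit trajectories, since the death occurs by a single jump to $-1$).

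On the right-hand side of \eqref{eq:discr-AC}, the factor $(1+\ell)/(\ell+P_\infty(\lfloor n_\ell t\rfloor))$ is bounded by $1$ and, because $\Upsilon_a^\uparrow$ has no fixed-time jumps, converges in distribution to $1/(1+\Upsilon_a^\uparrow(t))$; Skorokhod representation together with bounded convergence therefore gives
$$\mathrm{RHS} \xrightarrow[\ell\to\infty]{} \E\left[\frac{F((\Upsilon_a^\uparrow(r))_{0\le r\le t})}{1+\Upsilon_a^\uparrow(t)}\right].$$
On the left-hand side, $\tilde\tau$ admits a density by Proposition \ref{limite locale bivariée}, so $\P(\tilde\tau = t) = 0$ and the joint convergence combined with the portmanteau theorem yields
$$\mathrm{LHS} \xrightarrow[\ell\to\infty]{} \E\bigl[F((\Upsilon^\downarrow_{-1,a}(r))_{0\le r\le t})\,{\bf 1}_{\tilde\tau>t}\bigr].$$
Equating the two limits proves the identity for bounded continuous $F$, and a standard monotone class argument extends it to all bounded measurable $F$. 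The one nontrivial point is handling the indicator ${\bf 1}_{\tau_{-\ell}>n}$ on the left: this requires the joint convergence of the rescaled path together with its lifetime, plus the absence of atoms in the law of $\tilde\tau$, both of which are exactly what Theorem \ref{cvpérimètre} and Proposition \ref{limite locale bivariée} supply.
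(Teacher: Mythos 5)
Your proof is correct and is exactly the argument the paper intends: the paper gives no written proof of Corollary \ref{couplage continu} beyond saying it follows from Lemma \ref{couplage marche} and Theorem \ref{cvpérimètre}, and your write-up carries out precisely that — summing the discrete absolute-continuity identity from the proof of Lemma \ref{couplage marche} against a bounded functional and passing to the scaling limit on both sides via Theorems \ref{upsilon flèche} and \ref{cvpérimètre}, with the indicator handled by the atomlessness of $\tilde\tau$. The normalization $n_\ell=\lfloor \ell^{a-1}/p_{\bf q}\rfloor$ correctly removes the $p_{\bf q}$ time change on both sides, and the monotone class extension at the end is standard.
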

One can also define for all $\lambda>0$, the Lévy process conditioned to stay positive until it jumps and dies at $-\lambda$, denoted by $\Upsilon^\downarrow_{-\lambda,a}$. It satisfies the scaling limit:
$$
\text{Under } \P^{(1)}_{\lfloor \lambda \ell \rfloor}, \ \left( \frac{P_{\lfloor \lambda \ell \rfloor}(\ell^{a-1} t)}{\ell}\right)_{t \ge 0} \mathop{\longrightarrow}\limits_{\ell \to \infty}^{(\mathrm{d})}
\left(\Upsilon^\downarrow_{-\lambda,a}(p_{\bf q}t)\right)_{t\ge 0}.
$$
Let $\tilde{\tau}_{-\lambda,a}$ be the time at which $\Upsilon^\downarrow_{-\lambda,a}$ jumps at $-\lambda$. From the coupling $\P^{(1),n}_{\lfloor \lambda \ell \rfloor, \infty}$, we get that conditionally on $\tilde{\tau}_{-\lambda,a}> t$, the process $(\Upsilon^\downarrow_{-\lambda,a}(s))_{0\le s\le t}$ has the same law as $( \Upsilon_a^\uparrow(s))_{0\le s \le t}$. Furthermore, from the scaling limit it is easy to see that $\tilde{\tau}_{-\lambda,a}$ has the same law as $\lambda^{a-1} \tilde{\tau}$. It is thus possible to show that
$$
\Upsilon^\downarrow_{-\lambda,a} \mathop{\longrightarrow}\limits_{\lambda \to \infty}^{(\mathrm{d})} \Upsilon_a^\uparrow
$$
in $\D(\R_+,\R)$ equipped with the Skorokhod $J_1$-topology. This convergence echoes the local convergence of $P_\ell$ under $\P^{(1)}_\ell$ towards $P_\infty$ under $\P^{(1)}_\infty$ when $\ell \to \infty$.

\section{The peeling exploration ``starting from a uniform random edge''}\label{section exploration}

In this section, we recall the filled-in (edge) peeling exploration of a map with a distinguished face called the target and how this exploration enables to recover the distance from the root face to that target face. In Subsection \ref{échange racine cible}, we show that for our purpose, we can start the exploration ``from a uniform  random edge'' which will become the root while the former root face of degree $2\ell$ will become the target face.

\subsection{Filled-in exploration of a map with a target}
The edge-peeling process was introduced by Budd in \cite{B16} and the results presented here can be found in more details in Chapters III and IV of \cite{StFlour} or in \cite{BuddPeeling}. 

A map with a target face $\mathfrak{m}_\square = (\mathfrak{m},\square)$ is a (finite rooted bipartite planar) map $\mathfrak{m}$ given with a distinguished face $\square$ different from the root face $f_r$. For all $\ell,p\ge 1$, we denote by $\mathcal{M}_p^{(\ell)}$ the set of finite maps of perimeter $2\ell$ with a target face of perimeter $2p$. We set for all $\mathfrak{m}_\square \in \mathcal{M}_p^{(\ell)}$:
$$
w_{\bf q}(\mathfrak{m}_\square) = \prod_{f\in \mathrm{Faces}(\mathfrak{m})\setminus \left\{ f_r, \square\right\}} q_{\deg(f)/2}.
$$
We then define also the partition function $W_p^{(\ell)} = \sum_{\mathfrak{m}_\square \in \mathcal{M}_p^{(\ell)}} w_{\bf q}(\mathfrak{m}_\square)$. 
Analogously, a map with a target vertex $\mathfrak{m}_\bullet=(\mathfrak{m}, \bullet)$ is a (finite rooted bipartite planar) map $\mathfrak{m}$ with a distinguished vertex $\bullet$. We denote by $\mathcal{M}_0^{(\ell)}$ the set of finite maps of perimeter $2\ell$ with a target vertex. We set $w_{\bf q}(\mathfrak{m}_\bullet)= w_{\bf q}(\mathfrak{m})$ for all $\mathfrak{m}_\bullet \in\mathcal{M}_0^{(\ell)}$, and $W_0^{(\ell)} = \sum_{\mathfrak{m}_\bullet \in \mathcal{M}_0^{(\ell)}} w_{\bf q}(\mathfrak{m}_\bullet)$. The fact that $\bf q$ is admissible implies that $W_p^{(\ell)}<\infty$ for all $\ell\ge 1, p\ge 0$. Moreover, we know that (see e.g. Equation (3.9) in \cite{StFlour}) for all $p,\ell \ge 1$, 
\begin{equation}\label{fonction de partition}
	W_p^{(\ell)} = \frac{1}{2} h^\downarrow_p(\ell) c_{\bf q}^{\ell+p} \qquad \text{and} \qquad 
	W_0^{(\ell)} = h^\downarrow(\ell) c_{\bf q}^{\ell}.
\end{equation}
We write $\P^{(\ell)}_p$ the associated Boltzmann probability measure on $\mathcal{M}_p^{(\ell)}$ for all $p\ge 0$.

Let $\ell \ge 1$. By Theorem 2 of \cite{BuddPeeling}, one can define a random infinite planar map $\mathfrak{M}_\infty^{(\ell)}$ as the local limit in distribution as $p \to \infty$ of random maps of law $\P^{(\ell)}_p$. The map $\mathfrak{M}_\infty^{(\ell)}$ is almost surely locally finite and one-ended, and is called a $\bf q$-Boltzmann infinite planar random map of the plane. Its law is denoted by $\P_\infty^{(\ell)}$.

A (filled-in) peeling algorithm $\mathcal{A}$ is a function which associates to a map $\overline{\mathfrak{e}}$ with a distinguished simple face (different from the root face) an edge on the boundary $\partial\overline{\mathfrak{e}}$ of this distinguished face. Let $\mathfrak{m}_*$ be a map with a target face $\square$ (``$*=\square$''), a target vertex $\bullet$ (``$*=\bullet$'') or an infinite one-ended bipartite map (``$*=\infty$''). A filled-in peeling exploration of $\mathfrak{m}_*$ with algorithm $\mathcal{A}$ is an increasing sequence $(\overline{\mathfrak{e}}_n)_{n\ge 0}$ of sub-maps of $\mathfrak{m}$ which contain the root face and which are maps with a distinguished simple face called a hole, where by sub-map we mean that gluing a well chosen map with perimeter corresponding with that of the hole of $\overline{\mathfrak{e}}_n$ will give back the map $\mathfrak{m}_*$. This sequence of sub-maps is constructed inductively using the algorithm $\mathcal{A}$: $\overline{\mathfrak{e}}$ is the map with only two faces which are the root face and a hole of the same perimeter and then for each $n\ge 0$, the sub-map $\overline{\mathfrak{e}}_{n+1}$ is obtained from $\overline{\mathfrak{e}}_{n}$ by peeling the edge $\mathcal{A}(\overline{\mathfrak{e}}_{n})$ in $\mathfrak{m}_*$ (see Figure \ref{dessin explo remplie}). When peeling an edge, there are three cases:
\begin{enumerate}[(i)]
	\item The face in $\mathfrak{m}_*$ on the other side of $\mathcal{A}(\overline{\mathfrak{e}}_n)$ is not a face of $\overline{\mathfrak{e}}_n$ and is not the target face $\square$ when $*=\square$; then $\overline{\mathfrak{e}}_{n+1}$ is obtained by gluing that face to $\overline{\mathfrak{e}}_n$ onto the edge $\mathcal{A}(\overline{\mathfrak{e}}_n)$; we denote this case by $C_k$ where $k$ is the half-degree of the discovered face.
	\item The face in $\mathfrak{m}_*$ on the other side of $\mathcal{A}(\overline{\mathfrak{e}}_n)$ is the target face $\square$ (when $*=\square$); in that case, denoted by $C_p^{\mathrm{stop}}$ where $p$ is the half-degree of the target face, we set $\overline{\mathfrak{e}}_{n+1} = \mathfrak{m}_\square$ and the exploration stops.
	\item Or the other side of $\mathcal{A}(\overline{\mathfrak{e}}_n)$ corresponds to a face already discovered in $\overline{\mathfrak{e}}_n$ which must be on the boundary of the hole. In this case $\overline{\mathfrak{e}}_{n+1}$ is obtained by identifying the two edges in the hole of $\overline{\mathfrak{e}}_n$. This creates at most two holes. If $*\in\{\square, \bullet\}$, we fill-in the hole which does not contain the target. If $*=\infty$, we fill-in the hole containing a finite part of $\mathfrak{m}_\infty$.  This case is denoted by $G_{k,*}$ or $G_{*,k}$ where $2k$ is the degree of the hole which is filled-in, depending whether this hole is created on the left or on the right of the peeled edge. If $*=\bullet$ and $k=|\partial\overline{\mathfrak{e}}_{n}|/2-1$, then the exploration stops.
\end{enumerate}

\begin{figure}[h]
	\centering
	\hbox{
		\includegraphics[scale=1.2]{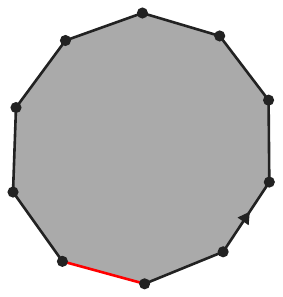}
		\includegraphics[scale=1.05]{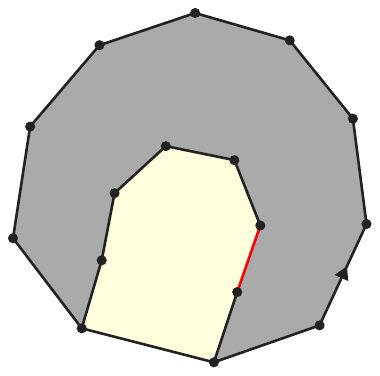}
		\includegraphics[scale=1.15]{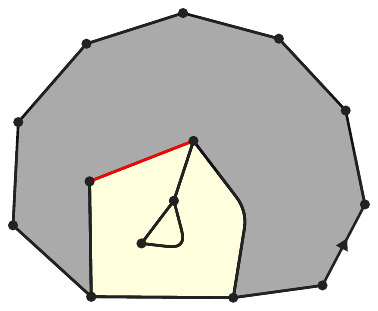}
		\includegraphics[scale=1.25]{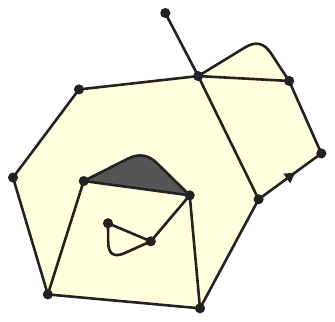}
	}
	\caption{ A filled-in exploration of a map with a target face $\mathfrak{m}_\square \in \mathcal{M}^5_1$. The peeled edge is in red, the root face is in white, the target face is in black and the hole containing the target is in grey. At the first step, an event $C_4$ happens, then an event $G_{1,*}$ occurs and finally the event $C_1^\mathrm{stop}$ ends the exploration.}
	\label{dessin explo remplie}
\end{figure}

The peeling algorithm $\mathcal{A}$ may be arbitrary, even random, but we require it to be Markovian with respect to the exploration in the sense that, conditionally on the explored region $\overline{\mathfrak{e}}_n$, the conditional law of $\mathcal{A}(\overline{\mathfrak{e}}_n)$ only depends on $\overline{\mathfrak{e}}_n$. For instance, the peeled edge may be chosen uniformly at random on the boundary of the hole as in Subsection \ref{sous-section explo uniforme} or in such a way that the dual distance from the peeled edge to the root is non-decreasing along the exploration as in Subsection \ref{sous-section explo par couche}. During the peeling exploration, one records the evolution of half of the number of edges on the boundary of the hole, which is denoted by $(P_p(n))_{n\ge 0}$ if $*\in\{\bullet,\square\}$ and $(P_\infty(n))_{n\ge 0}$ if $*=\infty$ and called the perimeter process associated with the exploration. One can express the law of the peeling process (and the law of the perimeter process) under $\P^{(\ell)}_p$ or under $\P^{(\ell)}_\infty$ using Doob $h$-transforms of the random walk $S=(S_n)_{n\ge 0}$ of step distribution $\nu$.

Now we recall the law of the filled-in peeling exploration $(\overline{\mathfrak{e}}_n)_{n\ge 0}$ under $\P^{(\ell)}_p$ or under $\P^{(\ell)}_\infty$, as described by Proposition 4.7 from \cite{StFlour} in the case $*=\square,\bullet$ and \cite{B16} in the case $*= \infty$ (see also Proposition 7.4 in \cite{StFlour}).

\begin{proposition}\label{Markov explo}(\cite{StFlour},\cite{B16})
	Let $\ell,p\ge 1$, let $\mathcal{A}$ be a peeling algorithm and $(\overline{\mathfrak{e}}_n)_{n\ge 0}$ be the peeling process under $\P_0^{(\ell)},\P^{(\ell)}_p$ or $\P^{(\ell)}_\infty$. Then $(\overline{\mathfrak{e}}_n)_{n\ge 0}$ is a Markov chain with the following transition probabilities: conditionally on $\overline{\mathfrak{e}}_n$, and $P(n)=m\ge 1$ (assuming that the exploration has not stopped yet), the events $C_k, C_p^\mathrm{stop}$(for $*=\square$)$, G_{k,*}$ and $G_{*,k}$ occur respectively with probabilities
	$$
	\nu(k-1)\frac{h(m+k-1)}{h(m)}, 
	\
	\nu(-m-p)\frac{h(-p)}{h(m)},
	\
	\frac{1}{2}\nu(-k-1)\frac{h(m-k-1)}{h(m)}
	\text{ and } 
	\frac{1}{2}\nu(-k-1)\frac{h(m-k-1)}{h(m)},
	$$
	where $h=h^\downarrow_p$ for $*=\square$, $h=h^\downarrow$ for $*=\bullet$ and $h=h^\uparrow$ for $*=\infty$.
	Moreover, conditionally on each event, the holes filled-in during the exploration are independent finite $\bf q$-Boltzmann maps (without target) of corresponding perimeters.
\end{proposition}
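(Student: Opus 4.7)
The plan is to exploit the spatial Markov property of Boltzmann random maps: conditionally on the explored region $\overline{\mathfrak{e}}_n$ and on its hole having perimeter $2m$, the unexplored part (which when glued back reconstitutes $\mathfrak{m}_*$) is itself Boltzmann-distributed, namely a Boltzmann map of perimeter $2m$ with a target of the same type as in $\mathfrak{m}_*$ (for $*\in\{\square,\bullet\}$), or, for $*=\infty$, the $p\to\infty$ local limit of such maps. Each transition probability then reduces to a ratio of partition functions, which one can compute in closed form from (\ref{def marche}) and (\ref{fonction de partition}).

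Treating first the case $*=\square$, decomposition according to what lies on the other side of the peeled edge inside the unexplored Boltzmann map yields: for $C_k$, probability $q_k W_p^{(m+k-1)}/W_p^{(m)}$ (a new face of half-degree $k$ is glued along the edge with weight $q_k$, enlarging the hole to perimeter $2(m+k-1)$ still containing the target); for $C_p^{\mathrm{stop}}$, probability $W^{(m+p-1)}/W_p^{(m)}$ (the target face is on the other side, and after gluing it the remaining hole of perimeter $2(m+p-1)$ is closed by a free Boltzmann map of total weight $W^{(m+p-1)}$); for $G_{k,*}$ (resp. $G_{*,k}$), probability $W^{(k)} W_p^{(m-k-1)}/W_p^{(m)}$, since the peeled edge is identified with another boundary edge on one specific side, splitting the hole into pieces of half-perimeters $k$ and $m-k-1$, the former filled by a free Boltzmann map and the latter still carrying the target. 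Substituting $W_p^{(\ell)}=\tfrac{1}{2}h^\downarrow_p(\ell)c_{\bf q}^{\ell+p}$, $W^{(k)}=\tfrac{1}{2}\nu(-k-1)c_{\bf q}^{k+1}$, and $\nu(k-1)=q_k c_{\bf q}^{k-1}$, and using the convention $h^\downarrow_p(-p)=1$, collapses these three expressions to the formulas in the statement with $h=h^\downarrow_p$; in particular, the factor $\tfrac{1}{2}$ in the $G$ events arises naturally from the identity $\nu(-k-1)=2W^{(k)}c_{\bf q}^{-k-1}$.

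The case $*=\bullet$ is entirely parallel, with $W_p^{(\ell)}$ replaced by $W_0^{(\ell)}=h^\downarrow(\ell)c_{\bf q}^{\ell}$ throughout, which yields the same formulas with $h=h^\downarrow$. For $*=\infty$, $\mathfrak{M}_\infty^{(\ell)}$ is by construction the local limit as $p\to\infty$ of $\P^{(\ell)}_p$, so each peeling transition probability is the $p\to\infty$ limit of the one just derived. Using $h^\downarrow_p(\ell)=h^\downarrow(p)h^\downarrow(\ell)\ell/(\ell+p)$ and $h^\uparrow(\ell)=2\ell h^\downarrow(\ell)$, one checks that $h^\downarrow_p(\ell_1)/h^\downarrow_p(\ell_2)\to h^\uparrow(\ell_1)/h^\uparrow(\ell_2)$ as $p\to\infty$; simultaneously, the probability of $C_p^{\mathrm{stop}}$ vanishes (the target ``escapes to infinity''), recovering the formulas with $h=h^\uparrow$ and no stopping rule. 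The Markov property of $(\overline{\mathfrak{e}}_n)_{n\ge 0}$ then follows since all transition probabilities depend on $\overline{\mathfrak{e}}_n$ only through $m$, and the peeling algorithm is itself Markovian by assumption. The ``moreover'' assertion is an immediate consequence of the spatial Markov property applied to each filled-in sub-map.

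The main technical point, more bookkeeping than conceptual, is the explicit simplification of the three partition-function ratios via (\ref{def marche}) and (\ref{fonction de partition}), converting combinatorial data (face weights, perimeter changes) into Doob-transform ratios of $h^\downarrow$, $h^\downarrow_p$, or $h^\uparrow$. A secondary point is to check in the $*=\infty$ case that each peeling transition commutes with the local limit $p\to\infty$, which is standard since each step modifies only a bounded region of the map.
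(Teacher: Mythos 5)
Your proof is correct and is essentially the standard argument: the paper does not prove this proposition itself but recalls it from \cite{StFlour} (Proposition 4.7) and \cite{B16}, and your derivation — spatial Markov property of Boltzmann maps, transition probabilities as ratios of partition functions, simplification via $W_p^{(\ell)}=\tfrac{1}{2}h^\downarrow_p(\ell)c_{\bf q}^{\ell+p}$ and $\nu(-k-1)=2W^{(k)}c_{\bf q}^{-k-1}$, then the $p\to\infty$ local limit for $*=\infty$ — is exactly the route taken in those references. All three partition-function ratios check out, including the origin of the factor $\tfrac{1}{2}$ in the $G$ events and the convention $h^\downarrow_p(-p)=1$ for $C_p^{\mathrm{stop}}$.
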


As a consequence, for all $p\ge 0$, the perimeter process under $\P_p^{(\ell)}$ (resp. under $\P_\infty^{(\ell)}$) indeed corresponds to the Doob $h^\downarrow_p$-transform killed when it reaches $-p$ (resp. Doob $h^\uparrow$-transform) of the walk $S$ started from $\ell$, if we set by convention that the perimeter drops to $-p$ when the exploration stops.

\subsection{Uniform peeling and first-passage percolation distance}\label{sous-section explo uniforme}
It is possible to use the peeling process to study the first-passage percolation distance on a map with a target $\mathfrak{m}_*$, or an infinite map of the plane. A very convenient algorithm is the uniform peeling algorithm $\mathcal{A}_\mathrm{uniform}$ which selects an edge on the boundary of the hole uniformly at random at every step. We refer to \cite{StFlour} 13.1 or \cite{BC} 2.4 for details.

This exploration can be related to the fpp distance on $\mathfrak{m}_*$ in the following way: for all $t\ge 0$, let $\overline{\mathrm{Ball}}^\mathrm{fpp}_t (\mathfrak{m}_*)$ be the sub-map of $\mathfrak{m}_*$ obtained by keeping the connected subset of dual edges whose endpoints are at fpp distance at most $t$ from the root face, then by gluing the corresponding faces according to those dual edges and finally by filling in the holes that do not contain the target (the finite holes in the case $*=\infty$). Then the process $(\overline{\mathrm{Ball}}^\mathrm{fpp}_t (\mathfrak{m}_*))_{t\ge 0}$ admits jumps $0=T_0<T_1<\cdots$. One can check (see Proposition 2.3 in \cite{BC}) that for all $n\ge 0$, the sub-map
$\overline{\mathrm{Ball}}^\mathrm{fpp}_{T_{n+1}} (\mathfrak{m})$ can be obtained from $\overline{\mathrm{Ball}}^\mathrm{fpp}_{T_n} (\mathfrak{m}_*)$ by the peeling of a uniform random edge on the boundary of the hole. Furthermore, conditionally on $(\overline{\mathrm{Ball}}^\mathrm{fpp}_{T_n} (\mathfrak{m}_*))_{n\ge 0}$, the random variables $T_{n+1}-T_n$ are independent and follow exponential laws of parameter $|\partial \overline{\mathrm{Ball}}^\mathrm{fpp}_{T_n} (\mathfrak{m}_*)|$ where $\partial \overline{\mathrm{Ball}}^\mathrm{fpp}_{T_n} (\mathfrak{m}_*)$ is the set of edges on the boundary of $\overline{\mathrm{Ball}}^\mathrm{fpp}_{T_n} (\mathfrak{m}_*)$.

From the above results, by coupling the uniform exploration with the exponential variables that define the fpp distance, the fpp distance from the root face to the submap $\mathfrak{u}$ filling the hole of boundary $\partial \overline{\mathfrak{e}}_n$ can be written as
\begin{equation}\label{distance fpp à l'arête épluchée}
	d_\mathrm{fpp}^\dagger (f_r,\mathfrak{u}) = 
	T_{n+1} = \sum_{i=0}^{n} \frac{\mathcal{E}_i}{|\partial \overline{\mathrm{Ball}}^\mathrm{fpp}_{T_i} (\mathfrak{m}_*)|}
	= \sum_{i=0}^{n} \frac{\mathcal{E}_i}{2P(i)},
\end{equation}
where $P(i) =|\partial \overline{\mathrm{Ball}}^\mathrm{fpp}_{T_i} (\mathfrak{m})|/2$ is the perimeter process associated with the exploration and the $\mathcal{E}_i$ for $i\ge 0$ are i.i.d. exponential random variables of parameter $1$ which are independent from the exploration $(\overline{\mathfrak{e}}_n)_{n\ge 0}$, and a fortiori from $(P(n))_{n\ge 0}$. In particular, if $\mathfrak{m}_\square \in \mathcal{M}^{(\ell)}_p$ for $\ell, p\ge 1$ and $n$ is the time at which the target face $\square$ is discovered, i.e. the event $C^\mathrm{target}_p$ happens at time $n$, or equivalently $\overline{\mathfrak{e}}_n=\mathfrak{m}_\square$, then $d_\mathrm{fpp}^\dagger(f_r,\square)=T_n$.

\subsection{Peeling by layers and dual graph distance}\label{sous-section explo par couche}

In the same way, a particular peeling algorithm $\mathcal{A}_\mathrm{layers}$ called the peeling by layers algorithm is suited to the study of the dual graph distance on a map with a target $\mathfrak{m}_*$ (or an infinite map of the plane). Recall that $d^\dagger_\mathrm{gr}$ denotes the dual graph distance on a map $\mathfrak{m}$. If $f$ is a face of $\mathfrak{m}$ the distance to the root face $d^\dagger_\mathrm{gr}(f_r,f)$ is also called the height of $f$. We can also define the height of an edge $e$ as the minimum of the heights of the faces on both sides of $e$.

The peeling algorithm $\mathcal{A}_\mathrm{layers}$, described in \cite{BC} 2.3, see also \cite{StFlour} p.185, is defined only on (finite) sub-maps $\overline{\mathfrak{e}}$ of $\mathfrak{m}_*$ with a hole (containing the target when $*=\square$) which satisfy the following hypothesis:
\begin{enumerate}[({\bf H})] 
	\item \label{hypothèse couche} There exists an integer $h\ge 0$ such that all the edges on the boundary of the hole $\partial \overline{\mathfrak{e}}$ are at height $h$ or $h+1$, and such that the set of edges of $\partial \overline{\mathfrak{e}}$ at heigth $h$ forms a (non empty) segment. 
\end{enumerate}
If $\overline{\mathfrak{e}}$ satisfies ({\bf H}), we then set $\mathcal{A}_\mathrm{layers}(\overline{\mathfrak{e}})$ to be the unique edge of $\partial \overline{\mathfrak{e}}$ at height $h$ such that the edge immediately on its left is at height $h+1$. If all the edges of $\partial \overline{\mathfrak{e}}$ are at height $h$ the edge $\mathcal{A}_\mathrm{layers}(\overline{\mathfrak{e}})$ is then chosen deterministically.

One can check that on the events $C_k$, $G_{*,k}$ and $G_{k,*}$ the sub-map obtained after peeling $\mathcal{A}_\mathrm{layers}(\overline{\mathfrak{e}})$ satisfies ({\bf H}) again. Moreover, $\overline{\mathfrak{e}}_0$ satisfies ({\bf H}) as well. Thus the peeling exploration with algorithm $\mathcal{A}_\mathrm{layers}$ is well defined.

For every $n \ge 0$, let $H(n)$ be the minimum of the heights of the edges of $\partial \overline{\mathfrak{e}}_n$. Then $H(0)=0$ and for all $n\ge 0$, we have $\Delta H(n)\coloneqq H(n+1)-H(n) \in \{0,1\}$. One has $\Delta H(n)=1$ when all the edges of $\partial \overline{\mathfrak{e}}_{n+1}$ are at height $H(n)+1$. The process $H(n)$ is called the height process associated with the peeling by layers exploration. The integer $H(n)+1$ can also be seen as the height of the hole in $\overline{\mathfrak{e}}_n$. In particular, if $\mathfrak{m}_\square \in \mathcal{M}^{(\ell)}_p$ for some $\ell,p\ge1$ and $n$ is the first time such that $\overline{\mathfrak{e}}_n=\mathfrak{m}_\square$, then $d_\mathrm{gr}^\dagger(f_r,\square)=H(n-1)+1$.

\subsection{Exchanging the root and the target}\label{échange racine cible}

\begin{figure}[h]
	\centering
	\hbox{
		\includegraphics[scale=1.4]{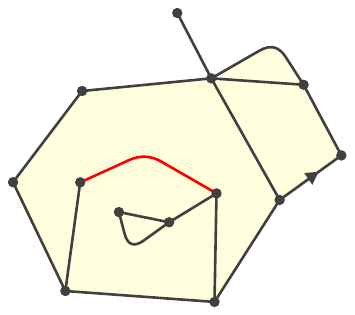}
		\includegraphics[scale=1.55]{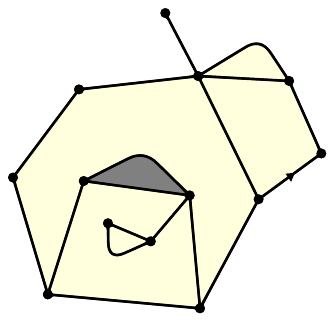}
		\includegraphics[scale=1.4]{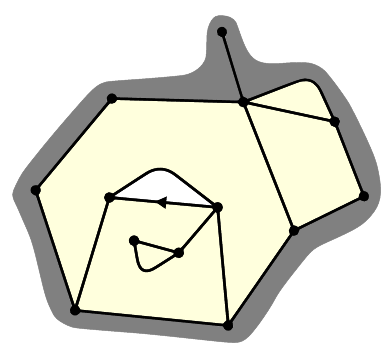}
	}
	\caption{From left to right: a map with a distinguished edge in red, the same map with the edge ``unzipped'' giving rise to a distinguished $2$-face (in grey) and finally the same map which has been re-rooted.}
	\label{dessin échange racine cible}
\end{figure}

In order to study the distance from the root face to a uniform random face of $\mathfrak{M}^{(\ell)}$, we will first look at the distance between the root face and a uniform random edge $E$ in $\mathfrak{M}^{(\ell)}$. We will then come back to a uniform random face using a biasing argument in Subsection \ref{uvufue}. This distance is defined as the minimum of the distances from the root face to a face next to this edge. 
By unzipping the uniform random edge and thus creating a distinguished face of degree $2$, one can see that the map obtained from $\mathfrak{M}^{(\ell)}$ belongs to $\mathcal{M}_1^{(\ell)}$. We denote this map by $\widetilde{\mathfrak{M}}_1^{(\ell)}$.

\begin{remark}\label{petite modif des distances}
	This unzipping operation does not modify the distances so much. 
	More precisely, the graph distance $d^\dagger_{\mathrm{gr}} (f_r,\square)$ in $\widetilde{\mathfrak{M}}_1^{(\ell)}$ equals $d^\dagger_{\mathrm{gr}} (f_r,E)+1$ in $\mathfrak{M}^{(\ell)}$ while $d^\dagger_{\mathrm{fpp}} (f_r,\square)$ in $\widetilde{\mathfrak{M}}_1^{(\ell)}$ is lower bounded by $d^\dagger_{\mathrm{fpp}} (f_r,E)$ and is upper bounded by $d^\dagger_{\mathrm{fpp}} (f_r,E)+\mathcal{E}$ in $\mathfrak{M}^{(\ell)}$ where $\mathcal{E}$ is an independent exponential variable of parameter $1$.
\end{remark}

Besides, one can notice that exchanging the roles of the root face and of the target face by choosing a root edge on the former target face does not modify the distance between the root face and the target face. 
All the operations we did are illustrated by Figure \ref{dessin échange racine cible}.  This swap between the root face and the target face will be useful insofar as in the next subsection we will introduce a coupling of $\P_\ell^{(1)}$ with $\P^{(1)}_\infty$ that will enable us to use the results of \cite{BCM}. Let $\widetilde{\mathfrak{M}}_\ell^{(1)}$ be the map obtained from $\widetilde{\mathfrak{M}}_1^{(\ell)}$ by exchanging the root face and the target face and choosing the root edge uniformly at random around the new root face. 

So as to transfer our results under $\P^{(1)}_\ell$ to the map $\mathfrak{M}^{(\ell)}$ equipped with a random uniform edge $E$, we state the following lemma. 

\begin{lemma}\label{lemme transfert biais}
	If $(\varphi_\ell)_{\ell \ge 1}$ is a sequence of functions from $\mathcal{M}^{(1)}_\ell$ to $\R_+$ uniformly bounded by a constant $C>0$  
	such that
	$$
	\E^{(1)}_\ell \varphi_\ell(\mathfrak{m}_\square) \mathop{\longrightarrow}\limits_{\ell \to \infty} 0,
	$$
	then
	$$
	\E \varphi_\ell\left(\widetilde{\mathfrak{M}}_\ell^{(1)}\right) \mathop{\longrightarrow}\limits_{\ell \to \infty} 0.
	$$
\end{lemma}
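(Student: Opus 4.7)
The approach is to express the law of $\widetilde{\mathfrak{M}}_\ell^{(1)}$ as an absolutely continuous change of measure with respect to $\P^{(1)}_\ell$ and then control the Radon--Nikodym derivative in $L^2(\P^{(1)}_\ell)$. First I compute this density: tracing through the construction, for each $\mathfrak{m}_\square \in \mathcal{M}^{(1)}_\ell$ the unzipped edge and the orientation chosen in the swap step are determined by $\mathfrak{m}_\square$, whereas the root edge of the underlying $\mathfrak{m}\in\mathcal{M}^{(\ell)}$ on the perimeter-$2\ell$ face is forgotten during the swap, yielding exactly $2\ell$ preimages all sharing the same Boltzmann weight $w_{\bf q}(\mathfrak{m})=w_{\bf q}(\mathfrak{m}_\square)$ (the newly created perimeter-$2$ face is successively target then root, so it never contributes to the product). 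Thus
\[
\P\bigl(\widetilde{\mathfrak{M}}_\ell^{(1)}=\mathfrak{m}_\square\bigr)
=\frac{\ell\,w_{\bf q}(\mathfrak{m}_\square)}{W^{(\ell)}\bigl(|E(\mathfrak{m}_\square)|-1\bigr)},
\]
and the Radon--Nikodym derivative equals $\Phi_\ell(\mathfrak{m}_\square)=C_\ell/(|E(\mathfrak{m}_\square)|-1)$ with $C_\ell:=\ell W^{(1)}_\ell/W^{(\ell)}$. Using the explicit formula $W^{(1)}_\ell=\tfrac12 h^\downarrow_\ell(1)c_{\bf q}^{\ell+1}$ together with $h^\downarrow(\ell)\sim(\pi\ell)^{-1/2}$ and (\ref{criticité de type 2}) with $a=2$, one gets $C_\ell\sim\ell^{3/2}/(2\sqrt\pi\,p_{\bf q})$.

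By Cauchy--Schwarz and $\varphi_\ell\le C$,
\[
\E\,\varphi_\ell\bigl(\widetilde{\mathfrak{M}}_\ell^{(1)}\bigr)
=\E^{(1)}_\ell[\varphi_\ell\Phi_\ell]
\le\sqrt{C\,\E^{(1)}_\ell[\varphi_\ell]}\,\sqrt{\E^{(1)}_\ell[\Phi_\ell^2]},
\]
so the lemma follows as soon as $\sup_\ell\E^{(1)}_\ell[\Phi_\ell^2]<\infty$. Applying the same $2\ell$-to-$1$ preimage bijection with the weight $w_{\bf q}(\mathfrak{m}_\square)/(|E|-1)^2$ in place of $w_{\bf q}(\mathfrak{m}_\square)$ gives the clean identity
\[
\E^{(1)}_\ell\!\bigl[(|E|-1)^{-2}\bigr]
=\frac1{C_\ell}\,\E^{(\ell)}\!\bigl[|E(\mathfrak{M}^{(\ell)})|^{-1}\bigr],
\]
hence $\E^{(1)}_\ell[\Phi_\ell^2]=C_\ell\,\E^{(\ell)}[|E|^{-1}]$ and the whole problem reduces to proving $\E^{(\ell)}[|E(\mathfrak{M}^{(\ell)})|^{-1}]=O(\ell^{-3/2})$. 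As a consistency check, the analogous bijection with weight $(|E(\mathfrak{m}_\square)|-1)$ yields $\E^{(\ell)}[|E(\mathfrak{M}^{(\ell)})|]=C_\ell\sim\ell^{3/2}/(2\sqrt\pi p_{\bf q})$, so Jensen already gives the matching lower bound $\E^{(\ell)}[|E|^{-1}]\ge 1/C_\ell$.

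The main obstacle is precisely the upper bound on the harmonic mean of the volume. I plan to obtain it by reading the volume of $\mathfrak{M}^{(\ell)}$ off the peeling exploration and using the scaling limit of the perimeter process established in Theorem~\ref{cvpérimètre}: the scaling $|E(\mathfrak{M}^{(\ell)})|/\ell^{3/2}$ converges in distribution to a positive random variable (a functional of the stable meander), and the behaviour of its density near $0$ makes its reciprocal integrable. Concretely, writing $\E^{(\ell)}[|E|^{-1}]=\int_0^\infty\P^{(\ell)}(|E|<1/s)\,ds$, splitting the integral at the typical scale $1/s\asymp\ell^{3/2}$ and using the deterministic bound $|E|\ge\ell$ to kill the atypically small regime yields $\E^{(\ell)}[|E|^{-1}]\le K\ell^{-3/2}$ for some $K=K({\bf q})$. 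Plugging back, $\sup_\ell\E^{(1)}_\ell[\Phi_\ell^2]<\infty$, and the Cauchy--Schwarz estimate concludes: $\E\,\varphi_\ell(\widetilde{\mathfrak{M}}_\ell^{(1)})=O\bigl(\sqrt{\E^{(1)}_\ell[\varphi_\ell]}\bigr)\to 0$.
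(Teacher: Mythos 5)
Your computation of the law of $\widetilde{\mathfrak{M}}_\ell^{(1)}$ is correct and coincides with the paper's formula (\ref{loi réenraciné}): the Radon--Nikodym derivative with respect to $\P^{(1)}_\ell$ is indeed $C_\ell/(\#\mathrm{Edges}(\mathfrak{m}_\square)-1)$ with $C_\ell=W_1^{(\ell)}/W^{(\ell)}\sim \ell^{3/2}/(2\sqrt{\pi}p_{\bf q})$, and your identity $\E^{(1)}_\ell[(\#\mathrm{Edges}-1)^{-2}]=C_\ell^{-1}\E^{(\ell)}[\#\mathrm{Edges}^{-1}]$ is also right (it is the change-of-measure formula applied to $\varphi_\ell=(\#\mathrm{Edges}-1)^{-1}$). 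The problem is the last step. After Cauchy--Schwarz you must show $\E^{(\ell)}[1/\#\mathrm{Edges}(\mathfrak{M}^{(\ell)})]=O(\ell^{-3/2})$, and the ingredients you invoke do not deliver it. Writing $\E^{(\ell)}[1/\#\mathrm{Edges}]=\int_0^\infty \P^{(\ell)}(\#\mathrm{Edges}<1/s)\,ds$, the regime $s\le \ell^{-3/2}$ contributes at most $\ell^{-3/2}$ and the deterministic bound $\#\mathrm{Edges}\ge\ell$ kills $s>\ell^{-1}$, but the intermediate regime $s\in[\ell^{-3/2},\ell^{-1}]$ contributes up to $\ell^{-1}$ under the trivial bound $\P\le 1$, which is far too large. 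To close it you would need a quantitative left-tail estimate of the form $\P^{(\ell)}(\#\mathrm{Edges}\le u)\lesssim (u\ell^{-3/2})^{\alpha}$ with $\alpha>1$, uniformly for $\ell\le u\le \ell^{3/2}$; the convergence in distribution of $\#\mathrm{Edges}\cdot\ell^{-3/2}$ to a positive limit (or the positivity of the limiting density near $0$) gives no rate in this window. Such a bound is plausibly provable via Lemma \ref{lemme BDG}, which identifies $\#\mathrm{Edges}$ with the first-passage time $\tau_{-\ell}(Y)$ of the $\mu$-walk, but as written this is a genuine missing step, not a routine verification.

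It is also worth noting that the $L^2$ detour is avoidable. The paper works directly from the identity $\E\,\varphi_\ell(\widetilde{\mathfrak{M}}_\ell^{(1)})=C_\ell\,\E^{(1)}_\ell[\varphi_\ell/(\#\mathrm{Edges}-1)]$ and splits on the event $\{\#\mathrm{Edges}\ge \vp\ell^{3/2}\}$: on that event the density is bounded by $C_\ell/(\vp\ell^{3/2})=O(1/\vp)$, so this part is controlled by $\E^{(1)}_\ell[\varphi_\ell]\to 0$; on the complement one uses $\varphi_\ell\le C$ and the change of measure backwards to bound the contribution by $C\,\P(\#\mathrm{Edges}(\mathfrak{M}^{(\ell)})\le \vp\ell^{3/2}-1)$, which vanishes as $\vp\to 0$ uniformly in large $\ell$ by the mere convergence in distribution of $\#\mathrm{Edges}\cdot\ell^{-3/2}$ to a positive random variable. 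This only uses the first moment of the density and the qualitative distributional limit, precisely the inputs your argument already has; I would recommend replacing the Cauchy--Schwarz step by this splitting rather than trying to prove the harmonic-mean estimate.
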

In particular, if $(A_\ell)_{\ell \ge 1}$ is a sequence of subsets of $\mathcal{M}^{(1)}_\ell$ 
such that
$
\P^{(1)}_\ell (A_\ell) \to 0
$ as $\ell \to \infty$, 
then
$
\P(\widetilde{\mathfrak{M}}_\ell^{(1)} \in A_\ell) \to 0
$.

\begin{proof}
	Observe first that the law of $\widetilde{\mathfrak{M}}_1^{(\ell)}$ can be characterized using $\P^{(\ell)}_1$. Indeed, if $\mathfrak{m}_\square \in \mathcal{M}_1^{(\ell)}$ is obtained by unzipping an edge of a map $\mathfrak{m}' \in \mathcal{M}^{(\ell)}$, then
	\begin{align*}
		\P( \widetilde{\mathfrak{M}}_1^{(\ell)}  = \mathfrak{m}_\square )
		&=
		\frac{1}{\# \mathrm{Edges} (\mathfrak{m}')}\P( \mathfrak{M}^{(\ell)}  = \mathfrak{m}')
		=
		\frac{1}{\# \mathrm{Edges} (\mathfrak{m}')}\frac{w_{\bf q}(\mathfrak{m}')}{W^{(\ell)}}
		\\
		&=
		\frac{1}{\# \mathrm{Edges} (\mathfrak{m}_\square)-1}\frac{W^{(\ell)}_1}{W^{(\ell)}}
		\frac{w_{\bf q}(\mathfrak{m}_\square)}{W^{(\ell)}_1}
		=
		\frac{W^{(\ell)}_1}{W^{(\ell)}} 
		\frac{1}{\# \mathrm{Edges} (\mathfrak{m}_\square)-1}
		\P^{(\ell)}_1(\mathfrak{m}_\square)
		.
	\end{align*}
	Another well known and useful consequence of this unzipping operation is that
	\begin{equation}\label{EqVolume}
		\E \, \# \mathrm{Edges}(\mathfrak{M}^{(\ell)}) 
		=\frac{W_1^{(\ell)}}{W^{(\ell)}}
		\mathop{\sim}\limits_{\ell \to \infty} \frac{\frac{1}{2}h^\downarrow_1(\ell) c_{\bf q}^{\ell+1}}{\frac{p_{\bf q}}{2}c_{\bf q}^{\ell+1} \ell^{-2}}
		\mathop{\sim}\limits_{\ell \to \infty} b_{\bf q} \ell^{3/2} \text{ where } b_{\bf q} = \frac{1}{2p_{\bf q} \sqrt{\pi}},
	\end{equation}
	applying successively (\ref{criticité de type 2}), (\ref{fonction de partition}), (\ref{h flèche}) and the fact that $h^\downarrow(\ell) \sim 1/\sqrt{\pi \ell}$.
	
	Next, the law of $\widetilde{\mathfrak{M}}_\ell^{(1)}$ can be expressed as follows. Let $\mathfrak{m}_\square \in \mathcal{M}^{(1)}_\ell$ which is obtained by re-rooting a map $\mathfrak{m}'_\square \in \mathcal{M}^{(\ell)}_1$. Then one can check that
	\begin{equation}\label{loi réenraciné}
		\P\left(\widetilde{\mathfrak{M}}_\ell^{(1)} = \mathfrak{m}_\square
		\right)
		=
		\frac{2\ell}{2} \P\left(\widetilde{\mathfrak{M}}_1^{(\ell)} = \mathfrak{m}'_\square \right)
		=
		\frac{2\ell}{2}
		\frac{W^{(\ell)}_1}{W^{(\ell)}} 
		\frac{\P^{(\ell)}_1(\mathfrak{m}'_\square)}{\# \mathrm{Edges} (\mathfrak{m}'_\square)-1}
		=
		\frac{W^{(\ell)}_1}{W^{(\ell)}} 
		\frac{\P^{(1)}_\ell(\mathfrak{m}_\square)}{\# \mathrm{Edges} (\mathfrak{m}_\square)-1}
		.
	\end{equation}
	We deduce that
	\begin{equation}\label{eq preuve lemme racine}
		\E \varphi_\ell\left(\widetilde{\mathfrak{M}}_\ell^{(1)} \right)
		=
		\sum_{\mathfrak{m}_\square \in \mathcal{M}^{(1)}_\ell} \frac{W^{(\ell)}_1}{W^{(\ell)}} 
		\frac{\P^{(1)}_\ell(\mathfrak{m}_\square)}{\# \mathrm{Edges} (\mathfrak{m}_\square)-1}\varphi_\ell(\mathfrak{m}_\square )
		= \frac{W^{(\ell)}_1}{W^{(\ell)}}  \E^{(1)}_\ell\left(\frac{\varphi_\ell(\mathfrak{m}_\square)}{\# \mathrm{Edges} (\mathfrak{m}_\square)-1} \right).
	\end{equation}
	Let $\vp >0$. We next distinguish whether the number of edges is larger than $\vp \ell^{3/2}+1$ or not. On the one hand, using (\ref{EqVolume})
	, one can see that
	$$
	\frac{W^{(\ell)}_1}{W^{(\ell)}}  \E^{(1)}_\ell\left(\frac{\varphi_\ell(\mathfrak{m}_\square)}{\# \mathrm{Edges} (\mathfrak{m}_\square)-1} {\bf 1}_{\#\mathrm{Edges}(\mathfrak{m}_\square)\ge \vp \ell^{3/2}+1} \right)
	\le \frac{W^{(\ell)}_1}{\vp \ell^{3/2} W^{(\ell)}} \E^{(1)}_\ell \varphi_\ell(\mathfrak{m}_\square)
	\mathop{\longrightarrow}\limits_{\ell \to \infty} 0.
	$$
	On the other hand,
	\begin{align*}
		\frac{W^{(\ell)}_1}{W^{(\ell)}}  \E^{(1)}_\ell\left(\frac{\varphi_\ell(\mathfrak{m}_\square){\bf 1}_{ \#\mathrm{Edges}(\mathfrak{m}_\square)\le \vp \ell^{3/2}}}{\# \mathrm{Edges} (\mathfrak{m}_\square)-1}  \right)
		&\le
		\frac{W^{(\ell)}_1}{W^{(\ell)}}  \E^{(1)}_\ell\left(C \frac{{\bf 1}_{\#\mathrm{Edges}(\mathfrak{m}_\square)\le \vp \ell^{3/2}}}{\# \mathrm{Edges} (\mathfrak{m}_\square)-1}  \right)\\
		&=C \P\left(\#\mathrm{Edges}\left(\widetilde{\mathfrak{M}}_\ell^{(1)}\right) \le \vp \ell^{3/2}\right)  \text{\enskip by (\ref{eq preuve lemme racine}) }\\
		&=C \P(\#\mathrm{Edges}(\mathfrak{M}^{(\ell)}) \le \vp \ell^{3/2}-1).
	\end{align*}
	Furthermore,
	$$
	\lim_{\vp \to 0} \limsup_{\ell\to \infty}\P^{(\ell)}(\#\mathrm{Edges}(\mathfrak{m})\le \vp \ell^{3/2}-1) =0,
	$$
	since $\# \mathrm{Edges}(\mathfrak{M}^{(\ell)}) \ell^{-3/2}$ converges in distribution towards a positive random variable as $\ell \to \infty$, see e.g. Proposition 10.4 in \cite{StFlour}.
\end{proof}

\section{Scaling limit of the distance to a target}\label{section distance racine face}

This section and the rest of this paper are devoted to the study of the dual distances on the $3/2$-stable map $\mathfrak{M}^{(\ell)}$ of perimeter $2\ell$. We thus henceforth assume that $\bf q$ is non-generic critical of type $a=2$. We will drop the subscript $2$ in $\Upsilon_2$, $\Upsilon^\uparrow_2$, $\Upsilon^\downarrow_2$ and $\Upsilon^\downarrow_{-1,2}$. 

As announced before, we first study the distance from the root face to a uniform random edge of $\mathfrak{M}^{(\ell)}$ and then transpose the result to the distance to a uniform random face and to a uniform random vertex. From Subsection \ref{échange racine cible}, it suffices to study the distance from the root face $f_r$ to the target face $\square$ under $\P^{(1)}_\ell$. Thus, we may use the filled-in exploration of $\P^{(1)}_\ell$ presented in Section \ref{section exploration} with in particular the peeling algorithms $\mathcal{A}_\mathrm{uniform}$ and $\mathcal{A}_\mathrm{layers}$.
\subsection{The first-passage percolation distance to a uniform random edge in $\mathfrak{M}^{(\ell)}$}

From Subsection \ref{sous-section explo uniforme}, in particular (\ref{distance fpp à l'arête épluchée}), we know that the fpp distance from the root face $f_r$ to the target face $\square$ under $\P_\ell^{(1)}$ is
\begin{equation}\label{distance fpp à la racine}
	d_\text{fpp}^\dagger (f_r, \square) = \sum_{k=0}^{\tau_{-\ell}-1} \frac{\mathcal{E}_k }{2P_\ell (k)},
\end{equation}
where the $\mathcal{E}_k$'s are i.i.d. exponential random variables of parameter one independent of the perimeter process.
The properties of the perimeter process proved in Section \ref{section périmètre} enable us to state the following proposition, taking us closer to the first convergence of Theorem \ref{distance de la racine à une face uniforme}.
\begin{proposition}\label{distance fpp arete uniforme}
	Under $\P_\ell^{(1)}$,
	$$\frac{d_\mathrm{fpp}^\dagger (f_r, \square)  }{ \log \ell } \mathop{\longrightarrow}\limits_{\ell \to \infty}^{(\P)} \frac{1 }{ \pi^2 p_{\bf q}}.$$
	As a consequence, if $E^{(\ell)}$ is a uniform edge in $\mathfrak{M}^{(\ell)}$, then
	$$\frac{d_\mathrm{fpp}^\dagger (f_r, E^{(\ell)})  }{ \log \ell } \mathop{\longrightarrow}\limits_{\ell \to \infty}^{(\P)} \frac{1 }{ \pi^2 p_{\bf q}}.$$
\end{proposition}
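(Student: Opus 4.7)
My approach rests on the identity (\ref{distance fpp à la racine}), which expresses $d_\mathrm{fpp}^\dagger(f_r,\square)$ under $\P^{(1)}_\ell$ as $\sum_{k=0}^{\tau_{-\ell}-1}\mathcal{E}_k/(2P_\ell(k))$, where the $\mathcal{E}_k$ are i.i.d. $\mathrm{Exp}(1)$ independent of the perimeter process $P_\ell$. I pick an intermediate time $n=n(\ell)$ with $n\to\infty$ and $n/\ell\to 0$ (say $n=\lfloor\ell/\log\log\ell\rfloor$), split the sum at $n$, and carry out a first/second moment estimate on each part.

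For the main regime $k<n$, Lemma \ref{couplage marche} couples $P_\ell$ with the walk $P_\infty$ under $\P^{(1)}_\infty$ so that they coincide on $[0,n]$ with probability $\E^{(1)}_\infty[(1+\ell)/(P_\infty(n)+\ell)]=1-o(1)$, since Theorem \ref{upsilon flèche} guarantees that $P_\infty(n)/\ell$ is $o_\P(1)$. I compute the mean via Lemma \ref{lemme3BCM}: combining the Doob-transform formula $\P(P_\infty(k)=x)=(h^\uparrow(x)/h^\uparrow(1))\P_1(S_1,\ldots,S_k\ge 1,\,S_k=x)$ with $h^\uparrow(x)\sim 2\sqrt{x/\pi}$ and the local limit yields $\E[1/P_\infty(k)]\sim \E[1/\Upsilon^\uparrow(1)]/(p_{\bf q} k)$. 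Specializing to $a=2$ the identity $\E[1/\Upsilon^\uparrow_a(1)^{a-1}]=2\Gamma(a)/((a-1)\pi^{3/2}\Gamma(a-3/2))$ established at the end of the proof of Proposition \ref{limite locale bivariée} gives $\E[1/\Upsilon^\uparrow(1)]=2/\pi^2$, and summation over $k\le n$ produces mean $\sim(\log n)/(\pi^2 p_{\bf q})\sim(\log\ell)/(\pi^2 p_{\bf q})$.

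Concentration splits into two contributions. Since $P_\infty(k)\ge 1$, conditionally on the walk the variance of $\sum\mathcal{E}_k/(2P_\infty(k))$ equals $\sum 1/(4P_\infty(k)^2)\le (1/2)\sum 1/(2P_\infty(k))$, of expectation $O(\log\ell)$, hence conditional fluctuations of order $\sqrt{\log\ell}$. To control the variance of $M_n:=\sum_{k=0}^{n-1}1/(2P_\infty(k))$ itself, I would decompose the sum over dyadic blocks $[2^j,2^{j+1})$: by the Markov property of $P_\infty$ and the index-$1$ self-similarity of $\Upsilon^\uparrow$, each block contributes an $L^2$-bounded quantity with mean $\asymp 1$, and cross-block correlations decay, giving a total variance of order $\log n=o((\log\ell)^2)$. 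This concentration step is the main technical obstacle of the proof.

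For the tail $\sum_{k=n}^{\tau_{-\ell}-1}\mathcal{E}_k/(2P_\ell(k))$, Theorem \ref{cvpérimètre} yields $(P_\ell(\ell t)/\ell)_t\Rightarrow \Upsilon^\downarrow_{-1}(p_{\bf q}\cdot)$; the limiting process stays bounded below by some random $c_\vp>0$ on $[\vp,\tilde{\tau}]$ and, by Proposition \ref{limite locale bivariée}, its last positive value is a.s.\ strictly positive, so with probability $1-o_\vp(1)$ one has $P_\ell(k)\ge c_\vp\ell$ on $[\vp\ell,\tau_{-\ell})$, making this portion of the sum $O(1)$, negligible next to $\log\ell$. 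Combining the three estimates yields $d_\mathrm{fpp}^\dagger(f_r,\square)/\log\ell\to 1/(\pi^2 p_{\bf q})$ in $\P^{(1)}_\ell$-probability. To transfer to a uniform edge $E^{(\ell)}$ in $\mathfrak{M}^{(\ell)}$, I apply Lemma \ref{lemme transfert biais} to the bounded functional $\varphi_\ell(\mathfrak{m}_\square)=\P_{\mathrm{fpp}}\bigl(|d_\mathrm{fpp}^\dagger(f_r,\square)/\log\ell - 1/(\pi^2 p_{\bf q})|>\vp\bigr)$, and use Remark \ref{petite modif des distances} to absorb the $O_\P(1)$ discrepancy between $d_\mathrm{fpp}^\dagger(f_r,\square)$ on $\widetilde{\mathfrak{M}}_\ell^{(1)}$ and $d_\mathrm{fpp}^\dagger(f_r,E^{(\ell)})$ on $\mathfrak{M}^{(\ell)}$.
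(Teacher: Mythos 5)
Your overall architecture is the same as the paper's: start from the identity (\ref{distance fpp à la racine}), kill the exponential randomness by a conditional variance bound (using $\sum 1/P_\ell(k)^2\le\sum 1/P_\ell(k)$), couple $P_\ell$ with $P_\infty$ via Lemma \ref{couplage marche} on an initial window, control the late times via the scaling limit, and transfer to a uniform edge through Remark \ref{petite modif des distances} and Lemma \ref{lemme transfert biais}. The difference is in the heart of the argument, and that is where you have a genuine gap. The paper does \emph{not} reprove the convergence in probability of $\frac{1}{\log\ell}\sum_{k\le\vp\ell}\frac{1}{2P_\infty(k)}$ to $\frac{1}{\pi^2 p_{\bf q}}$: it cites Equation (15) of \cite{BCM}, where exactly this statement is established for the infinite map. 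You instead propose a first/second moment proof. Your mean computation via Lemma \ref{lemme3BCM} and $\E[1/\Upsilon^\uparrow(1)]=2/\pi^2$ is correct, but the concentration step — the part you yourself flag as ``the main technical obstacle'' — is only asserted, and its key claim is delicate. Since $f_2^\uparrow(x)=C_1 x^{1/2}\tilde f_2(x)$ with $\tilde f_2$ bounded away from $0$ near the origin, one has $\E[1/\Upsilon^\uparrow(1)^2]=\infty$, and correspondingly $\E[(k/P_\infty(k))^2]$ diverges (like $\sqrt k$) as $k\to\infty$. So the individual summands are not square-integrable at the natural scale, and the ``$L^2$-bounded blocks with decaying cross-correlations'' claim requires genuine two-time estimates on $(P_\infty(k),P_\infty(k'))$ near the origin that you do not supply. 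As written, the central limit-in-probability statement is not proved; the clean repair is simply to invoke \cite{BCM} as the paper does.

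There is a second, smaller mismatch of windows. You run the coupling only up to $n=\lfloor\ell/\log\log\ell\rfloor$ (to get success probability $1-o(1)$), but your tail estimate via Theorem \ref{cvpérimètre} only lower-bounds $P_\ell(k)$ by $c_\vp\ell$ for $k\ge\vp\ell$; since $\Upsilon^\downarrow_{-1}$ starts from $0$, no such uniform lower bound holds near time $0$, so the range $k\in[n,\vp\ell)$ is covered by neither half of your argument. The paper avoids this by splitting directly at $\vp\ell$ and accepting a coupling success probability $1-o_\vp(1)$ (which suffices after letting $\vp\to0$); you should do the same, or separately bound the sum over $[n,\vp\ell)$ (whose expectation under the coupling is $O(\log(\vp\log\log\ell))=o(\log\ell)$).
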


\begin{proof}
	For the first convergence, it suffices to prove that 
	\begin{equation}\label{eq sans exp}
		\frac{1}{\log \ell}\sum_{k=0}^{\tau_{-\ell}-1} \frac{1 }{2P_\ell (k)} \mathop{\longrightarrow}\limits_{\ell \to \infty}^{(\P)} \frac{1}{ \pi^2 p_{\bf q}}.
	\end{equation}
	Indeed, assuming (\ref{eq sans exp}), like in the beginning of the proof of Proposition 3 of \cite{BCM}, we can reason conditionally on $P_\ell$:
	\begin{align*}
		\E \left(\left.\left(\frac{1}{ \log \ell}\sum_{k=0}^{\tau_{-\ell}-1} \frac{\mathcal{E}_k-1}{2P_\ell (k)} \right)^2\right| P_\ell\right)
		=\frac{1}{(\log \ell)^2} \sum_{k=0}^{\tau_{-\ell}-1} \frac{1 }{4P_\ell (k)^2} 
		\le \frac{1}{ (\log \ell)^2} \sum_{k=0}^{\tau_{-\ell}-1} \frac{1 }{ 4P_\ell (k)} \mathop{\longrightarrow}\limits_{\ell \to \infty}^{(\P)} 0,
	\end{align*}
	where the convergence comes from (\ref{eq sans exp}). 
	In order to prove (\ref{eq sans exp}), one can first see that by Theorem \ref{cvpérimètre}, for all $\vp >0$,
	\begin{equation}\label{intégrale de epsilon}
		\sum_{k=\lfloor \vp \ell \rfloor}^{\tau_{-\ell}-1} \frac{1 }{2P_\ell (k)} \mathop{\longrightarrow}\limits_{\ell \to \infty}^{(\mathrm{d})}
		\int_{\vp}^{\tilde{\tau}} \frac{1}{2\Upsilon^\downarrow_{-1}(p_{\bf q} t) }dt.
	\end{equation}
	Besides, under the coupling $\P^{(1), \lfloor \vp \ell \rfloor}_{\ell,\infty}$ of Lemma \ref{couplage marche}, on the event $\tau_{-\ell} > \lfloor \vp \ell \rfloor$ (which holds with probability greater than $1-o(1)$ uniformly in $\ell$ as $\vp \to 0$), for all $k \le \lfloor \vp \ell \rfloor$ we have $P_\ell(k) = P_\infty(k)$. Thus, on the event $\tau_{-\ell} > \lfloor \vp \ell \rfloor$, we have
	$$
	{1\over \log \ell}\sum_{k=0}^{\lfloor \vp \ell \rfloor} {1 \over 2P_\ell (k)} = {1\over \log \ell}\sum_{k=0}^{\lfloor \vp \ell \rfloor} {1 \over 2P_\infty (k)}\mathop{\longrightarrow}\limits_{\ell \to \infty}^{(\P)} {1\over \pi^2 p_{\bf q}},
	$$
	by Equation (15) of \cite{BCM}. Consequently, combining this convergence with (\ref{intégrale de epsilon}), we can conclude that (\ref{eq sans exp}) holds, which ends the proof of the first convergence.
	
	\noindent The first convergence of the statement is immediately transferred to the map $\mathfrak{M}^{(\ell)}$ equipped with a uniform edge by Remark \ref{petite modif des distances} and by Lemma \ref{lemme transfert biais}, using the function $\varphi_\ell$ defined by
	$$\forall \mathfrak{m}_\square \in \mathcal{M}^{(1)}_\ell, \qquad \varphi_\ell(\mathfrak{m}_\square)=\E\left(\left|\frac{d_\mathrm{fpp}^\dagger (f_r, \square) }{ \log \ell }
	-\frac{1}{\pi^2 p_{\bf q}}
	\right|\wedge 1\right),$$
	where the expectation applies to the exponential weights defining the fpp distance.
\end{proof}

\subsection{The dual graph distance to a uniform random edge in $\mathfrak{M}^{(\ell)}$}

The height process along the filled-in exploration using the peeling by layers algorithm of a map under $\P_\ell^{(1)}$ is denoted by $H_\ell$. We recall that $H_\ell(n)$ is the smallest height of an edge in $\partial \overline{\mathfrak{e}}_n$. From Subsection \ref{sous-section explo par couche}, we know that the dual graph distance from the root face $f_r$ to the target face $\square$ under $\P_\ell^{(1)}$ is
$
d_\mathrm{gr}^\dagger (f_r, \square) = H_\ell(\tau_{-\ell} -1 )+1
$.

Now we can state and prove the main result of this section, which gives the asymptotic behavior of the dual graph distance from the root face to the target face under $\P^{(1)}_\ell$, and brings us closer to the second convergence of Theorem \ref{distance de la racine à une face uniforme} by the arguments of Subsection \ref{échange racine cible}.

\begin{proposition}\label{distanceàunsommetuniforme}
	Under $\P^{(1)}_\ell$, 
	$$
	{d_\mathrm{gr}^\dagger (f_r, \square) \over (\log \ell)^2} \mathop{\longrightarrow}\limits_{\ell \to \infty}^{(\P)} {1\over 2 \pi^2}.
	$$
	As a consequence, if $E^{(\ell)}$ is a uniform random edge of $\mathfrak{M}^{(\ell)}$, then, in $\mathfrak{M}^{(\ell)}$,
	$$
	{d_\mathrm{gr}^\dagger (f_r, E^{(\ell)}) \over (\log \ell)^2} \mathop{\longrightarrow}\limits_{\ell \to \infty}^{(\P)} {1\over 2 \pi^2}.
	$$
\end{proposition}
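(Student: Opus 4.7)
The plan is to follow the blueprint of the proof of Proposition~\ref{distance fpp arete uniforme}, transposing the arguments from the FPP sum to the height process $H_\ell$ of the peeling-by-layers exploration under $\P^{(1)}_\ell$. Since $d_{\mathrm{gr}}^\dagger(f_r, \square) = H_\ell(\tau_{-\ell} - 1) + 1$, it suffices, as in the preceding proof, to establish that under $\P^{(1)}_\ell$, $H_\ell(\tau_{-\ell} - 1)/(\log \ell)^2 \to 1/(2\pi^2)$ in probability. The consequence for a uniform random edge $E^{(\ell)}$ of $\mathfrak{M}^{(\ell)}$ then follows from Remark~\ref{petite modif des distances} (the unzipping operation modifies the dual graph distance by exactly~$1$) and from Lemma~\ref{lemme transfert biais} applied to $\varphi_\ell(\mathfrak{m}_\square) = |(H_\ell(\tau_{-\ell}-1)+1)/(\log \ell)^2 - 1/(2\pi^2)| \wedge 1$.

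I would split the exploration at an intermediate time $\lfloor \vp \ell \rfloor$ for small $\vp > 0$. On the event $\{\tau_{-\ell} > \lfloor \vp \ell \rfloor\}$, whose probability is $1-o(1)$ uniformly in $\ell$ as $\vp \to 0$ by Proposition~\ref{limite du temps d'arret}, the coupling $\P^{(1),\lfloor \vp \ell \rfloor}_{\ell,\infty}$ of Lemma~\ref{couplage marche} extends to the full filled-in peeling exploration using Proposition~\ref{Markov explo} and the fact that discovered holes are conditionally independent Boltzmann maps given their perimeters. Under this extended coupling, the peeling-by-layers explorations of $\P^{(1)}_\ell$ and $\P^{(1)}_\infty$ coincide up to time $\lfloor \vp \ell \rfloor$, so in particular $H_\ell(\lfloor \vp \ell \rfloor) = H_\infty(\lfloor \vp \ell \rfloor)$. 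The analogous statement for the infinite $3/2$-stable map, obtained in \cite{BCM} via the volume and perimeter growth estimates of their Theorems~A and~B, then yields $H_\infty(\lfloor \vp \ell \rfloor)/(\log \ell)^2 \to 1/(2\pi^2)$ in probability.

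To conclude, I would show that the tail increment $H_\ell(\tau_{-\ell}-1) - H_\ell(\lfloor \vp \ell \rfloor)$ is negligible at scale $(\log \ell)^2$. Under $\mathcal{A}_\mathrm{layers}$ the height increases by at most $1$ per peeling step, and the conditional probability that $\Delta H_\ell(n) = 1$ given $\overline{\mathfrak{e}}_n$ can be bounded by a quantity of order $(\log P_\ell(n))/P_\ell(n)$ --- consistent with the infinite asymptotic $H_\infty(n) \sim (\log n)^2/(2\pi^2)$ together with $P_\infty(n) \sim n$. A first-moment computation combined with the scaling limit for $P_\ell$ of Theorem~\ref{cvpérimètre} would then give an $O(\log \ell)$ bound on $\E^{(1)}_\ell[H_\ell(\tau_{-\ell}-1) - H_\ell(\lfloor \vp \ell \rfloor)]$, whence negligibility. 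The main obstacle is precisely this per-step bound on $\E[\Delta H_\ell(n) \mid \overline{\mathfrak{e}}_n]$: unlike the FPP sum of Proposition~\ref{distance fpp arete uniforme}, which was an explicit functional of the perimeter process, the height increment depends on the finer combinatorial configuration of the boundary --- specifically on the number of edges at the current lowest layer and on the gluing events of $\mathcal{A}_\mathrm{layers}$ that may consume them in one shot --- so establishing it requires a careful analysis of peeling-by-layers dynamics in the Cauchy-type critical regime, in the spirit of the height-process estimates of \cite{BCM} for the infinite map.
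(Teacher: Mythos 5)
Your route is genuinely different from the paper's, and the difference is worth spelling out. The paper does not split the exploration at an intermediate time and never needs to control the tail increment of the height. Instead it conditions on the terminal data: by Proposition \ref{Markov explo}, conditionally on $\{\tau_{-\ell}=n,\ P_\ell(n-1)=k\}$ the entire exploration $(\overline{\mathfrak{e}}_0,\dots,\overline{\mathfrak{e}}_{n-1})$ under $\P^{(1)}_\ell$ has the same law as the exploration under $\P^{(1)}_\infty$ conditioned on $\{P_\infty(n-1)=k\}$. The bivariate local limit theorem (Proposition \ref{limite locale bivariée}) combined with Lemma \ref{lemme3BCM} shows that the ratio of $\P^{(1)}_\ell(\tau_{-\ell}=\lfloor\ell s\rfloor,\ P_\ell(\lfloor\ell s\rfloor-1)=\lfloor\ell u\rfloor)$ to $\P^{(1)}_\infty(P_\infty(\lfloor\ell s\rfloor-1)=\lfloor\ell u\rfloor)$ is $O(1/\ell)$ uniformly for $(s,u)$ in a compact set carrying probability $1-\vp$; hence $\E^{(1)}_\ell\,F\bigl(H_\ell(\tau_{-\ell}-1)/(\log\ell)^2\bigr)$ is bounded by $\vp$ plus a constant times $\int\E^{(1)}_\infty F\bigl(H_\infty(\lfloor\ell s\rfloor-1)/(\log\ell)^2\bigr)ds$, and Proposition 4 of \cite{BCM} concludes. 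This transfers the whole height, terminal value included, in one shot.

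Your decomposition is viable in outline, but as written it has a genuine gap at exactly the point you flag: the per-step bound $\E^{(1)}_\ell[\Delta H_\ell(n)\mid\mathcal{F}_n]=O(\log P_\ell(n)/P_\ell(n))$ is the crux of your tail estimate and you do not prove it. It is not a formal consequence of the BCM estimates for the infinite map, since the layered-exploration kernel under $\P^{(1)}_\ell$ carries the extra factor $h^\downarrow_\ell(p+k)/h^\downarrow_\ell(p)=\frac{h^\uparrow(p+k)}{h^\uparrow(p)}\cdot\frac{p+\ell}{p+k+\ell}$ relative to the $h^\uparrow$ case, and the whole computation must be redone with it; the paper does this, but only later and for a different purpose (Lemma \ref{analogue6}, used for the diameter), so your proof must import or reprove that lemma. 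Granting it, the conclusion of your tail step also needs a bit more care than a bare first-moment computation: the workable control is $\sum_{k=\lfloor\vp\ell\rfloor}^{\tau_{-\ell}-1}\log P_\ell(k)/P_\ell(k)\le O_{\P}(\log\ell)\cdot\sum_{k=\lfloor\vp\ell\rfloor}^{\tau_{-\ell}-1}1/P_\ell(k)$ with the last sum $O_{\P}(1)$ by the analogue of (\ref{intégrale de epsilon}), rather than a bound on the unconditional expectation. The remaining ingredients of your proposal — extending the coupling of Lemma \ref{couplage marche} to the full exploration via Proposition \ref{Markov explo} (correct: conditionally on the perimeter trajectory and on survival, the peeling events have the same law under both measures), and the transfer to a uniform edge via Remark \ref{petite modif des distances} and Lemma \ref{lemme transfert biais} — are sound and coincide with the paper.
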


\begin{proof}
	First of all, from Proposition \ref{Markov explo}, it is easy to see that for all $n\ge 1$, for any increasing sequence of maps with holes $\mathfrak{u}_0\subset \mathfrak{u}_1 \subset \cdots \subset \mathfrak{u}_{n-1} $,
	\begin{align*}
		\P_\ell^{(1)}&(\mathfrak{e}_0=\mathfrak{u}_0,\ldots, \mathfrak{e}_{n-1} = \mathfrak{u}_{n-1} |\tau_{-\ell}=n \text{ and } P_\ell({n-1}) = |\partial\mathfrak{u}_{n-1}|/2) \\
		&= \P_\infty^{(1)}(\mathfrak{e}_0=\mathfrak{u}_0,\ldots, \mathfrak{e}_{n-1} = \mathfrak{u}_{n-1} |P_\infty(n-1) = |\partial\mathfrak{u}_{n-1}|/2).
	\end{align*}
	As a result, if $F: \R \to \R$ is a continuous bounded function,
	\begin{align*}
		\E^{(1)}_\ell F\left( \frac{H_\ell(\tau_{-\ell}-1)}{(\log \ell)^2}\right)
		&= \sum_{n,k\ge 1} \E_\infty^{(1)}\left(\left.F\left( \frac{H_\infty(\tau_{-\ell}-1)}{(\log \ell)^2}\right) \right| P_\infty(n-1)=k\right) \P_\ell^{(1)}\left(\tau_{-\ell}= n, P_\ell(n-1)=k\right)
	\end{align*}
	We take $F: x \mapsto |x-1/(2\pi^2)|\wedge 1$. Let $\vp>0$. Let $\delta>0$ be small enough so that for all $\ell \ge 1$, under $\P^{(1)}_\ell$, we have $\tau_{-\ell}/\ell \in [\delta, 1/\delta]$ and $ P_\ell(\tau_{-\ell}-1)/\ell \in [\delta,1/\delta]$ with probability at least $1-\vp$. Such a $\delta$ exists thanks to Proposition \ref{limite locale bivariée}.
	Then, by Proposition \ref{limite locale bivariée} and Lemma \ref{lemme3BCM}, uniformly on $s,u\in [\delta,1/\delta]$ as $\ell \to \infty$,
	\begin{align*}
		\frac{\P^{(1)}_\ell(\tau_{-\ell} = \lfloor \ell s \rfloor \text{ and } P_\ell(\lfloor \ell s \rfloor -1) = \lfloor \ell u \rfloor)}
		{\P_\infty^{(1)} (P_\infty(\lfloor \ell s \rfloor -1)= \lfloor \ell u \rfloor)} 
		&=\frac{\P^{(1)}_\ell(\tau_{-\ell} = \lfloor \ell s \rfloor \text{ and } P_\ell(\lfloor \ell s \rfloor -1) = \lfloor \ell u \rfloor)}
		{(h^\uparrow(\lfloor \ell u \rfloor)/h^\uparrow(1))\P_1(S_1, \ldots, S_{\lfloor \ell s \rfloor -1}\ge1 \text{ and } S_{\lfloor \ell s \rfloor -1}= \lfloor \ell u \rfloor)}\\
		&\sim \frac{\pi p_{\bf q}}{\ell\sqrt{u} (1+u)^2},
	\end{align*}
	where we used that $h^\uparrow(\ell) \sim 2\sqrt{\ell/\pi}$. In particular, the above ratio is uniformly bounded in $\ell \ge 1$ and $u,s \in [\delta,1/\delta]$ by a constant $C(\delta)$ times $1/\ell$.
	Thus
	\begin{align*}
		\E^{(1)}_\ell F\left( \frac{H_\ell(\tau_{-\ell}-1)}{(\log \ell)^2}\right)
		&\le \vp + C(\delta)\ell^{-1}\int_\delta^{1/\delta}ds\int_\delta^{1/\delta} du \ell^2\E^{(1)}_\infty\left(F\left(\frac{H_\infty(\lfloor \ell s \rfloor-1)}{(\log \ell)^2}\right) {\bf 1}_{P_\infty(\lfloor \ell s \rfloor -1) = \lfloor \ell u \rfloor}\right) \\
		&\le \vp + C(\delta) \int_\delta^{1/\delta}ds\E^{(1)}_\infty\left(F\left(\frac{H_\infty(\lfloor \ell s \rfloor-1)}{(\log \ell)^2}\right) \right).
	\end{align*}
	The last integral tends to $0$ as $\ell \to + \infty$ because of Proposition 4 from \cite{BCM}, stating that under $\P_\infty^{(1)}$,
	$$
	{H_\infty(\ell) \over (\log \ell )^2}
	\mathop{\longrightarrow}\limits_{\ell \to \infty}^{(\P)}
	{1\over 2\pi^2}.
	$$
	This entails the first convergence.
	
	\noindent The second convergence is a straightforward consequence of the first one due to Remark \ref{petite modif des distances} and to Lemma \ref{lemme transfert biais}, using the function $\varphi_\ell$ defined by
	$$\forall \mathfrak{m}_\square \in \mathcal{M}^{(1)}_\ell, \qquad \varphi_\ell(\mathfrak{m}_\square)=\left|\frac{d_\mathrm{gr}^\dagger (f_r, \square) }{ (\log \ell)^2 }
	-\frac{1}{2\pi^2 }
	\right|\wedge 1.$$
	This concludes the proof.
\end{proof}

\subsection{Replacing the uniform random edge by a vertex or a face}\label{uvufue}

Let us explain how the results of the preceding subsections also give us the distance from the root face to a uniform random vertex or to a uniform random face, hence proving Theorem \ref{distance de la racine à une face uniforme} at the end of this subsection. This subsection relies on classical arguments involving the peeling exploration and bijections with trees which may be skipped by the reader.

We first focus on the case of a uniform random vertex. Notice that $ |\mathrm{Vertices}(\mathfrak{M}^{(\ell)})|=\sum_{\vec{e}} {1\over \deg v(\vec{e})}$, where the sum is over all the oriented edges $\vec{e}$ of $\mathfrak{M}^{(\ell)}$.
To get a uniform random vertex on $\mathfrak{M}^{(\ell)}$, conditionally on $\mathfrak{M}^{(\ell)}$, we pick a random oriented edge $\vec{E}$ of $\mathfrak{M}^{(\ell)}$ of law
\begin{equation}\label{E flèche}
	\forall \vec{e} \in \mathrm{Edges}(\mathfrak{M}^{(\ell)}), \ \ \P\left( \left.\vec{E} = \vec{e} \ \right| \mathfrak{M}^{(\ell)}\right) \coloneqq 
	{1\over \deg v(\vec{e})} {1\over |\mathrm{Vertices}(\mathfrak{M}^{(\ell)})|},
\end{equation}
where $v(\vec{e})$ is the vertex from which starts $\vec{e}$. Then for all vertex $v$ of $\mathfrak{M}^{(\ell)}$
\begin{equation}\label{eq sommet uniforme}
	\P\left(\left.v(\vec{E}) = v\right|\mathfrak{M}^{(\ell)}\right) = {1\over |\mathrm{Vertices}(\mathfrak{M}^{(\ell)})|},
\end{equation}
i.e. $V\coloneqq v(\vec{E})$ is a random vertex taken uniformly on $\mathfrak{M}^{(\ell)}$. 

Let us control the degree of 
the root vertex $v_r$ (from which starts the root edge) under $\P^{(1)}_\ell$. The lemma below is an extension of Lemma 15.6 from \cite{StFlour}.
\begin{lemma}\label{degré arête}
	There exists $c \in (0,1)$ such that for all $\ell,j \ge 1$, 
	
	$$
	\P^{(1)}_\ell (\deg v_r \ge j ) \le c^{j-2}.
	$$
\end{lemma}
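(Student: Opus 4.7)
The plan is to adapt the argument of Lemma~15.6 in \cite{StFlour} to our setting. I would run the filled-in peeling exploration under $\P^{(1)}_\ell$ using an algorithm $\mathcal{A}_{v_r}$ that, at each step, peels an edge of the hole boundary incident to $v_r$ whenever such an edge exists. Initially the hole is the root $2$-gon, whose two edges both meet $v_r$ at the $v_r$-corner. I would let $T$ denote the number of steps before $v_r$ leaves the boundary and aim to show that $T$ (hence $\deg v_r$) has a geometric tail uniformly in $\ell$.

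The quantitative core is a uniform lower bound on the per-step close-off probability. For a step with current perimeter $m=P_\ell(n)\ge 2$, I would focus on the event $E^*$ in which the peeled $v_r$-edge is identified with the other boundary edge incident to $v_r$ via a $G_{0,*}$ gluing (zero-perimeter bubble on the $v_r$-corner side), removing $v_r$ from the boundary. By Proposition~\ref{Markov explo}, its conditional probability equals $\tfrac{1}{2}\nu(-1)\,h^\downarrow_\ell(m-1)/h^\downarrow_\ell(m)$, and the explicit formulas for $h^\downarrow$ and $h^\downarrow_\ell$ give
\[
\frac{h^\downarrow_\ell(m-1)}{h^\downarrow_\ell(m)}\;=\;\frac{2(m-1)}{2m-1}\cdot\frac{m+\ell}{m-1+\ell}\;\ge\;\frac{2}{3}
\]
uniformly in $\ell\ge 1$ and $m\ge 2$, yielding a uniform lower bound $\ge \nu(-1)/3>0$ on $\P(E^*)$. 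The transient state $m=1$ (where no gluing is possible) is left after a $\mathrm{Geom}(1-q_1)$ number of $C_1$ steps, contributing only a bounded geometric correction.

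To translate the geometric tail of $T$ into an exponential tail for $\deg v_r$, I would note that each $C_k$ step contributes exactly one newly discovered edge incident to $v_r$, while a non-close-off gluing event identifies $v_r$ with a boundary vertex of a filled-in bubble which, by Proposition~\ref{Markov explo}, is an independent ${\bf q}$-Boltzmann map \emph{without} target of perimeter strictly smaller than $2m$. The extra edges contributed to $v_r$ by such a merger are bounded by the root-vertex degree in that bubble, which has exponential tails by Lemma~15.6 of \cite{StFlour} uniformly in the bubble's perimeter. Aggregating the geometric contribution from the $C_k$ steps and the exponentially-tailed contributions from the bubble mergers, together with a separate bound on the rare $C^{\mathrm{stop}}$ event from $m=1$ (whose probability is $O(\ell^{-1/2})$ but which could in principle reveal up to $O(\ell)$ edges at $v_r$), gives $\P^{(1)}_\ell(\deg v_r\ge j)\le c^{j-2}$ for some $c\in(0,1)$ depending only on $\bf q$. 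The hard part will be the careful aggregation of bubble contributions uniformly in $\ell$; this is cleanest via an induction on $\ell$, so that each bubble, being of smaller target-less perimeter, satisfies the inductive hypothesis directly.
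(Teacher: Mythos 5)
Your overall strategy --- peel around $v_r$, extract a uniformly positive per-step close-off probability, and convert the geometric tail of the exploration time into a geometric tail for $\deg v_r$ --- is the same as the paper's, and your computation of the close-off probability $\tfrac12\nu(-1)\,h^\downarrow_\ell(m-1)/h^\downarrow_\ell(m)\ge \nu(-1)/3$ in the targeted regime is correct; it is exactly the paper's first case. The genuine gaps concern what happens once $v_r$ is no longer on the boundary of the hole containing the target.

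First, your treatment of the ``swallowing'' gluing events does not close. The paper sidesteps the issue by changing the filling rule: after a $G$ event it fills in the hole \emph{not} containing $v_r$ and keeps peeling around $v_r$ in the remaining hole, accepting that this hole may no longer contain the target; in that untargeted regime the close-off event is $G_{p-1,0}$, with probability $W^{(p-1)}W^{(0)}/W^{(p)}$, bounded below uniformly in $p$ via (\ref{criticité de type 2}). You instead keep the standard filling rule and try to bound the degree contributed by the bubble absorbing $v_r$. But (i) $v_r$ is identified with a boundary vertex of that bubble at a position dictated by the gluing, not with its root vertex, so ``bounded by the root-vertex degree'' is not literally true and requires at least a re-rooting-invariance argument; and (ii) your induction is on the wrong family: the bubbles are target-less Boltzmann maps of law $\P^{(k)}$ (with $k$ possibly larger than $\ell$, since the perimeter process is not bounded by $\ell$), while the inductive statement concerns $\P^{(1)}_\ell$, so the hypothesis does not apply to them ``directly''. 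What you actually need is precisely the target-less analogue of the lemma, which is the content of the paper's second case. Second, the $C^{\mathrm{stop}}$ event is flagged but not handled, and it cannot be dismissed on probability grounds: the claimed bound must hold for \emph{every} $j\ge 1$, and an event of probability of order $\ell^{-1/2}$ on which $\deg v_r$ could reach order $\ell$ would violate $c^{j-2}$ for all $j$ somewhat larger than $\log \ell$. In the paper's modified exploration this event is harmless: discovering the target face reveals exactly one new edge at $v_r$ and simply moves the exploration into the untargeted regime, where the uniform close-off bound continues to apply.
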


\begin{proof}
	As in \cite{StFlour} we use the peeling exploration ``peeling around the root vertex'', defined as follows: as long as the root vertex belongs to the boundary of the hole $\partial \overline{\mathfrak{e}}_n$, we peel the edge on $\partial \overline{\mathfrak{e}}_n$ adjacent on the left of $v_r$ until it becomes an interior vertex. More precisely, after an event of type $G$, we fill-in the hole which does not have $v_r$ on its boundary and we continue the exploration. Thus, the hole may not contain the target after some time. We write $G_{k_1,k_2}$ the event of an identification of two edges on the boundary of a hole of perimeter $2p=2k_1+2k_2+2$ which does not contain the target, giving rise to a hole of perimeter $2k_1$ on the left of the peeled edge (which will be filled-in) and a hole of perimeter $2k_2$ on the right. This event has the transition probability $W^{(k_1)}W^{(k_2)}/W^{(p)}$. The exploration ends when $v_r$ becomes an inner vertex, i.e. after an event of type $G_{*,0}$ or after an event of type $G_{k,0}$ for some $k\ge 0$.  
	
	At one step of this peeling exploration, let $2p$ be the perimeter of the hole whose boundary contains $v_r$. We distinguish the two possible cases:
	\begin{itemize}
		\item Either $v_r$ is in the boundary of the hole containing the target face. In that case, the exploration stops if and only if the event $G_{*,0}$ occurs. And this happens with probability $\frac{1}{2} \nu(-1) \frac{h^\downarrow_\ell(p-1)}{h^\downarrow_\ell(p)}$ by Proposition \ref{Markov explo} (and necessarily $p\ge 2$ so that the hole which contains the target has positive boundary length).
		\item Either $v_r$ is in the boundary of a hole which does not contain the target face. In this case, the exploration stops when the event $G_{p-1,0}$ occurs. This event happens with probability $W^{(p-1)}W^{(0)}/W^{(p)}$.
	\end{itemize}
	But one can see by (\ref{criticité de type 2}), (\ref{h flèche}) and since $h^\downarrow(\ell)\sim 1/\sqrt{\pi \ell}$ when $\ell \to \infty$ that
	$$
	\frac{W^{(0)}W^{(0)}}{W^{(1)}} \wedge \inf_{\substack{p \ge 2 \\ \ell\ge 1}} \left(\frac{1}{2} \nu(-1) \frac{h^\downarrow_\ell(p-1)}{h^\downarrow_\ell(p)} \wedge \frac{W^{(p-1)}W^{(0)}}{W^{(p)}} \right) >0.
	$$
	Besides, in the first case, when $p = 1$ we automatically have $p\ge 1$ at the next step by an event of type $C_k$, and with positive probability we will have $p\ge 2$ at the next step. Moreover, at each step of the exploration, $\deg(v_r)$ only increases by zero or one, hence the statement of the lemma.
\end{proof}

We now recall a lemma which gathers some well known consequences of the Bouttier-Di Francesco-Guitter bijection (from \cite{BDFG}) and Janson \& Stef\'ansson's trick (from \cite{JS} Section 3), together with the \L{}ukasiewicz path.
\begin{lemma}\label{lemme BDG}(\cite{BDFG},\cite{JS})
	The measure $\mu$ on $\Z_{\ge -1}$ defined by
	$$
	\mu(-1)= \frac{4}{c_{\bf q}} \qquad \text{and}
	\qquad
	\mu(k)= q_{k+1} \binom{2k+1}{k+1} (c_{\bf q}/4)^k \text{ for all } k \ge 0.
	$$
	is a probability measure.
	Moreover, to a map $\mathfrak{m}_\bullet$ under $\P^{(\ell)}_0$ we can associate a random walk $(Y_n)_{n\ge 0}$ of step distribution $\mu$ starting at zero and stopped when it attains $-\ell$ such that if we denote by $\tau_{-\ell}(Y)$ the time at which $Y$ reaches $-\ell$, then
	$$
	\left\{
	\begin{array}{l}
		\# \left\{ n \in \lb 0, \tau_{-\ell}(Y)-1 \rb;  \ Y_{n+1}-Y_n=-1\right\}+1 = \# \mathrm{Vertices} (\mathfrak{m}_\bullet) \\
		\# \left\{ n \in \lb 0, \tau_{-\ell}(Y) -1\rb;  \ Y_{n+1}-Y_n\neq -1\right\}+1  = \# \mathrm{Faces} (\mathfrak{m}_\bullet)
		.
	\end{array}
	\right.
	$$
	As a consequence $\tau_{-\ell}(Y) = \# \mathrm{Edges} (\mathfrak{m}_\bullet)$.
\end{lemma}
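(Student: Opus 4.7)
The plan is to deduce the lemma by combining two classical bijective results: the Bouttier-Di Francesco-Guitter (BDFG) bijection for pointed bipartite maps and the Janson-Stef\'ansson (JS) trick, both cited in the statement.

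\textbf{Step 1.} First I would check that $\mu$ is a probability measure. The identity $\tfrac{4}{c_{\bf q}}+\sum_{k\ge 0}q_{k+1}\binom{2k+1}{k+1}(c_{\bf q}/4)^k=1$ is just a reformulation of the admissibility fixed-point equation for $c_{\bf q}$; one may derive it from the explicit form (\ref{def marche}) of $\nu$ and the expressions (\ref{fonction de partition}) for $W_p^{(\ell)}$ by a generating-function manipulation combined with the Catalan-type identity $W_0^{(k)}=h^\downarrow(k)c_{\bf q}^k$.

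\textbf{Step 2.} Apply the BDFG bijection: each $\mathfrak{m}_\bullet\in\mathcal{M}_0^{(\ell)}$ is encoded by a forest of $\ell$ labeled mobiles (plane trees with alternating white and black vertices, carrying integer labels on the black vertices satisfying the standard BDFG increment constraint around each white corner). Under this bijection, the non-$\bullet$ vertices of $\mathfrak{m}_\bullet$ are in bijection with the black vertices of the forest, the non-root faces of $\mathfrak{m}_\bullet$ with the white vertices (a face of degree $2d$ giving a white vertex of degree $d$), and the $2\ell$ half-edges of $f_r$ are represented by the $\ell$ planted roots. Summing $w_{\bf q}(\mathfrak{m}_\bullet)$ over the admissible labelings produces a combinatorial factor $\binom{2d-1}{d-1}$ at every white vertex of degree $d$.

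\textbf{Step 3.} Apply the JS trick to each mobile: it is a bijection on the vertex set that turns every white vertex of degree $d$ into an internal node of out-degree $d$ and every black vertex into a leaf. Concatenating the $\ell$ \L{}ukasiewicz paths of the resulting monotype plane trees defines the walk $Y$: it starts at $0$, makes one step per vertex of the forest, and is stopped when it hits $-\ell$. A bookkeeping of the powers of $c_{\bf q}$ (normalizing $w_{\bf q}(\mathfrak{m}_\bullet)$ by the partition function $W_0^{(\ell)}=h^\downarrow(\ell)c_{\bf q}^\ell$ and distributing the resulting powers of $c_{\bf q}$ evenly over the $\#\mathrm{Edges}(\mathfrak{m}_\bullet)$ steps of $Y$) then shows that an internal node of out-degree $k+1$ contributes a factor $q_{k+1}\binom{2k+1}{k+1}(c_{\bf q}/4)^k=\mu(k)$, while each leaf contributes $\mu(-1)=4/c_{\bf q}$; hence $Y$ has i.i.d.\ step distribution $\mu$ until $\tau_{-\ell}(Y)$. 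The three displayed identities are now immediate: the $-1$ steps are in bijection with the black vertices (hence with $\mathrm{Vertices}(\mathfrak{m}_\bullet)\setminus\{\bullet\}$); the non-$(-1)$ steps are in bijection with the white vertices (hence with $\mathrm{Faces}(\mathfrak{m}_\bullet)\setminus\{f_r\}$); and $\tau_{-\ell}(Y)$ equals the total number of vertices of the forest, which by Euler's formula equals $\#\mathrm{Edges}(\mathfrak{m}_\bullet)$.

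The bijective content being standard, the main obstacle is essentially the careful bookkeeping of the factors $c_{\bf q}$ and $4$ that appear in $\mu$; the factor $4$ in $\mu(-1)=4/c_{\bf q}$ in particular can be traced back to BDFG's choice of rooting each mobile at a distinguished corner of a black vertex.
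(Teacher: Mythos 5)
The paper gives no proof of this lemma (it is stated purely as a recollection of \cite{BDFG} and \cite{JS}), and your sketch is a correct reconstruction of exactly the standard argument those citations point to: BDFG mobiles for a pointed map with boundary, the Janson--Stef\'ansson transformation turning face-vertices into internal nodes and vertex-vertices into leaves, and the \L{}ukasiewicz path of the resulting forest, with Euler's formula giving $\tau_{-\ell}(Y)=\#\mathrm{Edges}(\mathfrak{m}_\bullet)$. The only step you leave schematic --- the weight bookkeeping showing the steps of $Y$ are i.i.d.\ of law $\mu$ (which is checked locally at each vertex rather than by ``distributing powers of $c_{\bf q}$ evenly'') --- is precisely the content of the cited results, e.g.\ the Boltzmann--Galton--Watson correspondence in \cite{StFlour}.
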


Using the previous lemmas, we are able to control the degree of a random uniform vertex in the map $\mathfrak{M}^{(\ell)}$. The degree of a uniform random vertex of the map $\mathfrak{M}^{(\ell)}$ is stochastically dominated by a random variable whose law does not depend on $\ell$:

\begin{lemma}\label{majoration degré sommet uniforme}
	There exists $C>0$ and $c \in (0,1)$ such that for all $k,\ell\ge 1$, if $V$ is a uniform random vertex of $\mathfrak{M}^{(\ell)}$, then
	$$
	\P ( \deg(V) = k) \le Cc^k. 
	$$
\end{lemma}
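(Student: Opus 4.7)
The plan is to reduce the degree of the uniform vertex $V$ of $\mathfrak{M}^{(\ell)}$ to the degree of the root vertex in the re-rooted unzipped map $\widetilde{\mathfrak{M}}^{(1)}_\ell$ from Subsection \ref{échange racine cible}, to which Lemma \ref{degré arête} applies. Set $V^\ast = v(\vec{E}_0)$ for $\vec{E}_0$ a uniform oriented edge of $\mathfrak{M}^{(\ell)}$; then $V^\ast$ is the size-biased-by-degree version of $V$, so an exponential tail bound $\P(\deg V^\ast \ge k) \le Cc^k$ implies the desired $\P(\deg V = k) \le C' c^k$, provided $\E[\deg V] = \E^{(\ell)}[2\#\mathrm{Edges}/\#\mathrm{Vertices}]$ is bounded uniformly in $\ell$---a property that follows from the asymptotic $\E^{(\ell)}[\#\mathrm{Edges}]\sim b_{\bf q}\ell^{3/2}$ of $(\ref{EqVolume})$ and the analogous $\E^{(\ell)}[\#\mathrm{Vertices}]\sim \tilde c_{\bf q}\ell^{3/2}$ obtained in the same way using $W^{(\ell)}_0 = h^\downarrow(\ell) c_{\bf q}^\ell$ from $(\ref{fonction de partition})$, together with the concentration of these two quantities under $\P^{(\ell)}$.

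By the construction of $\widetilde{\mathfrak{M}}^{(1)}_\ell$ (the former root face being the target of perimeter $2\ell$ and the new root edge being one of the two edges of the unzipped bigon picked uniformly at random), the root vertex of $\widetilde{\mathfrak{M}}^{(1)}_\ell$ has degree $\deg V^\ast + 1$, the extra $+1$ coming from the unzipping; using the density $(\ref{loi réenraciné})$,
\[
\P(\deg V^\ast \ge k) \;=\; \frac{W^{(\ell)}_1}{W^{(\ell)}}\,\E^{(1)}_\ell\!\left[\frac{{\bf 1}_{\deg v_r \ge k+1}}{\#\mathrm{Edges}-1}\right].
\]
Since $W^{(\ell)}_1/W^{(\ell)} \sim b_{\bf q}\ell^{3/2}$ by $(\ref{EqVolume})$, and the normalization of $(\ref{loi réenraciné})$ yields $\E^{(1)}_\ell[1/(\#\mathrm{Edges}-1)] = W^{(\ell)}/W^{(\ell)}_1 \sim (b_{\bf q}\ell^{3/2})^{-1}$, an asymptotic independence of the indicator ${\bf 1}_{\deg v_r \ge k+1}$ from the factor $1/(\#\mathrm{Edges}-1)$ would, together with Lemma \ref{degré arête}, immediately give the bound $\le C c^{k-1}$.

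The main obstacle is therefore establishing this asymptotic independence uniformly in $k$ and $\ell$. Intuitively $\{\deg v_r \ge k+1\}$ only constrains the local neighborhood of the root vertex while $\#\mathrm{Edges}$ is a global quantity. One can formalize this via the peeling-around-$v_r$ used in the proof of Lemma \ref{degré arête}: after at most $\deg v_r$ steps the root vertex is absorbed, and by the spatial Markov property of Proposition \ref{Markov explo} the complementary region is a Boltzmann map whose perimeter is still of order $\ell$ (except on a subevent of exponentially small probability in $k$), so its edge count has expectation of order $\ell^{3/2}$. A more robust route is Cauchy--Schwarz,
\[
\E^{(1)}_\ell\!\left[\frac{{\bf 1}_{\deg v_r \ge k+1}}{\#\mathrm{Edges}-1}\right] \le \sqrt{\P^{(1)}_\ell(\deg v_r \ge k+1)\cdot \E^{(1)}_\ell[(\#\mathrm{Edges}-1)^{-2}]},
\]
with the first factor bounded by $c^{(k-1)/2}$ via Lemma \ref{degré arête} and the second factor bounded by $O(\ell^{-3})$ via a second-moment estimate, which can be extracted from the scaling limit of Theorem \ref{cvpérimètre} together with a uniform-integrability argument. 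Combined with the $\ell^{3/2}$ prefactor, either route yields $\P(\deg V^\ast \ge k)\le C c^k$, from which the statement follows via the size-unbiasing relation.
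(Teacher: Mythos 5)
Your opening reduction---passing to the degree-biased vertex $V^\ast=v(\vec E_0)$ and invoking Lemma \ref{degré arête}---is the same first step as the paper's, but the un-biasing step, which is where the paper does its real work, does not close as you state it. Writing $N_k=\#\{v:\deg v\ge k\}$ and $R=\#\mathrm{Edges}/\#\mathrm{Vertices}$, one has
$\P(\deg V\ge k)=\E[N_k/\#\mathrm{Vertices}]\le \frac{2}{k}\E\bigl[R\cdot\P(\deg V^\ast\ge k\mid \mathfrak{M}^{(\ell)})\bigr]$,
and $R$ is an unbounded random variable positively correlated with the presence of high-degree vertices. A bound on $\E[R]$ together with $\P(\deg V^\ast\ge k)\le Cc^k$ does not control the expectation of the product: splitting on $\{R\le A\}$ leaves $\E[R\,{\bf 1}_{R>A}]$, which a first-moment hypothesis bounds only by $\E[R]$, with no decay in $k$. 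What is actually needed is that $\P(\exists v:\deg v\ge k\ \text{and}\ \#\mathrm{Edges}>K\#\mathrm{Vertices})$ be exponentially small in $k$ (equivalently, a uniform higher-moment bound on $R$); this is exactly the second half of the paper's proof, obtained by biasing by $\#\mathrm{Vertices}$, transporting to the \L{}ukasiewicz path of Lemma \ref{lemme BDG}, dominating $\tau_{-\ell}(Y)\le 2D+\sum_{i\le 2D}\mathcal G_i$ by geometric variables and using their large deviations. Nothing in your proposal substitutes for this. Note also that even $\sup_\ell \E^{(\ell)}[2\#\mathrm{Edges}/\#\mathrm{Vertices}]<\infty$ does not follow from the asymptotics of the two expectations plus ``concentration'': neither quantity concentrates (each converges in law after division by $\ell^{3/2}$ to a non-degenerate variable), and $\E[X/Y]\ne\E[X]/\E[Y]$.

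The tail bound for $\deg V^\ast$ itself is also not complete. The Cauchy--Schwarz route is legitimate in principle, but $\E^{(1)}_\ell[(\#\mathrm{Edges}-1)^{-2}]=O(\ell^{-3})$ is a quantitative lower-tail estimate on the volume; it cannot be ``extracted from Theorem \ref{cvpérimètre}'', which concerns the perimeter process and says nothing about $\#\mathrm{Edges}$, and the uniform integrability of $\ell^{3}(\#\mathrm{Edges}-1)^{-2}$ is precisely the statement to be proved (one would have to use (\ref{fonction de partition taille n}) and lower deviations of $\tau_{-\ell-1}(Y)$ between its deterministic floor of order $\ell$ and its typical scale $\ell^{3/2}$). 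The peeling route is likewise not sound as sketched: over the $\approx k$ steps of the peeling around $v_r$ the unexplored perimeter can drop macroscopically with probability of order $k/\ell$, not exponentially small in $k$, and on that event the conditional expectation of $1/(\#\mathrm{Edges}-1)$ is only $O(\ell^{-1})$, leaving an uncancelled $\ell^{1/2}$ against the prefactor $W^{(\ell)}_1/W^{(\ell)}\sim b_{\bf q}\ell^{3/2}$. The paper sidesteps both issues by never introducing the denominator $\#\mathrm{Edges}-1$: on the event $K\#\mathrm{Vertices}\ge\#\mathrm{Edges}$ it bounds $\P(\deg V=k)$ directly by $2K\,\P^{(1)}_\ell(\deg v_r=k)$, and treats the complementary event by the bijective argument above.
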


\begin{proof}
	Let $\ell \ge 1$ and $\vp>0$. Let $K>0$ whose value will be chosen later.
	For $k\ge 1$,
	\begin{align}
		\P(\deg(V) = k)  = &\P(\deg(V)=k \text{ and } K |\mathrm{Vertices}(\mathfrak{M}^{(\ell)})|\ge  |\mathrm{Edges}(\mathfrak{M}^{(\ell)})| ) \label{premier terme} \\
		&+ \P(\deg(V)=k \text{ and } K |\mathrm{Vertices}(\mathfrak{M}^{(\ell)})|<  |\mathrm{Edges}(\mathfrak{M}^{(\ell)})| ) \label{second terme}
	\end{align}
	For the first term (\ref{premier terme}), if we denote by $\vec{E}'$ a uniform oriented edge of $\mathfrak{M}^{(\ell)}$, one can see that for all $k\ge 1$,
	\begin{align*}
		\P&\left(\deg(V)=k \text{ and } K  |\mathrm{Vertices}(\mathfrak{M}^{(\ell)})|\ge |\mathrm{Edges}(\mathfrak{M}^{(\ell)})|\right) \\
		&=
		\E \left( {\bf 1}_{ K|\mathrm{Vertices}(\mathfrak{M}^{(\ell)})|\ge  |\mathrm{Edges}(\mathfrak{M}^{(\ell)})| } 
		\sum_{\vec{e} } {1\over \deg v(\vec{e})} {1\over |\mathrm{Vertices}(\mathfrak{M}^{(\ell)})|} {\bf 1}_{\deg (v(\vec{e})) = k } \right) \text{ by (\ref{eq sommet uniforme})}\\
		&= \E^{(\ell)} \left( {\bf 1}_{ K|\mathrm{Vertices}(\mathfrak{m})|\ge |\mathrm{Edges}(\mathfrak{m})| } {1 \over k |\mathrm{Vertices}(\mathfrak{m})|} \# \{ \vec{e} ;  \ \deg v(\vec{e}) = k\}\right) \\
		&\le \E^{(\ell)} \left( {\bf 1}_{K |\mathrm{Vertices}(\mathfrak{m})|\ge  |\mathrm{Edges}(\mathfrak{m})| }
		{2|\mathrm{Edges}(\mathfrak{m})|\over k |\mathrm{Vertices}(\mathfrak{m})|} \P(\deg (v(\vec{E}')) = k | \mathfrak{M}^{(\ell)}=\mathfrak{m})\right) \\
		&\le \E^{(\ell)} \left( 2K \P(\deg (v(\vec{E}')) = k |  \mathfrak{M}^{(\ell)}=\mathfrak{m}) \right) \\
		&=2K\P_\ell^{(1)}(\deg (v_r)=k) \\
		& \le 2K c^{k-2} \text{ by Lemma \ref{degré arête},}
	\end{align*}
	with $C >0$ and $c\in (0,1)$ independent of $\ell$. We now focus on the second term (\ref{second terme}).

	\begin{align*}
		\P&\left(\deg(V)=k \text{ and } K|\mathrm{Vertices}(\mathfrak{M}^{(\ell)})|<  |\mathrm{Edges}(\mathfrak{M}^{(\ell)})| \right) \\
		&\le \P\left(|\mathrm{Edges}(\mathfrak{M}^{(\ell)})|\ge k \text{ and } K|\mathrm{Vertices}(\mathfrak{M}^{(\ell)})|<  |\mathrm{Edges}(\mathfrak{M}^{(\ell)})| \right).
	\end{align*}
	Let $Y$ be the $\mu$-random walk stopped when it attains $-\ell$ associated with $\mathfrak{m}_\bullet$ under $\P_0^{(\ell)}$ by Lemma \ref{lemme BDG}. We also know that $\tau_{-\ell}(Y)=|\mathrm{Edges}(\mathfrak{m})|$ and $D\coloneqq\#\{ i \in \lb 0, \tau_{-\ell}(Y)-1 \rb ; \ Y_{i+1}-Y_{i} = -1 \}+1 = |\mathrm{Vertices}(\mathfrak{m})|$. 
	Thus, by biasing by the number of vertices, we obtain that
	
	\begin{align*}
		\P&\left(|\mathrm{Edges}(\mathfrak{M}^{(\ell)})|\ge k \text{ and } K |\mathrm{Vertices}(\mathfrak{M}^{(\ell)})|<  |\mathrm{Edges}(\mathfrak{M}^{(\ell)})| \right) \\
		&=\frac{1}{W^{(\ell)}} \sum_{\mathfrak{m} \in \mathcal{M}^{(\ell)}}
		w_{\bf q}(\mathfrak{m}) {\bf 1}_{|\mathrm{Edges}(\mathfrak{m})|\ge k \text{ and } K|\mathrm{Vertices}(\mathfrak{m})|<  |\mathrm{Edges}(\mathfrak{m})|  } \\
		&= \frac{W^{(\ell)}_0}{W^{(\ell)}} \E^{(\ell)}_0 \left(\frac{1}{|\mathrm{Vertices}(\mathfrak{m_\bullet})|}
		{\bf 1}_{|\mathrm{Edges}(\mathfrak{m}_\bullet)|\ge k \text{ and } K|\mathrm{Vertices}(\mathfrak{m}_\bullet)|<  |\mathrm{Edges}(\mathfrak{m}_\bullet)|  } \right) \\
		&=\frac{W^{(\ell)}_0}{W^{(\ell)}} \E \left(
		\frac{1}{D}
		{\bf 1}_{ KD\le  \tau_{-\ell}(Y) \text{ and } \tau_{-\ell}(Y) \ge k }
		\right) \\
		&=\frac{W^{(\ell)}_0}{W^{(\ell)}} \E \left(
		\frac{1}{D}
		\P\left(\left.
		KD\le  \tau_{-\ell}(Y) \text{ and } \tau_{-\ell}(Y) \ge k 
		\right| D\right)
		\right).
	\end{align*}
	Let us upper bound the conditional probability inside the expectation. One can note that the random walk $Y$ can be built by first sampling the non-zero steps and then inserting between these steps a geometric number (starting at zero, of parameter $1-\mu(0)$) of zero steps (when $\mu(0)=0$, we do not insert any such zero step). Moreover the number of positive steps before $\tau_{-\ell}(Y)$ is smaller than $D$. Thus one can write
	$$
	\tau_{-\ell}(Y)\le 2D+ \sum_{i=1}^{2D} \mathcal{G}_i,
	$$
	where the $\mathcal{G}_i$ are geometric i.i.d. random variables of parameter $1-\mu(0)$, where by convention $\mathcal{G}_i=0$ when $\mu(0)=0$. Let $\beta=(1-\mu(0))/(2(3-\mu(0)))$ so that $1-2\beta=4\beta/(1-\mu(0))$. We will distinguish whether $D\ge \beta k$ or not. On the one hand, if $D \le \beta$, then
	\begin{align*}
		\P\left(\left.
		KD\le  \tau_{-\ell}(Y) \text{ and } \tau_{-\ell}(Y) \ge k 
		\right| D\right) {\bf 1}_{D\le \beta k} 
		&\le
		\P\left(\left.
		\tau_{-\ell}(Y) \ge k 
		\right| D\right) {\bf 1}_{D\le \beta k} \\
		&\le \P\left( \left. 2D+ \sum_{i=1}^{2D} \mathcal{G}_i \ge k \right| D\right) {\bf 1}_{D\le \beta k}\\
		&\le \P\left( \sum_{i=1}^{2\beta k} \mathcal{G}_i \ge (1-2\beta)k \right)\\
		&\le C' (c')^k,
	\end{align*}
	for $C'>0$, $c' \in (0,1)$ which do not depend on $\ell$, where the last inequality comes from the large deviations for sums of i.i.d. geometric variables.
	On the other hand, when $D\ge \beta k$, by choosing $K\ge 2+4/(1-\mu(0))$,
	\begin{align*}
		\P\left(\left.
		KD\le  \tau_{-\ell}(Y) \text{ and } \tau_{-\ell}(Y) \ge k 
		\right| D\right) {\bf 1}_{D\ge \beta k} 
		&=
		\P\left(\left.
		KD\le  \tau_{-\ell}(Y) \right| D\right) {\bf 1}_{D\ge \beta k} \\
		&\le
		\P\left(\left.
		(K-2)D\le  \sum_{i=1}^{2D} \mathcal{G}_i \right| D\right) {\bf 1}_{D\ge \beta k}\\
		&\le C'(c')^D{\bf 1}_{D\ge \beta k} \le C' (c')^{\beta k},
	\end{align*}
	by the same large deviations. This ends the proof since 
	$
	\E ({1}/{D}) = \E^{(\ell)}_0 ({1}/{|\mathrm{Vertices}(\mathfrak{m})|}) = {W^{(\ell)}}/{W^{(\ell)}_0}
	$
	and thus
	$$
	\frac{W^{(\ell)}_0}{W^{(\ell)}} \E \left(
	\frac{1}{D}
	\P\left(\left.
	KD\le  \tau_{-\ell}(Y) \text{ and } \tau_{-\ell}(Y) \ge k 
	\right| D\right)
	\right)
	\le C' (c')^{\beta k},
	$$
	hence the upper bound of the statement.
\end{proof}
If $v$ is a vertex of $\mathfrak{m}$, and if $d^\dagger$ is a distance on $\mathfrak{m}^\dagger$, we define $d^\dagger (f_r,v)$ as the smallest distance from $f_r$ to a face $f$ next to $v$.

\begin{proposition}\label{proppointunif}
	If for all $\ell\ge 1$, $V^{(\ell)}$ is a random uniform vertex of $\mathfrak{M}^{(\ell)}$, then
	$$
	\frac{d_\mathrm{gr}^\dagger (f_r, V^{(\ell)}) }{(\log \ell)^2} \mathop{\longrightarrow}\limits_{\ell \to \infty}^{(\P)} \frac{1}{ 2 \pi^2} 
	\qquad \text{and} \qquad
	\frac{d_\mathrm{fpp}^\dagger (f_r, V^{(\ell)})}{ \log \ell} \mathop{\longrightarrow}\limits_{\ell \to \infty}^{(\P)} \frac{1}{  \pi^2 p_{\bf q}} .
	$$
\end{proposition}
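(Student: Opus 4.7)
The approach is to bootstrap from the uniform-edge results (Propositions \ref{distance fpp arete uniforme} and \ref{distanceàunsommetuniforme}) by showing that the dual distance from $f_r$ to a uniform random vertex and to a suitably chosen incident edge differ by an $O_P(1)$ quantity, which is negligible on the scales $\log \ell$ and $(\log \ell)^2$.

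Concretely, I would realize the uniform vertex as $V^{(\ell)} = v(\vec{E})$ through (\ref{eq sommet uniforme}), where $\vec{E}$ is drawn according to (\ref{E flèche}), and denote by $\bar{E}$ its underlying unoriented edge. Both faces adjacent to $\bar{E}$ are incident to $V^{(\ell)}$, and any two faces incident to a vertex $v$ can be joined in the dual graph by going around $v$ through at most $\deg v$ dual edges, whence
\begin{equation*}
0 \le d^\dagger_\mathrm{gr}(f_r, \bar{E}) - d^\dagger_\mathrm{gr}(f_r, V^{(\ell)}) \le \deg V^{(\ell)},
\end{equation*}
and, applying the same reasoning to the exponential fpp clocks carried by these dual edges,
\begin{equation*}
0 \le d^\dagger_\mathrm{fpp}(f_r, \bar{E}) - d^\dagger_\mathrm{fpp}(f_r, V^{(\ell)}) \le \sum_{i=1}^{\deg V^{(\ell)}} \mathcal{E}_i
\end{equation*}
for some i.i.d.\ standard exponentials $\mathcal{E}_i$. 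Lemma \ref{majoration degré sommet uniforme} forces $\deg V^{(\ell)}$ to be tight in $\ell$, so both error terms are $O_P(1) = o(\log \ell)$, and it suffices to prove the two desired asymptotics with $V^{(\ell)}$ replaced by $\bar{E}$.

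To transfer the results from a uniform edge $E^{(\ell)}$ to $\bar{E}$, I would use that, conditionally on $\mathfrak{M}^{(\ell)}$, the Radon-Nikodym density reads
\begin{equation*}
\frac{\P(\bar{E}=e\mid\mathfrak{M}^{(\ell)})}{\P(E^{(\ell)}=e\mid\mathfrak{M}^{(\ell)})} = \frac{\#\mathrm{Edges}(\mathfrak{M}^{(\ell)})}{\#\mathrm{Vertices}(\mathfrak{M}^{(\ell)})}\left(\frac{1}{\deg v_1(e)} + \frac{1}{\deg v_2(e)}\right) \le \frac{2\,\#\mathrm{Edges}(\mathfrak{M}^{(\ell)})}{\#\mathrm{Vertices}(\mathfrak{M}^{(\ell)})}.
\end{equation*}
Hence on the event $\{\#\mathrm{Edges}/\#\mathrm{Vertices}\le M\}$ the probability of any event concerning $\bar{E}$ is at most $2M$ times the probability of the corresponding event for $E^{(\ell)}$, so the convergences in probability of $d^\dagger_\mathrm{gr}(f_r, E^{(\ell)})/(\log\ell)^2$ and $d^\dagger_\mathrm{fpp}(f_r, E^{(\ell)})/\log\ell$ transfer directly to $\bar{E}$.

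The main obstacle is thus the tightness of the volume ratio $\#\mathrm{Edges}(\mathfrak{M}^{(\ell)})/\#\mathrm{Vertices}(\mathfrak{M}^{(\ell)})$ under $\P^{(\ell)}$. By Lemma \ref{lemme BDG}, under $\P^{(\ell)}_0$ this quantity equals the inverse of the fraction of $-1$ steps of a $\mu$-random walk run until it hits $-\ell$, and thus converges in probability to $1/\mu(-1)\in(0,\infty)$ by the law of large numbers. The analogous tightness under $\P^{(\ell)}$ then follows from an unbiasing argument: the two marginal laws of $\mathfrak{M}^{(\ell)}$ differ by the size-bias $\#\mathrm{Vertices}(\mathfrak{m})$ (whose expectation is $W^{(\ell)}_0/W^{(\ell)} \asymp \ell^{3/2}$), and the joint convergence of $\#\mathrm{Vertices}/\ell^{3/2}$ and $\#\mathrm{Edges}/\ell^{3/2}$ in distribution to positive limits under $\P^{(\ell)}_0$ ensures that the reweighting by $1/\#\mathrm{Vertices}$ does not destroy the tightness of the ratio.
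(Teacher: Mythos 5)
Your proposal is correct and follows essentially the same route as the paper's proof: realize $V^{(\ell)}$ as the origin of the degree-biased oriented edge $\vec E$ from (\ref{E flèche}), absorb the discrepancy between the distance to the vertex and to the incident edge into $\deg V^{(\ell)}$ controlled by Lemma \ref{majoration degré sommet uniforme}, and transfer the uniform-edge asymptotics of Propositions \ref{distance fpp arete uniforme} and \ref{distanceàunsommetuniforme} via the ratio $\#\mathrm{Edges}/\#\mathrm{Vertices}$, whose tightness comes from Lemma \ref{lemme BDG} and the law of large numbers. Your explicit Radon--Nikodym bound and the unbiasing step from $\P^{(\ell)}_0$ to $\P^{(\ell)}$ are just a slightly more spelled-out version of the paper's conditional first-moment computation.
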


\begin{proof}
	We do the proof for the dual graph distance (the same ideas work for the fpp distance using Proposition \ref{distance fpp arete uniforme}). First of all, we can work with $d_\mathrm{gr}^\dagger (f_r, \vec{E})$ with $\vec{E}$ defined as above by (\ref{E flèche}) since $d_\mathrm{gr}^\dagger (f_r, \vec{E})-\deg(V^{(\ell)}) \le d_\mathrm{gr}^\dagger (f_r, V^{(\ell)}) \le d_\mathrm{gr}^\dagger (f_r, \vec{E})$ and since by Lemma \ref{majoration degré sommet uniforme} $\deg V^{(\ell)}$ is stochastically dominated by a random variable which does not depend on $\ell$. Let $\vp, \delta >0$.
	Let $\vec{E}'$ be a uniform random oriented edge. By Proposition \ref{distanceàunsommetuniforme}, for $ \ell$ large, with probability at least $ 1-\delta$,
	\begin{align*}
		\P&\left( \left.{d_\mathrm{gr}^\dagger (f_r,\vec{E}) \over (\log \ell)^2} \in \left[ {1\over 2\pi^2}-\vp , {1\over 2\pi^2} +\vp \right] \right| \mathfrak{M}^{(\ell)}\right) = \sum_{\vec{e}} {1\over \deg v(\vec{e})} {1\over |\mathrm{Vertices}(\mathfrak{M}^{(\ell)})|} {\bf 1}_{{d_\mathrm{gr}^\dagger (f_r,\vec{E}) \over (\log \ell)^2} \in \left[ {1\over 2\pi^2}-\vp , {1\over 2\pi^2} +\vp \right]}
		\\
		&= {2|\mathrm{Edges}(\mathfrak{M}^{(\ell)})|\over |\mathrm{Vertices}(\mathfrak{M}^{(\ell)})|} \E \left( \left. {1\over \deg v(\vec{E}')} {\bf 1}_{{d_\mathrm{gr}^\dagger (f_r,\vec{E}') \over (\log \ell)^2} \in \left[ {1\over 2\pi^2}-\vp , {1\over 2\pi^2} +\vp \right]} \right| \mathfrak{M}^{(\ell)} \right) \\
		&\ge {2|\mathrm{Edges}(\mathfrak{M}^{(\ell)})|\over |\mathrm{Vertices}(\mathfrak{M}^{(\ell)})|} \left( \E \left( \left.  {1\over \deg v(\vec{E}')} \right| \mathfrak{M}^{(\ell)}\right) -\delta \right)  \\
		&= 1-{2|\mathrm{Edges}(\mathfrak{M}^{(\ell)})|\over |\mathrm{Vertices}(\mathfrak{M}^{(\ell)})|} \delta \\
		&\ge 1 - C\delta,
	\end{align*}
	where $C>0$ is a constant that does not depend on $\ell$ and $\delta$.
	The last inequality comes from Lemma \ref{lemme BDG} and the law of large numbers.
\end{proof}

The case of the distance from the root face to a uniform random face of $\mathfrak{M}^{(\ell)}$ is even simpler and relies on similar ideas. 

\begin{proof}[Proof of Theorem \ref{distance de la racine à une face uniforme}]
	Let $\ell \ge 1$.
	\phantomsection \label{preuve distance de la racine à une face uniforme} Let $F$ be a uniform random face on $\mathfrak{M}^{(\ell)}$ conditionally on $\mathfrak{M}^{(\ell)}$. Conditionally on $F$ and $\mathfrak{M}^{(\ell)}$, let $\vec{E}$ be uniform random oriented edge such that the face on its right is $F$. Then by definition of the distance to an edge,
	$$
	d_\mathrm{gr}^\dagger (f_r, \vec{E}) \le 
	d_\mathrm{gr}^\dagger (f_r, F) \le 
	d_\mathrm{gr}^\dagger (f_r, \vec{E})+1
	\qquad\text{and}\qquad
	d_\mathrm{fpp}^\dagger (f_r, \vec{E}) \le 
	d_\mathrm{fpp}^\dagger (f_r, F) \le 
	d_\mathrm{fpp}^\dagger (f_r, \vec{E})+\mathcal{E},
	$$
	where $\mathcal{E}$ is an exponential random variable of parameter $1$.
	Thus it is enough to show that 
	$$
	{d^\dagger_\mathrm{gr} (f_r,\vec{E}) \over (\log \ell)^2} \mathop{\longrightarrow}\limits_{\ell \to \infty}^{(\P)} {1 \over \pi^2}
	\
	\text{ and }
	\
	{d^\dagger_\mathrm{fpp} (f_r,\vec{E}) \over \log \ell} \mathop{\longrightarrow}\limits_{\ell \to \infty}^{(\P)} {2 \over \pi^2 p_{\bf q}}.
	$$
	We now focus on the dual graph distance. For the fpp distance, the same ideas apply using Proposition \ref{distance fpp arete uniforme}. If $\vec{e}$ is an oriented edge, we write $f(\vec{e})$ the face on the right of $\vec{e}$. Conditionally on $\mathfrak{M}^{(\ell)}$, let $\vec{E}'$ be a uniform oriented edge on $\mathfrak{M}^{(\ell)}$. Let $\vp,\delta>0$. Then by Proposition \ref{distanceàunsommetuniforme}, for $\ell $ large, with probability at least $1-\delta$,
	\begin{align*}
		\P &\left( \left.{d_\mathrm{gr}^\dagger (f_r,\vec{E}) \over (\log \ell)^2} \in \left[ {1\over 2\pi^2}-\vp , {1\over 2\pi^2} +\vp \right] \right| \mathfrak{M}^{(\ell)}\right) = \sum_{\vec{e}} {1\over \deg f(\vec{e})} {1\over |\mathrm{Faces}(\mathfrak{M}^{(\ell)})|} {\bf 1}_{{d_\mathrm{gr}^\dagger (f_r,\vec{E}) \over (\log \ell)^2} \in \left[ {1\over 2\pi^2}-\vp , {1\over 2\pi^2} +\vp \right]}
		\\
		&= {2|\mathrm{Edges}(\mathfrak{M}^{(\ell)})|\over |\mathrm{Faces}(\mathfrak{M}^{(\ell)})|} \E \left( \left. {1\over \deg f(\vec{E}')} {\bf 1}_{{d_\mathrm{gr}^\dagger (f_r,\vec{E}') \over (\log \ell)^2} \in \left[ {1\over 2\pi^2}-\vp , {1\over 2\pi^2} +\vp \right]} \right| \mathfrak{M}^{(\ell)} \right) \\
		&\ge {2|\mathrm{Edges}(\mathfrak{M}^{(\ell)})|\over |\mathrm{Faces}(\mathfrak{M}^{(\ell)})|} \left( \E \left( \left. {1\over \deg f(\vec{E}')} \right| \mathfrak{M}^{(\ell)} \right) -\delta \right) \\
		&={2|\mathrm{Edges}(\mathfrak{M}^{(\ell)})|\over |\mathrm{Faces}(\mathfrak{M}^{(\ell)})|}
		\frac{1}{2|\mathrm{Edges}(\mathfrak{M}^{(\ell)})|} \left(\sum_{\vec{e}} \frac{1}{\deg f(\vec{e})}\right) -{2|\mathrm{Edges}(\mathfrak{M}^{(\ell)})|\over |\mathrm{Faces}(\mathfrak{M}^{(\ell)})|}\delta .
	\end{align*}
	But by Lemma \ref{lemme BDG} and the law of large numbers, with probability at least $1-\delta$ for $\ell$ large,
	$$
	{2|\mathrm{Edges}(\mathfrak{M}^{(\ell)})|\over |\mathrm{Faces}(\mathfrak{M}^{(\ell)})|}
	\frac{1}{2|\mathrm{Edges}(\mathfrak{M}^{(\ell)})|} \left(\sum_{\vec{e}} \frac{1}{\deg f(\vec{e})}\right) -{2|\mathrm{Edges}(\mathfrak{M}^{(\ell)})|\over |\mathrm{Faces}(\mathfrak{M}^{(\ell)})|}\delta
	= 1-{2|\mathrm{Edges}(\mathfrak{M}^{(\ell)})|\over |\mathrm{Faces}(\mathfrak{M}^{(\ell)})|} \delta 
	\ge 1 - C\delta,
	$$
	where $C>0$ is a constant that does not depend on $\ell$ (nor on $\delta$, $\vp$).
\end{proof}

\section{Distance between two uniform random faces}\label{section distance entre sommets uniformes}
This section is devoted to the proof of the following corollary, which establishes the scaling limit in probability of the distance between two uniform random faces.
\begin{corollary}\label{distance entre deux faces uniformes}
	For all $\ell \ge 1$, let $F^{(\ell)}_1$ and $F^{(\ell)}_2$ be two independent uniform random faces in $\mathfrak{M}^{(\ell)}$. Then
	$$\frac{d_\mathrm{fpp}^\dagger (F^{(\ell)}_1, F^{(\ell)}_2) }{\log \ell } \mathop{\longrightarrow}\limits_{\ell \to \infty}^{(\P)} \frac{2}{ \pi^2 p_{\bf q}} \qquad
	\text{and} \qquad
	\frac{d_\mathrm{gr}^\dagger  (F^{(\ell)}_1, F^{(\ell)}_2)}{(\log \ell)^2 } \mathop{\longrightarrow}\limits_{\ell \to \infty}^{(\P)} {1 \over \pi^2 }.
	$$
\end{corollary}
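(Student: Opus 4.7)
The upper bound is immediate. By the triangle inequality,
$$
d^\dagger_\mathrm{gr}(F_1^{(\ell)},F_2^{(\ell)}) \le d^\dagger_\mathrm{gr}(f_r,F_1^{(\ell)}) + d^\dagger_\mathrm{gr}(f_r,F_2^{(\ell)}),
$$
and similarly for the fpp distance. By Theorem \ref{distance de la racine à une face uniforme} and the independence of $F_1^{(\ell)}$ and $F_2^{(\ell)}$, both right-hand terms divided by $(\log\ell)^2$ (resp.\ $\log\ell$) converge in probability to $1/(2\pi^2)$ (resp.\ $1/(\pi^2 p_{\bf q})$), which yields the asserted upper bound $1/\pi^2$ (resp.\ $2/(\pi^2 p_{\bf q})$).

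For the matching lower bound, the plan is to show that the two geodesics from $f_r$ to $F_1^{(\ell)}$ and from $f_r$ to $F_2^{(\ell)}$ \emph{split quickly}, in the sense that the closest point on any shortest $F_1^{(\ell)}$–$F_2^{(\ell)}$ path from $f_r$ lies at distance $o((\log\ell)^2)$ from $f_r$ (resp.\ $o(\log\ell)$ for fpp). Indeed, for any face $x$ on a shortest path $\gamma$ between $F_1^{(\ell)}$ and $F_2^{(\ell)}$,
$$
d^\dagger_\mathrm{gr}(F_1^{(\ell)},F_2^{(\ell)}) = d^\dagger_\mathrm{gr}(F_1^{(\ell)},x)+d^\dagger_\mathrm{gr}(x,F_2^{(\ell)}) \ge d^\dagger_\mathrm{gr}(f_r,F_1^{(\ell)}) + d^\dagger_\mathrm{gr}(f_r,F_2^{(\ell)}) - 2\, d^\dagger_\mathrm{gr}(f_r,x),
$$
so minimising over $x\in\gamma$ and combining with Theorem \ref{distance de la racine à une face uniforme} would give $d^\dagger_\mathrm{gr}(F_1,F_2)/(\log\ell)^2 \ge 1/\pi^2 - o_\P(1)$.

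To establish this small-common-ancestor statement, I would peel from $f_r$ using the peeling by layers algorithm and distinguish $F_1^{(\ell)}$ as a target (via a biasing argument analogous to Lemma \ref{lemme transfert biais}). Let $T_2$ be the first time the filled-in exploration ``swallows'' $F_2^{(\ell)}$ into a Boltzmann sub-map of some perimeter $2q_\ell$ attached along the hole at height $H_\ell(T_2{-}1)$. The geodesic from $f_r$ to $F_2^{(\ell)}$ then decomposes as $H_\ell(T_2{-}1)$ steps inside the already-peeled ``trunk'', plus a distance inside the swallowed sub-map, which by Theorem \ref{distance de la racine à une face uniforme} applied to the Boltzmann sub-map of perimeter $2q_\ell$ is $(1+o_\P(1))(\log q_\ell)^2/(2\pi^2)$ (provided $q_\ell\to\infty$). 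The key inputs are (i) the probability that a uniform face lies in a given swallowed sub-map is proportional to its volume $\sim q_\ell^{3/2}$, so $q_\ell$ is typically of order $\ell$, meaning $(\log q_\ell)^2 \sim (\log\ell)^2$, and (ii) conditional on $F_2^{(\ell)}$ being in such a large sub-map, $H_\ell(T_2{-}1)$ is $o((\log\ell)^2)$ since large jumps of the perimeter process occur early in the scaling limit $\Upsilon^\downarrow_{-1}$ (Theorem \ref{cvpérimètre}), at times $T_2$ of order $\ell$ on the natural scale but at ``rescaled time'' that keeps the height negligible compared to $(\log\ell)^2$.

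The main obstacle will be (ii): controlling jointly the pair $\bigl(H_\ell(T_2{-}1),q_\ell\bigr)$ for a uniform random face $F_2^{(\ell)}$ and showing that in typical scenarios $F_2^{(\ell)}$ is swallowed while $H_\ell$ is still small. This requires going beyond the marginal statements of Section \ref{section périmètre} to a joint analysis of the perimeter process, its jumps, and a uniform size-biased sampling, which can be done by combining Lemma \ref{couplage marche} (to reduce to the infinite map where the results of \cite{BCM} give precise height control) with an estimate, in the spirit of (\ref{EqVolume}) and Lemma \ref{lemme BDG}, on the total volume swallowed by time $n$. Once the joint control $H_\ell(T_2-1)/(\log \ell)^2 \to 0$ and $q_\ell/\ell \to \text{positive r.v.}$ is established, the reverse-triangle computation above closes the argument, and the fpp case follows by the same scheme with $(\log\ell)^2/(2\pi^2)$ replaced by $\log\ell/(\pi^2 p_{\bf q})$.
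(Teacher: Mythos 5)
Your overall architecture is the same as the paper's: reduce to a map with a $2$-face target $\square$ standing for the first point plus an independent uniform edge standing for the second, show that the two end up in different holes of the filled-in exploration while the remaining perimeter is still of order $\ell$ and the explored region is still at negligible height, then apply the one-point estimate (Proposition \ref{distanceàunsommetuniforme}) to the swallowed Boltzmann sub-map; the upper bound by the triangle inequality is also as in the paper. However, both pillars of your lower bound are left unproven, and the justifications you sketch would not close them as stated.

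For your input (i), ``probability proportional to volume, hence $q_\ell$ of order $\ell$'' is only a heuristic: at time $\tau^{(\ell)}_{-1}-\vp\ell$ the unexplored hole still has perimeter $\Theta(\ell)$ and hence volume $\Theta(\ell^{3/2})$, comparable to the whole map, so a single volume comparison cannot show that $F_2^{(\ell)}$ is swallowed before that time. The paper's Lemma \ref{séparation} instead iterates: the scaling limit $\Upsilon^\downarrow$ of the perimeter process under $\P^{(\ell)}_1$ almost surely has infinitely many negative jumps of relative size larger than $1/2$ before time $\zeta-\vp p_{\bf q}$, and an explicit partition-function computation (using $W_1^{(\ell)}[n+1]=nW^{(\ell)}[n]$ and (\ref{fonction de partition taille n})) shows that at each such jump the uniform edge falls into the filled-in hole, which then has perimeter of order $\ell$, with conditional probability larger than $1/3$; failure to separate therefore has probability at most $\E\bigl((2/3)^{N_\vp}\bigr)\to 0$. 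For your input (ii), the claim that ``large jumps occur early'' is not the reason the height is negligible (the separation time is a positive fraction of the lifetime), and Lemma \ref{couplage marche} couples walks started from $1$, not from $\ell$, so it does not transfer the height control of \cite{BCM} to the exploration launched from the perimeter-$2\ell$ root face. The paper sidesteps any direct height estimate: since $d^\dagger_\mathrm{gr}(\partial\mathfrak{e},\square)$, computed inside the hole of perimeter $\Theta(\ell)$ containing $\square$, and $d^\dagger_\mathrm{gr}(f_r,\square)$ are both $(1+o(1))(\log\ell)^2/(2\pi^2)$ in probability, and any path from $f_r$ to $\square$ must cross $\partial\mathfrak{e}$, one gets $d^\dagger_\mathrm{gr}(f_r,\partial\mathfrak{e})=o((\log\ell)^2)$ by subtraction. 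Supplying these two arguments (together with the transfer to uniform faces via Lemma \ref{lemme transfert biais} and degree-biasing, which you correctly anticipate) is the actual content of the proof.
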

Its statement can be extended to the case of uniform random edges or uniform random vertices, see Theorem \ref{distance entre deux arêtes uniformes} and Corollary \ref{cor distance entre deux sommets uniformes}. Actually, we will first prove the result for uniform random edges. 
The main idea will be to show that geodesics from the root to two uniform random edges ``split near the root'' with high probability. Hence, the map $\mathfrak{M}^{(\ell)}$ looks like a star with many branches as in Figure \ref{fig:ssimus_intro}.
\subsection{Two random edges are in ``different branches'' with high probability}
We prove here that the branches of two uniform random edges split near the root face. More precisely, we would like to prove that the filled-in exploration targeted at the first random uniform edge swallows the second edge at a time relatively far from the end of the exploration with high probability.
However, we do not know the law of the peeling exploration of $\mathfrak{M}^{(\ell)}$ targeted at a random uniform edge, but Lemma \ref{lemme transfert biais} gives us a way to transfer results about maps with target under $\P^{(\ell)}_1$ to $\mathfrak{M}^{(\ell)}$ equipped with a uniform edge. With this in mind, let $\mathfrak{M}^{(\ell)}_1$ be a random map with a target $2$-face of law $\P^{(\ell)}_1$. The target $2$-face $\square$ will play the role of the first uniform edge.
Let $E^{(\ell)}$ be a uniform random edge of $\mathfrak{M}^{(\ell)}_1$ (where $\square$ is considered as one edge), which will play the role of the second uniform edge. Let $\tau_{-1}^{(\ell)}$ be the time at which the perimeter process $P_1^{(\ell)}$ associated with the filled-in exploration $(\overline{\mathfrak{e}}_n^{(\ell)})_{n\ge 0}$ of $\mathfrak{M}^{(\ell)}_1$ dies at $-1$, i.e. the duration of that filled-in exploration.
Recall from Proposition \ref{périmètre des cartes à grand bord} that $\tau_{-1}^{(\ell)}/\ell$ converges in law.
\begin{lemma}\label{séparation}
	We have
	$$
	\lim_{\vp \to 0} \limsup_{\ell \to \infty} \P(\square\text{ and } E^{(\ell)} \text{ lie in the same hole until time } (\tau_{-1}^{(\ell)}-\vp \ell)\vee 0 )=0.
	$$
\end{lemma}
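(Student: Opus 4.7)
The strategy is to show that the hole containing $\square$ at time $T=(\tau_{-1}^{(\ell)}-\varepsilon\ell)\vee 0$ contains only a vanishing fraction of the edges of $\mathfrak{M}^{(\ell)}_1$, so that the uniformly chosen edge $E^{(\ell)}$ is unlikely to lie in it. Let $h_T$ denote the number of edges of $\mathfrak{M}^{(\ell)}_1$ inside the hole of $\overline{\mathfrak{e}}_T^{(\ell)}$ containing $\square$. Since $E^{(\ell)}$ is, conditionally on the map, uniform on its edges,
\[
\P\bigl(\square \text{ and } E^{(\ell)} \text{ lie in the same hole until time }T \bigm| \mathfrak{M}^{(\ell)}_1\bigr)=\frac{h_T}{\#\mathrm{Edges}(\mathfrak{M}^{(\ell)}_1)},
\]
so the task reduces to showing that this ratio vanishes in probability in the double limit.

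First I would establish that $P_1^{(\ell)}(T)/\ell\to 0$ in probability as $\varepsilon\to 0$ after $\ell\to\infty$. Indeed, Proposition \ref{périmètre des cartes à grand bord} yields $\tau_{-1}^{(\ell)}/\ell\to\zeta/p_{\bf q}$ in distribution with $\zeta>0$ a.s., hence $\P(\tau_{-1}^{(\ell)}\le \varepsilon\ell)\to 0$ in the double limit. On the complementary event, the same proposition provides the joint scaling limit $(P_1^{(\ell)}(\lfloor\ell\cdot\rfloor)/\ell,\tau_{-1}^{(\ell)}/\ell)\to(\Upsilon^\downarrow(p_{\bf q}\cdot),\zeta/p_{\bf q})$, and since $\Upsilon^\downarrow$ is built so as to die continuously at $0$, one deduces $P_1^{(\ell)}(T)/\ell\to\Upsilon^\downarrow(\zeta-p_{\bf q}\varepsilon)\to 0$.

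Next I would bound $h_T/\ell^{3/2}$. Decomposing on $\{\tau_{-1}^{(\ell)}=n\}$ and applying the Markov property of the peeling (Proposition \ref{Markov explo}) at the \emph{deterministic} time $n-\varepsilon\ell$, the hole content $\mathfrak{N}_{n-\varepsilon\ell}$ is conditionally a $\bf q$-Boltzmann map with target under $\P_1^{(P_1^{(\ell)}(T))}$, the event $\{\tau_{-1}^{(\ell)}=n\}$ amounting to prescribing that its own peeling lifetime equal $\varepsilon\ell$. The partition function formulas (\ref{fonction de partition})-(\ref{h flèche}), together with the second-moment estimate for the volume of Boltzmann maps of perimeter $2m$ (cf.\ (\ref{EqVolume}) and Proposition~10.4 of \cite{StFlour}), yield $\E_1^{(m)}\#\mathrm{Edges}(\mathfrak{m}_\square)\le Cm^{3/2}$; combining this with the perimeter control above (so that the constraint on the lifetime falls into a typical regime rather than a singular one) gives $h_T/\ell^{3/2}\to 0$ in probability. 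Since $\#\mathrm{Edges}(\mathfrak{M}^{(\ell)}_1)/\ell^{3/2}$ is tight and bounded away from $0$ in probability (by Proposition 10.4 of \cite{StFlour} and the unzipping bijection (\ref{EqVolume})), and the ratio $h_T/\#\mathrm{Edges}(\mathfrak{M}^{(\ell)}_1)\le 1$ is dominated, this yields the claim by a bounded-convergence argument.

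The main technical difficulty is that $T$ is not a stopping time of the peeling filtration, so the conditional law of $\mathfrak{N}_T$ given $\overline{\mathfrak{e}}_T^{(\ell)}$ is not simply Boltzmann but Boltzmann conditioned on the subsequent peeling lasting exactly $\varepsilon\ell$ more steps. The decomposition on $\tau_{-1}^{(\ell)}$ reduces this to applying the Markov property at the deterministic times $n-\varepsilon\ell$, and what still has to be checked is that this conditioning does not blow up the expected volume of $\mathfrak{N}_T$ beyond the typical scale $P_1^{(\ell)}(T)^{3/2}$ in the regime where $P_1^{(\ell)}(T)/\ell$ and $\varepsilon$ are of comparable order.
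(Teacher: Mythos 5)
Your reduction of the lemma to showing that the hole containing $\square$ at time $T=(\tau_{-1}^{(\ell)}-\vp\ell)\vee 0$ carries a vanishing fraction of the edges is legitimate, and the perimeter control $P_1^{(\ell)}(T)/\ell\to 0$ in the double limit via Proposition \ref{périmètre des cartes à grand bord} is fine. The gap is in the volume estimate. The claimed bound $\E_1^{(m)}\#\mathrm{Edges}(\mathfrak{m}_\square)\le Cm^{3/2}$ is false: a map with a $2$-face target is an edge-marked map, i.e.\ the law of $\mathfrak{M}^{(m)}$ size-biased by its number of edges, and by (\ref{fonction de partition taille n}) one has $\E_1^{(m)}\#\mathrm{Edges}(\mathfrak{m}_\square)=\E[\tau_{-m-1}(Y)]$, which is \emph{infinite} in the case $a=2$ (Kemperman's formula gives $\P(\tau_{-1}(Y)=n)=\tfrac1n\P(Y_n=-1)\sim c\,n^{-5/3}$ since $Y$ is attracted to a spectrally positive $3/2$-stable law). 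Equivalently, $\P^{(m)}(\#\mathrm{Edges}=n)$ decays like $n^{-8/3}$, so the unmarked volume has finite mean but infinite second moment, and size-biasing destroys integrability. So the first-moment method cannot give $h_T/\ell^{3/2}\to 0$. One could try to argue in probability instead (the size-biased volume is $m^{3/2}$ times a tail-index-$2/3$ variable $X$, and $\P(m^{3/2}X>\eta\ell^{3/2})\approx c\eta^{-2/3}m/\ell$), but then the second difficulty you flag becomes essential and is left unresolved: $T$ is not a stopping time, so the unexplored part at time $T$ is a Boltzmann map with target conditioned on its own peeling lifetime equalling $\lceil\vp\ell\rceil$, and since $P_1^{(\ell)}(T)/\ell$ is random and can be atypically small or large compared with $\vp$, controlling the volume under this conditioning would require a joint local limit theorem for (perimeter, lifetime, volume) that you do not supply.

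For comparison, the paper avoids estimating the volume of the remaining hole altogether. It uses that $\Upsilon^\downarrow$ has a.s.\ infinitely many negative jumps of relative size at least $1/2$, so $P_1^{(\ell)}$ undergoes many such splitting events before time $\tau_{-1}^{(\ell)}-\vp\ell$; at each one, an exact computation with the partition functions (using $W_1^{(k)}[n+1]=nW^{(k)}[n]$ and $\E[1/(\tau_{-k-1}(Y)-1)]=W^{(k)}/(\tfrac12 h^\downarrow_1(k)c_{\bf q}^{k+1})$, which are finite ratios) shows that, conditionally on $E^{(\ell)}$ not yet being swallowed, it falls into the filled-in hole with probability at least $1/3$. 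Iterating yields the bound $\E[(2/3)^{N_\vp^{(\ell)}}]$ with $N_\vp^{(\ell)}\to\infty$. This conditions only at genuine peeling steps and only ever compares expectations of $1/\tau$, which is precisely how it dodges both of the obstacles above.
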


\begin{proof}
	Let us consider the filled-in exploration $(\overline{\mathfrak{e}}_n^{(\ell)})_{n\ge 0}$. The idea is to show that $\square$ and $E^{(\ell)}$ are in different holes at a splitting event happening ``not too late''.
	By Proposition \ref{périmètre des cartes à grand bord}, we know that
	\begin{equation}\label{eqlimech}
		\left( {P^{(\ell)}_1(\ell t ) \over \ell} \right)_{t\ge 0} \mathop{\longrightarrow}\limits_{\ell \to \infty}^{(\mathrm{d})} \left(\Upsilon^\downarrow(p_{\bf q} t)\right)_{t\ge 0},
	\end{equation}
	where $\Upsilon^\downarrow$ is the Cauchy process $\Upsilon$ starting from $1$ and conditioned to die continuously at $0$. Let $\zeta\coloneqq \inf \left\{t\ge 0; \ \Upsilon^\downarrow (t)=0\right\}$ be the lifetime of $\Upsilon^\downarrow$.
	
	First, we note that the number of negative jumps of size larger than half of $\Upsilon^\downarrow$ is a.s. infinite. More precisely
	$$
	N\coloneqq\# \left\{ t \in [0, \zeta[; \ \Upsilon^\downarrow (t)< {\Upsilon^\downarrow (t-) \over 2} \right\} = \infty \text{ a.s.}
	$$
	Indeed, by Proposition 5.2 of \cite{BBCK} (in the case $\theta=1, \rho=1/2,\hat{\beta}=1,\gamma=1/2,\hat{\gamma}=1/2$) up to a time change, $\Upsilon^\downarrow$ is the exponential of a Lévy process. Let $\Lambda$ be its Lévy measure. 
	The Laplace exponent of that Lévy process is $\Phi^-(q)= \kappa_1(q-3/2)= (q-1/2)\tan(\pi q)$ by Section 4.3 of \cite{BBCK} with $\omega_-=3/2$ for $q\in (-1/2,3/2)$. One can check that $\Lambda$ is the image measure of ${1 \over \pi\sqrt{x} (1-x)^2} dx$ by the mapping $x\mapsto \log x$ 
	. Then $N$ is a Poisson random variable of expectation
	$$
	\E N  = \int_0^{\infty} dt \int_{-\infty}^{-\log 2} \Lambda(dy) = \infty \times \int_{0}^{1/2}{1 \over \pi\sqrt{x} (1-x)^2} dx= \infty,
	$$
	hence a.s. $N = \infty $. 
	
	Moreover, by (\ref{eqlimech}), the number $N_\vp^{(\ell)}$ of negative jumps of size larger than half of $P^{(\ell)}_1(\cdot)$ until time $\tau^{(\ell)}_{-1}-\vp \ell$ converges in distribution towards the number $N_\vp$ of negative jumps of $\Upsilon^\downarrow$ of size larger than half of $\Upsilon^\downarrow(\cdot-)$ until time $\zeta-\vp p_{\bf q}$ as $\ell \to \infty$, which further goes a.s. to $N=\infty$ as $\vp \to 0$.

	Now we fix such a step: let $n$ be such that $P_1^{(\ell)}(n+1) <{P_1^{(\ell)}(n)}/{ 2}$. We write $k_1  = P_1^{(\ell)}(n+1)$ and $k_2  = P_1^{(\ell)}(n) - P_1^{(\ell)}(n+1)-1$ the half-perimeters of the two holes created at this step. Let us show that the probability that the uniform random edge $E^{(\ell)}$ lies in the filled-in hole $\mathfrak{h}$, which is the hole that does not contain $\square$, conditionally on not being an internal edge of the explored region $\mathfrak{e}$ obtained from $\overline{\mathfrak{e}}_n^{(\ell)}$ after creating the two holes but before filling in the hole $\mathfrak{h}$
	and conditionally on $\mathfrak{e}$, is greater than ${1/3}$ as soon as $k_2$ is sufficiently large. This condition is verified if $P_1^{(\ell)} (n)$ is large enough. 
	
	Indeed, by Proposition \ref{Markov explo}, conditionally on $\mathfrak{e}$, the maps filling the two holes are independent $\bf q$-Boltzmann planar maps of corresponding perimeter, and the one which contains $\square$ has a target face of degree $2$. Thus if we write for all $\ell \ge 1, p\ge 0,n\ge 1$,
	$$
	W_p^{(\ell)}[n] = \sum_{\substack{\mathfrak{m}_\square\in \mathcal{M}^{(\ell)}_p \\ |\mathrm{Edges}(\mathfrak{m})|=n}} w_{\bf q} (\mathfrak{m}_\square)
	\qquad
	\text{and}
	\qquad
	W^{(\ell)}[n] = \sum_{\substack{\mathfrak{m}\in \mathcal{M}^{(\ell)} \\ |\mathrm{Edges}(\mathfrak{m})|=n}} w_{\bf q} (\mathfrak{m}),
	$$
	then
	\begin{align*}
		\P(E^{(\ell)} \in \mathfrak{h} | \mathfrak{e} \text{ and } E^{(\ell)} \not\in \mathfrak{e} ) &= 
		\sum_{\mathfrak{u}_1 \in \mathcal{M}_1^{(k_1)}, \mathfrak{u}_2 \in \mathcal{M}_{\phantom{1}}^{(k_2)}} { |\mathrm{Edges}(\mathfrak{u}_2)| \over |\mathrm{Edges}(\mathfrak{u}_1)|-1+|\mathrm{Edges}(\mathfrak{u}_2)|} {w_{\bf q} (\mathfrak{u}_1) \over W_1^{(k_1)}} {w_{\bf q} (\mathfrak{u}_2) \over W_{\phantom{1}}^{(k_2)}} \\
		&= \sum_{n_1, n_2 \ge 1} {n_2 \over n_1 +n_2-1} {W_1^{(k_1)}[n_1] \over W_1^{(k_1)} } {W_{\phantom{1}}^{(k_2)}[n_2] \over W_{\phantom{1}}^{(k_2)}} \\
		&= \sum_{n_1, n_2 \ge 1} {1 \over n_1+n_2 -1} {W_1^{(k_1)}[n_1] \over W_1^{(k_1)} } {W_1^{(k_2)}[n_2+1] \over W_1^{(k_2)}} {W_1^{(k_2)} \over W_{\phantom{1}}^{(k_2)}},
	\end{align*}
	where the last equality holds since by taking into account that zipping the target face gives a distinguished edge, we have $W_1^{(\ell)}[n+1]=nW^{(\ell)}[n]$ for all $n\ge 1$. Yet, it is known that 
	\begin{equation}\label{fonction de partition taille n}
		\forall n \ge 1, \qquad W_1^{(\ell)}[n] = \frac{\ell}{\ell+1} \binom{2\ell}{\ell} (c_{\bf q}/4)^{\ell+1} \P(\tau_{-\ell-1}(Y)=n),
	\end{equation}
	where $Y$ is a $\mu$-random walk starting from zero as in Lemma \ref{lemme BDG}, see for instance Theorem 3.12 in \cite{StFlour}. Thus
	\begin{align*}
		\sum_{n_1, n_2 \ge 1} {1 \over n_1+n_2 -1} {W_1^{(k_1)}[n_1] \over W_1^{(k_1)} } &{W_1^{(k_2)}[n_2+1] \over W_1^{(k_2)}} {W_1^{(k_2)} \over W^{(k_2)}} \\
		&=\sum_{n_1,n_2 \ge 1} {1\over n_1 +n_2 -1} \P(\tau_{-k_1-1}(Y)  = n_1) \P(\tau_{-k_2-1}(Y)  = n_2+1) {W_1^{(k_2)} \over W^{(k_2)}} \\
		&= \E {1\over \tau_{-k_1-k_2-2}(Y)-2} {{1\over 2} h_1^\downarrow(k_2) c_{\bf q}^{k_2+1} \over W^{(k_2)}} \text{ by (\ref{fonction de partition}}).
	\end{align*}
	Furthermore, since we have $W_1^{(\ell)}[n+1]=nW^{(\ell)}[n]$ for all $n\ge 1$ so that using (\ref{fonction de partition taille n}), one can see that for all $k\ge 1$,
	$$ \E \frac{1}{\tau_{-k-1}(Y)-1}= \frac{W^{(k)}}{\frac{1}{2}h^\downarrow_1(k)c_{\bf q}^{k+1}}.$$
	The same computation can be found e.g.\@ in Equation (3.11) in \cite{StFlour}.
	As a consequence, since $\E {1/(\tau_{-k-1}(Y)-2)} \sim \E {1/(\tau_{-k-1}(Y)}-1)$ when $k \to \infty$,
	\begin{align*}
		\E {1\over \tau_{-k_1-k_2-2}(Y)-2} {{1\over 2} h_1^\downarrow(k_2) c_{\bf q}^{k_2+1} \over W^{(k_2)}}
		&\mathop{\sim}\limits_{k_2 \to \infty} {W^{(k_1+k_2+1)} \over {1\over 2} h_1^\downarrow (k_1+k_2+1) c_{\bf q}^{k_1+k_2+2}} {{1\over 2} h_1^\downarrow(k_2) c_{\bf q}^{k_2+1} \over W^{(k_2)}} \\
		&\mathop{\sim}\limits_{k_2 \to \infty} {{p_{\bf q} \over 2} c_{\bf q}^{k_1+k_2+2} {1\over (k_1+k_2)^2} \over {1\over 2} h_1^\downarrow (k_1+k_2+1) c_{\bf q}^{k_1+k_2+2}} {{1\over 2} h_1^\downarrow(k_2) c_{\bf q}^{k_2+1} \over {p_{\bf q} \over 2} c_{\bf q}^{k_2+1} {1\over k_2^2}} \text{ by (\ref{criticité de type 2})} \\
		&\mathop{\sim}\limits_{k_2 \to \infty} {k_2^{3/2} \over (k_1+k_2)^{3/2}} \ge {1\over 2^{3/2}} >{1\over 3}, 
	\end{align*}
	where we used that $k_1<k_2+1$ by assumption. From the above computations, we obtain that
	$$
	\P(E^{(\ell)} \in \mathfrak{h} \mid \mathfrak{e} \text{ and } E^{(\ell)} \not\in \mathfrak{e} )
	> \frac{1}{3}
	$$
	if $P_1^{(\ell)}(n)$ is large enough. Therefore, the probability that $\square$ and $E^{(\ell)}$ lie in the same hole in the exploration until the $k$-th negative jump of $P_1^{(\ell)}$ larger that half of the perimeter is less than $(2/3)^k$ for large $\ell$. We also use the fact that at the time of this $k$-th jump the perimeter of the distinguished hole is of order $\ell$ thanks to (\ref{eqlimech}). Thus,
	$$
	\limsup_{\ell \to \infty} \P( \square \text{ and } E^{(\ell)} \text{ lie in the same hole until time } (\zeta-\vp) \ell ) \le \E \left(\left(\frac{2}{3}\right)^{N_\vp}\right).
	$$
	One concludes by dominated convergence.
\end{proof}

\subsection{Distance between two uniform random edges, vertices or faces}

\begin{figure}[h]
	\centering
	\includegraphics[scale=0.7]{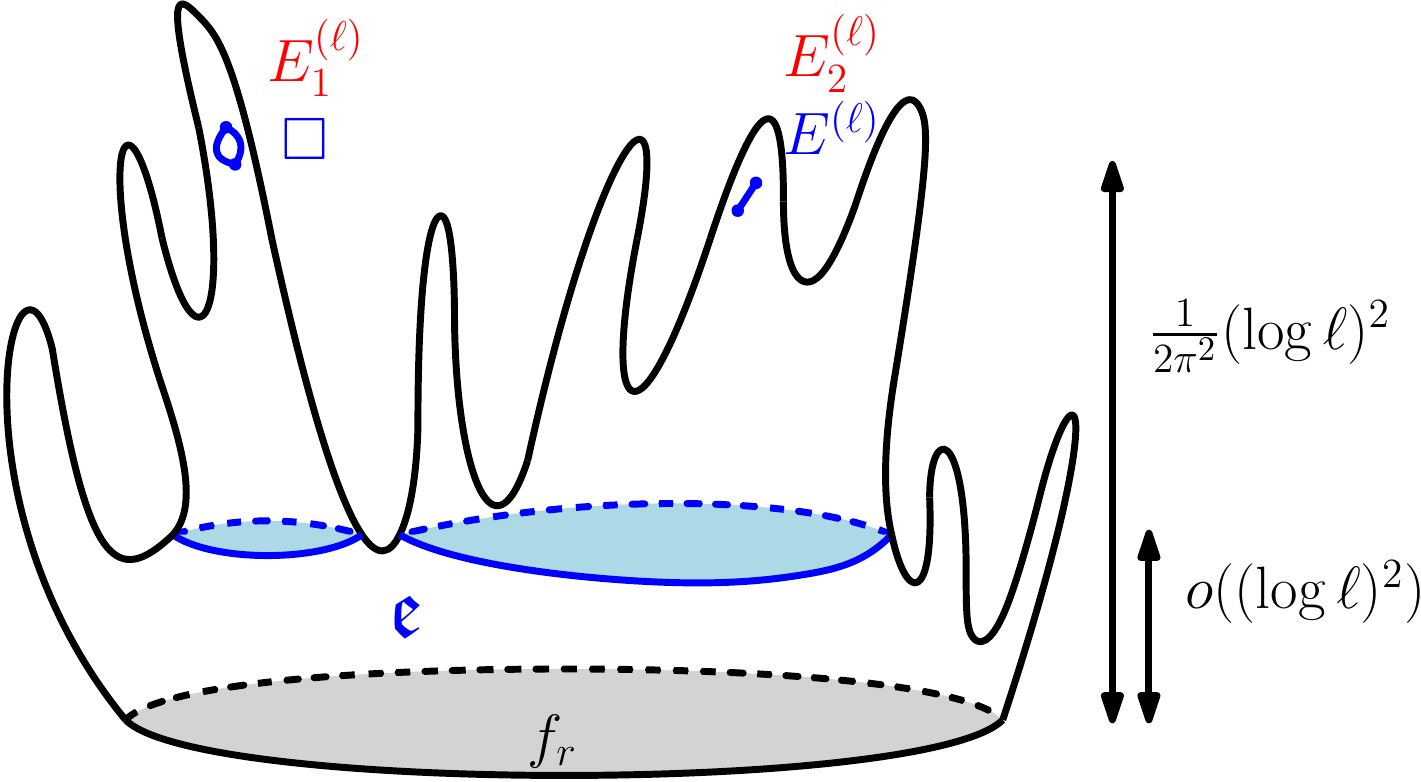}
	\caption{Illustration of the proof of Theorem \ref{distance entre deux arêtes uniformes}: the map $\mathfrak{M}^{(\ell)}$ (or $\mathfrak{M}^{(\ell)}_1$) is represented as a cactus where the height corresponds to the distance to the root face $f_r$. The blue holes are the two holes of the explored region $\mathfrak{e}$. The target $2$-face $\square$ and the uniform random edge $E^{(\ell)}$ on $\mathfrak{M}^{(\ell)}_1$, in blue, correspond respectively to the random uniform edges $E_1^{(\ell)}$ and $E_2^{(\ell)}$ in $\mathfrak{M}^{(\ell)}$, in red. }
	\label{image ça branche}
\end{figure}

\begin{theorem}\label{distance entre deux arêtes uniformes}
	If $\mathfrak{M}^{(\ell)}$ follows the law $\P^{(\ell)}$ and $E_1^{(\ell)},E_2^{(\ell)}$ are two uniform random edges of $\mathfrak{M}^{(\ell)}$, then
	$$
	{d^\dagger_\mathrm{gr} (E_1^{(\ell)},E_2^{(\ell)}) \over (\log \ell)^2} \mathop{\longrightarrow}\limits_{\ell \to \infty}^{(\P)} {1 \over \pi^2}
	\
	\text{ and }
	\
	{d^\dagger_\mathrm{fpp} (E_1^{(\ell)},E_2^{(\ell)}) \over \log \ell} \mathop{\longrightarrow}\limits_{\ell \to \infty}^{(\P)} {2 \over \pi^2 p_{\bf q}}
	$$
\end{theorem}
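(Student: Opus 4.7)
The plan is to establish matching upper and lower bounds, working within the framework of $\mathfrak{M}^{(\ell)}_1\sim \P^{(\ell)}_1$ so that Lemma \ref{séparation} applies. The upper bound is immediate from the triangle inequality $d^\dagger(E_1^{(\ell)},E_2^{(\ell)}) \le d^\dagger(f_r,E_1^{(\ell)}) + d^\dagger(f_r,E_2^{(\ell)})$ combined with the uniform-edge conclusions of Propositions \ref{distance fpp arete uniforme} and \ref{distanceàunsommetuniforme}: each right-hand term converges in probability to $\tfrac{1}{\pi^2 p_{\bf q}}\log\ell$ or $\tfrac{1}{2\pi^2}(\log\ell)^2$, so the sum delivers the matching upper bounds $\tfrac{2}{\pi^2 p_{\bf q}}$ and $\tfrac{1}{\pi^2}$.

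For the lower bound I would first reduce to studying $d^\dagger(\square, E^{(\ell)})$ under $\P^{(\ell)}_1$, where $\square$ plays the role of $E_1^{(\ell)}$ and $E^{(\ell)}$ that of $E_2^{(\ell)}$; this reduction uses a two-edge analogue of Lemma \ref{lemme transfert biais} (unzip $E_1^{(\ell)}$ into the target $2$-face and absorb the bias by $\#\mathrm{Edges}-1$, while $E_2^{(\ell)}$ is kept as a uniform edge). Then, running the filled-in peeling exploration of $\mathfrak{M}^{(\ell)}_1$ targeted at $\square$ (with $\mathcal{A}_{\mathrm{layers}}$ for the graph distance, $\mathcal{A}_{\mathrm{uniform}}$ for fpp), Lemma \ref{séparation} provides, with probability tending to one as $\ell\to\infty$ then $\varepsilon\to 0$, a first splitting time $n_*\le \tau_{-1}^{(\ell)}-\varepsilon\ell$ at which the peeling step separates $\square$ and $E^{(\ell)}$ into two distinct holes: $\mathfrak{H}_T$ of half-perimeter $k_T=P^{(\ell)}_1(n_*)$ containing $\square$, and the filled-in hole $\mathfrak{H}_F$ of half-perimeter $k_F$ containing $E^{(\ell)}$. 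Since the explored region separates $\mathfrak{H}_T$ from $\mathfrak{H}_F$, every dual path from $\square$ to $E^{(\ell)}$ must cross both hole boundaries, giving the key inequality
\[
d^\dagger(\square, E^{(\ell)}) \;\ge\; d^\dagger(\square, \partial\mathfrak{H}_T) + d^\dagger(E^{(\ell)}, \partial\mathfrak{H}_F).
\]

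By the Markov property of the peeling (Proposition \ref{Markov explo}), conditionally on the exploration up to time $n_*$ the submaps filling $\mathfrak{H}_T$ and $\mathfrak{H}_F$ are independent Boltzmann maps of perimeters $2k_T$ (with target $\square$) and $2k_F$, and $E^{(\ell)}$ is uniformly distributed on the edges of $\mathfrak{H}_F$. Applying the first convergence of Proposition \ref{distanceàunsommetuniforme} inside $\mathfrak{H}_T$ and its second convergence inside $\mathfrak{H}_F$ (and analogously Proposition \ref{distance fpp arete uniforme} for fpp) yields in probability $d^\dagger(\square, \partial\mathfrak{H}_T) \sim \tfrac{(\log k_T)^2}{2\pi^2}$ and $d^\dagger(E^{(\ell)}, \partial\mathfrak{H}_F) \sim \tfrac{(\log k_F)^2}{2\pi^2}$; summing and matching the upper bound then reduces to showing $\log k_T/\log\ell \to 1$ and $\log k_F/\log\ell \to 1$ in probability, i.e.\ $k_T, k_F \ge \ell^{1-o(1)}$ with high probability.

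The main obstacle is establishing these two polynomial-scale lower bounds at the random splitting time. For $k_T$, the input is Proposition \ref{périmètre des cartes à grand bord} applied to the continuation of the peeling inside $\mathfrak{H}_T$, whose total lifetime scales like $k_T$: since one already knows $\tau_{-1}^{(\ell)}-n_* \ge \varepsilon\ell$, this forces $k_T \gtrsim \varepsilon\ell$ with high probability. For $k_F$, the law of the filled-in hole containing $E^{(\ell)}$ is biased by its number of edges, which by (\ref{EqVolume}) is of order $k_F^{3/2}$, so a size-biased first-moment computation (based on the local limit of Proposition \ref{limite locale bivariée} and the tail $\nu(-k)\sim p_{\bf q}/k^2$) gives $k_F \ge \delta\ell$ with high probability. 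The fpp assertion follows from the same scheme, substituting $\mathcal{A}_{\mathrm{uniform}}$ for $\mathcal{A}_{\mathrm{layers}}$ and Proposition \ref{distance fpp arete uniforme} for Proposition \ref{distanceàunsommetuniforme} throughout.
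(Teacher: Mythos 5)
Your proposal is correct in its overall architecture and matches the paper's strategy for the upper bound (triangle inequality through $f_r$) and for the skeleton of the lower bound (reduce to $d^\dagger(\square,E^{(\ell)})$ under $\P^{(\ell)}_1$ via the biasing of Lemma \ref{lemme transfert biais}, invoke Lemma \ref{séparation} to get a splitting time $n_*\le\tau_{-1}^{(\ell)}-\varepsilon\ell$, and use the additivity $d^\dagger(\square,E^{(\ell)})\ge d^\dagger(\square,\partial\mathfrak{e})+d^\dagger(\partial\mathfrak{e},E^{(\ell)})$ together with the Markov property of Proposition \ref{Markov explo}). Where you genuinely diverge is on the $E^{(\ell)}$-side of this inequality. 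You estimate $d^\dagger(E^{(\ell)},\partial\mathfrak{H}_F)$ intrinsically, as the root-to-uniform-edge distance inside the size-biased Boltzmann map filling the hole of half-perimeter $k_F$, which forces you to prove $k_F\ge\delta\ell$ with high probability; this is true, but only in the iterated sense $\lim_{\delta\to 0}\limsup_{\ell\to\infty}\P(k_F<\delta\ell)=0$, and it requires an additional first-moment volume estimate (holes of half-perimeter at most $\delta\ell$ carry an expected volume of order $\sqrt{\delta}\,\ell^{3/2}$ out of a total of order $\ell^{3/2}$) that you only sketch. The paper avoids this entirely: it only controls the target-side hole, whose half-perimeter $P_1(n_*)$ is of order $\ell$ directly from Proposition \ref{périmètre des cartes à grand bord} because $\tau_{-1}^{(\ell)}-n_*\ge\varepsilon\ell$; from $d^\dagger(\partial\mathfrak{e},\square)=\frac{(\log\ell)^2}{2\pi^2}(1+o(1))$ and $d^\dagger(f_r,\square)=\frac{(\log\ell)^2}{2\pi^2}(1+o(1))$ it deduces $d^\dagger(f_r,\partial\mathfrak{e})=o((\log\ell)^2)$, hence $d^\dagger(\partial\mathfrak{e},E^{(\ell)})=d^\dagger(f_r,E^{(\ell)})+o((\log\ell)^2)$, and packages the intermediate result as the convergence of the difference $d^\dagger(\square,E^{(\ell)})-d^\dagger(f_r,E^{(\ell)})$ (Lemma \ref{lemme distance entre deux arêtes uniformes}), which is then transferred once by Lemma \ref{lemme transfert biais} and combined with Proposition \ref{distanceàunsommetuniforme} applied to $E_2^{(\ell)}$ in the original map. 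Your route buys a self-contained two-branch picture at the cost of the extra control of $k_F$ and of the law of the filled-in hole conditioned to contain $E^{(\ell)}$; the paper's route is lighter because all quantitative input is concentrated on the target branch, where the perimeter lower bound comes for free from the separation lemma.
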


The general idea of the proof is given in Figure \ref{image ça branche}. The key step is an intermediate result for the map $\mathfrak{M}^{(\ell)}_1$ of law $\P^{(\ell)}_1$.

\begin{lemma}\label{lemme distance entre deux arêtes uniformes}
	If $\mathfrak{M}^{(\ell)}_1$ is a random map with target of law $\P^{(\ell)}_1$ and $E^{(\ell)}$ is a random uniform edge on $\mathfrak{M}^{(\ell)}_1$ (where the target $2$-face has been zipped and is considered as an edge), then
	$$
	\frac{d^\dagger_\mathrm{gr} (\square,E^{(\ell)}) - d^\dagger_\mathrm{gr}(f_r,E^{(\ell)}) }{(\log \ell)^2} \mathop{\longrightarrow}\limits_{\ell \to \infty}^{(\P)} {1 \over 2\pi^2}
	\
	\text{ and }
	\
	\frac{d^\dagger_\mathrm{fpp} (\square,E^{(\ell)})-d^\dagger_\mathrm{fpp}(f_r,E^{(\ell)})}{ \log \ell} \mathop{\longrightarrow}\limits_{\ell \to \infty}^{(\P)} {1 \over \pi^2 p_{\bf q}}
	$$
\end{lemma}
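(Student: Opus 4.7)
The plan is to run a filled-in peeling exploration of $\mathfrak{M}_1^{(\ell)}$ targeted at $\square$ and exploit the separation provided by Lemma \ref{séparation}. For the fpp statement I use the uniform peeling algorithm $\mathcal{A}_\mathrm{uniform}$; for the graph distance statement, the layer peeling algorithm $\mathcal{A}_\mathrm{layers}$. Fix a small $\vp > 0$ and work on the event, of probability $1 - o_\vp(1)$, that at some time $T \le \tau_{-1}^{(\ell)} - \vp \ell$ the peeling step creates two holes: a hole $\mathfrak{h}_\square$ of perimeter $2p_1$ containing $\square$, and a hole $\mathfrak{h}_E$ of perimeter $2p_2$ containing $E^{(\ell)}$, the latter being immediately filled in by an independent $\bf q$-Boltzmann map $M_E$ (by Proposition \ref{Markov explo}). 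By Proposition \ref{périmètre des cartes à grand bord} I further restrict to the high-probability event that $p_1, p_2 \in [\delta \ell, \delta^{-1} \ell]$ for some $\delta = \delta(\vp) > 0$. Denote by $V$ the explored region at the end of step $T$ together with the filling $M_E$; then $\mathfrak{M}_1^{(\ell)}$ is the gluing of $V$ along $\partial \mathfrak{h}_\square$ with a $\bf q$-Boltzmann map $M_\square$ of perimeter $2p_1$ having target $\square$.

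Conditionally on $E^{(\ell)}$ lying in $M_E$, an argument analogous to Lemma \ref{lemme transfert biais} (using that the $O(p_2)$ edges on $\partial \mathfrak{h}_E$ are negligible against the total $\Theta(p_2^{3/2})$ edges of $M_E$, cf.\ (\ref{EqVolume})) shows that $E^{(\ell)}$ is asymptotically uniform among the edges of $M_E$. Applying Theorem \ref{distance de la racine à une face uniforme} to $M_E$ and Propositions \ref{distanceàunsommetuniforme}, \ref{distance fpp arete uniforme} to $M_\square$ (with the roles of root and target swapped), and using $\log p_i = \log \ell + O(1)$, I obtain
\begin{equation*}
	\frac{d^\dagger_\mathrm{gr, M_E}(\partial \mathfrak{h}_E, E^{(\ell)})}{(\log \ell)^2},\ \frac{d^\dagger_\mathrm{gr, M_\square}(\square, \partial \mathfrak{h}_\square)}{(\log \ell)^2} \mathop{\longrightarrow}\limits_{\ell \to \infty}^{(\P)} \frac{1}{2\pi^2},
\end{equation*}
and the analogous fpp convergences to $1/(\pi^2 p_{\bf q})$. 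Next, I show that the distances from $f_r$ to the hole boundaries through $V$ are negligible at the correct scale. For fpp, the uniform-peeling coupling (\ref{distance fpp à l'arête épluchée}) gives $d^\dagger_\mathrm{fpp, V}(f_r, \partial \mathfrak{h}_*) \le T_T = \sum_{i=0}^{T-1} \mathcal{E}_i/(2P(i)) = O(1)$ in probability, since $\min_{i \le T} P(i) \ge c(\vp) \ell$ with high probability by Proposition \ref{périmètre des cartes à grand bord}. For the graph distance, I invoke the re-rooting identity $\ell\, h^\downarrow_\ell(1) = h^\downarrow_1(\ell)$, which implies that the distribution of $d^\dagger_\mathrm{gr}(f_r, \square)$ under $\P^{(\ell)}_1$ coincides with its distribution under $\P^{(1)}_\ell$, and hence by Proposition \ref{distanceàunsommetuniforme} equals $(\log \ell)^2/(2\pi^2) + o((\log \ell)^2)$ in probability; subtracting $d^\dagger_\mathrm{gr, M_\square}(\square, \partial \mathfrak{h}_\square) \sim (\log p_1)^2/(2\pi^2)$ yields $d^\dagger_\mathrm{gr, V}(f_r, \partial \mathfrak{h}_\square) = o((\log \ell)^2)$, and the analogous bound holds for $\partial \mathfrak{h}_E$.

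Combining the triangle inequality through $f_r$,
\begin{equation*}
	d^\dagger(\square, E^{(\ell)}) \le d^\dagger_{M_\square}(\square, \partial \mathfrak{h}_\square) + d^\dagger_V(f_r, \partial \mathfrak{h}_\square) + d^\dagger_V(f_r, \partial \mathfrak{h}_E) + d^\dagger_{M_E}(\partial \mathfrak{h}_E, E^{(\ell)}),
\end{equation*}
with the matching lower bound $d^\dagger(\square, E^{(\ell)}) \ge d^\dagger_{M_\square}(\square, \partial \mathfrak{h}_\square) + d^\dagger_{M_E}(\partial \mathfrak{h}_E, E^{(\ell)})$ (since any path from $\square$ to $E^{(\ell)}$ must cross both hole boundaries), and with $d^\dagger(f_r, E^{(\ell)}) = d^\dagger_V(f_r, \partial \mathfrak{h}_E) + d^\dagger_{M_E}(\partial \mathfrak{h}_E, E^{(\ell)}) + O(1)$, yields
\begin{equation*}
	d^\dagger(\square, E^{(\ell)}) - d^\dagger(f_r, E^{(\ell)}) = d^\dagger_{M_\square}(\square, \partial \mathfrak{h}_\square) + o((\log \ell)^2) \text{ (resp.\ } o(\log \ell)\text{)},
\end{equation*}
from which the stated convergences in probability follow. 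The main technical point will be to ensure that Theorem \ref{distance de la racine à une face uniforme} and Propositions \ref{distanceàunsommetuniforme}, \ref{distance fpp arete uniforme} apply uniformly for (random) perimeters $p_1, p_2$ in $[\delta \ell, \delta^{-1} \ell]$, together with a careful justification that $E^{(\ell)}$ is asymptotically uniform among the edges of $M_E$ conditionally on being there.
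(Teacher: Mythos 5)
Your proposal follows the same skeleton as the paper's proof: run the peeling targeted at $\square$, invoke Lemma \ref{séparation} to get a splitting event at time $\le \tau_{-1}^{(\ell)}-\vp\ell$, use Proposition \ref{périmètre des cartes à grand bord} to keep the perimeters of order $\ell$, show the root-to-splitting-boundary distance is negligible by subtracting $d^\dagger(\square,\partial\mathfrak{h}_\square)\sim d^\dagger(f_r,\square)$, and conclude by a triangle inequality. The one genuine difference is that you evaluate \emph{both} branch lengths separately, in particular $d^\dagger_{M_E}(\partial\mathfrak{h}_E,E^{(\ell)})$, which forces you to identify the conditional law of $E^{(\ell)}$ inside the size-biased Boltzmann map $M_E$ and to make Proposition \ref{distanceàunsommetuniforme} uniform over the random perimeter $p_2\in[\delta\ell,\delta^{-1}\ell]$ --- precisely the step you flag as the main technical point. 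The paper avoids this entirely: since the statement only concerns the difference $d^\dagger(\square,E^{(\ell)})-d^\dagger(f_r,E^{(\ell)})$, the upper bound is the bare triangle inequality $d^\dagger(\square,E^{(\ell)})\le d^\dagger(\square,f_r)+d^\dagger(f_r,E^{(\ell)})$ combined with $d^\dagger(f_r,\square)\sim(\log\ell)^2/(2\pi^2)$, and the lower bound uses $d^\dagger(\square,E^{(\ell)})\ge d^\dagger(\partial\mathfrak{e},\square)+d^\dagger(\partial\mathfrak{e},E^{(\ell)})$ together with $d^\dagger(\partial\mathfrak{e},E^{(\ell)})=d^\dagger(f_r,E^{(\ell)})+o(\cdot)$, so the law of $E^{(\ell)}$ within its own branch never enters. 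Your route can be completed (an analogue of Lemma \ref{lemme transfert biais} does handle the edge-biasing), but the paper's bookkeeping buys you the result without that extra machinery; if you keep your version, make sure the two-sided control of the gluing along $\partial\mathfrak{h}_E$ uses the layer structure (all boundary edges at heights $H(n)$ or $H(n)+1$) for the graph distance, since the minimizing boundary points of the two pieces need not coincide.
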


\begin{proof}
	We consider the filled-in peeling exploration of $\mathfrak{M}^{(\ell)}_1$ using the algorithm $\mathcal{A}_\mathrm{layers}$ for the dual graph distance and the algorithm $\mathcal{A}_\mathrm{uniform}$ for the fpp distance. Let $n$ be the time at which during an event of type $G$, $\square$ and $E^{(\ell)}$ are in two distinct holes $\mathfrak{h}_1$ and $\mathfrak{h}_2$ (when it exists). By Lemma \ref{séparation}, we know that for $\vp$ small, $n\le \tau^{(\ell)}_{-1}-\vp \ell$ with large probability. We also denote by $\mathfrak{e}$ the sub-map obtained after identifying the edges but before filling-in the hole containing $E^{(\ell)}$.

	We work with the dual graph distance, and thus with the peeling by layers algorithm. We point out that a path from $\square$ to $E^{(\ell)}$ must go through $\mathfrak{e}$ (see Figure \ref{image ça branche}), so that
	$$
	d^\dagger_\mathrm{gr} (\square, E^{(\ell)}) \ge d^\dagger_\mathrm{gr} (\partial \mathfrak{e},\square) +d^\dagger_\mathrm{gr} (\partial \mathfrak{e}, E^{(\ell)}).
	$$
	But at time $n$, the perimeter $P_1(n)$ is larger than some constant times $\ell$ with large probability thanks to Proposition \ref{périmètre des cartes à grand bord} since $n\le \tau_{-1}-\vp \ell$. Consequently, by Proposition \ref{distanceàunsommetuniforme}, and since the map filling the hole $\mathfrak{h}_1$ has the law $\P^{(P_1(n))}_1$ conditionally on the exploration up to time $n$ by Proposition \ref{Markov explo},
	$$
	d^\dagger_\mathrm{gr} (\partial \mathfrak{e},\square) = {(\log \ell)^2\over 2\pi^2} (1+o(1))
	\qquad\text{and} \qquad
	d^\dagger_\mathrm{gr} (f_r,\square) = {(\log \ell)^2\over 2\pi^2} (1+o(1))
	$$
	in probability on the event that $n$ exists and $n\le \tau_{-1}-\vp \ell$. In particular, in probability and on the same event,
	$
	d^\dagger_\mathrm{gr} (f_r,\partial \mathfrak{e}) = o((\log \ell)^2).
	$
	As a consequence, $d^\dagger_\mathrm{gr} (f_r,E^{(\ell)}) = d^\dagger_\mathrm{gr} (\partial \mathfrak{e}, E^{(\ell)})+ o ((\log \ell)^2)$.
	Thus, in probability,
	$$
	d^\dagger_\mathrm{gr} (\square,E^{(\ell)})  \ge {(\log \ell)^2\over 2\pi^2} (1+o(1)) + d^\dagger_\mathrm{gr} (f_r,E^{(\ell)}).
	$$
	The converse inequality is a direct consequence of the triangle inequality and Proposition \ref{distanceàunsommetuniforme}. 
	For the fpp distance, the same reasoning holds using Proposition \ref{distance fpp arete uniforme}.
\end{proof}

\begin{remark}
	In the above proof, we saw that $d^\dagger_\mathrm{gr}(f_r,\partial \mathfrak{e})=o((\log \ell)^2)$ and $d^\dagger_\mathrm{fpp}(f_r,\partial \mathfrak{e})=o(\log \ell)$. Nonetheless, it is easy to see that $d_\mathrm{fpp}^\dagger(f_r,\partial \mathfrak{e})= O(1)$ in probability using Proposition \ref{périmètre des cartes à grand bord} and Lemma \ref{séparation}. Moreover, we point out that Lemma \ref{analogue6}, together with Lemma \ref{séparation}, enables to show that $d^\dagger_\mathrm{gr}(f_r,\partial \mathfrak{e})=O(\log \ell)$ in probability.
\end{remark}

Theorem \ref{distance entre deux arêtes uniformes} follows by a judicious application of Lemma \ref{lemme transfert biais}:

\begin{proof}[Proof of Theorem \ref{distance entre deux arêtes uniformes}]
	For the dual graph distance, we apply Lemma \ref{lemme transfert biais} and Lemma \ref{lemme distance entre deux arêtes uniformes} to the function $\varphi_\ell$ defined by
	$$
	\forall \mathfrak{m}_\square \in \mathcal{M}^{(\ell)}_1, \qquad
	\varphi_\ell(\mathfrak{m}_\square) = \E \left(
	\left|\frac{d^\dagger_\mathrm{gr} (\square,E(\mathfrak{m}_\square) - d^\dagger_\mathrm{gr}(f_r,E(\mathfrak{m}_\square)) }{(\log \ell)^2}
	-
	\frac{1}{2\pi^2}
	\right|
	\wedge 1 \right),
	$$
	where $E(\mathfrak{m}_\square)$ is a random uniform edge on $\mathfrak{m}_\square$ (where the target $2$-face is considered as a single edge). As a result, we obtain that if $E_1^{(\ell)}$ and $E^{(\ell)}_2$ are two independent random uniform edges in $\mathfrak{M}^{(\ell)}$, then
	$$
	\E \left( \left|\frac{d^\dagger_\mathrm{gr} (E_1^{(\ell)},E_2^{(\ell)}) - d^\dagger_\mathrm{gr}(f_r,E_2^{(\ell)}) }{(\log \ell)^2}
	-
	\frac{1}{2\pi^2}
	\right|
	\wedge 1 \right)
	\mathop{\longrightarrow}\limits_{\ell \to \infty}
	0.
	$$
	The first convergence of the theorem then follows by applying once again Proposition \ref{distanceàunsommetuniforme}. For the fpp distance, the function $\varphi_\ell$ is defined similarly but the expectation also involves the exponential weights defining the fpp distance.
\end{proof}

Using the same ideas as in the proof of Proposition \ref{proppointunif} we also obtain the following result

\begin{corollary}\label{cor distance entre deux sommets uniformes}
	If $\mathfrak{M}^{(\ell)}$ follows the law $\P^{(\ell)}$ and $V_1,V_2$ are two uniform vertices of $\mathfrak{M}^{(\ell)}$, then
	$$
	{d^\dagger_\mathrm{gr} (V_1,V_2) \over (\log \ell)^2} \mathop{\longrightarrow}\limits_{\ell \to \infty}^{(\P)} {1 \over \pi^2}
	\
	\text{ and }
	\
	{d^\dagger_\mathrm{fpp} (V_1,V_2) \over \log \ell} \mathop{\longrightarrow}\limits_{\ell \to \infty}^{(\P)} {2 \over \pi^2 p_{\bf q}}.
	$$
\end{corollary}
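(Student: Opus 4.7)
The plan is to deduce this corollary from Theorem~\ref{distance entre deux arêtes uniformes} via the same biasing trick used in the proof of Proposition~\ref{proppointunif}. Conditionally on $\mathfrak{M}^{(\ell)}$, sample two independent oriented edges $\vec{E}_1, \vec{E}_2$, each with the size-biased law $\P(\vec{E}_i = \vec{e} \mid \mathfrak{M}^{(\ell)}) = 1/(\deg v(\vec{e})\,|\mathrm{Vertices}(\mathfrak{M}^{(\ell)})|)$ from (\ref{E flèche}); by (\ref{eq sommet uniforme}), their starting vertices $V_i = v(\vec{E}_i)$ are then two independent uniform random vertices of $\mathfrak{M}^{(\ell)}$. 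Since $|d^\dagger_\mathrm{gr}(\vec{E}_1, \vec{E}_2) - d^\dagger_\mathrm{gr}(V_1, V_2)| \le \deg V_1 + \deg V_2$ and each $\deg V_i$ is stochastically dominated by a random variable whose law is independent of $\ell$ (Lemma~\ref{majoration degré sommet uniforme}), replacing $(V_1,V_2)$ by $(\vec{E}_1,\vec{E}_2)$ modifies the graph distance by $o((\log\ell)^2)$ in probability. The analogous substitution for $d^\dagger_\mathrm{fpp}$ introduces an error bounded by a sum of at most $\deg V_1 + \deg V_2$ independent $\mathcal{E}(1)$ variables (cf.\ Remark~\ref{petite modif des distances}), which is $o(\log\ell)$ in probability.

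It thus suffices to establish the two stated convergences for $(\vec{E}_1, \vec{E}_2)$. Let $\vec{E}'_1, \vec{E}'_2$ be independent uniform oriented edges of $\mathfrak{M}^{(\ell)}$; the change of measure yields, for any measurable set $A$ of pairs of oriented edges,
\begin{equation*}
\P\bigl((\vec{E}_1, \vec{E}_2) \in A \bigm| \mathfrak{M}^{(\ell)}\bigr)
= \left(\frac{2|\mathrm{Edges}(\mathfrak{M}^{(\ell)})|}{|\mathrm{Vertices}(\mathfrak{M}^{(\ell)})|}\right)^{\!2} \E\!\left(\left.\frac{\mathbf{1}_{\{(\vec{E}'_1, \vec{E}'_2)\in A\}}}{\deg v(\vec{E}'_1)\,\deg v(\vec{E}'_2)}\,\right| \mathfrak{M}^{(\ell)}\right),
\end{equation*}
and taking $A$ to be the whole space identifies $\E(1/(\deg v(\vec{E}'_1)\deg v(\vec{E}'_2))\mid \mathfrak{M}^{(\ell)}) = (|\mathrm{Vertices}|/(2|\mathrm{Edges}|))^2$. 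Specialising $A$ to the event that $d^\dagger_\mathrm{gr}/(\log\ell)^2$ (resp.\ $d^\dagger_\mathrm{fpp}/\log\ell$) lies within $\vp$ of $1/\pi^2$ (resp.\ $2/(\pi^2 p_{\bf q})$), Theorem~\ref{distance entre deux arêtes uniformes} together with Markov's inequality ensures that $\P(A^c\text{ for }(\vec{E}'_1,\vec{E}'_2)\mid \mathfrak{M}^{(\ell)}) \le \delta$ with probability $1-o(1)$. Since $1/(\deg v(\vec{E}'_1)\deg v(\vec{E}'_2))\le 1$, on this event the inner expectation is at least $(|\mathrm{Vertices}|/(2|\mathrm{Edges}|))^2 - \delta$, so the conditional probability on the left is at least $1 - (2|\mathrm{Edges}|/|\mathrm{Vertices}|)^2\delta$.

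Finally, by Lemma~\ref{lemme BDG} and the law of large numbers the ratio $2|\mathrm{Edges}|/|\mathrm{Vertices}|$ is tight, so letting $\delta \to 0$ and then $\vp \to 0$ concludes the proof. No genuinely new obstacle arises beyond what was already handled in Proposition~\ref{proppointunif}; the only mild extra point is that the biasing factor appears squared, which is controlled by the same tightness argument for the edge-to-vertex ratio.
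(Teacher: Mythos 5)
Your proof is correct and follows essentially the same route as the paper: reduce to the size-biased oriented edges $\vec{E}_1,\vec{E}_2$ via Lemma \ref{majoration degré sommet uniforme}, then transfer Theorem \ref{distance entre deux arêtes uniformes} by the change of measure from Proposition \ref{proppointunif}, with the squared biasing factor controlled by the tightness of $2|\mathrm{Edges}|/|\mathrm{Vertices}|$ from Lemma \ref{lemme BDG}. The paper states this proof only in outline ("following the same lines as in the proof of Proposition \ref{proppointunif}"), and your write-up fills in exactly the intended details.
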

\begin{proof}
	We do the proof for the dual graph distance, the same ideas work for the fpp distance. We can work with $d_\mathrm{gr}^\dagger (\vec{E}_1, \vec{E}_2)$ where $\vec{E}_1$,$\vec{E}_2$ are i.i.d. oriented edges whose law is the same as the law of $\vec{E}$ in (\ref{E flèche}) since $d_\mathrm{gr}^\dagger (\vec{E}_1, \vec{E}_2)-\deg(V_1^{(\ell)})-\deg(V_2^{(\ell)}) \le d_\mathrm{gr}^\dagger (V_1^{(\ell)}, V_2^{(\ell)}) \le d_\mathrm{gr}^\dagger (\vec{E}_1, \vec{E}_2)$ and since by Lemma \ref{majoration degré sommet uniforme} $\deg V^{(\ell)}_1$ and $\deg V^{(\ell)}_2$ are stochastically dominated by a random variable which does not depend on $\ell$. 
	We then conclude by following the same lines as in the proof of Proposition \ref{proppointunif} using 
	Theorem \ref{distance entre deux arêtes uniformes}.
\end{proof}
We finally prove Corollary \ref{distance entre deux faces uniformes} using the same ideas as in the proof of Theorem \ref{distance de la racine à une face uniforme} in Subsection  \ref{preuve distance de la racine à une face uniforme} and using Theorem \ref{distance entre deux arêtes uniformes}.

\section{Diameter of $\mathfrak{M}^{(\ell,\dagger)} $}\label{section diamètre}

The main purpose of this section is to give bounds on the diameter of $\mathfrak{M}^{(\ell,\dagger)} $ for the graph distance and for the fpp distance, proving Theorem \ref{majoration du diamètre graphe et fpp}. The upper bounds are achieved using a first moment method together with large deviations on the height of a uniform random edge, obtained using supermartingales. Two of these supermartingales (of Lemmas \ref{sur-martingale2} and \ref{martingale tronquée}) are related to the martingale $(M_n^{(\ell)})_{n\ge 0}$ defined using the perimeter process $P_\ell$ under $\P^{(1)}_\ell$ as follows:
\begin{equation}\label{martingale}
	\forall n\ge 0, \qquad
	M_n^{(\ell)} = \frac{1}{h^\downarrow_\ell(P_\ell(n))} \prod_{j=0}^{n-1}\left(\frac{1}{\nu([-P_\ell(j)+1,\infty)) + \nu(-P_\ell(j)-\ell)}\vee {\bf 1}_{P_\ell(j)=-\ell} \right).
\end{equation}
It is indeed a martingale since for all $n \ge 0$, if $P_\ell(n) \ge 1$, then
\begin{align*}
	\E^{(1)}_\ell
	\left(\left. M_{n+1}^{(\ell)} \right\vert \mathcal{F}_n\right)
	=&
	\frac{h^\downarrow_\ell(-\ell)}{h^\downarrow_\ell(P_\ell(n))} \nu(-P_\ell(n)-\ell) \frac{1}{h^\downarrow_\ell(-\ell)}
	\prod_{j=0}^{n}\frac{1}{\nu([-P_\ell(j)+1,\infty)) + \nu(-P_\ell(j)-\ell)}
	\\
	&+
	\sum_{k \ge 1-P_\ell(n)}
	\frac{h^\downarrow_\ell(P_\ell(n)+k)}{h^\downarrow_\ell(P_\ell(n))} \nu(k) \frac{1}{h^\downarrow_\ell(P_\ell(n)+k)}
	\prod_{j=0}^{n} \frac{1}{\nu([-P_\ell(j)+1,\infty)) + \nu(-P_\ell(j)-\ell)} \\
	=&M^{(\ell)}_n,
\end{align*}
while the indicator ${\bf 1}_{P_\ell(j)=-\ell}$ enables to keep a martingale after $P_\ell$ is absorbed at $-\ell$. This martingale is particularly well suited to the study of the distances on $3/2$-stable maps owing to the asymptotic behavior (\ref{comportement asymptotique nu}) of the step distribution $\nu$: intuitively, since $\nu((-\infty,-P_\ell(n)))\approx p_{\bf q}/P_\ell(n)$, the above product can be approximated by the exponential of $\sum_{j=0}^{n-1} 1/P_\ell(j)$ (times an appropriate constant) and can thus be related to the fpp distance (\ref{distance fpp à la racine}) via a concentration inequality.

One notable fact is that our bounds show that the diameter under the fpp distance heavily depends on the tails of $\nu$, but also on the non-asymptotic behavior of $\nu$. Hence, even though the fpp diameter and the distance between uniform points are of the same order, the ratio does not converge to a universal constant. On the contrary, in the case of the dual graph distance, our upper bound for the diameter is universal, echoing the universal scaling limit of the height of a random uniform face in Theorem \ref{distance de la racine à une face uniforme}.

\subsection{Upper bound of the diameter for the fpp distance}

In this subsection, we upper bound the diameter of $\mathfrak{M}^{(\ell,\dagger)}$ for the first-passage percolation distance. Let $\gamma({\bf q}) \coloneqq \min_{k,\ell \ge 1} k(\nu((-\infty,-k])-\nu(-\ell-k)))$, which is well defined thanks to (\ref{comportement asymptotique nu}) and belongs to $(0,1)$. The proposition below implies the first upper bound of Theorem \ref{majoration du diamètre graphe et fpp}.

\begin{proposition}\label{majoration du diamètre}
	Let $\mathrm{Diam}_\mathrm{fpp}^\dagger(\mathfrak{M}^{(\ell)})$ be the diameter of $\mathfrak{M}^{(\ell,\dagger)}$ for the fpp distance. Then there exists a constant $C>0$ which does not depend on $\bf q$ such that with probability $1-o(1)$ when $\ell \to \infty$,
	$$
	\frac{\mathrm{Diam}_\mathrm{fpp}^\dagger(\mathfrak{M}^{(\ell)})}{\log \ell}
	\le C\left(1+ \frac{1}{\gamma({\bf q})}\right).
	$$
\end{proposition}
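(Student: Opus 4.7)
The approach has three ingredients: an exponential-moment estimate on $d_\mathrm{fpp}^\dagger(f_r,\square)$ under $\P^{(1)}_\ell$ obtained from the martingale (\ref{martingale}); the bias transfer of Lemma~\ref{lemme transfert biais}; and a first-moment union bound over the $\Theta(\ell^{3/2})$ edges of $\mathfrak{M}^{(\ell)}$, closed by the triangle inequality $\mathrm{Diam}_\mathrm{fpp}^\dagger \le 2\max_e d_\mathrm{fpp}^\dagger(f_r,e)$.

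For the martingale estimate, the definition of $\gamma({\bf q})$ rewrites as
$$
\nu([-k+1,\infty)) + \nu(-k-\ell) \;\le\; 1 - \frac{\gamma({\bf q})}{k} \qquad (k\ge 1),
$$
so, using $\log(1/(1-x))\ge x$ for $x\in[0,1)$,
$$
M_{\tau_{-\ell}}^{(\ell)} \;\ge\; \exp\!\Bigl(\gamma({\bf q})\sum_{j=0}^{\tau_{-\ell}-1} \frac{1}{P_\ell(j)}\Bigr).
$$
Optional stopping applied to $M^{(\ell)}_{\tau_{-\ell}\wedge n}$, together with Fatou (valid since $M^{(\ell)}\ge 0$), yields $\E^{(1)}_\ell\exp(\gamma({\bf q})\sum_j 1/P_\ell(j)) \le M_0^{(\ell)} = 1/h_\ell^\downarrow(1) = O(\sqrt{\ell})$. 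Conditionally on $P_\ell$, the representation (\ref{distance fpp à la racine}) expresses $d_\mathrm{fpp}^\dagger(f_r,\square)$ as a sum of independent exponentials of rates $2P_\ell(k)$; using $1/(1-x)\le e^{2x}$ on $[0,1/2]$ (valid since $\gamma({\bf q})\le 1\le P_\ell(k)$),
$$
\E^{(1)}_\ell\exp\!\bigl(\gamma({\bf q})\, d_\mathrm{fpp}^\dagger(f_r,\square)\bigr)
\;\le\; \E^{(1)}_\ell\exp\!\Bigl(\gamma({\bf q})\sum_j \frac{1}{P_\ell(j)}\Bigr)
\;=\; O(\sqrt{\ell}),
$$
so Markov yields $\P^{(1)}_\ell(d_\mathrm{fpp}^\dagger(f_r,\square)\ge t)\le C\sqrt{\ell}\,e^{-\gamma({\bf q})t}$.

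To close the proof, the root-target swap of Subsection~\ref{échange racine cible} and the unzipping bias of Lemma~\ref{lemme transfert biais} transfer this tail bound (up to a constant and an additive $O(1)$ error from Remark~\ref{petite modif des distances}) to the probability that a uniform random edge of $\mathfrak{M}^{(\ell)}$ lies at fpp-distance $\ge t$ from $f_r$. Since $\E|\mathrm{Edges}(\mathfrak{M}^{(\ell)})|\sim b_{\bf q}\ell^{3/2}$ by (\ref{EqVolume}), the expected number of such edges is $O(\ell^{2}\, e^{-\gamma({\bf q})t})$. Choosing $t=(2+\delta')\log\ell/\gamma({\bf q})$ for any $\delta'>0$ makes this $o(1)$, so with probability $1-o(1)$ every edge is at distance at most $(2+\delta')\log\ell/\gamma({\bf q})$ from $f_r$. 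The triangle inequality then gives $\mathrm{Diam}_\mathrm{fpp}^\dagger(\mathfrak{M}^{(\ell)}) \le (4+2\delta')\log\ell/\gamma({\bf q})\le C(1+1/\gamma({\bf q}))\log\ell$, since $1/\gamma({\bf q})\ge 1$.

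The most delicate point is the quantitative use of Lemma~\ref{lemme transfert biais}, which is stated only qualitatively ($o(1)\mapsto o(1)$): one needs to rerun the two-case split in its proof (on whether $|\mathrm{Edges}(\mathfrak{M}^{(\ell)})|$ is of order $\ell^{3/2}$ or not) in order to preserve the polynomial prefactor $\sqrt{\ell}$ coming from the tail estimate, and to confirm that the exceptional event $\{|\mathrm{Edges}(\mathfrak{M}^{(\ell)})| \le \varepsilon\ell^{3/2}\}$ still has probability $o(1)$ uniformly. Everything else is routine: nonnegativity of $M^{(\ell)}$ makes optional stopping painless, and the key inequality on $\nu([-k+1,\infty))+\nu(-k-\ell)$ is just a rewriting of the definition of $\gamma({\bf q})$.
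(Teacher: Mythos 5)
Your proposal is correct and follows essentially the same route as the paper: a large-deviation bound on the distance to the target under $\P^{(1)}_\ell$ via the (super)martingale built from $\gamma({\bf q})$, a quantitative version of the bias transfer, and a first-moment union bound over the $O(\ell^{3/2})$ edges closed by the triangle inequality. The one genuine variation is in the large-deviation step: the paper (in Corollary \ref{grandes déviations distance à une arête uniforme2}) first controls $\sum_k 1/P_\ell(k)$ with the supermartingale of Lemma \ref{sur-martingale2} and then handles the exponential edge-lengths separately via Bernstein's inequality conditionally on $P_\ell$, whereas you bound $\E^{(1)}_\ell e^{\gamma({\bf q})\,d^\dagger_{\mathrm{fpp}}(f_r,\square)}$ directly by combining the martingale with the exact moment generating function of the exponentials ($1/(1-x)\le e^{2x}$ on $[0,1/2]$); your route is slightly cleaner since it avoids Bernstein. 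Two inaccuracies, both harmless to the conclusion since they only inflate $C$: first, $M_0^{(\ell)}=1/h^\downarrow_\ell(1)=O(\ell^{3/2})$, not $O(\sqrt{\ell})$ (since $h^\downarrow_\ell(1)=h^\downarrow(1)h^\downarrow(\ell)/(\ell+1)\asymp\ell^{-3/2}$); second, the transfer to a uniform edge is not ``up to a constant'' but costs another polynomial factor --- the clean quantitative move, used in the paper, is the explicit identity (\ref{loi réenraciné}) together with the trivial bound $1/(\#\mathrm{Edges}(\mathfrak{m}_\square)-1)\le 1$, which yields the factor $W_1^{(\ell)}/W^{(\ell)}=O(\ell^{3/2})$ directly and makes the two-case split on the number of edges (which you worry about, and which would only give an $o(1)$, not polynomial, error) unnecessary at this stage; the $\varepsilon\ell^{3/2}$ truncation is only needed once more, in the final union bound, exactly as you and the paper both do it.
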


So as to prove the above proposition, we introduce a first family of supermartingales associated with the perimeter process and related to the martingale (\ref{martingale}):

\begin{lemma}\label{sur-martingale2}
	Let $\lambda>0$. For all $\ell \ge 1, n\ge 0$, let
	$$
	M^{\lambda,\ell}_n = \frac{1}{h^\downarrow_\ell(P_\ell(n))} \exp \left({\lambda \sum_{j=0}^{n-1} \frac{1}{P_\ell(j)}{\bf 1}_{P_\ell(j)>0}} \right) .
	$$
	If $\lambda\le\gamma({\bf q})$
	, then for all $\ell \ge 1$, under $\P_\ell^{(1)}$,  $(M_n^{\lambda,\ell})_{n\ge 0}$ is a supermartingale with respect to the filtration $(\mathcal{F}_n)_{n \ge 0}$ associated with the peeling exploration.
\end{lemma}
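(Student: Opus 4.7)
The plan is a direct conditional-expectation computation exploiting that $P_\ell$ is, by construction, the Doob $h^\downarrow_\ell$-transform of $S$ killed at $-\ell$, so that the factor $1/h^\downarrow_\ell(P_\ell(n))$ in $M^{\lambda,\ell}_n$ will cancel the harmonic weights of the transition probabilities given in Proposition~\ref{Markov explo}. I will first dispense with the trivial case where the walk is already absorbed: if $P_\ell(n)=-\ell$, then $P_\ell(n+1)=-\ell$ as well, the indicator ${\bf 1}_{P_\ell(j)>0}$ prevents the exponential sum from growing, and $h^\downarrow_\ell(-\ell)=1$ is constant, so $M^{\lambda,\ell}_{n+1}=M^{\lambda,\ell}_n$ identically. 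Hence I may assume $P_\ell(n)=m\ge 1$.

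In that regime the one-step transitions of $P_\ell$ from $m$ are: jump to $m+k$ (for $k\ge 1-m$) with probability $\nu(k)h^\downarrow_\ell(m+k)/h^\downarrow_\ell(m)$, and jump to $-\ell$ with probability $\nu(-m-\ell)/h^\downarrow_\ell(m)$, using $h^\downarrow_\ell(-\ell)=1$. A direct substitution into the definition of $M^{\lambda,\ell}$ gives
$$
\E^{(1)}_\ell\bigl[M^{\lambda,\ell}_{n+1}\,\big|\,\mathcal F_n\bigr]
= M^{\lambda,\ell}_n\cdot e^{\lambda/m}\cdot \bigl(\nu([1-m,\infty))+\nu(-m-\ell)\bigr),
$$
since the $h^\downarrow_\ell$ factors collapse into a sum that is free of $h^\downarrow_\ell$ values at the arrival point.

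The final step is the analytic estimate. Rewriting
$$
\nu([1-m,\infty))+\nu(-m-\ell)=1-\bigl(\nu((-\infty,-m])-\nu(-m-\ell)\bigr),
$$
the definition of $\gamma({\bf q})$ (applied with $k=m$) yields $\nu((-\infty,-m])-\nu(-m-\ell)\ge \gamma({\bf q})/m$, so that
$$
\nu([1-m,\infty))+\nu(-m-\ell)\le 1-\gamma({\bf q})/m\le 1-\lambda/m\le e^{-\lambda/m}
$$
whenever $\lambda\le\gamma({\bf q})$, where the last bound is the elementary inequality $1-x\le e^{-x}$ (valid here since $\gamma({\bf q})\le 1$ ensures $\gamma({\bf q})/m\le 1$). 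Plugging back gives $\E^{(1)}_\ell[M^{\lambda,\ell}_{n+1}\,|\,\mathcal F_n]\le M^{\lambda,\ell}_n$, which is the claim.

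There is no real obstacle in the argument; the only delicate points are (i) bookkeeping the absorption case so that the indicator and the convention $h^\downarrow_\ell(-\ell)=1$ are consistent, and (ii) making sure the range of $\lambda$ used in the inequality $1-\gamma({\bf q})/m\le 1-\lambda/m$ is exactly $\lambda\in(0,\gamma({\bf q})]$, which is where the hypothesis on $\lambda$ is used. The rest is the harmonicity identity already encoded in the fact that $h^\downarrow_\ell$ governs the Doob transform.
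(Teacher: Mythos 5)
Your proof is correct and follows essentially the same route as the paper's: cancel the $h^\downarrow_\ell$ factors against the Doob-transform transition probabilities to reduce the conditional expectation to $M^{\lambda,\ell}_n e^{\lambda/m}\bigl(\nu([1-m,\infty))+\nu(-m-\ell)\bigr)$, then bound this by $1$ via the definition of $\gamma(\mathbf q)$ and the elementary inequality $1-x\le e^{-x}$ (the paper writes the equivalent $e^{x}\le 1/(1-x)$). Your explicit treatment of the absorbed state is a harmless bookkeeping addition that the paper leaves implicit in the indicator and the convention $h^\downarrow_\ell(-\ell)=1$.
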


\begin{proof}
	If $\lambda\le\gamma({\bf q})$, then for all $\ell \ge 1,n \ge 0$,
	\begin{align*}
		\E_\ell^{(1)}\left( \left. M_{n+1}^{\lambda,\ell}\right| \mathcal{F}_n\right)
		&=
		\sum_{\substack{k>-P_\ell(n) \\\text{ or } k=-P_\ell(n)-\ell}} \frac{1}{h^\downarrow_\ell(P_\ell(n)+k)}\exp \left({\lambda \sum_{j=0}^{n} \frac{1}{P_\ell(j)}{\bf 1}_{P_\ell(j)>0}} \right)
		\frac{h^\downarrow_\ell(P_\ell(n)+k)}{h^\downarrow_\ell(P_\ell(n))} \nu(k) \\
		&= M_n^{\lambda,\ell}  e^{\lambda  \frac{1}{P_\ell(n)}{\bf 1}_{P_\ell(n)> 0}}
		\left( \left(\sum_{k>-P_\ell(n)} \nu(k) \right) + \nu(-P_\ell(n)-\ell)\right)\\
		&\le M_n^{\lambda,\ell},
	\end{align*}
	thanks to the definition of $\gamma({\bf q})<1$ and using that $\exp({(\lambda /{P_\ell(n)}){\bf 1}_{P_\ell(n)> 0}}) \le 1/(1-({\lambda }/{P_\ell(n)}){\bf 1}_{P_\ell(n)> 0}) $.
\end{proof}
The above supermartingales lead to a large deviation inequality for the distance from the root to a uniform random edge.
\begin{corollary}\label{grandes déviations distance à une arête uniforme2}
	Let $c>0$. Let $E$ be a uniform  random edge of $\mathfrak{M}^{(\ell)}$. If $K> 0$ is large enough, then
	$$
	\P\left( d_\mathrm{fpp}^\dagger(f_r,E) \ge K \log \ell \right) = O(\ell^{-c}) \text{ when } \ell \to \infty.
	$$
	One can choose $K=C'(1+c)(1+ 1/\gamma({\bf q}))$, where $C'$ is large enough and does not depend on $\bf q$, $c$.
\end{corollary}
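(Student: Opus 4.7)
The plan is to control the exponential moment $\E^{(1)}_\ell[\exp(\lambda\, d_\mathrm{fpp}^\dagger(f_r,\square))]$ via the supermartingale of Lemma \ref{sur-martingale2}, deduce a deviation estimate for the fpp distance to the target under $\P^{(1)}_\ell$ by Markov's inequality, and transfer this estimate back to a uniform edge of $\mathfrak{M}^{(\ell)}$ using the biasing identities from the proof of Lemma \ref{lemme transfert biais}.

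First, fix $\lambda = \gamma({\bf q}) \in (0,1)$ and apply optional stopping to the non-negative supermartingale $M^{\lambda,\ell}_n$ at the hitting time $\tau_{-\ell}$. Since $P_\ell(\tau_{-\ell}) = -\ell$ forces $h^\downarrow_\ell(-\ell)=1$ and $\tau_{-\ell}$ is almost surely finite, truncation at $n\wedge \tau_{-\ell}$ followed by Fatou gives
$$
\E^{(1)}_\ell \exp\!\Bigl(\lambda \sum_{j=0}^{\tau_{-\ell}-1} \frac{1}{P_\ell(j)}\Bigr) \;\leq\; \frac{1}{h^\downarrow_\ell(1)} \;=\; \frac{2(\ell+1)}{h^\downarrow(\ell)} \;=\; O(\ell^{3/2}),
$$
using $h^\downarrow_\ell(1) = h^\downarrow(\ell)/(2(\ell+1))$ by (\ref{h flèche}) and $h^\downarrow(\ell) \sim 1/\sqrt{\pi\ell}$. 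Next, the representation (\ref{distance fpp à la racine}) writes $d_\mathrm{fpp}^\dagger(f_r,\square) = \sum_k \mathcal{E}_k/(2 P_\ell(k))$ with i.i.d. unit exponentials $\mathcal{E}_k$ independent of $P_\ell$. Conditioning on $P_\ell$ and using the elementary bound $\E[e^{\lambda\mathcal{E}/(2P)}] = 1/(1-\lambda/(2P)) \leq e^{\lambda/P}$ (valid for $\lambda \leq 1 \leq P$, since $1/(1-x) \leq e^{2x}$ on $[0,1/2]$) gives $\E^{(1)}_\ell e^{\lambda d_\mathrm{fpp}^\dagger(f_r,\square)} = O(\ell^{3/2})$, so Markov yields $\P^{(1)}_\ell(d_\mathrm{fpp}^\dagger(f_r,\square) \geq K\log\ell) = O(\ell^{3/2-\gamma({\bf q})K})$.

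To pass to a uniform edge $E$ of $\mathfrak{M}^{(\ell)}$, Remark \ref{petite modif des distances} combined with the root/target swap of Subsection \ref{échange racine cible} gives $d_\mathrm{fpp}^\dagger(f_r,E) \leq d_\mathrm{fpp}^\dagger(f_r,\square)$ in $\widetilde{\mathfrak{M}}_\ell^{(1)}$. Then (\ref{loi réenraciné}) together with (\ref{EqVolume}) (and the trivial bound $\#\mathrm{Edges}-1\geq 1$) yields
$$
\P(d_\mathrm{fpp}^\dagger(f_r,E) \geq K\log\ell) \;\leq\; \frac{W_1^{(\ell)}}{W^{(\ell)}}\, \P^{(1)}_\ell(d_\mathrm{fpp}^\dagger(f_r,\square) \geq K\log\ell) \;=\; O(\ell^{3-\gamma({\bf q})K}).
$$
Requiring $\gamma({\bf q})K \geq 3+c$ gives the $O(\ell^{-c})$ rate; since $(1+c)(1+1/\gamma({\bf q})) \geq (1+c)/\gamma({\bf q}) \geq (3+c)/(3\gamma({\bf q}))$, the form $K = C'(1+c)(1+1/\gamma({\bf q}))$ works for any universal $C' \geq 3$.

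The main subtle step is the supermartingale bound in the first paragraph: optional stopping at the unbounded time $\tau_{-\ell}$ must be handled by truncation and Fatou, and the constraint that makes $M^{\lambda,\ell}_n$ a supermartingale precisely caps $\lambda$ at $\gamma({\bf q})$, which is why this quantity appears in the final $K$. The remaining parts (incorporation of the exponential weights, biasing transfer) are routine once this exponential moment estimate is in place.
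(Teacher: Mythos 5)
Your proof is correct and rests on the same key ingredient as the paper's, namely the supermartingale $M^{\lambda,\ell}$ of Lemma \ref{sur-martingale2} with $\lambda\le\gamma({\bf q})$, optional stopping via Fatou, Markov's inequality, and the biasing transfer through (\ref{loi réenraciné}) and (\ref{EqVolume}). The one place where you genuinely diverge is in how the exponential edge weights are absorbed. The paper first isolates the deterministic quantity $\sum_{k<\tau_{-\ell}}1/(2P_\ell(k))$, proves the tail bound (\ref{eqsomme2}) for it with the supermartingale, and then controls the centered fluctuation $\sum_k(\mathcal{E}_k-1)/P_\ell(k)$ by Bernstein's inequality conditionally on the perimeter process. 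You instead bound the moment generating function of $d_\mathrm{fpp}^\dagger(f_r,\square)$ directly: conditioning on $P_\ell$ and using $\E\,e^{\lambda\mathcal{E}/(2P)}=\bigl(1-\lambda/(2P)\bigr)^{-1}\le e^{\lambda/P}$ for $\lambda\le 1\le P$ reduces everything to the single exponential moment $\E^{(1)}_\ell\exp(\lambda\sum_k 1/P_\ell(k))\le 1/h^\downarrow_\ell(1)=O(\ell^{3/2})$. This is slightly more elementary (no concentration inequality needed) and yields the same polynomial rate $O(\ell^{3-\gamma({\bf q})K})$ after the $O(\ell^{3/2})$ biasing factor; your bookkeeping showing that $K=C'(1+c)(1+1/\gamma({\bf q}))$ with a universal $C'\ge 3$ suffices is also correct.
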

\begin{proof}
	Let $c>0$. First of all, since the distance to an edge is smaller than the distance to the $2$-face obtained by unzipping the edge, and by (\ref{loi réenraciné}),
	\begin{align*}
		\P\left(d_\mathrm{fpp}^\dagger(f_r,E) \ge K \log \ell\right)
		&\le
		\frac{W_1^{(\ell)}}{W^{(\ell)}} \E_1^{(\ell)} \left(\frac{1}{\#\mathrm{Edges}(\mathfrak{m}_\square)-1} {\bf 1}_{d_\mathrm{fpp}^\dagger(f_r,\square)\ge K \log \ell}\right)\\
		&\le 
		\frac{W_1^{(\ell)}}{W^{(\ell)}} \P_1^{(\ell)} \left(d_\mathrm{fpp}^\dagger(f_r,\square)\ge K \log \ell\right).
	\end{align*}
	But, owing to (\ref{EqVolume})
	, we know that $W_1^{(\ell)}/W^{(\ell)}$ is a $O(\ell^{3/2})$, so that it is enough to prove the analogous statement for $\P_1^{(\ell)} (d_\mathrm{fpp}^\dagger(f_r,\square)\ge K \log \ell)$. 
	Moreover, recalling (\ref{distance fpp à la racine}), one can see that it suffices to show that there exists $K'>0$ such that
	\begin{equation}\label{eqsomme2}
		\P^{(1)}_\ell\left( \sum_{k=0}^{\tau_{-\ell}-1} {1 \over P_\ell(k)}\ge K' \log \ell \right) =O(\ell^{-c}) \text{ when } \ell \to \infty.
	\end{equation}
	
	Indeed, if we assume (\ref{eqsomme2}), then by conditioning on the perimeter process and applying Bernstein's inequality (the version we use comes from Corollary 2.10 of \cite{Ta94})
	, we get that there exists a constant $C>0$ such that for all $\ell \ge 1$ and for all $t>0$, 
	
	\begin{align*}
		\P^{(1)}_\ell \left( \left. \left| \sum_{k=0}^{\tau_{-\ell}-1} \frac{\mathcal{E}_k-1}{P_\ell(k)} \right| > t \log \ell \right| P_\ell \right) 
		&\le 2\exp \left(-C\min \left(\frac{t^2 (\log \ell)^2}{\sum_{k=0}^{\tau_{-\ell}-1} \frac{1}{P_\ell(k)^2}}, \frac{t \log \ell}{\max_{0\le k \le \tau_{-\ell}-1} \frac{1}{P_\ell(k)}}\right)\right) \\
		& \le 2\exp \left(-C\min \left(\frac{t^2 (\log \ell)^2}{\sum_{k=0}^{\tau_{-\ell}-1} \frac{1}{P_\ell(k)}}, t \log \ell\right)\right) \\
		& \le 2 \ell^{-(\frac{Ct^2}{K'} \wedge Ct) } \text{ with probability at least } 1-O(\ell^{-c}).
	\end{align*}
	One can then conclude by choosing $t>0$ such that $\frac{Ct^2}{K'} \wedge Ct \ge c$ and by taking $K=\frac{K'+t}{2}$.
	
	Now, let us prove (\ref{eqsomme2}). Let $\lambda \in (0,\gamma({\bf q})]$ so that for all $\ell \ge 1$, $M^{\lambda,\ell}$ is a supermartingale (given by Lemma \ref{sur-martingale2}). Then, using Markov's inequality and Fatou's lemma,
	\begin{align*}
		\P^{(1)}_\ell\left( \sum_{k=0}^{\tau_{-\ell}-1} {1 \over P_\ell(k)}\ge K' \log \ell \right) 
		&\le\ell^{-\lambda K'} \E^{(1)}_\ell e^{\lambda \sum_{k=0}^{\tau_{-\ell}-1} \frac{1}{P_\ell(k)}} \\
		&= \ell^{-\lambda K'} \E^{(1)}_\ell M^{\lambda, \ell}_{\tau_{-\ell}}\\
		&\le \ell^{-\lambda K'} \E^{(1)}_\ell M^{\lambda, \ell}_{0} =\ell^{-\lambda K'} \frac{1}{h^\downarrow_\ell(1)}  = O(\ell^{3/2-\lambda K'}) = O(\ell^{-c})
	\end{align*}
	for $K' \ge (c+3/2)/\lambda$.
\end{proof}

\begin{proof}[Proof of Proposition \ref{majoration du diamètre}]
	To begin with, since the diameter is upper bounded by twice the maximal distance to the root face, it suffices to show that there exists $K>0$ such that
	$$
	\P\left( \exists e \in \mathrm{Edges}(\mathfrak{M}^{(\ell)}), \ d_\mathrm{fpp}^\dagger(f_r,e)
	\ge K \log \ell\right)
	\mathop{\longrightarrow}\limits_{\ell \to \infty} 0,
	$$
	using also that for all $f \in \mathrm{Faces}(\mathfrak{M}^{(\ell)})$, if $e$ is an edge surrounding $f$, then $d_\mathrm{fpp}^\dagger(f_r,f) \le d_\mathrm{fpp}^\dagger(f_r,e) +\mathcal{E}$, where $\mathcal{E}$ is an exponential random variable of parameter $1$.
	
	\noindent 
	Then we distinguish whether the number of edges is too large or not:
	let $\vp>0$.
	\begin{align*}
		\P&\left( \exists e\in \mathrm{Edges}(\mathfrak{M}^{(\ell)}), \ d_\mathrm{fpp}^\dagger(f_r,e)
		\ge K \log \ell\right) \\
		\le
		&\enskip \E \left( \#\left\{e\in \mathrm{Edges}(\mathfrak{M}^{(\ell)}); \ d_\mathrm{fpp}^\dagger(f_r,e) \ge K \log \ell \right\} {\bf 1}_{\# \mathrm{Edges}(\mathfrak{M}^{(\ell)}) \le \frac{2 b_{\bf q}}{\vp} \ell^{3/2}}\right) \\&+ \P\left(\# \mathrm{Edges}(\mathfrak{M}^{(\ell)}) \ge \frac{2 b_{\bf q}}{\vp} \ell^{3/2}\right)
	\end{align*}
	and the last term is smaller than $\vp$ for $\ell$ large enough by (\ref{EqVolume}). Henceforth we focus on the first term. Conditionally on $\mathfrak{M}^{(\ell)}$, let $E$ be a uniform random edge of $\mathfrak{M}^{(\ell)}$.
	\begin{align*}
		\E &\left( \#\left\{e\in \mathrm{Edges}(\mathfrak{M}^{(\ell)}); \ d_\mathrm{fpp}^\dagger(f_r,e) \ge K \log \ell \right\} {\bf 1}_{\# \mathrm{Edges}(\mathfrak{M}^{(\ell)}) \le \frac{2 b_{\bf q}}{\vp} \ell^{3/2}}\right) \\
		&\le \E \left(\frac{2 b_{\bf q}}{\vp} \ell^{3/2} \P( d_\mathrm{fpp}^\dagger(f_r,E) \ge K \log \ell |\mathfrak{M}^{(\ell)})  \right) \\
		&=\frac{2 b_{\bf q}}{\vp} \ell^{3/2} \P( d_\mathrm{fpp}^\dagger(f_r,E) \ge K \log \ell ).
	\end{align*}
	The expression on the last line tends to zero when $\ell \to \infty$ if $K$ is chosen large enough according to Corollary \ref{grandes déviations distance à une arête uniforme2}.
\end{proof}
\subsection{Lower bound for the diameter for the fpp distance}

A lower bound for the diameter is already provided by Corollary \ref{distance entre deux faces uniformes}. Nevertheless, this lower bound only depends on the tail behavior of $\nu$. Actually, in the case of the fpp distance, the diameter depends heavily on $\nu$. Let us show for instance the influence of $q_1$, or equivalently $\nu(0)$, on the asymptotic behavior of the diameter, which implies in particular the first lower bound of Theorem \ref{majoration du diamètre graphe et fpp}, hence ending the proof of that theorem.

\begin{proposition}\label{minoration du diamètre}
	For all $\delta >0$, with probability $1-o(1)$ as $\ell \to \infty$,
	$$
	\frac{\mathrm{diam}_\mathrm{fpp}^\dagger (\mathfrak{M}^{(\ell)})}{\log \ell}\ge \frac{3}{4(1-q_1^2)} -\delta
	.
	$$
\end{proposition}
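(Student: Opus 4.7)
The plan is to locate, with probability $1-o(1)$ as $\ell\to\infty$, a sub-structure of $\mathfrak{M}^{(\ell)}$ that forces two faces to lie at fpp distance at least $\bigl(\tfrac{3}{4(1-q_1^2)}-\delta\bigr)\log\ell$. The natural candidate is a chain of primal $2$-faces $F_1,\dots,F_k$ each sharing a primal edge with the next: the corresponding dual vertices form an induced path of degree-$2$ vertices in $\mathfrak{M}^{(\ell,\dagger)}$ which cannot be shortcut, so the fpp distance between its two endpoints equals exactly the sum of the $k+1$ independent $\mathrm{Exp}(1)$ weights along the chain, which by the law of large numbers is $k(1+o(1))$.

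The first step is to lower bound the probability that such a chain of length $k$ exists in $\mathfrak{M}^{(\ell)}$. Via the BDG + Janson--Stef\'ansson bijection (Lemma~\ref{lemme BDG}), $\mathfrak{M}^{(\ell)}_\bullet$ under $\P^{(\ell)}_0$ is encoded by a $\mu$-random walk $Y$ stopped at $-\ell$, and the step $\mu(0)=q_1$ is the one attached to a $2$-face. Tracing through the bijection, a chain of $k$ such $2$-faces corresponds to a specific local pattern in $Y$ of weight of order $q_1^{2k}$: the exponent is $2k$ (rather than $k$) because the bipartite structure of the BDG tree requires each $2$-face to be encoded by a paired pattern of two steps. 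Since $\tau_{-\ell}(Y)=\#\mathrm{Edges}(\mathfrak{M}^{(\ell)}_\bullet) \sim b_{\bf q}\ell^{3/2}$ by (\ref{EqVolume}), there are $\sim\ell^{3/2}$ potential starting positions and the expected number of such chains of length $k$ is bounded below by a constant times $\ell^{3/2} q_1^{2k}$, which exceeds $1$ as long as
$$
k \;\le\; \frac{(3/2)\log\ell}{-\log q_1^2} \;\le\; \frac{(3/2)\log\ell}{2(1-q_1^2)} \;=\; \frac{3\log\ell}{4(1-q_1^2)},
$$
using the elementary inequality $-\log x \ge 1-x$ applied to $x=q_1^2$.

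The second step is to promote this first-moment estimate to a high probability existence statement, either via a second moment method exploiting the Markov property of the filled-in peeling exploration (Proposition~\ref{Markov explo}) to decouple chains in disjoint explored regions, or by observing that $Y$ consists of essentially i.i.d.\ steps so that classical longest-run estimates apply. Once a chain of length $k \ge \bigl(\tfrac{3}{4(1-q_1^2)}-\tfrac{\delta}{2}\bigr)\log\ell$ is located, the sum of the $k+1$ $\mathrm{Exp}(1)$ weights along the corresponding dual path is at least $k(1-o(1))$ with probability $1-o(1)$ by Bernstein's inequality, applied as in Corollary~\ref{grandes déviations distance à une arête uniforme2}. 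This yields the required lower bound on the fpp diameter.

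The main obstacle is the combinatorial verification that the correct effective base of the exponential is $q_1^2$ rather than $q_1$: this is where the bipartite hypothesis on $\mathfrak{M}^{(\ell)}$ enters decisively, and it is what produces the factor $1-q_1^2$ (as opposed to $1-q_1$) in the denominator of the final constant. Once this is settled the rest of the argument is a standard first-moment computation combined with a large deviation estimate for a sum of i.i.d.\ exponentials.
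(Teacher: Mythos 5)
Your overall idea --- long chains of degree-$2$ primal faces (``watermelons'') whose dual is a path of degree-$2$ vertices, found by a first/second moment method over $\asymp \ell^{3/2}$ locations --- is exactly the structure the paper exploits. But there are three concrete gaps that prevent your argument from yielding the stated constant. First, the fpp distance between the two \emph{endpoint} faces of the chain is \emph{not} the sum of the $k+1$ exponential weights along the chain: those two faces are also joined through the rest of the map and can be arbitrarily close (they could even coincide). Only the interior degree-$2$ dual vertices cannot be shortcut, so the correct quantity is the escape radius from the \emph{middle} of the chain, which is the minimum of two sums of about $k/2$ weights each; the paper lower bounds this by a sum of $\lfloor M/2\rfloor-1$ variables $\mathcal{E}_i/2\sim\mathcal{E}(2)$, i.e.\ roughly $k/4$ at the LLN scale, not $k$. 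Second, your combinatorial base is wrong: a chain of $k$ two-faces has probability $\asymp q_1^{k}$, not $q_1^{2k}$ (in the encoding of Lemma \ref{lemme BDG} each $2$-face is a single step of weight $\mu(0)=q_1$; equivalently, by Proposition 11.10 of \cite{StFlour} as used in the paper, the size of a watermelon is geometric of parameter $q_1$). Third, your chain of inequalities points the wrong way: $-\log x\ge 1-x$ gives an \emph{upper} bound on the admissible $k$, whereas you need a lower bound, and the reverse inequality $-\log q_1\le 1-q_1^2$ fails for small $q_1$.

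More fundamentally, ``longest chain times LLN value of the exponentials'' cannot produce the constant $\tfrac{3}{4(1-q_1^2)}$ for any $q_1$: the factor $1-q_1^2$ does not come from bipartiteness but from a joint optimization between the chain length and a \emph{large deviation} of the exponential weights. The paper computes the exact tail
$$
\P\bigl(R(E)\ge t\bigr)\;\ge\;\sum_{k\ge 4}q_1^{k-1}(1-q_1)\,\P\Bigl(\textstyle\sum_{i=1}^{\lfloor k/2\rfloor-1}\mathcal{E}_i/2\ge t\Bigr)\;\asymp\;e^{-2(1-q_1^2)t},
$$
where the $q_1^2$ per unit appears because one layer of escape radius consumes one $2$-face on \emph{each} side of the center; the dominant chains for a target radius $t$ have about $4q_1^2t$ two-faces with atypically large weights, which is shorter than the LLN-typical $4t$. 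Combining $e^{-2(1-q_1^2)t}$ with $\ell^{3/2}$ independent uniform edges and a second moment argument then gives precisely $t<\tfrac{3}{4(1-q_1^2)}$. Your proposal, once corrected for points one and two, would instead yield a bound of the form $\tfrac{3}{8\log(1/q_1)}$, which is sometimes weaker and in any case not the claimed constant. The large-deviation step for the exponentials is therefore not an afterthought to be handled ``as in Corollary \ref{grandes déviations distance à une arête uniforme2}'' but the heart of the computation.
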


\begin{figure}[h]
	\centering
	\hbox{
		\includegraphics[scale=1.6]{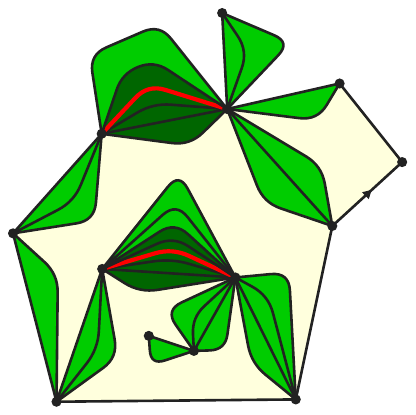}
		\includegraphics[scale=2.1]{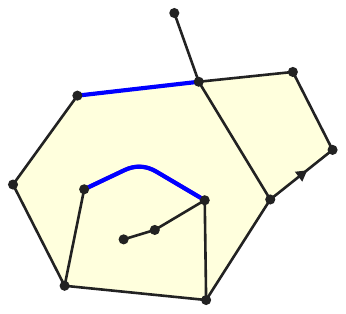}
	}
	\caption{On the left, the map $\mathfrak{M}^{(\ell)}$ with two random edges $E_1$ and $E_2$ in red. The $2$-faces are in green and form ``watermelons'' and $E_1$ and $E_2$ are in the middle of their watermelons. The largest fpp-balls around $E_1$ and $E_2$ made of $2$-faces only are in dark green. On the right the map $\widetilde{\mathfrak{M}}^{(\ell)}$ obtained from $\mathfrak{M}^{(\ell)}$ by deleting the $2$-faces. The blue edges are the two random uniform edges $\tilde{E}_1$ and $\tilde{E}_2$ that correspond to the two watermelons containing $E_1$ and $E_2$.}
	\label{dessin pastèque}
\end{figure}

\begin{proof}
	Let us prove that result with a second moment method.
	If $e$ is an edge of $\mathfrak{M}^{(\ell)}$, let $R(e)$ denote the radius of the largest fpp-ball centred at $e$ only constituted of faces of degree $2$. Notice that this fpp-ball is included in the maximal sub-map made of $2$-faces and edges which have the same endpoints as $e$, which is called the ``watermelon'' containing $e$ (see Figure \ref{dessin pastèque}).
	
	By Proposition 11.10 of \cite{StFlour}, we know that the map $\widetilde{\mathfrak{M}}^{(\ell)}$ obtained from $\mathfrak{M}^{(\ell)}$ by replacing each watermelon by one edge  (see Figure \ref{dessin pastèque}) is a $\tilde{\bf q}$-Boltzmann map of perimeter $2\ell$, where the weight sequence $\tilde{\bf q}$ is defined by 
	$$
	\left\{
	\begin{array}{lc}
		\tilde{q}_1=0 &  \\
		\tilde{q}_k=q_k/(1-q_1)^k & \text{for } k\ge 2.
	\end{array}
	\right.
	$$
	Moreover, conditionally on $\widetilde{\mathfrak{M}}^{(\ell)}$, the map ${\mathfrak{M}}^{(\ell)}$ is obtained by adding independently a geometric number of $2$-faces on each edge of $\widetilde{\mathfrak{M}}^{(\ell)}$, with parameter $q_1$. We denote by $M(\tilde{e})$ the number of edges of the watermelon corresponding to an edge $\tilde{e}$ of $\widetilde{\mathfrak{M}}^{(\ell)}$ so that conditionally on $\widetilde{\mathfrak{M}}^{(\ell)}$, the random variables $M(\tilde{e})-1$ are i.i.d. geometric of parameter $q_1$. By the paragraph just above Proposition 11.10 in \cite{StFlour} we also know that $c_{\bf q}= c_{\tilde{\bf q}}/(1-q_1)$ and that $\tilde{\bf q}$ is again a critical weight sequence of type $2$.
	
	If $\tilde{E}$ is a random uniform edge in $\widetilde{\mathfrak{M}}^{(\ell)}$ and $E$ is the edge in the middle of the watermelon corresponding to $\tilde{E}$ (and if the watermelon has an even number of edges, we choose $E$ uniformly among the two edges in the middle), then, conditionally on $\widetilde{\mathfrak{M}}^{(\ell)}$ and on $\tilde{E}$, the random variable $M(\widetilde{E})$ is geometric of parameter $q_1$ starting at one. Hence, $R(E)$ is lower-bounded by a sum of $\lfloor M(E)/2 \rfloor-1$ exponential random variables of parameter $2$ (i.e. of expectation $1/2$). As a consequence, for all $t\ge 0$,
	$$\P\left(R(E)\ge t \left| \widetilde{\mathfrak{M}}^{(\ell)}, \tilde{E} \right.\right) \ge
	\P\left(\left.\sum_{i=1}^{\lfloor M(\tilde{E})/2\rfloor-1} \frac{\mathcal{E}_i}{2} \ge t \right| \widetilde{\mathfrak{M}}^{(\ell)}, \tilde{E} \right),
	$$
	where the $\mathcal{E}_i$'s are i.i.d. exponential random variables of parameter $1$ which are independent from $\widetilde{\mathfrak{M}}^{(\ell)}$ and $M(\tilde{E})$. But one can lower-bound
	\begin{align*}
		\P\left(\left.\sum_{i=1}^{\lfloor M(\tilde{E})/2\rfloor-1} \frac{\mathcal{E}_i}{2} \ge t \right|\widetilde{\mathfrak{M}}^{(\ell)}, \tilde{E}\right)
		&=
		\sum_{k=4}^{\infty} q_1^{k-1} (1-q_1)\int_{2t}^{\infty} \frac{x^{\lfloor k/2\rfloor-2} e^{-x}}{(\lfloor k/2\rfloor-2)!} dx\\
		&\ge 
		q_1^5 (1-q_1) \int_{2t}^{\infty} \sum_{k=4}^{\infty} (q_1^{2})^{\lfloor k/2\rfloor-2} \frac{x^{\lfloor k/2\rfloor-2} e^{-x}}{(\lfloor k/2\rfloor-2)!} dx \\
		&=2q_1^5 (1-q_1) \int_{2t}^{\infty} e^{(q_1^2-1)x} dx \\
		&=\frac{2q_1^5}{1+q_1}e^{-2(1-q_1^2)t} .
	\end{align*}
	Henceforth, let $\tilde{E}_1,\ldots, \tilde{E}_{\lfloor \ell^{3/2}\rfloor}$ be independent random uniform edges of $\widetilde{\mathfrak{M}}^{(\ell)}$ (conditionally on $\widetilde{\mathfrak{M}}^{(\ell)}$). For all $1 \le i \le \lfloor \ell^{3/2} \rfloor$, we write $E_i$ the edge of $\mathfrak{M}^{(\ell)}$ in the middle of the watermelon corresponding to $\tilde{E}_i$ (chosen uniformly at random between the two edges in the middle if the watermelon has an even number of edges). We again write $R(E_i)$ the radius of the largest fpp-ball centred at $E_i$ containing only $2$-faces. 
	For conciseness we drop the $\lfloor \cdot \rfloor$. If $t>0$, then by the second moment method, 
	\begin{align*}
		\P&\left( \left. \exists  i\le \ell^{3/2} , R(E_i)\ge t\log \ell \right| \widetilde{\mathfrak{M}}^{(\ell)}\right)\\
		&\ge \frac{\E (\#\left\{i \in \lb 1, \ell^{3/2} \rb, R(E_i) \ge t \log \ell\right\}|\widetilde{\mathfrak{M}}^{(\ell)})^2 }{\E((\#\left\{i \in \lb 1, \ell^{3/2} \rb, R(E_i) \ge t \log \ell\right\})^2|\widetilde{\mathfrak{M}}^{(\ell)})} \\
		&=
		\frac{\ell^{3} \P(R(E) \ge t \log \ell|\widetilde{\mathfrak{M}}^{(\ell)})^2}{\ell^{3/2} \P(R(E) \ge t \log \ell|\widetilde{\mathfrak{M}}^{(\ell)}) + \ell^{3/2}(\ell^{3/2}-1)\P(R(E_1),R(E_2) \ge t \log \ell|\widetilde{\mathfrak{M}}^{(\ell)})}. 
	\end{align*}
	Now, one can distinguish whether $\tilde{E}_1=\tilde{E}_2$ or not. Then, in probability as $\ell \to \infty$,
	\begin{align*}
		\P&(R(E_1),R(E_2) \ge t \log \ell|\widetilde{\mathfrak{M}}^{(\ell)}) \\
		&\le \P( \tilde{E}_1 =\tilde{E}_2  \text{ and }R(E_1) \ge t \log \ell |\widetilde{\mathfrak{M}}^{(\ell)})
		+\P( \tilde{E}_1 \neq \tilde{E}_2 \text{ and }R(E_1),R(E_2) \ge t \log \ell|\widetilde{\mathfrak{M}}^{(\ell)})\\
		&= \frac{1}{\# \mathrm{Edges}(\widetilde{\mathfrak{M}}^{(\ell)})} \P(R(E) \ge t \log \ell|\widetilde{\mathfrak{M}}^{(\ell)})
		+ \frac{\# \mathrm{Edges}(\widetilde{\mathfrak{M}}^{(\ell)})-1}{\# \mathrm{Edges}(\widetilde{\mathfrak{M}}^{(\ell)})}
		\P(R(E_1),R(E_2) \ge t \log \ell|\tilde{E}_1 \neq \tilde{E}_2, \widetilde{\mathfrak{M}}^{(\ell)}) \\
		&=O(\ell^{-3/2})\P(R(E) \ge t \log \ell|\widetilde{\mathfrak{M}}^{(\ell)})+\P(R(E) \ge t \log \ell|\widetilde{\mathfrak{M}}^{(\ell)})^2,
	\end{align*}
	where the last line is due to the convergence in distribution of $\# \mathrm{Edges}(\widetilde{\mathfrak{M}}^{(\ell)}) \ell^{-3/2}$ as $\ell \to \infty$, see e.g. Proposition 10.4 in \cite{StFlour}. 
	By taking $t<3/(4(1-q_1^2))$ we ensure that 
	$
	\ell^{3/2}\P(R(E) \ge t \log \ell|\widetilde{\mathfrak{M}}^{(\ell)}) \to
	\infty
	$ 
	in probability as $\ell \to \infty$, 
	hence
	$$
	\P\left( \left. \exists  i\le \ell^{3/2} , R(E_i)\ge t\log \ell \right| \widetilde{\mathfrak{M}}^{(\ell)}\right)
	\mathop{\longrightarrow}\limits_{\ell \to  \infty}^{(\P)}
	1.
	$$
	This entails what we wanted to show.
\end{proof}
\subsection{Upper bound of the diameter for the graph distance}

We next upper bound the diameter of $\mathfrak{M}^{(\ell,\dagger)}$ for the graph distance. The proposition below implies the second upper bound of Theorem \ref{majoration du diamètre graphe et fpp}. Unlike the case of the fpp distance, the upper bound for the graph distance is universal (i.e. it does not depend on $\bf q$). 

\begin{proposition}\label{majoration du diamètre graph}
	Let $\mathrm{Diam}_\mathrm{gr}^\dagger(\mathfrak{M}^{(\ell)})$ be the diameter of $\mathfrak{M}^{(\ell,\dagger)}$ for the graph distance. Then for all $\delta >0$, with probability $1-o(1)$ when $\ell \to \infty$,
	$$
	\frac{\mathrm{Diam}_\mathrm{gr}^\dagger(\mathfrak{M}^{(\ell)})}{(\log \ell)^2}
	\le 18+\delta.
	$$
\end{proposition}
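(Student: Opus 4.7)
The proof follows the template of Proposition \ref{majoration du diamètre}, with the fpp distance replaced by the dual graph distance and the exponential supermartingale of Lemma \ref{sur-martingale2} replaced by a variant tracking the height process $H_\ell$ of the peeling-by-layers exploration. Writing $\mathrm{Diam}_\mathrm{gr}^\dagger(\mathfrak{M}^{(\ell)})\le 2\max_e d_\mathrm{gr}^\dagger(f_r,e)+1$ and controlling the number of edges via (\ref{EqVolume}), a first-moment argument identical to the one in the proof of Proposition \ref{majoration du diamètre} reduces the statement to showing that for some $K<9+\delta/2$ and every $c>3/2$,
\[ \P\bigl(d_\mathrm{gr}^\dagger(f_r,E)\ge K(\log\ell)^2\bigr)=O(\ell^{-c}), \]
with $E$ a uniform random edge of $\mathfrak{M}^{(\ell)}$. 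By the unzipping and biasing argument based on (\ref{loi réenraciné})--(\ref{EqVolume}) together with Remark \ref{petite modif des distances}, exactly as in the derivation of Corollary \ref{grandes déviations distance à une arête uniforme2}, this further reduces to establishing the corresponding tail bound for $H_\ell(\tau_{-\ell}-1)$ under $\P^{(1)}_\ell$.

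The core of the argument is then a Chernoff-type bound for $H_\ell(\tau_{-\ell}-1)$ along the peeling-by-layers exploration. The plan is to construct, in the spirit of the martingale (\ref{martingale}), a non-negative supermartingale of the form
\[ \widetilde M_n^{\lambda,\ell}=\frac{e^{\lambda H_\ell(n)}}{h^\downarrow_\ell(P_\ell(n))}\prod_{j=0}^{n-1}\Phi_\lambda\bigl(P_\ell(j)\bigr), \]
where $\Phi_\lambda$ is an explicit correction factor of the form $1+(e^\lambda-1)\psi(P)/P+o(1/P)$ chosen so that the one-step conditional moment generating function $\E^{(1)}_\ell[e^{\lambda\Delta H_\ell(n)}\mid\mathcal{F}_n]$ is exactly offset by the martingale (\ref{martingale}) and the factor $\Phi_\lambda$. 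The key input is a careful evaluation, using Proposition \ref{Markov explo} applied to $\mathcal{A}_\mathrm{layers}$, of the conditional probability that $\Delta H_\ell(n)=1$ given $\mathcal{F}_n$ and $P_\ell(n)=p$: this event occurs precisely when the peeling step identifies the last boundary edges at the current minimum height, and its probability can be written as a ratio of convolutions of the harmonic functions $h^\downarrow$ and the step distribution $\nu$. The universal asymptotics (\ref{comportement asymptotique nu}) then force the dependence on $\bf q$ to enter only through the factor $p_{\bf q}$, which is absorbed in $\Phi_\lambda$; this is what eventually produces a constant independent of $\bf q$ in the statement, in sharp contrast with the fpp case.

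Applying Doob's optional stopping theorem and Markov's inequality to $\widetilde M^{\lambda,\ell}$ at time $\tau_{-\ell}$, together with the large-deviation bound on $\sum_{j<\tau_{-\ell}}1/P_\ell(j)$ extracted from the martingale (\ref{martingale}) itself (along the same lines as Lemma \ref{sur-martingale2}, but with the universal decay constant $p_{\bf q}$), yields an estimate of the form
\[ \P^{(1)}_\ell\bigl(H_\ell(\tau_{-\ell}-1)\ge K(\log\ell)^2\bigr)\le \exp\bigl(-(\log\ell)^2\,G(\lambda,K)+O(\log\ell)\bigr) \]
for an explicit function $G(\lambda,K)=\lambda K-\psi(\lambda)$ where $\psi$ is a universal cumulant rate. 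Optimizing $\lambda=\lambda(\delta)>0$ so that $G(\lambda,K)>0$ as soon as $K>9$ gives the announced decay $O(\ell^{-c})$ for any prescribed $c$, and hence the diameter bound $18+\delta$ after the triangle-inequality factor of $2$. The main obstacle in the plan is the one-step moment generating function computation for $\Delta H_\ell$ and the identification of the correction $\Phi_\lambda$; tracking the precise numerical constants through that computation is exactly what produces the universal value $9$ (and so $18$ in the statement) as opposed to a ${\bf q}$-dependent bound.
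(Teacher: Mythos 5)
Your overall architecture matches the paper's: a first-moment bound over the $O(\ell^{3/2})$ edges combined with a Chernoff-type tail estimate for the height of a uniform edge, obtained from an exponential supermartingale along the peeling-by-layers exploration. But the supermartingale you propose, built from $e^{\lambda H_\ell(n)}$ with a correction factor $\Phi_\lambda(P_\ell(j))$ depending only on the perimeter, cannot work as stated. The conditional law of $\Delta H_\ell(n)$ given $\mathcal{F}_n$ is not a function of $P_\ell(n)$ alone: it depends on $D_n$, the number of boundary edges still at the current minimal height. When $D_n$ is small (say $D_n=1$), the layer is about to close and $\P(\Delta H_\ell(n)=1\mid \mathcal{F}_n)$ is of order $1$, not of order $\log P_\ell(n)/P_\ell(n)$; so no factor of the form $1+(e^\lambda-1)\psi(P)/P+o(1/P)$ can offset $\E[e^{\lambda \Delta H_\ell(n)}\mid\mathcal{F}_n]$ uniformly in $D_n$. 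This is exactly why the paper works with the \emph{interpolated} height process $H^f_\ell(n)=H_\ell(n)+f(D_n/(2P_\ell(n)))$ for a suitable smooth decreasing $f$ (Lemma \ref{analogue6}, following Lemma 6 of \cite{BCM}): the interpolation spreads the unit increase over the whole layer and yields a one-step drift bound of order $\frac{p_{\bf q}}{2}\frac{\log P_\ell(n)}{P_\ell(n)}$ uniform in $D_n$, which is then turned into the supermartingale of Corollary \ref{sur-martingale3}. Since $H_\ell\le H^f_\ell$, this suffices. Your reduction step is fine, but the missing interpolation is the essential idea, not a technicality.

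Your account of where the constant $9$ comes from is also off. There is no optimization of a rate function $G(\lambda,K)=\lambda K-\psi(\lambda)$ at speed $(\log\ell)^2$: $\lambda$ is kept fixed and small, Markov's inequality only produces a polynomial factor $\ell^{-\lambda A}$, and the real work is in bounding the compensator $\sum_{k<\tau_{-\ell}}\log(P_\ell(k))/P_\ell(k)$. The paper does this by (i) showing $\tau_{-\ell}\le \ell^{c+1}$ up to probability $O(\ell^{-c})$ via the coupling with $P_\infty$, which gives $\log P_\ell(k)\le (c+1)\log\ell$ on the relevant range, and (ii) the truncated supermartingale of Lemma \ref{martingale tronquée}, which bounds $\sum_k 1/P_\ell(k)$ by $(c+3/2)(\log\ell)/(p_{\bf q}-\delta)$. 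The drift constant $p_{\bf q}/2$ cancels against the $1/p_{\bf q}$ from step (ii) --- that cancellation, not an absorption into $\Phi_\lambda$, is what makes the bound universal in $\bf q$ --- and after shifting $c$ by $3/2$ to pay for the number of edges one gets $K=\tfrac{1}{2}(c+3)(c+5/2)$, equal to $9$ at $c=3/2$, hence $18$ for the diameter.
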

To prove this result, we will need to use the height process $H_\ell$ during the peeling by layers exploration, together with the interpolated height process.  
Let us first recall the definition of the interpolated height process from \cite{BCM}. At time $n$ of the exploration, let $D_n$ be the number of edges in the boundary $\partial \overline{\mathfrak{e}}_n$ that are at height $H_\ell(n)$, so that the other $2P_\ell(n)-D_n$ edges are at height $H_\ell(n)+1$. If $f:[0,1]\to [0,1]$ is a non-increasing function, the interpolated height process $H^f_\ell (n)$ under $\P^{(1)}_\ell$ and its increment at time $n<\tau_{-\ell}$ are defined by 
$$
H_\ell^f(n)\coloneqq H_\ell(n)+f(D_n/2P_n) \qquad
\text{and}
\qquad
\Delta H_\ell^f(n) \coloneqq H_\ell^f(n+1)-H_\ell^f(n),
$$
where by convention, we set $H^f_\ell(n)  = H^f_\ell(\tau_{-\ell}-1)$ for $n\ge \tau_{-\ell}$. The next lemma is an analogue of Lemma 6 from \cite{BCM}, bounding from above the mean increase of the interpolated height.
\begin{lemma}\label{analogue6}
	Let $0<\vp<1$. If $f:[0,1]\to [0,1]$ is twice continuously differentiable with $f(0)=1, f(1)=0, f'(0)=f'(1)=f''(0)=f''(1)=0$ and $0\le -f'(x) \le 1+\vp$ for all $x\in [0,1]$, then there exist $C(\vp,{\bf q}),C'(\vp,{\bf q})>0$ such that for all $\ell_0\ge 1,\ell \ge 1,n\ge 0$,
	$$
	\E^{(\ell_0)}_\ell \left( \left. \Delta H_\ell^f(n) \right| \mathcal{F}_n \right) \le  \frac{(1+3\vp)^2}{1-\vp}\frac{p_{\bf q} }{2} \frac{ \log (P_\ell(n)) +C(\vp,{\bf q})}{ P_\ell(n)}
	\enskip \text{and} \enskip
	\E^{(\ell_0)}_\ell \left( \left. \left(\Delta H_\ell^f(n)\right)^2 \right| \mathcal{F}_n \right) \le  \frac{C'(\vp,{\bf q})}{P_\ell(n)}
	,
	$$
	where $(\mathcal{F}_n)_{n\ge 0}$ is the filtration associated with the exploration process.
\end{lemma}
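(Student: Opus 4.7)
The plan is to closely follow the strategy of Lemma 6 in \cite{BCM}, which handled the analogous statement in the infinite case (under $\P^{(1)}_\infty$), and adapt it to the general conditioning $\P^{(\ell_0)}_\ell$ using the comparison between the Doob $h^\uparrow$- and $h^\downarrow_{\ell_0}$-transforms. First, I would decompose the increment using
\[
\Delta H_\ell^f(n) \;=\; \Delta H_\ell(n) \;+\; f\!\left(\tfrac{D_{n+1}}{2P_{n+1}}\right) - f\!\left(\tfrac{D_n}{2P_n}\right),
\]
and partition the step at time $n$ according to Proposition \ref{Markov explo}: an event $C_k$ (probability $\nu(k-1) h^\downarrow_{\ell_0}(m+k-1)/h^\downarrow_{\ell_0}(m)$), an event $G_{k,*}$ or $G_{*,k}$ (probability $\tfrac12 \nu(-k-1) h^\downarrow_{\ell_0}(m-k-1)/h^\downarrow_{\ell_0}(m)$), or the target event (which terminates and does not contribute to $\Delta H^f$). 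In each case the peeling-by-layers rule determines explicitly how $D$ and $H$ change: $C_k$ inserts $k-1$ new boundary edges at height $H+1$ and thus typically decreases $D/2P$; the gluings $G_{*,k}$ and $G_{k,*}$ may or may not trigger an increment of $H$ depending on whether the segment at height $H$ has been entirely swallowed, as described in \cite{BCM}.

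Second, I would compare transition kernels. Using $h^\downarrow_p(\ell)=h^\downarrow(\ell)h^\downarrow(p)\ell/(\ell+p)$ and $h^\uparrow(\ell)=2\ell h^\downarrow(\ell)$, one checks that
\[
\frac{h^\downarrow_{\ell_0}(m+j)}{h^\downarrow_{\ell_0}(m)} \;=\; \frac{h^\uparrow(m+j)}{h^\uparrow(m)}\cdot \frac{m+\ell_0}{m+j+\ell_0},
\]
so the $\P^{(\ell_0)}_\ell$-transitions are the $\P^{(1)}_\infty$-transitions multiplied by the factor $(m+\ell_0)/(m+j+\ell_0)$, which is $\le 1$ for jumps $j\ge 0$ (events $C_k$) and $\ge 1$, but still bounded by some constant uniformly away from the events that empty the hole, for jumps $j<0$ (events $G$). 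This lets me import the sharp asymptotics carried out in \cite{BCM} for the infinite map and absorb the discrepancy into the $(1+3\vp)$ factor.

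Third, I would Taylor-expand $f$ to second order, using $f\in C^2$: the first-order contribution of the negative jumps of size $k$ contributes, via the tail asymptotics $\nu(-k)\sim p_{\bf q} k^{-2}$ from (\ref{comportement asymptotique nu}) and $h^\downarrow(m-k-1)/h^\downarrow(m)\sim \sqrt{m/(m-k-1)}$, a Riemann sum that computes to $(p_{\bf q}/2)(\log m)/m$ up to $O(1/m)$, with the prefactor $(1+3\vp)^2/(1-\vp)$ absorbing the factor $(m+\ell_0)/(m-k-1+\ell_0)$, the truncation of $f'$ by $1+\vp$, and the lower-order errors in the Stirling/harmonic-function asymptotics. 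The positive jumps and the $C_k$ events contribute only $O(1/m)$ in expectation because $\nu$ is centered on the positive side in the $h^\uparrow$-transform and the extra weight factor above is $\le 1$. The term $\Delta H_\ell$ itself is handled likewise since it only flips to $1$ on a sub-event whose probability is controlled by the same $G$-type transitions. The second-moment bound $\E[(\Delta H_\ell^f)^2|\mathcal{F}_n]\le C'(\vp,{\bf q})/P_\ell(n)$ follows from the same expansion: $(\Delta H_\ell^f)^2$ is $O(1)$ on a step of either type, and the unconditional probability of an event of type $G$ at time $n$ is controlled by $\sum_{k\ge 1}\nu(-k-1)h^\downarrow_{\ell_0}(m-k-1)/h^\downarrow_{\ell_0}(m)=O(1/m)$ uniformly in $\ell_0$ by the same comparison, while $(\Delta f(D/2P))^2$ from $C_k$-steps sums against $\nu(k-1)$ to another $O(1/m)$ contribution using $|f'|\le 1+\vp$.

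The main obstacle will be controlling the contribution from the \emph{near-swallowing} events $k\simeq m-1$, where the harmonic ratio $h^\downarrow_{\ell_0}(m-k-1)/h^\downarrow_{\ell_0}(m)$ can be of order $\sqrt{m}$ and the comparison factor $(m+\ell_0)/(m-k-1+\ell_0)$ can be as large as $m+\ell_0$. Here I would exploit the boundary conditions $f(0)=1$, $f(1)=0$, $f'(0)=f'(1)=0$, $f''(0)=f''(1)=0$, which force $|f(y)-f(y')|=O((y-y')^2)$ near $y,y'\in\{0,1\}$, to beat the blow-up of the harmonic ratio and still extract the $C(\vp,{\bf q})/P_\ell(n)$ correction. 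As in \cite{BCM}, this is the technical heart of the estimate, and is the only place where the vanishing-derivative assumptions on $f$ are genuinely used.
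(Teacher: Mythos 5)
Your overall strategy is the right one and matches the paper's: condition on $(P_\ell(n),D_n)=(p,d)$, decompose the step according to the peeling events of Proposition \ref{Markov explo}, use the identity $h^\downarrow_\ell(p+k)=h^\downarrow(\ell)h^\uparrow(p+k)/(2(p+k+\ell))$ to write each transition as the $h^\uparrow$-transition times the factor $(p+\ell)/(p+k+\ell)$, and then run the Taylor/Riemann-sum analysis of Lemma 6 of \cite{BCM}. (Two small points: the comparison factor involves the killing level $\ell$, not the starting point $\ell_0$; and the vanishing of $f',f''$ at $0$ and $1$ is used chiefly to extend $f$ to a globally $C^2$ function on $\R$ with $f\equiv 1$ on $(-\infty,0]$ and $f\equiv 0$ on $[1,\infty)$, so that the uniform second-order Taylor bound applies everywhere and the interpolation is consistent with integer jumps of $H_\ell$; the near-swallowing terms $k\simeq p$ are then killed by the crude bound $\nu(-k)h^\uparrow(p-k)/h^\uparrow(p)\cdot(p+\ell)/(p-k+\ell)\le \sqrt{p/(p-k)}\,O(p^{-2})$, whose sum is $O(p^{-1})$, without any further input from $f$.)

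There is, however, a genuine error in your accounting of where the leading $\frac{p_{\bf q}}{2}\frac{\log p}{p}$ comes from. You claim that the $C_k$ events contribute only $O(1/p)$ and that the negative jumps alone produce $\frac{p_{\bf q}}{2}\frac{\log p}{p}$. This is false. Writing $y=d/(2p)$, a positive jump of size $k$ moves the argument of $f$ from $\frac{d}{2p}$ to $\frac{d-1}{2p+2k}$, a decrease of order $\frac{yk}{p+k}$, so the $C_k$ events contribute
\[
\sum_{k\ge 1}\nu(k)\,|f'(y)|\,\frac{yk}{p+k}\;\asymp\; p_{\bf q}\,|f'(y)|\,y\,\frac{\log p}{p},
\]
a leading-order term. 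Meanwhile the negative jumps split into two families with opposite signs: gluings on the right move the argument down by $(1-y)\frac{k}{p-k}$ and contribute $+\frac{p_{\bf q}}{2}(1-y)|f'(y)|\frac{\log p}{p}$, whereas gluings on the left move it up by roughly $y\frac{k}{p}$ and contribute $-\frac{p_{\bf q}}{2}\,y\,|f'(y)|\frac{\log p}{p}$. Only the combination $y+\frac{1-y}{2}-\frac{y}{2}=\frac12$, followed by $|f'(y)|\le 1+\vp$, yields the constant $\frac{(1+3\vp)^2}{1-\vp}\frac{p_{\bf q}}{2}$; your bookkeeping would either overshoot it or lose the $y$-dependence needed for the cancellation, and this constant cannot be fudged because it propagates into the $18+\delta$ bound of Theorem \ref{majoration du diamètre graphe et fpp}. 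A related slip occurs in your second-moment argument: the total probability of a $G$-type event at a given step is \emph{not} $O(1/p)$ (already $\nu(-1)$ times a ratio close to $1$ is of order one); what makes $\E[(\Delta H^f_\ell(n))^2\mid\mathcal F_n]=O(1/p)$ work is that on a gluing of size $k$ the squared increment is $O((k/p)^2)$ unless $k$ is comparable to $p$, and $\sum_k \nu(-k)(k/p)^2=O(1/p)$ by (\ref{comportement asymptotique nu}).
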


\begin{proof}
	The proof is essentially the same as the proof of Lemma 6 of \cite{BCM}, but a little more complex. Indeed, the same terms appear except that they are multiplied by a factor $(p+\ell)/(p+k+\ell)$. 
	Let us detail the proof. 
	
	We extend $f$ to $\R$ by setting $f(x)=1$ for $x\le 0$ and $f(x)=0$ for all $x \ge 1$. As in \cite{BCM}, by the Markov property of the exploration (Proposition \ref{Markov explo}), we claim that $(H_\ell(n),P_\ell(n),D_n)_{n\ge 0}$ is a Markov chain and that if $p\ge 1$ and $1\le d \le 2p$, then
	\begin{align}
		\E^{(\ell_0)}_\ell &\left( \left. \Delta H_\ell^f(n) \right| P_\ell(n)= p, D_n=d \right) \\
		= &\sum_{k=0}^{\infty} \nu(k) \frac{p+\ell}{p+k+\ell} \frac{h^\uparrow(p+k)}{h^\uparrow(p)} \left[ f\left(\frac{d-1}{2p+2k}\right)-f\left(\frac{d}{2p}\right)\right] \label{première somme}\\
		&+ \sum_{k=1}^{p-1} \frac{\nu(-k)}{2} \frac{p+\ell}{p-k+\ell} \frac{h^\uparrow(p-k)}{h^\uparrow(p)} \left[ f\left(\frac{d-2k}{2p-2k}\right)-f\left(\frac{d}{2p}\right)\right] \label{deuxième somme} \\
		&+ \sum_{k=1}^{p-1} \frac{\nu(-k)}{2} \frac{p+\ell}{p-k+\ell} \frac{h^\uparrow(p-k)}{h^\uparrow(p)} \left[ f\left(\frac{d-1}{2p-2k}\right)-f\left(\frac{d}{2p}\right)\right]. \label{troisième somme}
	\end{align}
	Indeed, if $d=1$, then either $\Delta H_\ell(n)=1$ and $D_{n+1}=2P_\ell(n+1)$, and therefore $\Delta H_\ell^f(n)=1-f(1/(2p))$, with probability $\sum_{k=-p+1}^{\infty} \nu(k) (h^\downarrow_\ell(p+k)/h^\downarrow_\ell(p))$; or $\tau_{-\ell}=n+1$ and in that case $\Delta H_\ell^f(n)=0$, with probability $\nu(-p-\ell)/{h^\downarrow_\ell(p)}$. This agrees with the sums since $f(x)=1$ for $x\le 0$ (and by using the definition of $h^\uparrow$ and $h^\downarrow_\ell$ in (\ref{h flèche})).
	Now, in the case $d\ge 2$, if an event $C_{k+1}$ happens, then $P_\ell(n+1)=p+k$ and $D_{n+1}=d-1$, hence the first line (\ref{première somme}) (again using (\ref{h flèche})). 
	If an event $G_{\square,k-1}$ happens for $1\le k \le p-1$, i.e. if the peeled edge is identified with an edge on its right, then $P_\ell(n+1)=P_\ell(n)-k$ and either $D_{n+1}=d-2k$ and $\Delta H_\ell(n)=0$, or $D_{n+1}=2P_\ell(n+1)$ and $\Delta H_\ell(n)=1$, depending on the sign of $d-2k$. These two cases are taken into account in the second line (\ref{deuxième somme}) since $f(x)=1$ for all $x\le 0$.
	If an event $G_{k-1,\square}$ happens for some $1\le k \le p-1$, i.e. if the peeled edge is identified with an edge on its left, then again $P_\ell(n+1)=P_\ell(n)-k$ but $\Delta H_\ell(n)=0$ and either $D_{n+1}=d-1$ or $D_{n+1}=2P_\ell(n+1)$, depending on which is the smallest. Both situations are incorporated by the last line (\ref{troisième somme}).
	Finally, if the event $C_\ell^\mathrm{stop}$ happens, then $\Delta H_\ell^f(n)=0$ by definition of the interpolated height process.
	
	To deal with these three sums, we will use the notation $y=d/(2p)\in (0,1]$. Let $c>0$ given by Taylor's theorem such that $|f(x)-f(y)-f'(y)(x-y)|\le c(x-y)^2$ for all $x \in \R$. We will also rely on the inequality
	$$
	\forall k>-p, \qquad \sqrt{1+k/p}(1-1/p)\le \frac{h^\uparrow(p+k)}{h^\uparrow(p)}
	\le \sqrt{1+k/p}(1+1/p).
	$$
	Let $N>0$ such that $1-\vp< k^2\nu(-k)/p_{\bf q}< 1+\vp$ and $1-\vp <k\nu([k,\infty))/p_{\bf q} <1+\vp$ for all $k\ge N$, given by (\ref{comportement asymptotique nu}).
	From the mean value inequality applied to $f$, the first $N$ terms in each sum contribute $O(p^{-1})$ uniformly in $d,\ell$, so it suffices to bound the sums restricted to $|k|\ge N$.
	Since $(p+\ell)/(p+k+\ell)\le 1$, the aforementioned inequalities, the first sum from $N$ to $\infty$, which only contains positive terms, is smaller than
	$$
	\sum_{k=N}^{\infty} \nu(k) \sqrt{1+k/p}(1+1/p) \left[|f'(y)|\frac{1+yk}{p+k} + c \left(\frac{1+k}{p+k}\right)^2 \right]
	$$
	where we used that
	$$
	\frac{d}{2p}-\frac{d-1}{2p+2k} \le \frac{1+yk}{p+k}\le \frac{1+k}{p+k}.
	$$
	Thus, by summation by parts, using that $1-\vp <k\nu([k,\infty))/p_{\bf q} <1+\vp$ for all $k\ge N$, and then recognizing a Riemann sum, the first sum (\ref{première somme}) is upper bounded by 
	\begin{equation}\label{maj1}
		(1+\vp)p_{\bf q} y \frac{1}{p} \int_{(N-1)/p}^\infty \frac{dx}{x\sqrt{1+x}} + O(p^{-1})\le (1+\vp)^2 p_{\bf q} y  \frac{\log p}{p} + O(p^{-1}),
	\end{equation} 
	where the $O(p^{-1})$ is uniform in $d$ and $\ell$. 
	For the second sum (\ref{deuxième somme}), which again has only positive terms, we distinguish whether $k\le \vp p$ or $k > \vp  p$. If $k \le \vp p$, $(p+\ell)/(p-k+\ell) \le 1/(1-\vp)$, so that applying the inequalities and the fact that $1-\vp< k^2\nu(-k)/p_{\bf q}< 1+\vp$ for all $k\ge N$ gives us that
	\begin{align*}
		\sum_{k=N}^{\vp p} \frac{\nu(-k)}{2} \frac{p+\ell}{p-k+\ell} &\frac{h^\uparrow(p-k)}{h^\uparrow(p)} \left[ f\left(\frac{d-2k}{2p-2k}\right)-f\left(\frac{d}{2p}\right)\right] \\
		\le
		&\sum_{k=N}^{\vp p} \frac{1+\vp}{1-\vp} \frac{p_{\bf q}}{2k^2} (1+1/p)\sqrt{1-k/p} 
		\left[|f'(y)| (1-y)\frac{k}{p} \frac{1}{1-\frac{k}{p}}+ c\left(\frac{k}{p} \frac{1}{1-\frac{k}{p}}\right)^2\right],
	\end{align*}
	where we used that
	$$
	\frac{d}{2p}-\frac{d-2k}{2p-2k} =(1-y)\frac{k}{p}\frac{1}{1-\frac{k}{p}} \le \frac{k}{p}\frac{1}{1-\frac{k}{p}}.
	$$
	Hence, by recognizing a Riemann sum, we can upper bound the sum for the $k\le \vp p$ by 
	$$\frac{1+\vp}{1-\vp}p_{\bf q} \frac{1-y}{2}|f'(y)| \frac{1}{p}\int_{(N-1)/p}^{\vp+1/p} \frac{1}{x \sqrt{1-x}}dx +O(p^{-1}) \le
	\frac{1+\vp}{1-\vp}p_{\bf q} \frac{1-y}{2}|f'(y)|\frac{\log p}{p} + O(p^{-1}).$$
	If $\vp p \le k \le p-1$, we have
	$$
	1\le\frac{p+\ell}{p-k+\ell} \le \frac{p}{p-k}
	$$
	and
	$$
	\frac{\nu(-k)}{2} \frac{h^\uparrow(p-k)}{h^\uparrow(p)} \left[ f\left(\frac{d-2k}{2p-2k}\right)-f\left(\frac{d}{2p}\right)\right] \le \nu(-k) \frac{h^\uparrow(p-k)}{h^\uparrow(p)}\le \sqrt{(p-k)/p} O(p^{-2}).
	$$
	So, using a Riemann sum, the sum for $k\ge \vp p$ is upper bounded by $O(p^{-1})$. Therefore, the second sum (\ref{deuxième somme}) is bounded by 
	\begin{equation}\label{maj2}
		\frac{1+\vp}{1-\vp}p_{\bf q} \frac{1-y}{2} |f'(y)|\frac{\log p}{p} + O(p^{-1}).
	\end{equation}
	For the last sum (\ref{troisième somme}), we first focus on the positive terms, i.e. those such that $kd \le p$. If $d \ge 2$, then this entails that $(p+\ell)/(p-k+\ell) \le 2$. Moreover, for all $1\le k \le p-1$,
	$$
	\frac{d}{2p}- \frac{d-1}{2p-2k} \le \frac{1}{2p},
	$$
	so that the sum of the positive terms is a $O(p^{-1})$. If $d=1$, then by Taylor's theorem $f(0)-f(1/(2p)) = O(p^{-2})$, so that the sum of the positive terms is a $O(p^{-2})$. In these two cases, the sum of the positive terms of (\ref{troisième somme}) is upper bounded by a $O(p^{-1})$. 
	In the case $0<y<\vp$, the upper bounds for the positive terms already enable to conclude since by summing (\ref{maj1}), (\ref{maj2}) and the $O(p^{-1})$, we obtain the uniform bound
	$$
	\frac{(1+\vp)^3}{1-\vp} \frac{p_{\bf q}}{2} \frac{\log p}{p} + O(p^{-1}) \le \frac{(1+3\vp)^2}{1-\vp}\frac{p_{\bf q}}{2} \frac{\log p}{p} + O(p^{-1}).
	$$
	In the case $y\ge \vp$, we assume that we have chosen $N>1/\vp$, so that $N\ge 2p/d$ and thus all the terms of (\ref{troisième somme}) for $N\le k\le \vp p$ are negative. Using that $(p+\ell)/(p-k+\ell)\ge1$, the sum of the negative terms of (\ref{troisième somme}) with $N\le k\le \vp p$ can be upper bounded by
	$$
	-\frac{1-\vp}{2} \sum_{k=N}^{\vp p} \frac{p_{\bf q}}{k^2} (1-1/p)\sqrt{1-k/p} \left[|f'(y)| \left( y \frac{k}{p} -\frac{1}{2p}\right)-c(2k/p)^2\right],
	$$
	where we used that
	$$
	\frac{d-1}{2p-2k} -\frac{d}{2p} \ge \frac{dk-p}{2p^2} =y\frac{k}{p} -\frac{1}{2p} \qquad \text{and} \qquad \left| \frac{d}{2p}- \frac{d-1}{2p-2k} \right| \le \frac{k}{(1-\vp)p}.
	$$
	Therefore, using again a Riemann sum, the sum (\ref{troisième somme}) is upper bounded by
	\begin{equation}\label{maj3}
		-(1-\vp)p_{\bf q} \frac{y}{2} |f'(y)| \frac{\log p}{p} + O(p^{-1}).
	\end{equation}
	Finally, summing the upper bounds (\ref{maj1}), (\ref{maj2}), (\ref{maj3}) and the $O(p^{-1})$, one recovers the desired result in the case $\vp \le y \le 1$ as well for $\E^{(\ell_0)}_\ell \left( \left. \Delta H_\ell^f(n) \right| \mathcal{F}_n \right)$. For the expectation of the square, one can perform the same computation with the three sums (\ref{première somme}), (\ref{deuxième somme}), (\ref{troisième somme}), except that the expressions into brackets are squared. One can check using the same ideas that each sum is a $O(p^{-1})$ (uniformly in $d,\ell$), hence the second upper bound.
\end{proof}
We will rely on a direct consequence of Lemma \ref{analogue6}.
\begin{corollary}\label{analogue6bis}
	For all $\vp \in (0,1)$, there exists $\lambda(\vp)>0$ (which does not depend on $\bf q$) and $C(\vp,{\bf q})>0$ such that for all $0<\lambda<\lambda(\vp)$, for all $\ell\ge 1$ and $n\ge 0$, 
	$$
	\E^{(1)}_\ell \left( \left.e^{\lambda \Delta H_\ell^f(n)} \right| \mathcal{F}_n\right) \le 1+
	\lambda \left(\frac{(1+3\vp)^2}{1-\vp} \frac{p_{\bf q}}{2}  { \log (P_\ell(n))  \over P_\ell(n)}+\frac{C(\vp,{\bf q})}{P_\ell(n)}\right),
	$$
	where $(\mathcal{F}_n)_{n\ge 0}$ is the filtration associated with the exploration (using the peeling by layers algorithm).
\end{corollary}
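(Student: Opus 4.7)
The plan is to derive the stated inequality directly from Lemma \ref{analogue6} via a second-order Taylor estimate on the exponential, exploiting the boundedness of $\Delta H_\ell^f(n)$.

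First I would check that the increment $\Delta H_\ell^f(n)$ is bounded by a constant $M = M(\vp)$ depending only on $\vp$ (through the choice of $f$), and not on $\ell$, $n$ or ${\bf q}$. Since $H_\ell$ changes by $0$ or $1$ at each step and $f$ takes values in $[0,1]$ (and is extended to $\R$ in a bounded way), we have $|\Delta H_\ell^f(n)| \le 2$, so one may take $M = 2$. This uniformity of the bound is what allows $\lambda(\vp)$ to depend only on $\vp$.

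Next I would apply the elementary inequality
\[
e^{\lambda x} \le 1 + \lambda x + \tfrac{\lambda^2 x^2}{2}\, e^{\lambda M}, \qquad |x| \le M,
\]
and choose $\lambda(\vp) := 1/M$, so that for every $\lambda \in (0,\lambda(\vp))$ we get $e^{\lambda M} \le e$ and hence
\[
e^{\lambda \Delta H_\ell^f(n)} \le 1 + \lambda \Delta H_\ell^f(n) + \tfrac{e\lambda^2}{2}\bigl(\Delta H_\ell^f(n)\bigr)^2.
\]
Taking conditional expectations with respect to $\mathcal{F}_n$ and injecting the two estimates of Lemma \ref{analogue6} yields
\[
\E^{(1)}_\ell\!\left(\left. e^{\lambda \Delta H_\ell^f(n)} \right| \mathcal{F}_n\right) \le 1 + \lambda\,\frac{(1+3\vp)^2}{1-\vp}\frac{p_{\bf q}}{2}\frac{\log P_\ell(n)+C(\vp,{\bf q})}{P_\ell(n)} + \frac{e\lambda^2}{2}\,\frac{C'(\vp,{\bf q})}{P_\ell(n)}.
\]

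Finally, since $\lambda \le \lambda(\vp)$, the $\lambda^2$ term is bounded by $\lambda \cdot (e\lambda(\vp)/2)\,C'(\vp,{\bf q})/P_\ell(n)$, which combines with the $C(\vp,{\bf q})/P_\ell(n)$ contribution coming from the first bound into a single term of the form $\lambda\,\widetilde{C}(\vp,{\bf q})/P_\ell(n)$ after enlarging the constant. This yields the claimed inequality. The only mildly delicate point is making sure the threshold $\lambda(\vp)$ is independent of ${\bf q}$, which is precisely guaranteed by the universal (in ${\bf q}$) bound on $|\Delta H_\ell^f(n)|$; everything else is bookkeeping of the two moment bounds already established in Lemma \ref{analogue6}.
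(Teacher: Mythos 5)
Your proof is correct and follows essentially the same route as the paper: both arguments reduce the claim to a second-order expansion of the exponential, valid for $\lambda$ below a universal threshold thanks to the uniform bound on $|\Delta H_\ell^f(n)|$, and then absorb the second-moment term from Lemma \ref{analogue6} into the $C(\vp,{\bf q})/P_\ell(n)$ error (the paper uses the sharper bound $-1\le \Delta H_\ell^f(n)\le 1$, but your coarser bound by $2$ changes nothing).
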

\begin{proof}
	It suffices to take $\lambda(\vp)$ small enough so that for all $0<\lambda<c(\vp)$, we have the inequality $e^{\lambda \Delta H_\ell^f(n)} \le 1+ \lambda\Delta H_\ell^f(n)+ \lambda^2 (\Delta H_\ell^f(n))^2$ (such a $\lambda(\vp)$ does exist given that $-1 \le \Delta H_\ell^f(n) \le 1$). 
\end{proof}
We can therefore define another family of supermartingales.

\begin{corollary}\label{sur-martingale3}
	Let $\vp \in (0,1)$. Let $\lambda<\lambda(\vp)$ where $\lambda(\vp)$ is given by Corollary \ref{analogue6bis}. For all $\ell \ge 1$ for all $n\ge 0$, let
	$$
	M^{\vp,\lambda,\ell}_n=  \exp \left({\lambda H_\ell^f(n) - \lambda \sum_{k=0}^{n-1} \left(\frac{(1+3\vp)^3}{1-\vp} \frac{p_{\bf q}}{2}  { \log (P_\ell(k))  \over P_\ell(k)}+\frac{C(\vp,{\bf q})}{P_\ell(k)}\right)} \right).
	$$
	Then, under $\P^{(1)}_\ell$, $\left(M^{\vp,\lambda,\ell}_n\right)_{n \ge 0}$ is a supermartingale with respect to the filtration $(\mathcal{F}_n)_{n\ge 0}$ associated with the exploration.
\end{corollary}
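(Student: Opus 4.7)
The plan is a direct one-step conditional expectation computation, combining Corollary \ref{analogue6bis} with the elementary inequality $1+x \le e^x$. First, on the event $\{n < \tau_{-\ell}\}$, I would write
$$\frac{M^{\vp,\lambda,\ell}_{n+1}}{M^{\vp,\lambda,\ell}_n} = \exp(\lambda \Delta H_\ell^f(n)) \cdot \exp(-\lambda A_n),$$
where
$$A_n := \frac{(1+3\vp)^3}{1-\vp}\frac{p_{\bf q}}{2}\frac{\log P_\ell(n)}{P_\ell(n)} + \frac{C(\vp,{\bf q})}{P_\ell(n)}$$
is $\mathcal{F}_n$-measurable. Taking conditional expectation, the second factor pulls out of the expectation.

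Next, for $0 < \lambda < \lambda(\vp)$, Corollary \ref{analogue6bis} gives
$$\E^{(1)}_\ell\left[e^{\lambda \Delta H_\ell^f(n)} \bigm| \mathcal{F}_n\right] \le 1 + \lambda B_n \le e^{\lambda B_n},$$
where $B_n := \frac{(1+3\vp)^2}{1-\vp}\frac{p_{\bf q}}{2}\frac{\log P_\ell(n)}{P_\ell(n)} + \frac{C(\vp,{\bf q})}{P_\ell(n)}$, the last step being $1+x \le e^x$. Combining these two displays,
$$\E^{(1)}_\ell\left[M^{\vp,\lambda,\ell}_{n+1} \bigm| \mathcal{F}_n\right] \le M^{\vp,\lambda,\ell}_n \cdot \exp(\lambda(B_n - A_n)) \le M^{\vp,\lambda,\ell}_n,$$
where the last inequality uses that $A_n \ge B_n$: indeed, since $P_\ell(n) \ge 1$ we have $\log P_\ell(n) \ge 0$, and $(1+3\vp)^3 \ge (1+3\vp)^2$ for $\vp \ge 0$, while the two terms involving $C(\vp,{\bf q})$ are identical. (Observe that the extra factor $(1+3\vp)$ in $A_n$ relative to $B_n$ provides precisely the slack needed to absorb the loss from $1+x \le e^x$; it is in fact not strictly required at the level of the leading constants, but is harmless.)

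To handle $n \ge \tau_{-\ell}$, I would adopt the natural convention, consistent with the convention $H_\ell^f(n) = H_\ell^f(\tau_{-\ell}-1)$ for $n \ge \tau_{-\ell}$, that the summands in the exponent corresponding to $k \ge \tau_{-\ell}$ are set to zero; then $M^{\vp,\lambda,\ell}_n$ is constant after absorption and the supermartingale property is automatic. There is no genuine obstacle: the corollary is essentially a repackaging of Corollary \ref{analogue6bis} into exponential supermartingale form, and the only subtle point is keeping track of the $\mathcal{F}_n$-measurability so that the conditional expectation factors correctly.
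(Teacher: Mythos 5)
Your proof is correct and is exactly the argument the paper intends: the corollary is stated without proof as an immediate consequence of Corollary \ref{analogue6bis}, and your one-step computation (factor out the $\mathcal{F}_n$-measurable compensator, apply Corollary \ref{analogue6bis}, then $1+x\le e^x$ together with $(1+3\vp)^3\ge(1+3\vp)^2$ and $\log P_\ell(n)\ge 0$) is the standard way to fill it in. Your remark on the convention after absorption is also consistent with the paper's usage, which only ever evaluates the supermartingale up to time $\tau_{-\ell}-1$.
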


We will also use a variant of Lemma \ref{sur-martingale2}, whose proof is similar and left to the reader. 
\begin{lemma}\label{martingale tronquée}
	If $\vp >0$, let $k_0$ such that $e^{(p_{\bf q}-\vp)/k} \le 1+(\nu((-\infty,-k])-\nu(-\ell-k))$ for all $k\ge k_0$ and $\ell\ge 1$. For all $\ell\ge 1, n\ge 0$, let
	$$
	M^{\vp,k_0,\ell}_n = \frac{1}{h^\downarrow_\ell(P_\ell(n))} \exp \left({(p_{\bf q}-\vp) \sum_{j=0}^{n-1} \frac{1}{P_\ell(j)}{\bf 1}_{P_\ell(j)\ge k_0}} \right) .
	$$
	then for all $\ell \ge 1$, under $\P_\ell^{(1)}$,  $(M^{\vp,k_0,\ell}_n)_{n\ge 0}$ is a supermartingale with respect to the filtration $(\mathcal{F}_n)_{n \ge 0}$ associated with the peeling exploration.
\end{lemma}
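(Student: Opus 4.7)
The plan is to follow the proof of Lemma \ref{sur-martingale2} almost line by line, the only change being that the global coefficient $\lambda\le\gamma({\bf q})$ is replaced by $p_{\bf q}-\vp$, and that the tail inequality now holds only asymptotically, which is precisely what the truncation ${\bf 1}_{P_\ell(j)\ge k_0}$ compensates for. First I would condition on $\mathcal{F}_n$, write $p := P_\ell(n)\ge 1$, and apply the transitions of the perimeter process from Proposition \ref{Markov explo}: $P_\ell$ jumps by $k>-p$ with probability $\nu(k)h^\downarrow_\ell(p+k)/h^\downarrow_\ell(p)$ and to $-\ell$ with probability $\nu(-p-\ell)/h^\downarrow_\ell(p)$ (using $h^\downarrow_\ell(-\ell)=1$). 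The $h^\downarrow_\ell$-Doob factors cancel exactly against the $1/h^\downarrow_\ell(P_\ell(n+1))$ in $M^{\vp,k_0,\ell}_{n+1}$, leaving
\begin{equation*}
\E_\ell^{(1)}\!\left(M^{\vp,k_0,\ell}_{n+1}\,\Big|\,\mathcal{F}_n\right) = M^{\vp,k_0,\ell}_n\cdot \exp\!\left(\tfrac{p_{\bf q}-\vp}{p}{\bf 1}_{p\ge k_0}\right)\bigl(\nu((-p,\infty))+\nu(-p-\ell)\bigr),
\end{equation*}
the case $p=-\ell$ being trivial since the indicator then freezes the exponential and $h^\downarrow_\ell(-\ell)=1$.

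The second step is to check that the trailing scalar factor is at most $1$. When $p<k_0$ the indicator vanishes and the factor reduces to $1-\bigl(\nu((-\infty,-p])-\nu(-p-\ell)\bigr)\le 1$. When $p\ge k_0$, setting $a := \nu((-\infty,-p])-\nu(-p-\ell)\in[0,1)$, one needs $e^{(p_{\bf q}-\vp)/p}(1-a)\le 1$; this follows from the defining hypothesis $e^{(p_{\bf q}-\vp)/p}\le 1+a$ combined with the elementary bound $1+a\le 1/(1-a)$ (equivalent to $1-a^2\le 1$). The main point, and essentially the only addition beyond the computation of Lemma \ref{sur-martingale2}, is to observe that a uniform $k_0$ with the required property exists thanks to (\ref{comportement asymptotique nu}): $\nu((-\infty,-k])-\nu(-k-\ell)=p_{\bf q}/k+o(1/k)$ uniformly in $\ell$ (since $\nu(-k-\ell)\le\nu(-k)=O(k^{-2})$), while $e^{(p_{\bf q}-\vp)/k}=1+(p_{\bf q}-\vp)/k+O(k^{-2})$, so a slack of order $\vp/k$ is guaranteed for $k$ large enough. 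No new probabilistic input is required, and the only real care needed is the two-case split $p\lessgtr k_0$ and the bookkeeping of the killing jump to $-\ell$.
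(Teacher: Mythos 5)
Your proof is correct and is exactly the argument the paper intends: the paper leaves this lemma to the reader as a variant of Lemma \ref{sur-martingale2}, and you reproduce that computation with the only necessary changes (the two-case split on $P_\ell(n)\gtrless k_0$, the bound $e^{(p_{\bf q}-\vp)/p}\le 1+a\le 1/(1-a)$ in place of the $\gamma({\bf q})$ bound, and the existence of $k_0$ from (\ref{comportement asymptotique nu})).
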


Subsequently, from the above results, we obtain a large deviation inequality for the height of a random uniform edge in $\mathfrak{M}^{(\ell)}$.

\begin{corollary}\label{grandes déviations distance à une arête uniforme dgr}
	Let $c>0$. Let $\delta >0$ and
	$$
	K=
	\frac{1}{2}(c+3)\left(c+\frac
	{5}{2}\right)
	+\delta.
	$$
	Let $\mathfrak{M}^{(\ell)}$ of law $\P^{(\ell)}$, let $E$ be a uniform random edge of $\mathfrak{M}^{(\ell)}$. Then,
	$$
	\P\left( d_\mathrm{gr}^\dagger(f_r,E) \ge K (\log \ell)^2 \right) = O(\ell^{-c}) \text{ when } \ell \to \infty.
	$$
\end{corollary}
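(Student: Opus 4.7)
The plan mirrors the proof of Corollary~\ref{grandes déviations distance à une arête uniforme2} for the fpp distance, with the supermartingale $(M^{\vp,\lambda,\ell}_n)_{n\ge 0}$ of Corollary~\ref{sur-martingale3} taking the place of the one from Lemma~\ref{sur-martingale2}, and with an additional step needed to handle the $\log P_\ell$ factor in the drift of the interpolated height process. First I would apply the unzipping/re-rooting reduction of Corollary~\ref{grandes déviations distance à une arête uniforme2}, together with (\ref{EqVolume}) and the identity $d^\dagger_\mathrm{gr}(f_r,\square)=H_\ell(\tau_{-\ell}-1)+1\le H^f_\ell(\tau_{-\ell}-1)+1$ from Subsection~\ref{sous-section explo par couche}, to reduce the claim to the bound
\[
\P^{(1)}_\ell\bigl(H^f_\ell(\tau_{-\ell}-1)\ge K(\log\ell)^2\bigr)=O(\ell^{-c-3/2}).
\]

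Second, I would apply optional stopping to the supermartingale of Corollary~\ref{sur-martingale3}: writing $S_n$ for the drift sum appearing in its exponent, one obtains $\E^{(1)}_\ell\exp(\lambda H^f_\ell(\tau_{-\ell}-1)-\lambda S_{\tau_{-\ell}-1})\le e^{\lambda}$, so Markov's inequality gives, for any $0<B<K$ and any admissible $\lambda$,
\[
\P^{(1)}_\ell\bigl(H^f_\ell(\tau_{-\ell}-1)\ge K(\log\ell)^2\bigr)\le\P^{(1)}_\ell\bigl(S_{\tau_{-\ell}-1}>B(\log\ell)^2\bigr)+e^{-\lambda(K-B)(\log\ell)^2+\lambda}.
\]
The last term is super-polynomially small, so the problem reduces to controlling the tail of $S_{\tau_{-\ell}-1}$, i.e., essentially that of $\sum_{k<\tau_{-\ell}}\log P_\ell(k)/P_\ell(k)$.

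Third, I would bound this sum via $\log P_\ell(k)\le\log\max_j P_\ell(j)$ and control the two resulting quantities separately. The sum $\sum_k 1/P_\ell(k)$ is handled by the martingale~(\ref{martingale}), whose expectation is $1/h^\downarrow_\ell(1)\sim 2\sqrt{\pi}\,\ell^{3/2}$: using the asymptotic $-\log(\nu([-P+1,\infty))+\nu(-P-\ell))\ge(p_{\bf q}-\vp)/P$ for $P\ge k_0$ (together with the uniform positive lower bound $-\log(\cdots)\ge\delta_0>0$ which, combined with the same martingale, controls the number of small-perimeter steps), Markov's inequality yields $\sum_k 1/P_\ell(k)\le(c+3+\eta)/(p_{\bf q}-\vp)\log\ell$ with probability $1-O(\ell^{-c-3/2})$. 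The maximum $\max_k P_\ell(k)$ is then controlled by a Doob-type first-moment bound on the same martingale (via $M^{(\ell)}_n\ge 1/h^\downarrow_\ell(P_\ell(n))$ and the large-$P$ asymptotic $h^\downarrow_\ell(P)\sim(\pi\sqrt{P\ell})^{-1}$), supplemented if needed by the coupling of Lemma~\ref{couplage marche} and the polynomial tail of $\tau_{-\ell}$ deducible from Proposition~\ref{limite du temps d'arret}, yielding $\log\max_k P_\ell(k)\le(c+5/2+\eta')\log\ell$ with probability $1-O(\ell^{-c-3/2})$. Multiplying these two bounds gives
\[
\sum_{k<\tau_{-\ell}}\frac{\log P_\ell(k)}{P_\ell(k)}\le\frac{(c+3)(c+5/2)}{p_{\bf q}}(\log\ell)^2+o((\log\ell)^2)
\]
on the intersection of the two events, which, after multiplying by the prefactor $\frac{(1+3\vp)^3}{1-\vp}\cdot\frac{p_{\bf q}}{2}\to\frac{p_{\bf q}}{2}$, gives $S_{\tau_{-\ell}-1}\le(c+3)(c+5/2)/2\cdot(\log\ell)^2+o((\log\ell)^2)$, which is the needed tail bound on $S$ for $B$ slightly below $K$ once $\vp,\eta,\eta'$ are chosen small.

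The main obstacle is the $\max_k P_\ell(k)$ estimate: because $\nu$ has Cauchy-type tails, $P_\ell$ has no exponential moments and the exponent $c+5/2$ has to come from a delicate polynomial (first-moment) bound rather than a Chernoff bound. It is precisely this polynomial dependence that produces the quadratic growth $(c+3)(c+5/2)/2$ in the statement, and in turn, by taking $c>3/2$ and applying a union bound over the $\asymp\ell^{3/2}$ edges of $\mathfrak{M}^{(\ell)}$, the universal diameter constant $18+\delta$ of Proposition~\ref{majoration du diamètre graph}.
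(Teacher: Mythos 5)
Your architecture matches the paper's up to and including the reduction to the tail of the drift sum $S_{\tau_{-\ell}-1}$: unzipping plus (\ref{EqVolume}), the interpolated height supermartingale of Corollary \ref{sur-martingale3} with Markov's inequality, and the truncated supermartingale of Lemma \ref{martingale tronquée} giving $\sum_k P_\ell(k)^{-1}\mathbf{1}_{P_\ell(k)\ge k_0}\le\frac{c+3+\eta}{p_{\bf q}-\vp}\log\ell$ with probability $1-O(\ell^{-c-3/2})$ are all exactly the paper's steps. The gap is in your treatment of $\sum_{k<\tau_{-\ell}}\log P_\ell(k)/P_\ell(k)$ via the factorization $\bigl(\log\max_jP_\ell(j)\bigr)\cdot\sum_k1/P_\ell(k)$. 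The tool you cite for the maximum — Doob's maximal inequality applied to the martingale (\ref{martingale}) through $M^{(\ell)}_n\ge1/h^\downarrow_\ell(P_\ell(n))\asymp\sqrt{P_\ell(n)\ell}$ — only yields $\P^{(1)}_\ell(\max_kP_\ell(k)\ge m)\le C\,\ell^{3/2}/\sqrt{\ell m}=C\ell/\sqrt{m}$, so to reach the error $O(\ell^{-c-3/2})$ you need $m\ge\ell^{2c+5}$, i.e. $\log\max_kP_\ell(k)\le(2c+5+\eta')\log\ell$, not $(c+5/2+\eta')\log\ell$. This doubles one factor and produces the constant $\frac12(c+3)(2c+5)$ instead of $\frac12(c+3)(c+5/2)$ (and hence $36+\delta$ instead of $18+\delta$ in Proposition \ref{majoration du diamètre graph}). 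The coupling of Lemma \ref{couplage marche} together with the heavy upper tail of $P_\infty$ does not repair this, and Proposition \ref{limite du temps d'arret} is only a convergence in law, so it gives no quantitative polynomial tail. (A sharper estimate $\P^{(1)}_\ell(\max_kP_\ell(k)\ge m)\lesssim\ell/m$ is in fact true, via the $h^\downarrow_\ell$-transform representation of the hitting probability of $[m,\infty)$ combined with the ruin estimate $\P_1(T_{[m,\infty)}<\tau_{\le0})\lesssim m^{-1/2}$, and would rescue your constant — but that is a different argument from the one you propose and you would need to supply it.)

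The paper sidesteps the maximum of $P_\ell$ entirely, and your diagnosis that the factor $c+5/2$ comes from $\max_kP_\ell(k)$ is a misattribution: it comes from the \emph{lifetime}. The paper splits the sum according to whether $P_\ell(k)\le k\log k$ or not. On $\{P_\ell(k)>k\log k\}$, monotonicity of $x\mapsto x/\log x$ gives $\log P_\ell(k)/P_\ell(k)\le(\log k+\log\log k)/(k\log k)$, so that part of the sum is at most $2\log\tau_{-\ell}=o((\log\ell)^2)$ deterministically — no control of large perimeters is needed where the summand is already small. On $\{k_0\le P_\ell(k)\le k\log k\}$ one has $\log P_\ell(k)\le\log\tau_{-\ell}+\log\log\tau_{-\ell}$, so the only input beyond the truncated supermartingale is the tail bound $\P^{(1)}_\ell(\tau_{-\ell}>\ell^{c+5/2})=O(\ell^{-c-3/2})$, which follows cleanly from the coupling of Lemma \ref{couplage marche} and $\E^{(1)}_\infty[n/P_\infty(n)]=O(1)$ (Lemma 4 of \cite{BCM}), as in display (\ref{temps de mort}). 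You should replace your $\max_kP_\ell(k)$ step by this splitting (or else prove the sharper $\ell/m$ tail for the maximum); as written, your argument does not establish the stated constant $K$.
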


\begin{proof}
	First of all, since the distance to an edge is smaller than the distance to the $2$-face obtained by unzipping the edge, and by (\ref{loi réenraciné}),
	\begin{align*}
		\P\left(d_\mathrm{gr}^\dagger(f_r,E) \ge K (\log \ell)^2\right)
		&\le
		\frac{W_1^{(\ell)}}{W^{(\ell)}} \E_1^{(\ell)} \left(\frac{1}{\#\mathrm{Edges}(\mathfrak{m}_\square)-1} {\bf 1}_{d_\mathrm{gr}^\dagger(f_r,\square)\ge K (\log \ell)^2}\right)\\
		&\le 
		\frac{W_1^{(\ell)}}{W^{(\ell)}} \P_1^{(\ell)} \left(d_\mathrm{gr}^\dagger(f_r,\square)\ge K (\log \ell)^2\right).
	\end{align*}
	But, since $W_1^{(\ell)}/W^{(\ell)}$ is a $O(\ell^{3/2})$ by (\ref{EqVolume}), it is enough to prove that if $K'=\frac{1}{2}\left(c+\frac{3}{2}\right)(c+1)+\delta$, then
	$$
	\P_1^{(\ell)} (d_\mathrm{gr}^\dagger(f_r,\square)\ge K' (\log \ell)^2) = O(\ell^{-c}).
	$$
	Since $d^\dagger_\mathrm{gr}(f_r,\square)\le H_\ell(\tau_{-\ell}-1)+1$ and $H_\ell(n) \le H_\ell^f(n)$ for all $n \le \tau_{-\ell}-1$, by taking $\delta$ a bit smaller it suffices to show that
	$$
	\P^{(1)}_\ell\left( H_\ell^f(\tau_{-\ell}-1) \ge K' (\log \ell)^2 \right) = O(\ell^{-c}) \text{ when } \ell \to \infty.
	$$
	So as to prove that, we can first see from Corollary \ref{sur-martingale3} that if $\vp \in (0,1)$, if $0<\lambda<\lambda(\vp)$
	and $\mu = \frac{(1+3\vp)^3}{1-\vp} \frac{p_{\bf q}}{2}  $ then for all $\ell \ge 1$, 
	\begin{align*}
		\E^{(1)}_\ell
		\exp \left({\lambda H_\ell^f(\tau_{-\ell}-1) - \lambda \mu \sum_{k=0}^{\tau_{-\ell}-2}   { \log (P_\ell(k))  \over P_\ell(k)}-\lambda C(\vp,{\bf q})\sum_{k=0}^{\tau_{-\ell}-2}\frac{1}{P_\ell(k)}} \right)
		&=
		\E^{(1)}_\ell M^{\vp,\lambda,\ell}_{\tau_{-\ell}-1} \\
		&\le \E^{(1)}_\ell M_0^{\vp, \lambda, \ell} \\
		&= 1.
	\end{align*}
	As a consequence, if $A>0$ is such that $\lambda A >c$, we have
	\begin{align*}
		\P^{(1)}_\ell &\left(H_\ell^f(\tau_{-\ell}) \ge \mu \sum_{k=0}^{\tau_{-\ell}-1}   { \log (P_\ell(k))  \over P_\ell(k)}+ C(\vp,{\bf q}) \sum_{k=0}^{\tau_{-\ell}-1} \frac{1}{P_\ell(k)}  + A \log \ell  \right) \\
		&\le 
		\P^{(1)}_\ell \left(\exp \left({\lambda H_\ell^f(\tau_{-\ell}) - \lambda \mu \sum_{k=0}^{\tau_{-\ell}-1} \left(  { \log (P_\ell(k))  \over P_\ell(k)}+\frac{1}{P_\ell(k)} \right)}\right)\ge   \ell^{A \lambda} \right)  \le \frac{1}{\ell^{A \lambda}} = O(\ell^{-c}).
	\end{align*}
	From (\ref{eqsomme2}), $\sum_{k=0}^{\tau_{-\ell}-1} \frac{1}{P_\ell(k)}$ is well controlled by a constant times $\log \ell$. Thus it is sufficient to show that
	\begin{equation}\label{eqsomme6}
		\P^{(1)}_\ell\left(\sum_{k=0}^{\tau_{-\ell}-1}  { \log (P_\ell(k))  \over P_\ell(k)} {\bf 1}_{P_\ell(k)\ge k_0} \ge \frac{K'}{\mu}(\log \ell)^2\right)  = O(\ell^{-c}),
	\end{equation}
	where $k_0>0$ will be chosen later in the proof.
	However, one can note that since $x \mapsto x/ \log x $ is increasing on $[e,\infty)$, we have
	$$
	\sum_{k=3}^{\tau_{-\ell}-1}  { \log (P_\ell(k))  \over P_\ell(k)} {\bf 1}_{ P_\ell(k)\ge k \log k}
	\le \sum_{k=3}^{\tau_{-\ell}-1} \frac{\log k+ \log \log k}{k \log k} \le 2 \log \tau_{-\ell}
	$$
	Moreover, using Lemma \ref{couplage marche} and then that
	$\E^{(1)}_\infty ({\ell}/{P_\infty(\ell)})=O(1)$ by Lemma 4 of \cite{BCM} as $\ell \to \infty$, 
	\begin{equation}\label{temps de mort}
		\P_\ell^{(1)}( \tau_{-\ell} > \ell^{c+1}) = \E^{(1)}_\infty \frac{1+\ell}{P_\infty(\lfloor\ell^{1+c}\rfloor)+\ell} \le
		2\E_\infty^{(1)} \frac{\ell}{P_\infty(\lfloor\ell^{c+1}\rfloor)} = O(\ell^{-c}).
	\end{equation}
	Consequently,
	$$
	\P^{(1)}_\ell\left(\sum_{k=0}^{\tau_{-\ell}-1}  { \log (P_\ell(k))  \over P_\ell(k)} {\bf 1}_{P_\ell(k)\ge k \log k} \ge 2(c+1)\log \ell \right)  = O(\ell^{-c}).
	$$
	Hence, so as to prove (\ref{eqsomme6}), it suffices to show that
	$$
	\P^{(1)}_\ell\left(\sum_{k=0}^{\tau_{-\ell}-1}  { \log (P_\ell(k))  \over P_\ell(k)} {\bf 1}_{k \log k \ge P_\ell(k)\ge k_0} \ge \frac{K'}{\mu}(\log \ell)^2\right)  = O(\ell^{-c}).
	$$
	But one can upper bound
	\begin{align*}
		\sum_{k=0}^{\tau_{-\ell}-1}  { \log (P_\ell(k))  \over P_\ell(k)} {\bf 1}_{k \log k \ge P_\ell(k)\ge k_0}
		&\le 
		\sum_{k=0}^{\tau_{-\ell}-1}  { \log k+ \log \log k  \over P_\ell(k)} {\bf 1}_{k \log k \ge P_\ell(k)\ge k_0}\\
		&\le \left(\log \tau_{-\ell} + \log \log \tau_{-\ell}\right) 
		\sum_{k=0}^{\tau_{-\ell}-1}  { 1 \over P_\ell(k)} {\bf 1}_{k \log k \ge P_\ell(k)\ge k_0}.
	\end{align*}
	So, using again (\ref{temps de mort}) and then taking $\vp$ small enough so that $\mu = \frac{(1+3\vp)^3}{1-\vp} \frac{p_{\bf q}}{2} $ is close to $\frac{p_{\bf q}}{2}$, we just need to prove that for all $p_{\bf q} >\delta >0$,
	\begin{equation}\label{eqsomme7}
		\P^{(1)}_\ell\left(\sum_{k=0}^{\tau_{-\ell}-1}  { 1  \over P_\ell(k)} {\bf 1}_{P_\ell(k)\ge k_0} \ge \frac{c+\frac{3}{2}}{p_{\bf q}-\delta}(\log \ell)^2\right)  = O(\ell^{-c}).
	\end{equation}
	In order to show that we will use the supermartingale $M^{\delta,k_0, \ell}$ of Lemma \ref{martingale tronquée} for $k_0$ chosen large enough.
	Applying Markov's inequality and Fatou's lemma, we get that for $K'' = (c+3/2)/(p_{\bf q} - \delta)$,
	\begin{align*}
		\P^{(1)}_\ell\left( \sum_{k=0}^{\tau_{-\ell}-1} {1 \over P_\ell(k)} {\bf 1}_{P_\ell(k)\ge k_0}\ge K'' \log \ell \right) 
		&\le\ell^{-(p_{\bf q} - \delta) K''} \E^{(1)}_\ell \exp\left({(p_{\bf q} - \delta) \sum_{k=0}^{\tau_{-\ell}-1} \frac{1}{P_\ell(k)} {\bf 1}_{P_\ell(k)\ge k_0}}\right) \\
		&= \ell^{-(p_{\bf q} - \delta) K''} \E^{(1)}_\ell M^{\delta,k_0, \ell}_{\tau_{-\ell}}\\
		&\le \ell^{-(p_{\bf q} - \delta) K''} \E^{(1)}_\ell M^{\delta,k_0, \ell}_{0} \\
		&=\ell^{-(p_{\bf q} - \delta) K''} \frac{1}{h^\downarrow_\ell(1)}  \\
		&= O(\ell^{3/2-(p_{\bf q} - \delta) K''}) = O(\ell^{-c}),
	\end{align*}
	hence (\ref{eqsomme7}).
\end{proof}

The proof of Proposition \ref{majoration du diamètre graph} is then exactly the same as the proof of Proposition \ref{majoration du diamètre} using Corollary \ref{grandes déviations distance à une arête uniforme dgr}:

\begin{proof}[Proof of Proposition \ref{majoration du diamètre graph}]
	Since the diameter is upper bounded by twice the maximal height, it suffices to show that for all $\delta>0$
	$$
	\P\left( \exists e \in \mathrm{Edges}(\mathfrak{M}^{(\ell)}), \ d_\mathrm{gr}^\dagger(f_r,e)
	\ge (9+\delta) (\log \ell)^2\right)
	\mathop{\longrightarrow}\limits_{\ell \to \infty} 0,
	$$
	inasmuch as for all $f \in \mathrm{Faces}(\mathfrak{M}^{(\ell)})$, if $e$ is an edge surrounding $f$, then $d_\mathrm{gr}^\dagger(f_r,f) \le d_\mathrm{gr}^\dagger(f_r,e) +1$. 
	Next, we distinguish whether the number of edges is too large or not: let $\vp>0$.
	\begin{align*}
		\P&\left( \exists e\in \mathrm{Edges}(\mathfrak{M}^{(\ell)}), \ d_\mathrm{gr}^\dagger(f_r,e)
		\ge (9+\delta) (\log \ell)^2\right) \\
		\le
		&\enskip \E \left( \#\left\{e\in \mathrm{Edges}(\mathfrak{M}^{(\ell)}); \ d_\mathrm{gr}^\dagger(f_r,e) \ge (9+\delta) (\log \ell)^2 \right\} {\bf 1}_{\# \mathrm{Edges}(\mathfrak{M}^{(\ell)}) \le \frac{2 b_{\bf q}}{\vp} \ell^{3/2}}\right) \\
		&+ \P\left(\# \mathrm{Edges}(\mathfrak{M}^{(\ell)}) \ge \frac{2 b_{\bf q}}{\vp} \ell^{3/2}\right),
	\end{align*}
	and the last term is smaller than $\vp$ for $\ell$ large enough by (\ref{EqVolume}). Henceforth we focus on the first term. Conditionally on $\mathfrak{M}^{(\ell)}$, let $E$ be a uniform edge of $\mathfrak{M}^{(\ell)}$.
	\begin{align*}
		\E &\left( \#\left\{e\in \mathrm{Edges}(\mathfrak{M}^{(\ell)}); \ d_\mathrm{gr}^\dagger(f_r,e) \ge (9+\delta) (\log \ell)^2 \right\} {\bf 1}_{\# \mathrm{Edges}(\mathfrak{M}^{(\ell)}) \le \frac{2 b_{\bf q}}{\vp} \ell^{3/2}}\right) \\
		&\le\E \left(\frac{2 b_{\bf q}}{\vp} \ell^{3/2} \P( d_\mathrm{gr}^\dagger(f_r,E) \ge (9+\delta) (\log \ell)^2 |\mathfrak{M}^{(\ell)})  \right) \\
		&=\frac{2 b_{\bf q}}{\vp} \ell^{3/2}  \P( d_\mathrm{gr}^\dagger(f_r,E) \ge (9+\delta) (\log \ell)^2 ).
	\end{align*}
	The expression on the last line tends to zero by Corollary \ref{grandes déviations distance à une arête uniforme dgr} when $\ell \to \infty$ since for $c=3/2$ we have $\left(c+3\right)\left(c+5/2\right)/2 =9$.
\end{proof}
\paragraph{Acknowledgements}
I thank Cyril Marzouk and Nicolas Curien for their constant support and for their insightful comments on earlier versions. I also thank the referee for a careful reading.
\bibliographystyle{abbrv}
\bibliography{Biblio}

\end{document}